\newtheorem{Thm}{Theorem}[section]
\newtheorem{Lem}[Thm]{Lemma}
\newtheorem{Cor}[Thm]{Corollary}
\theoremstyle{definition}
\newtheorem{Def}[Thm]{Definition}
\newtheorem{Rmk}[Thm]{Remark}
\newtheorem{Ex}[Thm]{Example}
\newtheorem{A}[Thm]{Exercise}
\numberwithin{equation}{section}
\renewcommand{\phi}{\varphi}
\newcommand{\ii}{\mathrm{i}}
\newcommand{\ZZ}{\mathbb{Z}}
\newcommand{\CC}{\mathbb{C}}
\newcommand{\QQ}{\mathbb{Q}}
\newcommand{\RR}{\mathbb{R}}
\newcommand{\NN}{\mathbb{N}}
\newcommand{\FF}{\mathbb{F}}
\newcommand{\GL}{\operatorname{GL}}
\newcommand{\tr}{\operatorname{tr}}
\newcommand{\sgn}{\operatorname{sgn}}
\newcommand{\per}{\operatorname{per}}
\newcommand{\diag}{\operatorname{diag}}
\newcommand{\adj}{\operatorname{adj}}
\newcommand{\id}{\operatorname{id}}
\newcommand{\TT}{\mathrm{t}}
\newcommand{\stir}[2]{\genfrac{[}{]}{0pt}{}{#1}{#2}}
\newcommand{\stirr}[2]{\genfrac{\{}{\}}{0pt}{}{#1}{#2}}
\newcommand{\gauss}[2]{\genfrac{\langle}{\rangle}{0pt}{}{#1}{#2}}
\newcommand{\res}{\operatorname{res}}
\newcommand{\inv}{\operatorname{inv}}
  \DeclareFontShape{T1}{cmr}{m}{scit}{<->ssub*cmr/m/sc}{}%
\def\thm@space@setup{%
  \thm@preskip=\parskip \thm@postskip=0pt
}
\title{An invitation to formal power series\footnote{This version differs significantly from the published article at Jahresbericht DMV}}
\author{Benjamin Sambale\footnote{Institut für Algebra, Zahlentheorie und Diskrete Mathematik, Leibniz Universität Hannover, Welfengarten 1, 30167 Hannover, Germany,
\href{mailto:sambale@math.uni-hannover.de}{sambale@math.uni-hannover.de}}}
\date{\today}
\begin{document}
\frenchspacing
\maketitle
\begin{abstract}\noindent
This is a lecture on the theory of formal power series developed entirely without any analytic machinery. Combining ideas from various authors we are able to prove Newton's binomial theorem, Jacobi's triple product, the Rogers--Ramanujan identities and many other prominent results. 
We apply these methods to derive several combinatorial theorems including Ramanujan's partition congruences, generating functions of Stirling numbers and Jacobi's four-square theorem. We further discuss formal Laurent series and multivariate power series and end with a proof of MacMahon's master theorem.
\end{abstract}

\textbf{Keywords:} formal power series; Jacobi's triple product; partitions; Ramanujan; Stirling numbers; MacMahon's master theorem\\
\textbf{AMS classification:} 13F25, 16W60, 11D88, 11P84, 05A15, 05A17

\tableofcontents
\renewcommand{\sectionautorefname}{Section}

\section{Introduction}

In a first course on abstract algebra students learn the difference between polynomial (real-valued) functions familiar from high school and formal polynomials defined over arbitrary fields. In courses on analysis they learn further that certain “well-behaved” functions possess a Taylor series expansion, i.\,e. a power series which converges in a neighborhood of a point. On the other hand, only specialized courses cover the formal world of power series where no convergence questions are asked. 

The purpose of these expository notes is to give a far-reaching introduction to formal power series without appealing to any analytic machinery (we only use an elementary discrete metric). In doing so, we go well beyond a dated account undertaken by Niven~\cite{Niven} in 1969 (for instance, Niven cites Euler's pentagonal number theorem without proof). An alternative approach with different emphases can be found in Tutte~\cite{Tutte1,Tutte2}.
To illustrate the usefulness of formal power series we offer several combinatorial applications including some deep partition identities due to Ramanujan and others. This challenges the statement “While the formal analogies with ordinary calculus are undeniably beautiful, strictly speaking one can’t go much beyond Euler that way…” from the introduction of the recent book by Johnson~\cite{Johnson}.
While most proofs presented here are not new, they are scattered in the literature spanning five decades and, to my knowledge, cannot be found in a unified treatment. Our main source of inspiration is the accessible book by Hirschhorn~\cite{Hirschhorn} (albeit based on analytic reasoning) in combination with numerous articles cited when appropriate. The work on these notes was initiated by lectures on combinatorics and discrete mathematics at the universities of Jena and Hannover. 
I hope that the present notes may serve as the basis of seminars for undergraduate and graduate students alike. The prerequisites do not go beyond a basic abstract algebra course (in \autoref{seclaurent}, some knowledge of algebraic and transcendental field extensions is assumed).

The material is organized as follows: In the upcoming section we define the ring of formal power series over an arbitrary field and discuss its basic properties. Thereafter, we introduce our toolkit consisting of compositions, derivations and exponentiations of power series. In the following section we extend the theory to formal Laurent series with the goal of proving the Lagrange--Bürmann inversion formula. 
In \autoref{secmain} we first establish the binomial theorems of Newton and Gauss and later obtain Jacobi's famous triple product identity, Euler's pentagonal number theorem and the Rogers--Ramanujan identities. In the subsequent section we apply the methods to combinatorial problems to obtain a number of generating functions. Most notably, we prove Ramanujan's partition congruences (modulo $5$ and $7$) as well as his so-called “most beautiful” formula. Another section deals with Stirling numbers, permutations, Faulhaber's formula and the Lagrange--Jacobi four-square theorem. 
Subsequently, multivariate power series enter the picture. We give proofs of identities of Vieta, Girard--Newton and Waring on symmetric polynomials. We continue by developing multivariate versions of Leibniz' differentiation rule, Faà di Bruno's rule and the inverse function theorem.
In the final section we go somewhat deeper by taking matrices into account. After establishing the Lagrange--Good inversion formula, we culminate by proving MacMahon's master theorem. 
In the appendix we review some algebraic properties of power series, which are rarely needed in combinatorics. For instance, we show that the ring of power series in finitely many indeterminates is a unique factorization domain, and we prove Puiseux' theorem on the algebraic closure of the ring of Laurent series.
In all parts of this work we often indicate analytic counterparts and connections to other areas. Furthermore, a few exercises are included.

\section{Definitions and basic properties}

The sets of positive and non-negative integers are denoted by $\NN=\{1,2,\ldots\}$ and $\NN_0=\{0,1,\ldots\}$ respectively.

\begin{Def}\hfill
\begin{enumerate}[(i)]
\item 
The letter $K$ will always denote a (commutative) field. In this section there are no requirements on $K$, but at later stages we need that $K$ has characteristic $0$ or contains some roots of unity. At this point we often replace $K$ by $\CC$ for convenience (and not for making analytic arguments available). This is not much loss of generality, since our theorems always involve at most countable many field elements, say $a_1,a_2,\ldots$, and $\QQ(a_1,a_2,\ldots)$ can be embedded into $\CC$.

\item 
A (formal) \emph{power series}\index{power series} over $K$ is just an infinite sequence $\alpha=(a_0,a_1,\ldots)$ with \emph{coefficients}\index{coefficient} $a_0,a_1,\ldots\in K$. 
The set of power series forms a $K$-vector space denoted by $K[[X]]$\index{*KXX@$K[[X]]$} with respect to the familiar componentwise operations:
\[\alpha+\beta:=(a_0+b_0,a_1+b_1,\ldots),\qquad \lambda\alpha:=(\lambda a_0,\lambda a_1,\ldots),\]
where $\beta=(b_0,b_1,\ldots)\in K[[X]]$ and $\lambda\in K$. 
We identify the elements $a\in K$ with the \emph{constant}\index{power series!constant} power series $(a,0,0,\ldots)$. In general, we call $a_0$ the \emph{constant term}\index{constant term}\index{coefficient!constant term} of $\alpha$ and set \index{*Inf@$\inf(\alpha)$}
\[\inf(\alpha):=\inf\{n\in\NN_0:a_n\ne 0\}\] 
with $\inf(0)=\inf\varnothing=\infty$ (as a group theorist I avoid calling $\inf(\alpha)$ the order of $\alpha$ as in many sources).
\item To motivate a multiplication on $K[[X]]$ we introduce an \emph{indeterminate}\index{indeterminate} $X$ and its powers 
\[X^0:=1=(1,0,\ldots),\qquad X=X^1=(0,1,0,\ldots),\qquad X^2=(0,0,1,0,\ldots),\qquad\ldots.\]
We can now formally write 
\[\alpha=\sum_{n=0}^\infty a_nX^n.\]
If there exists some $d\in\NN_0$ with $a_n=0$ for all $n>d$, then $\alpha$ is called a (formal) \emph{polynomial}.\index{polynomial} 
The smallest $d$ with this property is the \emph{degree}\index{degree} $\deg(\alpha)$\index{*Deg@$\deg(\alpha)$} of $\alpha$ (by convention $\deg(0)=-\infty$). In this case, $a_{\deg(\alpha)}$ is the \emph{leading coefficient}\index{coefficient!leading} and $\alpha$ is called \emph{monic}\index{polynomial!monic} if $a_{\deg(\alpha)}=1$.
The set of polynomials (inside $K[[X]]$) is denoted by $K[X]$. \index{*KX@$K[X]$}

\item 
We borrow from the usual multiplication of polynomials (sometimes called \emph{Cauchy product}\index{Cauchy product} or \emph{discrete convolution}\index{convolution}) to define
\[\boxed{\alpha\cdot \beta:=\sum_{n=0}^\infty\Bigl(\sum_{k=0}^na_kb_{n-k}\Bigr)X^n}\]
for arbitrary $\alpha,\beta\in K[[X]]$ as above. 
\end{enumerate}
\end{Def}

Note that $1,X,X^2,\ldots$ is a $K$-basis of $K[X]$, but not of $K[[X]]$. Indeed, $K[[X]]$ has no countable basis.
Opposed to a popular trend to rename $X$ to $q$ (as in \cite{Hirschhorn}), we always keep $X$ as “formal” as possible.

\begin{Lem}\label{intdom}
With the above defined addition and multiplication $(K[[X]],+,\cdot)$ is an integral domain with identity $1$, i.\,e. $K[[X]]$ is a commutative ring such that $\alpha\cdot\beta\ne 0$ for all $\alpha,\beta\in K[[X]]\setminus\{0\}$. Moreover, $K$ and $K[X]$ are subrings of $K[[X]]$.
\end{Lem}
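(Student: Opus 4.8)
The plan is to verify the ring axioms one at a time, the only non-routine one being associativity of multiplication, and then to read off the integral domain property from the behaviour of $\inf$ under the Cauchy product.

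Since $K[[X]]$ is already equipped with a $K$-vector space structure, $(K[[X]],+)$ is an abelian group, so it remains to treat multiplication. Commutativity is immediate: substituting $k\mapsto n-k$ in the inner sum rewrites the $n$-th coefficient $\sum_{k=0}^n a_kb_{n-k}$ of $\alpha\cdot\beta$ as $\sum_{k=0}^n a_{n-k}b_k$, which is the $n$-th coefficient of $\beta\cdot\alpha$. For the identity, note that in $\alpha\cdot 1$ — where $1=(1,0,0,\ldots)$, so $b_j=0$ for $j\ge1$ — only the summand $k=n$ in $\sum_{k=0}^n a_kb_{n-k}$ survives, leaving $a_n$; hence $\alpha\cdot 1=\alpha$. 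Distributivity holds because, for each fixed $n$, the assignment $(\alpha,\beta)\mapsto\sum_{k=0}^n a_kb_{n-k}$ is $K$-bilinear in the coefficient sequences.

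The heart of the matter is associativity. I would compute the $n$-th coefficient of $(\alpha\cdot\beta)\cdot\gamma$ as
\[\sum_{k=0}^n\Bigl(\sum_{i=0}^k a_ib_{k-i}\Bigr)c_{n-k}=\sum_{\substack{i,j,k\ge0\\ i+j+k=n}}a_ib_jc_k,\]
and similarly show that the $n$-th coefficient of $\alpha\cdot(\beta\cdot\gamma)$ equals the same symmetric triple sum; the two therefore agree. The point worth emphasising — since the entire theory rests on it — is that every coefficient is a \emph{finite} sum, so all these manipulations are purely formal and no notion of convergence ever enters. This is the most computation-heavy step, though it is entirely routine bookkeeping.

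For the integral domain property, suppose $\alpha,\beta\ne0$ with $m:=\inf(\alpha)<\infty$ and $n:=\inf(\beta)<\infty$. In the coefficient $\sum_{k=0}^{m+n}a_kb_{m+n-k}$ of $X^{m+n}$ in $\alpha\cdot\beta$, every term with $k<m$ vanishes because $a_k=0$, and every term with $k>m$ vanishes because then $m+n-k<n$ and hence $b_{m+n-k}=0$; so this coefficient equals $a_mb_n$, which is nonzero since $K$ has no zero divisors. Thus $\alpha\cdot\beta\ne0$, and the same argument shows more precisely $\inf(\alpha\cdot\beta)=\inf(\alpha)+\inf(\beta)$ (this also holds, with the convention $\infty+n=\infty$, when one factor is $0$). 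Finally, $K$ is a subring of $K[[X]]$ because the constant series contain $1$ and are closed under $+$ and $\cdot$ — indeed $(a,0,\ldots)\cdot(b,0,\ldots)=(ab,0,\ldots)$ — and $K[X]$ is a subring because it contains $1$, is closed under addition, and is closed under multiplication since $\deg(\alpha\cdot\beta)=\deg(\alpha)+\deg(\beta)$ (the leading coefficients multiply to something nonzero, again as $K$ has no zero divisors), which is finite whenever $\deg(\alpha)$ and $\deg(\beta)$ are.
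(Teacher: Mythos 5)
Your proof is correct and follows essentially the same route as the paper: associativity via the symmetric triple sum $\sum_{i+j+k=n}a_ib_jc_k$, the zero-divisor-freeness via the coefficient of $X^{\inf(\alpha)+\inf(\beta)}$ being $a_mb_n\ne0$, and the subring claims by direct inspection. The only cosmetic difference is that you justify closure of $K[X]$ under multiplication via degree additivity, where the weaker (and field-independent) bound $\deg(\alpha\beta)\le\deg(\alpha)+\deg(\beta)$ already suffices; the paper simply observes that the operations restrict.
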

\begin{proof}
Most axioms follows from the definition in a straight-forward manner. To prove the associativity of $\cdot$, let $\alpha=(a_0,\ldots)$, $\beta=(b_0,\ldots)$ and $\gamma=(c_0,\ldots)$ be power series. The $n$-th coefficient of $\alpha\cdot(\beta\cdot\gamma)$ is
\[\sum_{i=0}^na_i\sum_{j=0}^{n-i}b_jc_{n-i-j}=\sum_{i+j+k=n}a_ib_jc_k=\sum_{i=0}^n\Bigl(\sum_{j=0}^ia_jb_{i-j}\Bigr)c_{n-i},\]
which happens to be the $n$-th coefficient of $(\alpha\cdot\beta)\cdot\gamma$. 

Now let $\alpha\ne 0\ne\beta$ with $k:=\inf(\alpha)$ and $l:=\inf(\beta)$. Then the $(k+l)$-th coefficient of $\alpha\cdot\beta$ is $\sum_{i=0}^{k+l}a_ib_{k+l-i}=a_kb_l\ne 0$. In particular, $\inf(\alpha\cdot\beta)=\inf(\alpha)+\inf(\beta)$ and $\alpha\cdot\beta\ne 0$.

Since $K\subseteq K[X]\subseteq K[[X]]$ and the operations agree in these rings, it is clear that $K$ and $K[X]$ are subrings of $K[[X]]$ (with the same neutral elements). 
\end{proof}

The above proof does not require $K$ to be a field. It works more generally for integral domains and this is needed later in \autoref{defmulti}.
From now on we will usually omit the multiplication symbol $\cdot$ and apply multiplications always before additions. For example, $\alpha\beta-\gamma$ is shorthand for $(\alpha\cdot\beta)+(-\gamma)$. 
Moreover, we often omit the summation index in writing $\sum a_nX^n$ if it is clear from the context. 
The scalar multiplication is compatible with the ring multiplication, i.\,e. $\lambda(\alpha\beta)=(\lambda\alpha)\beta=\alpha(\lambda\beta)$ for $\alpha,\beta\in K[[X]]$ and $\lambda\in K$. This turns $K[[X]]$ into a $K$-algebra.

\begin{Ex}\label{bsppower}\hfill
\begin{enumerate}[(i)]
\item The following power series can be defined for any $K$:
\[1-X,\qquad\sum_{n=0}^\infty X^n,\qquad\sum nX^n,\qquad\sum(-1)^nX^n.\]
We compute
\[(1-X)\sum_{n=0}^\infty X^n=\sum_{n=0}^\infty X^n-\sum_{n=1}^\infty X^n=1.\]

\item For a field $K$ of characteristic $0$ (like $K=\QQ$, $\RR$ or $\CC$) we can define the (formal) \emph{exponential series}\index{exponential series}
\[\boxed{\exp(X):=\sum_{n=0}^\infty\frac{X^n}{n!}=1+X+\frac{X^2}{2}+\frac{X^3}{6}+\ldots\in K[[X]].}\]\index{*Exp@$\exp(X)$}
We will never write $e^X$ for the exponential series, since Euler's number $e$ simply does not live in the formal world.
\end{enumerate}
\end{Ex}

\begin{Def}\hfill
\begin{enumerate}[(i)]
\item We call $\alpha\in K[[X]]$ \emph{invertible}\index{power series!invertible} if there exists some $\beta\in K[[X]]$ such that $\alpha\beta=1$. As usual, $\beta$ is uniquely determined and we write $\alpha^{-1}:=1/\alpha:=\beta$.
As in any ring, the invertible elements form the group of units denoted by $K[[X]]^\times$. \index{*KXXtimes@$K[[X]]^\times$}

\item For $\alpha,\beta,\gamma\in K[[X]]$ we write more generally $\alpha=\frac{\beta}{\gamma}$ if $\alpha\gamma=\beta$ (regardless whether $\gamma$ is invertible or not).
For $k\in\NN_0$ let $\alpha^k:=\alpha\ldots\alpha$ with $k$ factors and $\alpha^{-k}:=(\alpha^{-1})^k$ if $\alpha\in K[[X]]^\times$.

\item For $\alpha\in K[[X]]$ let $(\alpha):=\bigl\{\alpha\beta:\beta\in K[[X]]\bigr\}$\index{*AA@$(\alpha)$} be the principal ideal generated by $\alpha$. 
\end{enumerate}
\end{Def}

\begin{Lem}\label{leminv}
Let $\alpha=\sum a_nX^n\in K[[X]]$. Then the following holds
\begin{enumerate}[(i)]
\item $\alpha$ is invertible if and only if $a_0\ne 0$. Hence, $K[[X]]^\times=K[[X]]\setminus(X)$. 

\item If there exists some $m\in\NN$ with $\alpha^m\in K$, then $\alpha\in K$. In particular, the elements of finite order in $K[[X]]^\times$ lie in $K^\times$. 
\end{enumerate}
\end{Lem}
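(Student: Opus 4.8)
The plan is to treat the two parts separately. For part~(i), the forward implication is immediate: if $\alpha\beta=1$ with $\beta=\sum b_nX^n$, then comparing constant terms gives $a_0b_0=1$, so $a_0\ne 0$. For the converse, suppose $a_0\ne 0$ and build $\beta=\sum b_nX^n$ with $\alpha\beta=1$ by determining the $b_n$ recursively: the equation in degree~$0$ reads $a_0b_0=1$, forcing $b_0=a_0^{-1}$, and in degree $n\ge 1$ it reads $a_0b_n+\sum_{k=1}^na_kb_{n-k}=0$, which can be solved for $b_n$ because $a_0$ is a unit in the field $K$. This establishes existence and incidentally re-derives the uniqueness of $\beta$. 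Finally, $(X)$ consists exactly of the power series with vanishing constant term (if $a_0=0$ write $\alpha=X\cdot\sum a_{n+1}X^n$), so $K[[X]]\setminus(X)$ is precisely the set of series with $a_0\ne 0$, which is the displayed identity.

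For part~(ii) I would first dispose of the easy cases. If $\alpha=0$ there is nothing to prove, and if $\inf(\alpha)=k\ge 1$ then by the multiplicativity of $\inf$ established in the proof of \autoref{intdom} we get $\inf(\alpha^m)=mk\ge 1$, so $\alpha^m$ has vanishing constant term; being a nonzero element of the integral domain $K[[X]]$, it cannot lie in $K$. Hence we may assume $a_0\ne 0$. Now suppose, for contradiction, that $\alpha\notin K$, and set $\beta:=\alpha-a_0$, so $k:=\inf(\beta)\ge 1$ with some coefficient $b\ne 0$ at $X^k$. From $\alpha^m-a_0^m=\sum_{j=1}^m\binom{m}{j}a_0^{m-j}\beta^j$ and $\inf(\beta^j)=jk\ge 2k>k$ for $j\ge 2$, the coefficient of $X^k$ in $\alpha^m$ equals $m\,a_0^{m-1}b$. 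If $m\ne 0$ in $K$ this is nonzero (a product of units of $K$ with the nonzero element $b$), contradicting $\alpha^m\in K$.

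The main obstacle is the remaining case $\Char K=p>0$ with $p\mid m$, where the degree-$k$ coefficient above vanishes identically and the argument collapses. Here I would factor $m=p^es$ with $e\ge 1$ and $p\nmid s$, and pass to $\gamma:=\alpha^s$, so that $\gamma^{p^e}=\alpha^m\in K$. Since $K[[X]]$ has characteristic $p$, the Frobenius map is a ring endomorphism, whence $\gamma^{p^e}=\sum_n c_n^{p^e}X^{np^e}$ when $\gamma=\sum c_nX^n$; this lies in $K$ precisely when $c_n^{p^e}=0$, i.e.\ $c_n=0$, for all $n\ge 1$, so already $\gamma=\alpha^s\in K$. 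As $p\nmid s$ we have $s\ne 0$ in $K$, and the previous paragraph with $s$ in place of $m$ forces $\alpha\in K$. The final assertion then follows at once: an element $\alpha\in K[[X]]^\times$ of finite order $m$ satisfies $\alpha^m=1\in K$, hence $\alpha\in K$ by~(ii), and $\alpha\in K^\times$ since it is a unit whose inverse $\alpha^{m-1}$ also lies in $K$.
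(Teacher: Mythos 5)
Your proof is correct; part (i) coincides with the paper's argument, and part (ii) reaches the same conclusion by a slightly different decomposition. The paper first reduces to a prime exponent $m=p$ by induction along the prime factorization, then reads off from the vanishing of the coefficient $pa_0^{p-1}a_n$ of $X^n$ in $\alpha^p$ that $\Char K=p$, and finally derives a contradiction from the coefficient $a_n^p$ of $X^{np}$ via the truncated freshman's dream. You instead split according to whether $m$ vanishes in $K$: when $m\ne 0$ in $K$ your coefficient-of-$X^k$ computation (the same quantity $m\,a_0^{m-1}b$) finishes at once, and in characteristic $p$ with $m=p^es$, $p\nmid s$, you absorb the whole $p$-power part in one stroke, since $\gamma^{p^e}=\sum c_n^{p^e}X^{np^e}$ can lie in $K$ only if $\gamma\in K$, after which the first case applied with the exponent $s$ (nonzero in $K$) forces $\alpha\in K$. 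This avoids the induction and isolates the clean statement that in characteristic $p$ the map $\gamma\mapsto\gamma^{p^e}$ detects membership in $K$, at the price of invoking the Frobenius identity for full power series rather than for a single coefficient. The only step you should spell out is exactly that identity: the binomial argument gives $(\beta_1+\dots+\beta_r)^p=\beta_1^p+\dots+\beta_r^p$ for finitely many summands, and one passes to an infinite series either by continuity of multiplication or, as the paper does, by noting that each coefficient of $\gamma^{p^e}$ depends on only finitely many $c_n$; this is a one-line addition, not a gap.
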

\begin{proof}\hfill
\begin{enumerate}[(i)]
\item Let $\beta=\sum b_nX^n\in K[[X]]$ such that $\alpha\beta=1$. Then $a_0b_0=1$ and $a_0\ne 0$. Assume conversely that $a_0\ne 0$. We define $b_0,b_1,\ldots\in K$ recursively by $b_0:=1/a_0$ and
\[b_k:=-\frac{1}{a_0}\sum_{i=1}^{k}a_ib_{k-i}\in K\]
for $k\ge 1$. Then
\[\sum_{i=0}^ka_ib_{k-i}=\begin{cases}
1&\text{if }k=0,\\
0&\text{if }k>0.
\end{cases}\]
Hence, $\alpha\beta=1$ where $\beta:=\sum b_nX^n$.

\item We may assume that $m>1$ and $a:=\alpha^m\in K^\times$. For any prime divisor $p$ of $m$ it holds that $(\alpha^{m/p})^p\in K$. Thus, by induction on $m$, we may assume that $m=p$. By way of contradiction, suppose $\alpha\notin K$ and let $n:=\min\{k\ge 1:a_k\ne 0\}$. The $n$-th coefficient of $\alpha^p$ is $pa_0^{p-1}a_n=0$. Since $\alpha$ is invertible (indeed $\alpha^{-1}=a^{-1}\alpha^{p-1}$), we know $a_0\ne 0$ and conclude that $p=0$ in $K$ (i.\,e. $K$ has characteristic $p$). 
Now we investigate the coefficient of $X^{np}$ in $\alpha^p$. Obviously, it only depends on $a_0,\ldots,a_{np}$. 
Since $p$ divides $\binom{p}{k}=\frac{p(p-1)\ldots(p-k+1)}{k!}$ for $0<k<p$, the binomial theorem yields $(a_0+a_1X)^p=a_0^p+a_1^pX^p$. This familiar rule extends inductively to any finite number of summands. Hence,
\[(a_0+\ldots+a_{np}X^{np})^p=a_0^p+a_n^pX^{np}+a_{n+1}^pX^{(n+1)p}+\ldots+a_{np}^{p}X^{np^2}.\]
In particular, the $np$-th coefficient of $\alpha^p$ is $a_n^p\ne 0$; a contradiction to $\alpha^p\in K$.
If $\alpha$ has finite order $m$, then $\alpha^m=1\in K$ and therefore $\alpha\in K^\times$. 
\qedhere
\end{enumerate}
\end{proof}

\begin{Ex}\label{bspinv}\hfill
\begin{enumerate}[(i)]
\item By \autoref{bsppower} we obtain the familiar formula for the (formal) \emph{geometric series}\index{geometric series} 
\[\frac{1}{1-X}=\sum X^n\] 

\item For any $\alpha\in K[[X]]\setminus\{1\}$ and $n\in\NN$ an easy induction yields
\[\sum_{k=0}^{n-1}\alpha^k=\frac{1-\alpha^n}{1-\alpha}.\]

\item For distinct $a,b\in K\setminus\{0\}$ one has the \emph{partial fraction decomposition}\index{partial fraction decomposition}
\begin{equation}\label{partial}
\frac{1}{(a+X)(b+X)}=\frac{1}{b-a}\Bigl(\frac{1}{a+X}-\frac{1}{b+X}\Bigr),
\end{equation}
which can be generalized depending on the algebraic properties of $K$.
\end{enumerate}
\end{Ex}

We now start forming infinite sums of power series. To justify this process we introduce a discrete norm, which behaves much simpler than the euclidean norm on $\CC$, for instance.

\begin{Def}\label{defnorm}
For $\alpha=\sum a_nX^n\in K[[X]]$ let 
\[|\alpha|:=2^{-\inf(\alpha)}\in\RR\]\index{*A@$\lvert\alpha\rvert$}
be the \emph{norm}\index{norm} of $\alpha$ with the convention $|0|=2^{-\infty}=0$.
\end{Def}

The number $2$ in \autoref{defnorm} can of course be replaced by any real number greater than $1$.
Note that $\alpha$ is invertible if and only if $|\alpha|=1$.
The following lemma turns $K[[X]]$ into an ultrametric space.

\begin{Lem}\label{ultra}
For $\alpha,\beta\in K[[X]]$ we have 
\begin{enumerate}[(i)]
\item $|\alpha|\ge 0$ with equality if and only if $\alpha=0$,
\item $|\alpha\beta|=|\alpha||\beta|$,
\item $|\alpha+\beta|\le\max\{|\alpha|,|\beta|\}$ with equality if $|\alpha|\ne|\beta|$.
\end{enumerate}
\end{Lem}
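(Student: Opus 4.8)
The plan is to verify the three properties directly from the definition $|\alpha| = 2^{-\inf(\alpha)}$, reducing everything to elementary statements about $\inf$. Property (i) is immediate: $2^{-n} > 0$ for any $n \in \NN_0$ and $|0| = 0$ by convention, and conversely $|\alpha| = 0$ forces $\inf(\alpha) = \infty$, i.e. $\alpha = 0$. Property (ii) is nothing but the identity $\inf(\alpha\beta) = \inf(\alpha) + \inf(\beta)$, which was already established in the proof of \autoref{intdom} (the $(k+l)$-th coefficient of $\alpha\beta$ equals $a_k b_l \neq 0$ when $k = \inf(\alpha)$, $l = \inf(\beta)$). One only has to note that the formula also holds trivially when $\alpha = 0$ or $\beta = 0$, since then both sides are $\infty$ (reading $2^{-\infty} = 0$ so that $|\alpha\beta| = 0 = |\alpha||\beta|$).

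For property (iii), I would first record the key observation that $\inf(\alpha + \beta) \ge \min\{\inf(\alpha), \inf(\beta)\}$: if $n < \min\{\inf(\alpha),\inf(\beta)\}$ then the $n$-th coefficients of both $\alpha$ and $\beta$ vanish, hence so does that of $\alpha + \beta$. Applying $t \mapsto 2^{-t}$, which is order-reversing, gives $|\alpha+\beta| \le \max\{|\alpha|,|\beta|\}$. For the equality clause, suppose \obda $|\alpha| < |\beta|$, i.e. $\inf(\alpha) > \inf(\beta) =: m$. Then the $m$-th coefficient of $\alpha$ is $0$ while that of $\beta$ is nonzero, so the $m$-th coefficient of $\alpha + \beta$ is nonzero; combined with the inequality $\inf(\alpha+\beta) \ge m$ this yields $\inf(\alpha+\beta) = m$, hence $|\alpha+\beta| = |\beta| = \max\{|\alpha|,|\beta|\}$.

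There is no real obstacle here; the only point requiring a little care is bookkeeping around the value $\infty$ and the convention $2^{-\infty} = 0$, so that the degenerate cases ($\alpha = 0$, $\beta = 0$, or $\alpha = -\beta$) are covered by the same statements. In particular in (iii) the equality case presupposes $|\alpha| \ne |\beta|$, so at most one of $\alpha, \beta$ is zero and the argument above applies verbatim; and when $\alpha + \beta = 0$ the inequality $0 \le \max\{|\alpha|,|\beta|\}$ is trivially fine.
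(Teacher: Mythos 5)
Your proposal is correct and follows essentially the same route as the paper: (i) directly from the definition, (ii) via $\inf(\alpha\beta)=\inf(\alpha)+\inf(\beta)$ already established in the proof of \autoref{intdom}, and (iii) via $\inf(\alpha+\beta)\ge\min\{\inf(\alpha),\inf(\beta)\}$ with equality when the infima differ. Your extra bookkeeping of the degenerate cases involving $\infty$ is a welcome but minor refinement of what the paper leaves implicit.
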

\begin{proof}\hfill
\begin{enumerate}[(i)]
\item This follows from the definition.

\item Without loss of generality, let $\alpha\ne 0\ne\beta$. We have already seen in the proof of \autoref{intdom} that $\inf(\alpha\beta)=\inf(\alpha)+\inf(\beta)$. 

\item From $a_n+b_n\ne 0$ we obtain $a_n\ne 0$ or $b_n\ne 0$. It follows that $\inf(\alpha+\beta)\ge\min\{\inf(\alpha),\inf(\beta)\}$. 
This turns into the \emph{ultrametric inequality}\index{ultrametric inequality} $|\alpha+\beta|\le\max\{|\alpha|,|\beta|\}$. If $\inf(\alpha)>\inf(\beta)$, then clearly $\inf(\alpha+\beta)=\inf(\beta)$. \qedhere
\end{enumerate}
\end{proof}

\begin{Thm}\label{vollständig}
The distance function $d(\alpha,\beta):=|\alpha-\beta|$\index{*DAB@$d(\alpha,\beta)$} for $\alpha,\beta\in K[[X]]$ turns $K[[X]]$ into a complete metric space. 
\end{Thm}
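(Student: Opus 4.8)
The plan is to verify the two requirements separately: that $d$ is a metric, and that $(K[[X]], d)$ is complete. The metric axioms are immediate consequences of \autoref{ultra}: symmetry follows from $|{-\gamma}| = |\gamma|$ (since $\inf(-\gamma) = \inf(\gamma)$), positivity and the identity-of-indiscernibles from part (i), and the triangle inequality from part (iii) applied to $\alpha - \beta = (\alpha - \gamma) + (\gamma - \beta)$, which gives in fact the stronger ultrametric inequality $d(\alpha,\beta) \le \max\{d(\alpha,\gamma), d(\gamma,\beta)\}$. So essentially no work is needed here.

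The substantive part is completeness. First I would unwind what a Cauchy sequence $(\alpha^{(k)})_{k \ge 0}$ in this metric looks like: for every $N \in \NN_0$ there is an index $k_N$ such that $|\alpha^{(k)} - \alpha^{(l)}| \le 2^{-N}$ for all $k, l \ge k_N$, which by definition of the norm means $\inf(\alpha^{(k)} - \alpha^{(l)}) > N$, i.e. the first $N+1$ coefficients of $\alpha^{(k)}$ and $\alpha^{(l)}$ agree. Hence for each fixed coefficient position $n$, the sequence of $n$-th coefficients $(a_n^{(k)})_k$ is eventually constant (constant for $k \ge k_n$). I define $a_n \in K$ to be that eventual value and set $\alpha := \sum a_n X^n$.

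It then remains to show $\alpha^{(k)} \to \alpha$. Given $N$, for $k \ge \max\{k_0, k_1, \ldots, k_N\}$ the coefficients of $\alpha^{(k)}$ in positions $0, 1, \ldots, N$ all equal $a_0, \ldots, a_N$, so $\inf(\alpha - \alpha^{(k)}) \ge N+1$ and thus $|\alpha - \alpha^{(k)}| \le 2^{-(N+1)} \to 0$. This shows the sequence converges in $K[[X]]$, completing the proof. I expect the main obstacle — really more of a bookkeeping point than a genuine difficulty — to be managing the indices cleanly: one must be careful that the $k_n$ obtained from the Cauchy condition for each coordinate can be chosen coherently (or simply take a maximum over the finitely many relevant $n$), and that the "eventually constant" phenomenon is stated for each coordinate before passing to the limit series. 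There are no convergence subtleties of the analytic kind, precisely because the ultrametric makes the coefficient sequences stabilize rather than merely converge.
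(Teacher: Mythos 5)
Your proof is correct and follows essentially the same route as the paper: the metric axioms fall out of \autoref{ultra}, and completeness is obtained by observing that each coefficient sequence stabilizes, defining the limit series coefficientwise, and checking convergence via a maximum over finitely many indices (the paper instead chooses the thresholds $M_0\le M_1\le\ldots$ monotonically, which is the same bookkeeping). The only blemish is the off-by-one in reading the norm ($|\gamma|\le 2^{-N}$ gives $\inf(\gamma)\ge N$, not $>N$); replacing $\le 2^{-N}$ by $<2^{-N}$, as the Cauchy condition allows, fixes this without affecting the argument.
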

\begin{proof}
Clearly, $d(\alpha,\beta)=d(\beta,\alpha)\ge 0$ with equality if and only if $\alpha=\beta$. Hence, $d$ is symmetric and positive definite. The triangle inequality follows from \autoref{ultra}:
\begin{align*}
d(\alpha,\gamma)&=|\alpha-\gamma|=|\alpha-\beta+\beta-\gamma|\le\max\bigl\{|\alpha-\beta|,|\beta-\gamma|\bigr\}\\
&\le|\alpha-\beta|+|\beta-\gamma|=d(\alpha,\beta)+d(\beta,\gamma).
\end{align*}
Now let $\alpha_1,\alpha_2,\ldots\in K[[X]]$ be a Cauchy sequence with $\alpha_m=\sum a_{m,n}X^n$ for $m\ge 1$. For every $k\ge 0$ there exists some $M_k\ge 1$ such that $|\alpha_m-\alpha_{M_k}|<2^{-k}$ for all $m\ge M_k$. This shows $a_{m,n}=a_{M_k,n}$ for all $m\ge M_k$ and $n\le k$. Without loss of generality, we may assume that $M_0\le M_1\le\ldots$. We define
\[a_k:=a_{M_k,k}\]
and $\alpha=\sum a_kX^k$. Then $|\alpha-\alpha_m|<2^{-k}$ for all $m\ge M_k$, i.\,e. $\lim_{m\to\infty}\alpha_m=\alpha$. Therefore, $K[[X]]$ is complete with respect to $d$. 
\end{proof}

Note that $K[[X]]$ is the completion of $K[X]$ with respect to $d$. In order words: power series can be regarded as Cauchy series of polynomials.
For convergent sequences $(\alpha_k)_k$ and $(\beta_k)_k$ we have 
\[\lim_{k\to\infty}(\alpha_k+\beta_k)=\lim_{k\to\infty}\alpha_k+\lim_{k\to\infty}\beta_k,\qquad\lim_{k\to\infty}(\alpha_k\beta_k)=\lim_{k\to\infty}\alpha_k\cdot\lim_{k\to\infty}\beta_k.\]
The infinite sum
\[\sum_{k=1}^\infty\alpha_k:=\lim_{n\to\infty}\sum_{k=1}^n\alpha_k\]
can only converge if $(\alpha_k)_k$ is a \emph{null sequence},\index{null sequence} that is, $\lim_{k\to\infty}|\alpha_k|=0$. 
Surprisingly and in stark contrast to euclidean spaces, the converse is also true as we are about to see. This crucial fact makes the arithmetic of formal power series much simpler than the analytic counterpart.

\begin{Lem}\label{infsum}
For every null sequence $\alpha_1,\alpha_2,\ldots\in K[[X]]$ the series
$\sum_{k=1}^\infty\alpha_k$ and $\prod_{k=1}^\infty(1+\alpha_k)$ converge, i.\,e. they are well-defined in $K[[X]]$.
\end{Lem}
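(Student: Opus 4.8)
The plan is to show, in both cases, that the relevant sequence of partial sums (respectively partial products) is a Cauchy sequence and then to invoke the completeness of $K[[X]]$ from \autoref{vollständig}. The engine throughout is the ultrametric inequality \autoref{ultra}(iii), which makes these estimates far cruder (and hence easier) than their archimedean analogues. For the series, write $S_n:=\sum_{k=1}^n\alpha_k$. For $m>n$, iterating \autoref{ultra}(iii) gives
\[|S_m-S_n|=\Bigl|\sum_{k=n+1}^m\alpha_k\Bigr|\le\max_{n<k\le m}|\alpha_k|\le\sup_{k>n}|\alpha_k|.\]
Since $(\alpha_k)_k$ is a null sequence, the right-hand side tends to $0$ as $n\to\infty$, so $(S_n)_n$ is Cauchy and therefore converges.

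For the product, set $P_n:=\prod_{k=1}^n(1+\alpha_k)$. First I would exploit $\lim_k|\alpha_k|=0$ to choose $k_0$ with $|\alpha_k|<1$ for all $k\ge k_0$; for such $k$ the equality clause of \autoref{ultra}(iii) forces $|1+\alpha_k|=\max\{1,|\alpha_k|\}=1$. Combined with the multiplicativity of the norm \autoref{ultra}(ii), this yields $|P_n|=|P_{k_0}|=:C$ for every $n\ge k_0$; that is, the partial products do not grow. Then telescoping together with the identity $P_{j+1}-P_j=P_j\alpha_{j+1}$ gives, for $m>n\ge k_0$,
\[|P_m-P_n|=\Bigl|\sum_{j=n}^{m-1}P_j\alpha_{j+1}\Bigr|\le\max_{n\le j<m}|P_j|\,|\alpha_{j+1}|\le C\sup_{j>n}|\alpha_j|,\]
which again tends to $0$. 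Hence $(P_n)_n$ is Cauchy and converges in $K[[X]]$.

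The one genuinely load-bearing step is the observation that $|1+\alpha_k|=1$ for all sufficiently large $k$, which is exactly what supplies the uniform bound on $|P_n|$. Without the ultrametric one would instead be forced into the usual (harder) estimate involving $\prod(1+|\alpha_k|)$; here that complication simply evaporates, and everything else is routine bookkeeping with the norm.
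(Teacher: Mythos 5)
Your proof is correct and follows essentially the same route as the paper: show that the partial sums and partial products are Cauchy using the ultrametric inequality of \autoref{ultra} and then invoke completeness (\autoref{vollständig}). The only cosmetic difference is in the product step, where you telescope $P_m-P_n=\sum_{j=n}^{m-1}P_j\alpha_{j+1}$ while the paper factors out $\prod_{i\le l}(1+\alpha_i)$ and expands the tail product minus $1$; both rest on the bound $|1+\alpha_i|\le 1$, and in fact your detour through a $k_0$ with $|1+\alpha_k|=1$ is unnecessary, since every element of $K[[X]]$ has norm at most $1$, so $|P_j|\le 1$ holds automatically.
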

\begin{proof}
By \autoref{vollständig} it suffices to show that the partial sums form Cauchy sequences. For $\epsilon>0$ let $N\ge 0$ such that $|\alpha_k|<\epsilon$ for all $k\ge N$. Then, for $k>l\ge N$, we have
\begin{align*}
\Bigl|\sum_{i=1}^k\alpha_i-\sum_{i=1}^l\alpha_i\Bigr|&=\Bigl|\sum_{i=l+1}^k\alpha_i\Bigr|\overset{\ref{ultra}}{\le}\max\bigl\{|\alpha_i|:i=l+1,\ldots,k\bigr\}<\epsilon,\\
\Bigl|\prod_{i=1}^k(1+\alpha_i)-\prod_{i=1}^l(1+\alpha_i)\Bigr|&=\prod_{i=1}^l\underbrace{|1+\alpha_i|}_{\le 1}\Bigl|\prod_{i=l+1}^k(1+\alpha_i)-1\Bigr|\le\Bigl|\sum_{\varnothing\ne I\subseteq\{l+1,\ldots,k\}}\prod_{i\in I}\alpha_i\Bigr|\\&\le\max\bigl\{|\alpha_i|:i=l+1,\ldots,k\bigr\}<\epsilon.\qedhere
\end{align*}
\end{proof}

We often regard finite sequences as null sequences by extending them silently.
Let $\alpha_1,\alpha_2,\ldots\in K[[X]]$ be a null sequence and $\alpha_k=\sum a_{k,n}X^n$ for $k\ge 1$. For every $n\ge 0$ only finitely many of the coefficients $a_{1,n},a_{2,n},\ldots$ are non-zero. This shows that the coefficient of $X^n$ in
\begin{equation}\label{infsums}
\sum_{k=1}^\infty\alpha_k=\sum_{n=0}^\infty\Bigl(\sum_{k=1}^\infty a_{k,n}\Bigr)X^n
\end{equation}
depends on only finitely many terms. The same reasoning applies to the $\prod_{k=1}^\infty(1+\alpha_k)$.

For $\gamma\in K[[X]]$ and null sequences $(\alpha_k)$, $(\beta_k)$ it holds that $\sum\alpha_k+\sum\beta_k=\sum(\alpha_k+\beta_k)$ and $\gamma\sum\alpha_k=\sum\gamma\alpha_k$ as expected.

\begin{Cor}\hfill
\begin{enumerate}[(i)]
\item Let $(\alpha_k)$ be a null sequence and $\pi\colon \NN\to\NN$ a bijection. Then
\[\sum_{k=1}^\infty\alpha_k=\sum_{k=1}^\infty\alpha_{\pi(k)}.\]

\item \textup(discrete \emph{Fubini's theorem}\textup)\index{Fubini's theorem} Let $\alpha_{k,n}\in K[[X]]$ such that $\lim_{k+n\to\infty}\alpha_{k,n}=0$. Then
\[\sum_{k=1}^\infty\sum_{n=1}^\infty\alpha_{k,n}=\sum_{n=1}^\infty\sum_{k=1}^\infty\alpha_{k,n}.\]
\end{enumerate}
\end{Cor}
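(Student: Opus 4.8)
The plan is to treat both parts by one and the same device: extract a single coefficient of $X^n$ and reduce the identity to the corresponding elementary fact about \emph{finite} sums of elements of $K$. The crucial input is \eqref{infsums}: for a null sequence $(\gamma_k)$ with $\gamma_k=\sum_n c_{k,n}X^n$, the coefficient of $X^n$ in $\sum_k\gamma_k$ equals $\sum_k c_{k,n}$, a sum with only finitely many nonzero terms (namely those $k$ with $\inf(\gamma_k)\le n$, equivalently $|\gamma_k|\ge 2^{-n}$).

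For (i), I would first observe that $(\alpha_{\pi(k)})_k$ is again a null sequence — for $\epsilon>0$ the set $\{k:|\alpha_k|\ge\epsilon\}$ is finite, hence so is its preimage under the bijection $\pi$ — so $\sum_k\alpha_{\pi(k)}$ converges by \autoref{infsum}. Writing $\alpha_k=\sum_n a_{k,n}X^n$, the coefficient of $X^n$ in $\sum_k\alpha_k$ is the finite sum $\sum_k a_{k,n}$, while the coefficient of $X^n$ in $\sum_k\alpha_{\pi(k)}$ is $\sum_k a_{\pi(k),n}$; these involve the same finitely many nonzero scalars, merely reindexed by $\pi$, hence are equal. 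As $n$ was arbitrary, the two power series coincide.

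For (ii), the hypothesis means precisely that for every $\epsilon>0$ the set $F_\epsilon:=\{(k,n):|\alpha_{k,n}|\ge\epsilon\}$ is finite. I would first make sure every series in the statement is defined: for fixed $k$, $(\alpha_{k,n})_n$ is a null sequence, so $\beta_k:=\sum_n\alpha_{k,n}$ exists; by \autoref{ultra} (passing to the limit) $|\beta_k|\le\max_n|\alpha_{k,n}|$, and since $F_\epsilon$ is finite there is $K_0$ with $|\alpha_{k,n}|<\epsilon$ for all $n$ once $k>K_0$, whence $|\beta_k|<\epsilon$; so $(\beta_k)_k$ is a null sequence and $\sum_k\beta_k=\sum_k\sum_n\alpha_{k,n}$ converges, and symmetrically for $\gamma_n:=\sum_k\alpha_{k,n}$. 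Now fix $\ell\ge 0$ and write $\alpha_{k,n}=\sum_{m\ge 0}a_{k,n}^{(m)}X^m$; note $a_{k,n}^{(\ell)}\ne 0$ forces $(k,n)\in F_{2^{-\ell}}$. Applying \eqref{infsums} twice, the coefficient of $X^\ell$ in $\sum_k\sum_n\alpha_{k,n}$ is $\sum_k\sum_n a_{k,n}^{(\ell)}$ and that in $\sum_n\sum_k\alpha_{k,n}$ is $\sum_n\sum_k a_{k,n}^{(\ell)}$; both are equal to $\sum_{(k,n)\in F_{2^{-\ell}}}a_{k,n}^{(\ell)}$, a genuinely finite sum, by the ordinary (finite) Fubini theorem. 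Hence the two power series have identical coefficients and are equal.

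The computations are routine; the only spot demanding care is the bookkeeping in (ii): one must secure that $(\beta_k)_k$ and $(\gamma_n)_n$ are null sequences \emph{before} extracting coefficients, and then recognize that the assumption $\lim_{k+n\to\infty}\alpha_{k,n}=0$ is exactly what makes each fixed-degree coefficient collapse to a finite double sum over $F_{2^{-\ell}}$, where finite Fubini applies. (Alternatively, (ii) can be deduced from (i) by fixing a bijection $\NN\to\NN\times\NN$ and comparing both iterated sums with the resulting single rearranged series, but the finiteness argument needed is essentially the same.)
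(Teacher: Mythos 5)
Your proof is correct, but it follows a genuinely different route from the paper. The paper argues metrically: for (i) it chooses, for given $n$, an $N$ with $\pi(k)>n$ for $k>N$, so that the two partial sums $\sum_{k=1}^N\alpha_k$ and $\sum_{k=1}^N\alpha_{\pi(k)}$ differ only in terms of index $>n$ and hence by something of norm at most $\max\{|\alpha_j|:j>n\}\to 0$; for (ii) it bounds $\bigl|\sum_k\sum_n\alpha_{k,n}-\sum_{n\le N}\sum_k\alpha_{k,n}\bigr|=\bigl|\sum_k\sum_{n>N}\alpha_{k,n}\bigr|$ and lets $N\to\infty$, all via the ultrametric inequality of \autoref{ultra}. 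You instead extract a fixed coefficient and use \eqref{infsums} to collapse everything to \emph{finite} sums in $K$, where rearrangement and Fubini are trivial; in (ii) the hypothesis $\lim_{k+n\to\infty}\alpha_{k,n}=0$ enters exactly as you say, through the finiteness of $F_{2^{-\ell}}$, and your preliminary check that $(\beta_k)_k$ and $(\gamma_n)_n$ are null sequences (needed before \autoref{infsum} and \eqref{infsums} may be invoked) is precisely the point the tail-estimate proof also has to secure, so nothing is missing. The trade-off: your coefficientwise argument is more elementary and makes visible that each coefficient of either iterated sum is literally the same finite sum over $F_{2^{-\ell}}$, while the paper's argument is shorter and works verbatim in any complete ultrametric abelian group, without using the concrete coefficient structure of $K[[X]]$. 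One small polish: your inequality $|\beta_k|\le\max_n|\alpha_{k,n}|$ is cleanest justified not by a limiting appeal to \autoref{ultra} but directly from \eqref{infsums}, since the coefficient of $X^m$ in $\beta_k$ vanishes whenever $m<\inf(\alpha_{k,n})$ for all $n$; this avoids any discussion of continuity or of the discreteness of the norm values.
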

\begin{proof}\hfill
\begin{enumerate}[(i)]
\item For every $n\in\NN$ there exists some $N\in\NN$ such that $\pi(k)>n$ for all $k>N$. Hence, 
\[\Bigl|\sum_{k=1}^N\alpha_k-\sum_{k=1}^N\alpha_{\pi(k)}\Bigr|\le\max\bigl\{|\alpha_k|:k>n\bigr\}\to0.\]

\item This follows from
\[
\Bigl|\sum_{k=1}^\infty\sum_{n=1}^\infty\alpha_{k,n}-\sum_{n=1}^N\sum_{k=1}^\infty\alpha_{k,n}\Bigr|=\Bigl|\sum_{k=1}^\infty\sum_{n=1}^\infty\alpha_{k,n}-\sum_{k=1}^\infty\sum_{n=1}^N\alpha_{k,n}\Bigr|=\Bigl|\sum_{k=1}^\infty\sum_{n=N+1}^\infty\alpha_{k,n}\Bigr|\xrightarrow{N\to\infty}0.\qedhere
\]
\end{enumerate}
\end{proof}

\begin{Ex}\hfill
\begin{enumerate}[(i)]
\item For $\alpha\in (X)$ we have $|\alpha^n|=|\alpha|^n\le 2^{-n}\to 0$ and therefore $\sum\alpha^n=\frac{1}{1-\alpha}$. 
So we have substituted $X$ by $\alpha$ in the geometric series. This will be generalized in \autoref{defsub}.

\item Since every non-negative integer has a unique $2$-adic expansion, we obtain
\[\prod_{k=0}^\infty(1+X^{2^k})=1+X+X^2+\ldots=\frac{1}{1-X}.\]
Equivalently, 
\[\prod_{k=0}^\infty(1+X^{2^k})=\prod\frac{(1+X^{2^k})(1-X^{2^k})}{1-X^{2^k}}=\prod\frac{1-X^{2^{k+1}}}{1-X^{2^k}}=\frac{1}{1-X}.\]
More interesting series will be discussed in \autoref{seccomb}.

\item It is not always allowed to interchange limits and sums. For instance, if $\delta_{k,n}\in K[[X]]$ is the Kronecker-Delta, then 
\[\lim_{n\to\infty}\sum_{k=1}^\infty\delta_{k,n}=1\ne 0=\sum_{k=1}^\infty\lim_{n\to\infty}\delta_{k,n}.\]
\end{enumerate}
\end{Ex}

\begin{A}
Show that
\[\prod_{k=1}^\infty (1+X^k)(1-X^{2k-1})=1.\]
\end{A}

\section{The toolkit}

\begin{Def}\label{defsub}
Let $\alpha=\sum a_nX^n\in K[[X]]$ and $\beta\in K[[X]]$ such that $\alpha\in K[X]$ or $\beta\in (X)$. We define
\[\boxed{\alpha\circ\beta:=\alpha(\beta):=\sum_{n=0}^\infty a_n\beta^n.}\]\index{*AB@$\alpha(\beta)$}\index{*AcircB@$\alpha\circ\beta$}
\end{Def}

If $\alpha$ is a polynomial, it is clear that $\alpha(\beta)$ is a valid power series, while for $\beta\in (X)$ the convergence of $\alpha(\beta)$ is guaranteed by \autoref{infsum}. In the following we will silently assume that one of these conditions is fulfilled.
Observe that $|\alpha(\beta)|\le|\alpha|$ if $\beta\in(X)$. 

\begin{Ex}
For $\alpha=\sum a_nX^n\in K[[X]]$ we have $\alpha(0)=a_0$ and $\alpha(X^2)=\sum a_nX^{2n}$. On the other hand for $\alpha=\sum X^n$ we are not allowed to form $\alpha(1)$. 
\end{Ex}

\begin{Lem}\label{lemcomp}
For $\alpha,\beta,\gamma\in (X)$ and every null sequence $\alpha_1,\alpha_2,\ldots \in K[[X]]$ we have
\begin{align}
\Bigl(\sum\alpha_k\Bigr)\circ\beta&=\sum \alpha_k(\beta),\label{distcirc}\\
\Bigl(\prod(1+\alpha_k)\Bigr)\circ\beta&=\prod(1+\alpha_k(\beta)),\label{distcirc2}\\
\alpha\circ(\beta\circ\gamma)&=(\alpha\circ\beta)\circ\gamma.\label{associativ}
\end{align} 
\end{Lem}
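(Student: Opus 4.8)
The plan is to verify the three identities by reducing everything to two basic principles: multiplication of power series commutes with composition (i.e. $(\alpha\beta)\circ\gamma = (\alpha\circ\gamma)(\beta\circ\gamma)$ for $\gamma\in(X)$), and composition with a fixed $\beta\in(X)$ is a continuous map $K[[X]]\to K[[X]]$, in the sense that it sends null sequences to null sequences and hence commutes with convergent infinite sums. The first principle follows directly from the definitions: writing $\alpha=\sum a_n X^n$ and $\beta=\sum b_n X^n$, one checks that the coefficients of $(\alpha\beta)\circ\gamma$ and $(\alpha\circ\gamma)(\beta\circ\gamma)$ agree term by term, using that $\gamma\in(X)$ makes both sides well-defined by \autoref{infsum}. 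Continuity follows from the observation already noted after \autoref{defsub}, namely $|\alpha(\beta)|\le|\alpha|$ whenever $\beta\in(X)$; combined with \autoref{ultra} this shows $\alpha\mapsto\alpha(\beta)$ is $1$-Lipschitz, in particular continuous.

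For \eqref{distcirc}, write $\sigma_N:=\sum_{k=1}^N\alpha_k$ and note $\sum\alpha_k=\lim_N\sigma_N$. By continuity of $-\circ\beta$ we get $(\sum\alpha_k)\circ\beta=\lim_N(\sigma_N\circ\beta)$, and since composition is clearly additive on finite sums, $\sigma_N\circ\beta=\sum_{k=1}^N\alpha_k(\beta)$. Because $(\alpha_k)$ is a null sequence and $|\alpha_k(\beta)|\le|\alpha_k|\to0$, the right-hand side converges to $\sum\alpha_k(\beta)$, giving \eqref{distcirc}. One subtlety: I should confirm $\sum\alpha_k(\beta)$ is actually well-defined, which is exactly the null-sequence estimate just mentioned. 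For \eqref{distcirc2}, the same strategy works with partial products $\pi_N:=\prod_{k=1}^N(1+\alpha_k)$: continuity gives $(\prod(1+\alpha_k))\circ\beta=\lim_N(\pi_N\circ\beta)$, the multiplicativity principle gives $\pi_N\circ\beta=\prod_{k=1}^N(1+\alpha_k(\beta))$, and \autoref{infsum} (applied to the null sequence $(\alpha_k(\beta))$) ensures the limiting infinite product exists and equals $\prod(1+\alpha_k(\beta))$.

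For the associativity \eqref{associativ}, I would first reduce to the case where $\alpha$ is a power of $X$, i.e. prove $(X^n)\circ(\beta\circ\gamma)=((X^n)\circ\beta)\circ\gamma$, which says $(\beta\circ\gamma)^n=(\beta^n)\circ\gamma$ — but this is precisely the multiplicativity principle applied $n-1$ times (note $\beta^n\in(X)$ and $\beta\circ\gamma\in(X)$, so everything is legitimate). Then for general $\alpha=\sum a_nX^n$, both $\alpha\circ(\beta\circ\gamma)$ and $(\alpha\circ\beta)\circ\gamma$ are obtained by applying a composition operator to the convergent sum $\sum a_n X^n$: on the left directly, $\alpha\circ(\beta\circ\gamma)=\sum a_n(\beta\circ\gamma)^n$; on the right, $(\alpha\circ\beta)\circ\gamma=(\sum a_n\beta^n)\circ\gamma$, and by \eqref{distcirc} (already proved, with the null sequence $a_n\beta^n$) this equals $\sum a_n(\beta^n\circ\gamma)=\sum a_n(\beta\circ\gamma)^n$. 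Comparing the two expressions finishes the proof.

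The main obstacle is bookkeeping rather than any deep idea: at each step I must check that the relevant series or products actually converge before manipulating them, and that the hypotheses $\alpha,\beta,\gamma\in(X)$ (or polynomiality) are genuinely available where needed — in particular that $\beta\circ\gamma\in(X)$ so that $\alpha\circ(\beta\circ\gamma)$ is defined, and that intermediate objects like $\beta^n$ and the partial products stay in the range where composition is legal. Establishing the multiplicativity identity $(\alpha\beta)\circ\gamma=(\alpha\circ\gamma)(\beta\circ\gamma)$ cleanly at the outset, perhaps as a small separate claim, is what makes the rest fall into place.
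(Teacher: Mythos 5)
Your proposal is correct and follows essentially the same route as the paper: multiplicativity of composition is checked on coefficients for two factors, the estimate $|\alpha(\beta)|\le|\alpha|$ drives the limiting arguments for the infinite sum and product, and \eqref{associativ} is deduced by combining \eqref{distcirc} and \eqref{distcirc2} exactly as in the paper. The only cosmetic difference is that for \eqref{distcirc} you pass to partial sums and invoke continuity, whereas the paper interchanges the double sum over coefficients directly; both rest on the same ingredients.
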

\begin{proof}
Since $|\alpha_k(\beta)|\le|\alpha_k|\to 0$ for $k\to\infty$, all series are well-defined. Using the notation from \eqref{infsums} we deduce:
\[
\Bigl(\sum\alpha_k\Bigr)\circ\beta=\sum_{n=0}^\infty\Bigl(\sum_{k=1}^\infty a_{k,n}\Bigr)\beta^n=\sum_{k=1}^\infty\Bigl(\sum_{n=0}^\infty a_{k,n}\beta^n\Bigr)=\sum\alpha_k(\beta).
\]
We begin proving \eqref{distcirc2} with only two factors, say $\alpha_1=\sum a_nX^n$ and $\alpha_2=\sum b_nX^n$:
\[(\alpha_1\alpha_2)\circ\beta=\sum_{n=0}^\infty\Bigl(\sum_{k=0}^na_kb_{n-k}\Bigr)\beta^n=\sum_{n=0}^\infty\sum_{k=0}^n(a_k\beta^k)(b_{n-k}\beta^{n-k})=(\alpha_1\circ\beta)(\alpha_2\circ\beta).\]
Inductively, \eqref{distcirc2} holds for finitely many factors. 
Hence, 
\begin{align*}
\Bigl|\Bigl(\prod(1+\alpha_k)\Bigr)\circ\beta-\prod_{k=1}^n(1+\alpha_k(\beta))\Bigr|&=\Bigl|\Bigl(\prod_{k=1}^\infty(1+\alpha_k)-\prod_{k=1}^n(1+\alpha_k)\Bigr)\circ\beta\Bigr|\\
&\le\Bigl|\prod_{k=1}^\infty(1+\alpha_k)-\prod_{k=1}^n(1+\alpha_k)\Bigr|\xrightarrow{n\to\infty}0.
\end{align*}
Finally, setting $\alpha=\sum a_nX^n$ we have
\[
\alpha\circ(\beta\circ\gamma)=\sum a_n(\beta\circ\gamma)^n\overset{\eqref{distcirc2}}{=}\sum a_n(\beta^n\circ\gamma)\overset{\eqref{distcirc}}{=}\Bigl(\sum a_n\beta^n\Bigr)\circ\gamma=(\alpha\circ\beta)\circ\gamma.\qedhere
\]
\end{proof}

We warn the reader that in general 
\[\alpha\circ\beta\ne\beta\circ\alpha,\qquad \alpha\circ(\beta\gamma)\ne(\alpha\circ\beta)(\alpha\circ\gamma),\qquad \alpha\circ(\beta+\gamma)\ne\alpha\circ\beta+\alpha\circ\gamma.\] 
Nevertheless, the last statement can be corrected for the exponential series (\autoref{lemfunc}).

\begin{Thm}\label{revgroup}
The set $K[[X]]^\circ:=(X)\setminus(X^2)\subseteq K[[X]]$\index{*KXXcirc@$K[[X]]^\circ$} forms a group with respect to $\circ$.
\end{Thm}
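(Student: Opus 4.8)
The plan is to verify the group axioms for $(K[[X]]^\circ,\circ)$ directly, using the tools already assembled. The set $K[[X]]^\circ$ consists of those $\beta=\sum b_nX^n$ with $b_0=0$ and $b_1\ne0$, i.e. $\inf(\beta)=1$; equivalently $|\beta|=\tfrac12$ and the linear coefficient is a unit in $K$. Associativity is immediate from \eqref{associativ} in \autoref{lemcomp}, once we know the operation is well-defined on this set. Well-definedness is clear: if $\alpha,\beta\in(X)$ then $\alpha\circ\beta$ converges by \autoref{infsum}, and we must check it again lands in $(X)\setminus(X^2)$. Writing $\alpha=a_1X+a_2X^2+\cdots$ and $\beta=b_1X+\cdots$, the series $\alpha\circ\beta=\sum_{n\ge1}a_n\beta^n$ has constant term $0$ (each $\beta^n\in(X)$) and linear coefficient $a_1b_1\ne0$ since both $a_1,b_1\ne0$; hence $\inf(\alpha\circ\beta)=1$ and $\alpha\circ\beta\in K[[X]]^\circ$.

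The identity element is $X$ itself: for any $\alpha=\sum a_nX^n$ we have $\alpha\circ X=\sum a_nX^n=\alpha$ trivially, and $X\circ\beta=\beta$ directly from \autoref{defsub} (the polynomial $X$ composed with $\beta$ gives $\beta^1=\beta$). So $X\in K[[X]]^\circ$ serves as a two-sided neutral element.

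The real work is constructing inverses: given $\beta=b_1X+b_2X^2+\cdots$ with $b_1\ne0$, I must produce $\gamma=c_1X+c_2X^2+\cdots\in K[[X]]^\circ$ with $\beta\circ\gamma=X$ (and separately $\gamma\circ\beta=X$). I would build $\gamma$ coefficient by coefficient. Comparing coefficients of $X^n$ in $\beta\circ\gamma=\sum_{k\ge1}b_k\gamma^k=X$: the coefficient of $X$ gives $b_1c_1=1$, forcing $c_1=b_1^{-1}\ne0$; and for $n\ge2$ the coefficient of $X^n$ in $\sum_{k=1}^n b_k\gamma^k$ has the form $b_1c_n + P_n(b_1,\dots,b_n,c_1,\dots,c_{n-1})$, where $P_n$ is a polynomial expression involving only the already-determined $c_1,\dots,c_{n-1}$ (since $\gamma^k$ for $k\ge2$ contributes to $X^n$ only through $c_1,\dots,c_{n-1}$, and $\gamma^1$ contributes $c_n$ only via $b_1$). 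Setting this equal to $0$ and solving, $c_n=-b_1^{-1}P_n$ is uniquely determined in $K$. This recursion defines $\gamma\in K[[X]]^\circ$ with $\beta\circ\gamma=X$; note we are using that $b_1$ is invertible in the field $K$, which is exactly why we excluded $(X^2)$.

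Finally I must reconcile the one-sided inverse into a two-sided one. By the above, every element of $K[[X]]^\circ$ has a right inverse under $\circ$. In particular $\gamma$ itself has a right inverse $\delta\in K[[X]]^\circ$ with $\gamma\circ\delta=X$. Then, using associativity \eqref{associativ},
\[
\delta=X\circ\delta=(\beta\circ\gamma)\circ\delta=\beta\circ(\gamma\circ\delta)=\beta\circ X=\beta,
\]
so $\gamma\circ\beta=\gamma\circ\delta=X$ as well; thus $\gamma$ is a genuine two-sided inverse. Together with associativity and the identity $X$, this shows $(K[[X]]^\circ,\circ)$ is a group. The main obstacle is the inverse construction, i.e.\ making precise that the coefficient of $X^n$ in $\beta\circ\gamma$ depends on $c_n$ only linearly through $b_1c_n$ with everything else already known — this is the triangularity of the recursion, and it is where invertibility of $b_1$ (hence the field hypothesis) is essential.
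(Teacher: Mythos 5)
Your proposal is correct and follows essentially the same route as the paper: verify closure, associativity via \eqref{associativ} and the identity $X$, then build a one-sided compositional inverse by a triangular coefficient recursion (the paper solves $\beta(\alpha)=X$ for the outer coefficients, you solve $\beta(\gamma)=X$ for the inner ones, exploiting in both cases that the top unknown enters only linearly through an invertible factor), and finally upgrade to a two-sided inverse by exactly the same associativity argument $\gamma=\gamma\circ\beta\circ\alpha$ used in the paper. No gaps.
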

\begin{proof}
Let $\alpha,\beta\in K[[X]]^\circ$. Then $\alpha(\beta)\in K[[X]]^\circ$, i.\,e. $K[[X]]^\circ$ is closed under $\circ$. The associativity holds by \eqref{associativ}.
By definition, $X\in K[[X]]^\circ$ and $X\circ\alpha=\alpha=\alpha\circ X$. 

To construct inverses we argue as in \autoref{leminv}. Let $\alpha^k=\sum_{n=0}^\infty a_{k,n}X^n$ for $k\in\NN_0$. Since $a_{1,0}=0$, also $a_{k,n}=0$ for $n<k$ and $a_{n,n}=a_{1,1}^n\ne 0$. We define recursively $b_0:=0$, $b_1:=\frac{1}{a_{1,1}}\ne 0$ and 
\[b_n:=-\frac{1}{a_{n,n}}\sum_{k=1}^{n-1}b_ka_{k,n}\]
for $n\ge 2$. Setting $\beta:=\sum b_nX^n\in K[[X]]^\circ$, we obtain
\[\beta(\alpha)=\sum_{k=0}^\infty b_k\alpha^k=\sum_{k=1}^\infty\sum_{n=0}^\infty b_ka_{k,n}X^n=\sum_{n=0}^\infty\Bigl(\sum_{k=1}^nb_ka_{k,n}\Bigr)X^n=X.\]
Now replacing $\alpha$ by $\beta$, we find $\gamma\in K[[X]]^\circ$ such that $\gamma\circ\beta=X$. Hence,
\[\gamma=\gamma\circ X=\gamma\circ \beta\circ\alpha=X\circ\alpha=\alpha\] 
and $\alpha\circ\beta=X$. 
\end{proof}

For $\alpha\in K[[X]]^\circ$, we call the unique $\beta\in K[[X]]^\circ$ with $\alpha(\beta)=X=\beta(\alpha)$ the \emph{reverse}\index{reverse}\index{power series!reverse} of $\alpha$. To avoid confusion with the inverse $\alpha^{-1}$ (which is not defined here), we refrain from introducing a symbol for the reverse.

\begin{Ex}\hfill
\begin{enumerate}[(i)]
\item Let $\alpha$ be the reverse of $X+X^2+\ldots=\frac{X}{1-X}$. Then 
\[X=\frac{\alpha}{1-\alpha}\]
and it follows that $\alpha=\frac{X}{1+X}=X-X^2+X^3-\ldots$. This is an example of a \emph{Möbius transformation}.\index{Möbius transformation}
In general, it is much harder to find a closed-form expression for the reverse. We do so for the exponential series with the help of formal derivatives (\autoref{deflog}). Later we provide the explicit Lagrange--Bürmann inversion formula (\autoref{lagrange}) using the machinery of Laurent series.

\item For the field $\FF_p$ with $p$ elements (where $p$ is a prime), the subgroup $N_p:=X+(X^2)$\index{*Np@$N_p$} of $\FF_p[[X]]^\circ$ is called \emph{Nottingham group}.\index{Nottingham group} It has been shown by Leedham-Green\index{Leedham-Green} and Weiss\index{Weiss} (as mentioned in \cite{Nottingham}) that every finite $p$-group is a subgroup of $N_p$, so it must have a very rich structure.
Let $\alpha^{\circ 1}:=\alpha$ and $\alpha^{\circ n}:=\alpha\circ\alpha^{\circ(n-1)}$\index{*Acircn@$\alpha^{\circ n}$} for $\alpha\in N_p$ and $n\ge2$. \emph{Sen's theorem}\index{Sen's theorem} \cite{Sen} asserts that 
\[\inf(\alpha^{\circ p^{n-1}}-X)\equiv\inf(\alpha^{\circ p^n}-X)\pmod{p^n}\] 
for $n\ge 1$ as long as $\alpha^{\circ p^n}\ne X$.
\end{enumerate}
\end{Ex}

\begin{A}
Compute the “first” coefficients of the reverse of $X-X^3\in\CC[[X]]^\circ$. Identify a pattern by using \url{https://oeis.org/}.
\end{A}

\begin{Lem}[Functional equation]\label{lemfunc}\index{functional equation!for exponential series}
For every null sequence $\alpha_1,\alpha_2,\ldots\in (X)\subseteq\CC[[X]]$,
\begin{equation}\label{func2}
\boxed{\exp\Bigl(\sum\alpha_k\Bigr)=\prod\exp(\alpha_k).}
\end{equation}
In particular, $\exp(kX)=\exp(X)^k$ for $k\in\ZZ$.
\end{Lem}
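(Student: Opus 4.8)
The plan is to reduce the general identity \eqref{func2} to the two-factor case and then to a purely combinatorial statement about coefficients. First I would prove that for $\alpha,\beta\in(X)$ one has $\exp(\alpha+\beta)=\exp(\alpha)\exp(\beta)$. Writing out both sides via the Cauchy product, this amounts to the identity
\[
\frac{(\alpha+\beta)^n}{n!}=\sum_{k=0}^n\frac{\alpha^k}{k!}\cdot\frac{\beta^{n-k}}{(n-k)!},
\]
which in turn follows from the binomial theorem $(\alpha+\beta)^n=\sum_{k=0}^n\binom{n}{k}\alpha^k\beta^{n-k}$ in the commutative ring $K[[X]]$ together with $\binom{n}{k}=\frac{n!}{k!(n-k)!}$ (here $\Char K=0$, so all these factorials are invertible and the rearrangement is legitimate). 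A small point to check is that the double sum $\sum_{n}\sum_{k}$ may be reindexed and split as a product of two convergent series; this is covered by discrete Fubini and the fact that $(\alpha^k/k!)_k$ and $(\beta^k/k!)_k$ are null sequences since $\alpha,\beta\in(X)$.

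Next I would extend this to finitely many summands by an easy induction: $\exp(\alpha_1+\dots+\alpha_{n+1})=\exp(\alpha_1+\dots+\alpha_n)\exp(\alpha_{n+1})=\prod_{k=1}^{n+1}\exp(\alpha_k)$, using that $\alpha_1+\dots+\alpha_n\in(X)$. For the infinite case I would pass to the limit: set $\sigma:=\sum_{k=1}^\infty\alpha_k$ and $\sigma_n:=\sum_{k=1}^n\alpha_k$, so $\sigma-\sigma_n\to 0$. Then
\[
\Bigl|\exp(\sigma)-\prod_{k=1}^n\exp(\alpha_k)\Bigr|
=\bigl|\exp(\sigma)-\exp(\sigma_n)\bigr|
=\bigl|\exp(\sigma)\bigr|\cdot\bigl|1-\exp(\sigma_n-\sigma)\bigr|
\le\bigl|\exp(\sigma_n-\sigma)-1\bigr|,
\]
where I used the two-factor case $\exp(\sigma)=\exp(\sigma_n)\exp(\sigma-\sigma_n)$, that $\exp(\sigma)$ is a unit (constant term $1$), and that $|\exp(\sigma_n)|\le 1$ is not even needed — what matters is $|\exp(\mu)-1|\le|\mu|$ for $\mu\in(X)$, which is immediate from $\exp(\mu)-1=\sum_{n\ge 1}\mu^n/n!$ and the ultrametric inequality of \autoref{ultra}. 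Since $|\sigma_n-\sigma|\to 0$, the right-hand side tends to $0$, so the partial products converge to $\exp(\sigma)$, which is exactly \eqref{func2}. (One should also note the infinite product on the right converges by \autoref{infsum}, as $\exp(\alpha_k)-1\to 0$.)

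Finally, the particular case: taking $\alpha_1=\dots=\alpha_k=X$ for $k\in\NN$ gives $\exp(kX)=\exp(X)^k$ directly; for $k=0$ both sides are $1$; and for negative $k$ one writes $\exp(kX)\exp(-kX)=\exp(0)=1$, so $\exp(kX)=(\exp(-kX))^{-1}=(\exp(X)^{-k})^{-1}=\exp(X)^k$, using that $\exp(X)$ is a unit by \autoref{leminv}. I expect the main obstacle to be purely bookkeeping: justifying the interchange of the two infinite summations in the two-factor step (handled by discrete Fubini) and making sure the limiting argument uses only the already-established algebraic facts rather than smuggling in analytic continuity of $\exp$; the estimate $|\exp(\mu)-1|\le|\mu|$ is the one clean inequality that makes the limit step go through.
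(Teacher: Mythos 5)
Your proposal is correct and follows essentially the same route as the paper: the two-summand case via the binomial theorem and the Cauchy product, induction to finitely many summands, a limit argument using the finite functional equation together with the ultrametric estimate that $\exp$ of a small argument is close to $1$, and the same unit-inverse trick for negative integer exponents. The only cosmetic difference is that you factor out $\exp(\sigma)$ and make the bound $|\exp(\mu)-1|\le|\mu|$ explicit, where the paper factors out $\exp\bigl(\sum_{k=1}^n\alpha_k\bigr)$ and leaves the analogous estimate implicit.
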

\begin{proof}
Since $\sum\alpha_k\in (X)$ and $\exp(\alpha_k)\in 1+\alpha_k+\frac{\alpha_k^2}{2}+\ldots$, both sides of \eqref{func2} are well-defined. For two summands $\alpha,\beta\in (X)$ we compute
\begin{align*}
\exp(\alpha+\beta)&=\sum\frac{(\alpha+\beta)^n}{n!}=\sum_{n=0}^\infty\sum_{k=0}^n\binom{n}{k}\frac{\alpha^k\beta^{n-k}}{n!}\\
&=\sum_{n=0}^\infty\sum_{k=0}^n\frac{\alpha^k\beta^{n-k}}{k!(n-k)!}=\sum\frac{\alpha^n}{n!}\cdot\sum\frac{\beta^n}{n!}=\exp(\alpha)\exp(\beta).
\end{align*}
By induction, we obtain \eqref{func2} for finitely many summands. This implies
\begin{align*}
\Bigl|\exp\Bigl(\sum\alpha_k\Bigr)-\prod_{k=1}^n\exp(\alpha_k)\Bigr|&=\Bigl|\exp\Bigl(\sum_{k=1}^n\alpha_k+\sum_{k=n+1}^\infty\alpha_k\Bigr)-\exp\Bigl(\sum_{k=1}^n\alpha_k\Bigr)\Bigr|\\
&=\Bigl|\exp\Bigl(\sum_{k=1}^n\alpha_k\Bigr)\Bigr|\Bigl|\exp\Bigl(\sum_{k=n+1}^\infty\alpha_k\Bigr)-1\Bigr|\xrightarrow{n\to\infty}0.
\end{align*}

For the second claim let $k\in\NN_0$. Then $\exp(kX)=\exp(X+\ldots+X)=\exp(X)^k$. Since 
\[\exp(kX)\exp(-kX)=\exp(kX-kX)=\exp(0)=1,\] 
we also have $\exp(-kX)=\exp(kX)^{-1}=\exp(X)^{-k}$. 
\end{proof}

\begin{Def}
For $\alpha=\sum a_nX^n\in K[[X]]$ we call 
\[\boxed{\alpha':=\sum_{n=1}^\infty na_nX^{n-1}\in K[[X]]}\]\index{*Ader@$\alpha'$}
the (formal) \emph{derivative}\index{derivative}\index{power series!derivative} of $\alpha$. Moreover, let $\alpha^{(0)}:=\alpha$ and $\alpha^{(n)}:=(\alpha^{(n-1)})'$\index{*Adern@$\alpha^{(n)}$} the $n$-th derivative for $n\in\NN$.
\end{Def}

It seems natural to define formal \emph{integrals}\index{integral} as counterparts, but this is less useful, since in characteristic $0$ we have $\alpha=\beta$ if and only if $\alpha'=\beta'$ and $\alpha(0)=\beta(0)$. 

\begin{Ex}
As expected we have $1'=0$, $X'=1$ as well as
\[\exp(X)'=\sum_{n=1}^\infty n\frac{X^{n-1}}{n!}=\sum_{n=0}^\infty\frac{X^n}{n!}=\exp(X).\]
Note however, that $(X^p)'=0$ if $K$ has characteristic $p$.
\end{Ex}

In characteristic $0$, derivatives provide a convenient way to extract coefficients of power series. 
For $\alpha=\sum a_nX^n\in \CC[[X]]$ we see that
$\alpha^{(0)}(0)=\alpha(0)=a_0$, $\alpha'(0)=a_1$, $\alpha''(0)=2a_2,\ldots,\alpha^{(n)}(0)=n!a_n$. Hence, \emph{Taylor's theorem}\index{Taylor's theorem} (more precisely, the \emph{Maclaurin series})\index{Maclaurin series} holds
\begin{equation}\label{taylor}
\boxed{\alpha=\sum_{n=0}^\infty\frac{\alpha^{(n)}(0)}{n!}X^n.}
\end{equation}
Over arbitrary fields we are not allowed to divide by $n!$. Alternatively, one may use the $k$-th \emph{Hasse derivative}\index{Hasse derivative} defined by
\[H^k(\alpha):=\sum_{n=k}^\infty\binom{n}{k}a_nX^{n-k}\]\index{*Hk@$H^k(\alpha)$}
(the integer $\binom{n}{k}$ can be embedded in any field). Note that $k!H^k(\alpha)=\alpha^{(k)}$ and $\alpha=\sum_{n=0}^\infty H^n(\alpha)(0)X^n$. In the following we restrict ourselves to complex power series.

\begin{Lem}\label{der}
For $\alpha,\beta\in \CC[[X]]$ and every null sequence $\alpha_1,\alpha_2,\ldots\in \CC[[X]]$ the following rules hold:
\begin{align*}
\Bigl(\sum\alpha_k\Bigr)'&=\sum\alpha_k'&&(\emph{sum rule})\index{sum rule},\\
(\alpha\beta)'&=\alpha'\beta+\alpha\beta'&&(\emph{(finite) product rule})\index{product rule},\\
\Bigl(\prod(1+\alpha_k)\Bigr)'&=\prod(1+\alpha_k)\sum\frac{\alpha_k'}{1+\alpha_k},&&(\emph{(infinite) product rule}),\\
\Bigl(\frac{\alpha}{\beta}\Bigr)'&=\frac{\alpha'\beta-\alpha\beta'}{\beta^2}&&(\emph{quotient rule}),\index{quotient rule}\\
(\alpha\circ\beta)'&=\alpha'(\beta)\beta'&&(\emph{chain rule}).\index{chain rule}
\end{align*}
\end{Lem}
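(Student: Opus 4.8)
The plan is to prove the five differentiation rules in the order listed, since later rules can invoke earlier ones. All of them are formal identities about coefficients, so the strategy throughout is: reduce to a statement about the coefficient of $X^n$, and verify it by a finite manipulation.

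First I would establish the sum rule: for a null sequence, the coefficient of $X^{n-1}$ in $(\sum\alpha_k)'$ is $n$ times the coefficient of $X^n$ in $\sum\alpha_k$, which by \eqref{infsums} is $n\sum_k a_{k,n}=\sum_k na_{k,n}$, the coefficient of $X^{n-1}$ in $\sum\alpha_k'$; one only needs that $(\alpha_k')_k$ is again a null sequence, which follows from $|\alpha_k'|\le 2|\alpha_k|$ (or simply $|\alpha_k'|\le|\alpha_k|$ when $a_{k,0}=0$, and in general the shift changes the norm by at most a factor $2$). Next, the finite product rule $(\alpha\beta)'=\alpha'\beta+\alpha\beta'$ is a direct computation: the coefficient of $X^{n-1}$ on the left is $(n+1)\sum_{k=0}^{n+1}a_kb_{n+1-k}$... more cleanly, compare coefficients of $X^n$ and use $n=k+(n-k)$ to split $n\sum_{k} a_kb_{n-k}=\sum_k ka_kb_{n-k}+\sum_k a_k(n-k)b_{n-k}$. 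By induction this extends to any finite product, giving the finite version of the logarithmic-derivative form $\bigl(\prod_{k=1}^n(1+\alpha_k)\bigr)'=\prod_{k=1}^n(1+\alpha_k)\sum_{k=1}^n\frac{\alpha_k'}{1+\alpha_k}$, where $\frac{1}{1+\alpha_k}$ makes sense by \autoref{leminv} since $\alpha_k\in(X)$ eventually (for the finitely many $k$ with $|\alpha_k|=1$ one treats the factor directly, or notes it does not occur once we are past the start of the null sequence).

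For the infinite product rule I would pass to the limit. Writing $P=\prod_{k=1}^\infty(1+\alpha_k)$ and $P_n=\prod_{k=1}^n(1+\alpha_k)$, one has $P_n\to P$ by \autoref{infsum}, hence $P_n'\to P'$ by the already-proved sum rule applied to the telescoping differences (or directly: differentiation is continuous because it changes norms by a bounded factor). On the other side, $\frac{\alpha_k'}{1+\alpha_k}$ is a null sequence since $|\alpha_k'|\le 2|\alpha_k|\to 0$ and $|1+\alpha_k|=1$ for large $k$, so $\sum_k\frac{\alpha_k'}{1+\alpha_k}$ converges by \autoref{infsum}, and $P_n\sum_{k=1}^n\frac{\alpha_k'}{1+\alpha_k}\to P\sum_{k=1}^\infty\frac{\alpha_k'}{1+\alpha_k}$ using continuity of multiplication and addition of limits. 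The quotient rule then follows from the product rule and the special case $(\beta^{-1})'=-\beta'\beta^{-2}$, obtained by differentiating $\beta\beta^{-1}=1$; and $(\alpha/\beta)'$ in the general (non-invertible $\beta$) sense is reduced to the invertible case by clearing denominators, i.e. differentiating $\alpha=(\alpha/\beta)\beta$.

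The chain rule $(\alpha\circ\beta)'=\alpha'(\beta)\beta'$ is the step I expect to be the main obstacle, both because composition is only defined under the side conditions of \autoref{defsub} and because it requires combining the infinite sum rule with the product rule simultaneously. I would split into the two admissible cases. If $\alpha=\sum a_nX^n$ with $\beta\in(X)$: then $\alpha\circ\beta=\sum_n a_n\beta^n$ converges, the partial derivatives $n a_n\beta^{n-1}\beta'$ form a null sequence (since $|\beta^{n-1}|\le 2^{-(n-1)}$), and by the sum rule $(\alpha\circ\beta)'=\sum_n a_n(\beta^n)'$; an induction with the product rule gives $(\beta^n)'=n\beta^{n-1}\beta'$, so $(\alpha\circ\beta)'=\bigl(\sum_n na_n\beta^{n-1}\bigr)\beta'=\alpha'(\beta)\beta'$, where the last equality is exactly the definition of $\alpha'\circ\beta$ and is legitimate because $\beta\in(X)$. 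If instead $\alpha\in K[X]$ is a polynomial, the sum is finite and the same computation (now not even needing \autoref{infsum}) applies for arbitrary $\beta$, with $\alpha'$ again a polynomial so that $\alpha'(\beta)$ is defined. The only real care needed is checking that in each case the object $\alpha'(\beta)$ appearing on the right-hand side satisfies one of the two conditions of \autoref{defsub}, which it does since $\alpha'$ is a polynomial whenever $\alpha$ is, and $\beta\in(X)$ whenever we used that for $\alpha$.
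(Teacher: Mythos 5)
Your proposal is correct and follows essentially the same route as the paper: sum rule by coefficient comparison, finite product rule by a direct computation, reduction of the infinite product rule to finite products, the quotient rule by differentiating $\alpha=\frac{\alpha}{\beta}\beta$, and the chain rule via the power rule $(\beta^n)'=n\beta^{n-1}\beta'$ together with the sum rule. The only cosmetic difference is in the infinite product rule, where you pass to the limit using continuity of differentiation ($|\gamma'|\le 2|\gamma|$) while the paper instead observes that the coefficient of $X^N$ on both sides depends only on finitely many factors; both arguments are equivalent in substance.
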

\begin{proof}\hfill
\begin{enumerate}[(i)]
\item\label{sumr} Using the notation from \eqref{infsums}, we have
\[\Bigl(\sum\alpha_k\Bigr)'=\Bigl(\sum_{n=0}^\infty\sum_{k=1}^\infty a_{k,n}X^n\Bigr)'=\sum_{n=1}^\infty\sum_{k=1}^\infty n a_{k,n}X^{n-1}=\sum_{k=1}^\infty\Bigl(\sum_{n=1}^\infty na_{k,n}X^{n-1}\Bigr)=\sum\alpha_k'.\]

\item\label{produktr} By \eqref{sumr} we may assume $\alpha=X^k$ and $\beta=X^l$. In this case,
\[(\alpha\beta)'=(X^{k+l})'=(k+l)X^{k+l-1}=kX^{k-1}X^{l}+lX^{l-1}X^{k}=\alpha'\beta+\beta'\alpha.\]

\item\label{produktinf} Without loss of generality, suppose $\alpha_k\ne-1$ for all $k\in\NN$ (otherwise both sides vanish). 
Let $|\alpha_k|<2^{-N-1}$ for all $k>n$. The coefficient of $X^N$ on both sides of the equation depends only on $\alpha_1,\ldots,\alpha_n$.
From \eqref{produktr} we verify inductively:
\[\Bigl(\prod_{k=1}^n(1+\alpha_k)\Bigr)'=\prod_{k=1}^n(1+\alpha_k)\sum_{l=1}^n\frac{\alpha_l'}{1+\alpha_l}\]
for all $n\in\NN$. Now the claim follows with $N\to\infty$.

\item By \eqref{produktr},
\[\alpha'=\Bigl(\frac{\alpha}{\beta}\beta\Bigr)'=\Bigl(\frac{\alpha}{\beta}\Bigr)'\beta+\frac{\alpha\beta'}{\beta}.\]

\item By \eqref{produktinf}, the \emph{power rule}\index{power rule} $(\alpha^n)'=n\alpha^{n-1}\alpha'$ holds for $n\in\NN_0$. The sum rule implies
\[(\alpha\circ\beta)'=\Bigl(\sum a_n\beta^n\Bigr)'=\sum a_n(\beta^n)'=\sum_{n=1}^\infty na_n\beta^{n-1}\beta'=\alpha'(\beta)\beta'.\qedhere\]
\end{enumerate}
\end{proof}

The product rule implies the rather trivial \emph{factor rule}\index{factor rule} $(\lambda\alpha)'=\lambda\alpha'$ as well as \emph{Leibniz' rule}\index{Leibniz' rule} \[(\alpha\beta)^{(n)}=\sum_{k=0}^n\binom{n}{k}\alpha^{(k)}\beta^{(n-k)}\] 
for $\lambda\in\CC$ and $\alpha,\beta\in\CC[[X]]$. A generalized version of the latter and a chain rule for higher derivatives are proven in \autoref{secmult}.

\begin{A}
Let $\alpha,\beta\in(X)$ such that $\beta\notin(X^2)$. Prove \emph{L'Hôpital's rule}\index{L'Hôpital's rule} $\frac{\alpha}{\beta}(0)=\frac{\alpha'(0)}{\beta'(0)}$.
\end{A}

\begin{Ex}\label{deflog}
Define the (formal) \emph{logarithm}\index{logarithm} by the \emph{Mercator series}\index{Mercator series}
\[\boxed{\log(1+X):=\sum_{n=1}^\infty\frac{(-1)^{n-1}}{n}X^n=X-\frac{X^2}{2}+\frac{X^3}{3}\mp\ldots\in \CC[[X]].}\]\index{*Log@$\log(1+X)$}
By \autoref{revgroup}, $\alpha:=\exp(X)-1$ possesses a reverse and $\log(\exp(X))=\log(1+\alpha)\in\CC[[X]]^\circ$.
Since
\[\log(1+X)'=1-X+X^2\mp\ldots=\sum(-X)^n=\frac{1}{1+X},\]
the chain rule yields
\[\log(1+\alpha)'=\frac{\alpha'}{1+\alpha}=\frac{\exp(X)}{\exp(X)}=1.\]
This shows that $\log(\exp(X))=X$. Therefore, $\log(1+X)$ is the reverse of $\alpha=\exp(X)-1$ as expected from analysis. Equivalently, $\exp(\log(1+X))=1+X$. 
Moreover, $\log(1-X)=-\sum_{n=1}^\infty \frac{X^n}{n}$.  
\end{Ex}

The only reason why we called the power series $\log(1+X)$ instead of $\log(X)$ or just $\log$ is to keep the analogy to the natural logarithm (as a real function).

\begin{Lem}[Functional equation]\index{functional equation!for logarithm}
For every null sequence $\alpha_1,\alpha_2,\ldots\in (X)\subseteq\CC[[X]]$,
\begin{equation}\label{funclog}
\boxed{\log\Bigl(\prod(1+\alpha_k)\Bigr)=\sum\log(1+\alpha_k).}
\end{equation}
\end{Lem}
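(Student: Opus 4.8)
The plan is to reduce the claim to the already-established functional equation for $\exp$ (\autoref{lemfunc}) by applying $\exp$ to both sides and using that $\log$ is the reverse of $\exp(X)-1$, which was worked out in \autoref{deflog}. Concretely, set $\beta_k:=\log(1+\alpha_k)$ and $\beta:=\sum_k\beta_k$. First I would check that everything is well-defined: since $\alpha_k\in(X)$ we have $|\alpha_k|\le 1/2$, and because $\log(1+X)\in(X)$ it follows (using $|\alpha(\beta)|\le|\alpha|$ when $\beta\in(X)$, noted after \autoref{defsub}) that $|\beta_k|=|\log(1+\alpha_k)|\le|\alpha_k|\to 0$, so $(\beta_k)_k$ is a null sequence and $\sum_k\beta_k$ converges by \autoref{infsum}; likewise $\prod_k(1+\alpha_k)$ converges and lies in $1+(X)$, so applying $\log(1+X)$ to $\prod_k(1+\alpha_k)-1\in(X)$ is legitimate.

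Next I would compute $\exp$ of the right-hand side. By the functional equation \eqref{func2} for the exponential, $\exp\bigl(\sum_k\beta_k\bigr)=\prod_k\exp(\beta_k)=\prod_k\exp(\log(1+\alpha_k))$. Now $\exp(\log(1+X))=1+X$ from \autoref{deflog}, and composing this identity with $\alpha_k\in(X)$ (using associativity of $\circ$, \eqref{associativ}, and the fact that $\exp(\log(1+X))$ is a power series in $(X)$ composed appropriately) gives $\exp(\log(1+\alpha_k))=1+\alpha_k$. Hence $\exp\bigl(\sum_k\log(1+\alpha_k)\bigr)=\prod_k(1+\alpha_k)$.

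On the other side, by the very definition of $\log$ as the reverse of $\exp(X)-1$, i.e. $\exp(\log(1+X))=1+X$, and since $\prod_k(1+\alpha_k)-1\in(X)$, we get $\exp\bigl(\log(\prod_k(1+\alpha_k))\bigr)=\prod_k(1+\alpha_k)$ as well. Thus both $\log\bigl(\prod_k(1+\alpha_k)\bigr)$ and $\sum_k\log(1+\alpha_k)$ are elements of $(X)$ mapped by $\exp$ to the same power series $\prod_k(1+\alpha_k)$. To conclude they are equal I would invoke injectivity of $\exp$ on $(X)$: if $\exp(\sigma)=\exp(\tau)$ with $\sigma,\tau\in(X)$, then $\exp(\sigma-\tau)=\exp(\sigma)\exp(-\tau)=1$ by \eqref{func2}, and $\exp(\gamma)=1$ for $\gamma\in(X)$ forces $\gamma=0$ (the coefficient of $X^{\inf(\gamma)}$ in $\exp(\gamma)-1$ equals the coefficient of $X^{\inf(\gamma)}$ in $\gamma$, which is nonzero unless $\gamma=0$). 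Therefore $\log\bigl(\prod_k(1+\alpha_k)\bigr)=\sum_k\log(1+\alpha_k)$, which is \eqref{funclog}.

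The main obstacle — really the only delicate point — is justifying the composition identity $\exp(\log(1+\alpha_k))=1+\alpha_k$ cleanly from $\exp(\log(1+X))=1+X$: one must phrase it as composing the power series $\delta:=\exp(\log(1+X))-X\in K[[X]]$ with $\alpha_k$, or equivalently note that the map "substitute $\alpha_k$" is a ring homomorphism on appropriate subsets that commutes with the composites defining $\exp\circ\log$, which is exactly the content of the associativity law \eqref{associativ} together with \eqref{distcirc}. Once that bookkeeping is in place, the proof is a short chain of substitutions and an appeal to \autoref{lemfunc}.
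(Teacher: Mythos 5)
Your proof is correct and takes essentially the same route as the paper: both reduce the claim to the functional equation \eqref{func2} for $\exp$ via the composite identity $\exp(\log(1+X))=1+X$, writing the product as $\exp\bigl(\sum\log(1+\alpha_k)\bigr)$. The only cosmetic difference is in the final step, where the paper strips the outer composite using $\log(\exp(X))=X$ from \autoref{deflog}, while you apply $\exp$ to both sides and establish injectivity of $\exp$ on $(X)$ by a lowest-coefficient argument --- a valid, slightly longer detour that proves the same fact ad hoc.
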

\begin{proof}
\begin{align*}
\log\Bigl(\prod(1+\alpha_k)\Bigr)&=\log\Bigl(\prod\exp(\log(1+\alpha_k))\Bigr)\overset{\eqref{func2}}{=}\log\Bigl(\exp\Bigl(\sum\log(1+\alpha_k)\Bigr)\Bigr)\\
&=\sum\log(1+\alpha_k).\qedhere
\end{align*}
\end{proof}

\begin{Ex}\label{bsplog}
By \eqref{funclog},
\[
\log\Bigl(\frac{1}{1-X}\Bigr)=-\log(1-X)=\sum_{n=1}^\infty\frac{X^n}{n}.
\]
\end{Ex}

\begin{Def}\label{defpower}
For $c\in\CC$ and $\alpha\in (X)$ let
\[\boxed{(1+\alpha)^c:=\exp\bigl(c\log(1+\alpha)\bigr).}\]\index{*Ac@$(1+\alpha)^c$}
If $c=1/k$ for some $k\in\NN$, we write more customary $\sqrt[k]{1+\alpha}:=(1+\alpha)^{1/k}$ and in particular $\sqrt{1+\alpha}:=\sqrt[2]{1+\alpha}$.
\end{Def}

By \autoref{lemfunc}, 
\[(1+\alpha)^c(1+\alpha)^d=\exp\bigl(c\log(1+\alpha)+d\log(1+\alpha)\bigr)=(1+\alpha)^{c+d}\]
for every $c,d\in\CC$ as expected. Consequently, $\sqrt[k]{1+\alpha}^k=1+\alpha$ for $k\in\NN$, i.\,e. $\sqrt[k]{1+\alpha}$ is a $k$-th \emph{root}\index{root} of $1+\alpha$ with constant term $1$. 
Suppose that $\beta\in\CC[[X]]$ also satisfies $\beta^k=1+\alpha$ and has constant term $1$. Then
\[\beta=\exp(\log(\beta))=\exp\bigl(k^{-1}\log(1+\alpha)\bigr)=\sqrt[k]{1+\alpha}.\]
Consequently, $\sqrt[k]{1+\alpha}$ is the unique $k$-th root of $1+\alpha$ with constant term $1$.

\begin{Ex}
Suppose that $\alpha=\sum_{i=1}^\infty a_iX^i$ has integral coefficients. Let $k=p_1^{r_1}\ldots p_l^{r_l}$ be the prime factorization of $k$. It was recently shown by Pomerat--Straub~\cite{PomeratStraub} that $\sqrt[k]{1+\alpha}\in\ZZ[[X]]$ if and only if 
\[\alpha\equiv (1+a_{p_i^{r_i}}X+a_{2p_i^{r_i}}X^2+\ldots)^{p_i^{r_i}}\pmod{p_i^{r_i+1}}\]
for $i=1,\ldots,l$. In particular, $\sqrt[p]{1+aX}\in\ZZ[[X]]\iff p^2\mid a$ for every prime $p$ and $a\in\ZZ$. 
\end{Ex}

The inexperienced reader may find the following exercise helpful.

\begin{A}
Check that the following power series in $\CC[[X]]$ are well-defined:\index{trigonometric series}
\begin{align*}
\sin(X)&:=\sum_{n=0}^\infty\frac{(-1)^n}{(2n+1)!}X^{2n+1},\index{*Sin@$\sin(X)$}&\cos(X)&:=\sum_{n=0}^\infty\frac{(-1)^n}{(2n)!}X^{2n},\index{*Cos@$\cos(X)$}\\
\tan(X)&:=\frac{\sin(X)}{\cos(X)},\index{*Tan@$\tan(X)$}&\sinh(X)&:=\sum_{k=0}^\infty\frac{X^{2k+1}}{(2k+1)!},\index{*Sinh@$\sinh(X)$}\\
\arcsin(X)&:=\sum_{n=0}^\infty\frac{(2n)!}{(2^nn!)^2}\frac{X^{2n+1}}{2n+1},\index{*ArcSin@$\arcsin(X)$}&\arctan(X)&:=\sum_{k=0}^\infty\frac{(-1)^k}{2k+1}X^{2k+1}.\index{*Arctan@$\arctan(X)$}
\end{align*}
Show that
\begin{enumerate}[(a)]
\item\label{euler} \textup(\textsc{Euler}'s formula\textup)\index{Euler's formula} $\exp(\ii X)=\cos(X)+\ii\sin(X)$ where $\ii=\sqrt{-1}\in\CC$.
\item $\sin(2X)=2\sin(X)\cos(X)$ and $\cos(2X)=\cos(X)^2-\sin(X)^2$.\\
\textit{Hint:} Use \eqref{euler} and separate real from non-real coefficients.
\item \textup(\textsc{Pythagorean} identity\textup)\index{Pythagorean identity} $\cos(X)^2+\sin(X)^2=1$.
\item $\sinh(X)=\frac{1}{2}(\exp(X)-\exp(-X))$. 
\item $\sin(X)'=\cos(X)$ and $\cos(X)'=-\sin(X)$.
\item $\arctan\circ\tan=X$.\\
\textit{Hint:} Mimic the argument for $\log(1+X)$.
\item $\arctan(X)=\frac{\ii}{2}\log\Bigl(\frac{\ii+X}{\ii-X}\Bigr)$.
\item $\arcsin(X)'=\frac{1}{\sqrt{1-X^2}}$.
\item $\arcsin\circ \sin=X$.
\end{enumerate}
\end{A}

\section{Laurent series}\label{seclaurent}

Every integral domain $R$ can be embedded into its \emph{field of fractions}\index{field of fractions} consisting of the formal fractions $\frac{r}{s}$ where $r,s\in R$ and $s\ne 0$. For our ring $K[[X]]$ these fractions have a more convenient shape.

\begin{Def}
A (formal) \emph{Laurent series}\index{Laurent series} in the indeterminate $X$ over the field $K$ is a sum of the form
\[\alpha=\sum_{k=m}^\infty a_kX^k\]
where $m\in\ZZ$ and $a_k\in K$ for $k\ge m$ (i.\,e. we allow $X$ to negative powers). 
We often write $\alpha=\sum_{k=-\infty}^\infty a_kX^k$ assuming that $\inf(\alpha)=\inf\{k\in\ZZ:a_k\ne 0\}$ exists.
The set of all Laurent series over $K$ is denoted by $K((X))$.\index{*KXX2@$K((X))$}
Laurent series can be added and multiplied like power series:
\[\alpha+\beta=\sum_{k=-\infty}^\infty(a_k+b_k)X^k,\qquad\alpha\beta=\sum_{k=-\infty}^\infty\Bigl(\sum_{l=-\infty}^\infty a_lb_{k-l}\Bigr)X^k\]
(one should check that the inner sum is finite). Moreover, the norm $|\alpha|$ and the derivative $\alpha'$ are defined as for power series. 
\end{Def}

If a Laurent series is a finite sum, it is naturally called a \emph{Laurent polynomial}.\index{Laurent polynomial} The ring of Laurent polynomials is denoted by $K[X,X^{-1}]$,\index{*KYY@$K[X,X^{-1}]$} but plays no role in the following. In analysis one allows double infinite sums, but then the product is no longer well defined as in $\Bigl(\sum_{n=-\infty}^\infty X^n\Bigr)^2$. 

\begin{Thm}
The field of fractions of $K[[X]]$ is naturally isomorphic to $K((X))$. In particular, $K((X))$ is a field.
\end{Thm}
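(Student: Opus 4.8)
The plan is to exhibit an explicit isomorphism between the field of fractions $\operatorname{Frac}(K[[X]])$ and $K((X))$ by building a ring homomorphism in each direction and checking they are mutually inverse. The key observation is the one already recorded in \autoref{intdom} and \autoref{leminv}: every nonzero $\alpha\in K[[X]]$ factors uniquely as $\alpha = X^{\inf(\alpha)}\beta$ with $\beta\in K[[X]]^\times$, so in any field containing $K[[X]]$ the element $X$ is invertible and $\alpha^{-1} = X^{-\inf(\alpha)}\beta^{-1}$ makes sense. This suggests the map $\Phi\colon \operatorname{Frac}(K[[X]])\to K((X))$ sending $\frac{\alpha}{\gamma}\mapsto \alpha\cdot\gamma^{-1}$, where the inverse on the right is computed inside $K((X))$ (legitimate once we know $X^{-k}$ lies in $K((X))$ and units of $K[[X]]$ are invertible there).

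First I would verify that $K((X))$ is itself a commutative ring with $1$ and, more importantly, that it is an integral domain: the argument of \autoref{intdom} carries over verbatim once one notes $\inf(\alpha\beta)=\inf(\alpha)+\inf(\beta)$ for Laurent series, so the leading coefficients multiply to something nonzero. Next I would show every nonzero $\alpha\in K((X))$ is a unit: writing $\alpha = X^m\beta$ with $\beta\in K[[X]]^\times$ (here $m=\inf(\alpha)\in\ZZ$), \autoref{leminv}(i) gives $\beta^{-1}\in K[[X]]\subseteq K((X))$, and $X^{-m}\in K((X))$ by definition, so $\alpha^{-1}=X^{-m}\beta^{-1}\in K((X))$. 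Hence $K((X))$ is a field. Since $K[[X]]$ is a subring of $K((X))$ (it is the part with $\inf\ge 0$), the universal property of the field of fractions yields a unique field homomorphism $\Phi\colon\operatorname{Frac}(K[[X]])\to K((X))$ extending the inclusion, and field homomorphisms are injective.

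For surjectivity of $\Phi$ I would take an arbitrary $\alpha=\sum_{k\ge m}a_kX^k\in K((X))$; if $m\ge 0$ then $\alpha\in K[[X]]$ is in the image already, and if $m<0$ then $X^{-m}\alpha\in K[[X]]$ so $\alpha = \Phi\bigl(\frac{X^{-m}\alpha}{X^{-m}}\bigr)$. Thus $\Phi$ is an isomorphism. The phrase ``naturally'' is justified because $\Phi$ is the \emph{unique} map restricting to the identity on $K[[X]]$, so no choices are involved. The only mildly delicate point — the main obstacle, such as it is — is bookkeeping: one must confirm that the multiplication defined on $K((X))$ really is well-defined (the inner convolution sum $\sum_l a_l b_{k-l}$ is finite because both factors are bounded below in degree) and associative, and that $\Phi$ respects multiplication, i.e.\ that $(\alpha\gamma^{-1})(\alpha'\gamma'^{-1}) = (\alpha\alpha')(\gamma\gamma')^{-1}$ in $K((X))$; these are routine since $K((X))$ is by then known to be a commutative ring in which the relevant elements are invertible. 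No genuinely hard step arises — the content is entirely in the factorization $\alpha = X^{\inf(\alpha)}\cdot(\text{unit})$, which reduces everything to \autoref{leminv}.
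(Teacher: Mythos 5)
Your proposal is correct and follows essentially the same route as the paper: establish that $K((X))$ is a commutative ring as in \autoref{intdom}, invert a nonzero $\alpha$ by writing it as $X^{\inf(\alpha)}$ times a unit of $K[[X]]$ via \autoref{leminv}, and then invoke the universal property of the field of fractions together with the observation $f\bigl(\frac{X^{-m}\alpha}{X^{-m}}\bigr)=\alpha$ for surjectivity. The only difference is that you spell out a few routine verifications (finiteness of the convolution sum, injectivity of field homomorphisms) that the paper leaves implicit.
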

\begin{proof}
Repeating the proof of \autoref{intdom} shows that $K((X))$ is a commutative ring. Let $\alpha\in K((X))\setminus\{0\}$ and $k:=\inf(\alpha)$. By \autoref{leminv}, $X^{-k}\alpha\in K[[X]]^\times$. Hence, $X^{-k}(X^{-k}\alpha)^{-1}\in K((X))$ is the inverse of $\alpha$. This shows that $K((X))$ is a field. By the universal property of the field of fractions $Q(K[[X]])$, the embedding $K[[X]]\subseteq K((X))$ extends to a (unique) field monomorphism $f\colon Q(K[[X]])\to K((X))$. If $k=\inf(\alpha)<0$, then $f\bigl(\frac{X^{-k}\alpha}{X^{-k}}\bigr)=\alpha$ and $f$ is surjective.  
\end{proof}

Of course, we will view $K[[X]]$ as a subring of $K((X))$. In fact, $K[[X]]$ is the \emph{valuation ring}\index{valuation ring} of $K((X))$, i.\,e. $K[[X]]=\{\alpha\in K((X)):|\alpha|\le 1\}$. 
The field $K((X))$ should not be confused with the field of \emph{rational functions}\index{rational function} $K(X)$,\index{*KX2@$K(X)$} which is the field of fractions of $K[X]$. 

If $\alpha\in K((X))$ and $\beta\in K[[X]]^\circ$, the substitute $\alpha(\beta)$ is still well-defined and \autoref{lemcomp} remains correct ($\alpha$ deviates from a power series by only finitely many terms).  

\begin{A}
Compute $(X+X^{-1})^{-1}\in\CC((X))$ as a Laurent series.
\end{A}

\begin{Def}
The (formal) \emph{residue}\index{residue} of $\alpha=\sum a_kX^k\in K((X))$ is defined by $\res(\alpha):=a_{-1}$. \index{*Res@$\res(\alpha)$}
\end{Def}

The residue is a $K$-linear map such that $\res(\alpha')=0$ for all $\alpha\in K((X))$. 

\begin{Lem}\label{lemres}
For $\alpha,\beta\in \CC((X))$ we have
\begin{flalign*}
(i)&&\res(\alpha'\beta)&=-\res(\alpha\beta'),\\
(ii)&&\res(\alpha'/\alpha)&=\inf(\alpha)&&(\alpha\ne 0),\\
(iii)&&\res(\alpha)\inf(\beta)&=\res(\alpha(\beta)\beta')&&(\beta\in (X)).
\end{flalign*}
\end{Lem}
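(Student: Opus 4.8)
The plan is to handle the three parts in the order $(i)$, $(ii)$, $(iii)$, since each feeds into the next. For $(i)$, the key observation is that $\res$ annihilates derivatives: $\res(\gamma')=0$ for every $\gamma\in\CC((X))$, because the coefficient of $X^{-1}$ in $\sum n c_n X^{n-1}$ comes from the $X^0$-term of $\gamma$, which contributes $0$. Applying this to $\gamma=\alpha\beta$ and using the product rule $(\alpha\beta)'=\alpha'\beta+\alpha\beta'$ (which holds verbatim for Laurent series, as noted after the definition) gives $0=\res((\alpha\beta)')=\res(\alpha'\beta)+\res(\alpha\beta')$, which is exactly $(i)$.

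For $(ii)$, write $k:=\inf(\alpha)$ and factor $\alpha=X^k\mu$ with $\mu\in\CC[[X]]^\times$, i.e. $\mu=\sum_{n\ge 0}m_nX^n$ with $m_0\ne 0$. By the product rule, $\alpha'=kX^{k-1}\mu+X^k\mu'$, so $\alpha'/\alpha=kX^{-1}+\mu'/\mu$. Since $\mu$ is invertible in $\CC[[X]]$, so is $\mu'/\mu\in\CC[[X]]$, hence $\mu'/\mu$ has no $X^{-1}$-term and $\res(\alpha'/\alpha)=k=\inf(\alpha)$. (One should note $\alpha'/\alpha$ is genuinely a Laurent series here: $\alpha\ne 0$ is invertible in the field $\CC((X))$.)

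For $(iii)$, set $l:=\inf(\beta)$ and split $\alpha=\sum_k a_kX^k$ into $\alpha=a_{-1}X^{-1}+\rho$ where $\rho:=\sum_{k\ne -1}a_kX^k$. For the tail $\rho$, each term $a_kX^k$ with $k\ne -1$ has an antiderivative $\frac{a_k}{k+1}X^{k+1}$ (note $k+1\ne 0$), so $\rho=\sigma'$ for a Laurent series $\sigma$; then $\rho(\beta)\beta'=(\sigma\circ\beta)'$ by the chain rule, and this has residue $0$. For the remaining term, $a_{-1}X^{-1}(\beta)\beta'=a_{-1}\beta'/\beta$, and by $(ii)$ its residue is $a_{-1}\inf(\beta)=\res(\alpha)\,l$. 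Adding the two contributions gives $\res(\alpha(\beta)\beta')=\res(\alpha)\inf(\beta)$.

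The main obstacle is making sure the substitutions and antiderivatives in $(iii)$ are legitimate Laurent-series operations: for $\beta\in(X)\setminus\{0\}$ with $\inf(\beta)=l\ge 1$, one needs that $\alpha(\beta)$ is well-defined for $\alpha\in\CC((X))$ — this is granted by the remark after the definition of the residue that $\alpha$ deviates from a power series by finitely many terms, so $\alpha(\beta)=\sum_{k\ge m}a_k\beta^k$ converges in $\CC((X))$ (the negative powers $\beta^{-1},\dots,\beta^{-|m|}$ lie in $X^{-|m|l}\CC[[X]]^\times$). The chain rule $(\sigma\circ\beta)'=\sigma'(\beta)\beta'$ for Laurent series $\sigma$ follows by linearity from the power-series case together with the power rule extended to negative exponents, $(\beta^n)'=n\beta^{n-1}\beta'$, which holds for all $n\in\ZZ$ since $\beta\in\CC((X))^\times$. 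Once these formal points are in place, the computation is routine.
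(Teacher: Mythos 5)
Your proof is correct and follows essentially the same route as the paper's: part (i) via the product rule and $\res(\gamma')=0$, part (ii) by factoring $\alpha=X^{\inf(\alpha)}\mu$ with $\mu$ an invertible power series, and part (iii) by linearity, killing the $k\ne-1$ part as a residue of a derivative and invoking (ii) for the $X^{-1}$-term. The only cosmetic difference is that the paper reduces to monomials $\alpha=X^k$ and writes $\beta^k\beta'=\tfrac{1}{k+1}(\beta^{k+1})'$ directly, whereas you package the whole tail into one antiderivative $\sigma$ and use the chain rule for Laurent series; both come to the same thing.
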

\begin{proof}\hfill
\begin{enumerate}[(i)]
\item\label{res1} This follows from the product rule
\[0=\res((\alpha\beta)')=\res(\alpha'\beta)+\res(\alpha\beta').\]

\item\label{res2} Let $\alpha=X^k\gamma$ with $k=\inf(\alpha)$ and $\gamma\in \CC[[X]]^\times$. Then
\[\frac{\alpha'}{\alpha}=\frac{kX^{k-1}\gamma+X^k\gamma'}{X^k\gamma}=kX^{-1}+\gamma'\gamma^{-1}.\]
Since $\gamma^{-1}\in \CC[[X]]$, it follows that $\res(\alpha'/\alpha)=k=\inf(\alpha)$.

\item Since $\res$ is a linear map, we may assume that $\alpha=X^k$. If $k\ne -1$, then \[\res(\alpha(\beta)\beta')=\res(\beta^k\beta')=\frac{1}{k+1}\res\bigl((\beta^{k+1})'\bigr)=0=\res(\alpha)=\res(\alpha)\inf(\beta).\]
If $k=-1$, then
\[\res(\alpha(\beta)\beta')=\res(\beta'/\beta)\overset{\eqref{res2}}{=}\inf(\beta)=\res(\alpha)\inf(\beta).\qedhere\]
\qedhere
\end{enumerate}
\end{proof}

\begin{Thm}[\textsc{Lagrange--Bürmann}'s inversion formula]\label{lagrange}\index{Lagrange--Bürmann's inversion formula}
The reverse of $\alpha\in\CC[[X]]^\circ$ is
\[\boxed{\sum_{k=1}^\infty \frac{\res(\alpha^{-k})}{k}X^k.}\]
\end{Thm}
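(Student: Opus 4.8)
The plan is to let $\beta = \sum_{k\ge 1} b_k X^k \in \CC[[X]]^\circ$ be the reverse of $\alpha$, so that $\alpha(\beta) = \beta(\alpha) = X$, and to extract the coefficients $b_k$ one at a time using the residue calculus developed in \autoref{lemres}. The key idea is that $b_k = \res(\beta X^{-k-1})$, since $\beta X^{-k-1} = \sum_j b_j X^{j-k-1}$ and the coefficient of $X^{-1}$ picks out exactly $b_k$. So the whole proof reduces to evaluating $\res(\beta X^{-k-1})$ in terms of $\alpha$.

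The main step is a change of variables. Since $\alpha \in \CC[[X]]^\circ$ and $\inf(\alpha) = 1$, I apply \autoref{lemres}(iii) with the roles arranged so that substituting $\alpha$ into a Laurent series and multiplying by $\alpha'$ preserves residues up to the factor $\inf(\alpha) = 1$. Concretely, for any $\gamma \in \CC((X))$ we have $\res(\gamma) = \res(\gamma(\alpha)\,\alpha')$. Now I want to choose $\gamma$ so that $\gamma(\alpha)\alpha'$ becomes something whose residue I can read off as $b_k$. Taking $\gamma = \beta \cdot X^{-k-1}$ — which lies in $\CC((X))$ because $\beta \in (X)$ — gives
\[
b_k = \res(\beta X^{-k-1}) = \res\bigl(\beta(\alpha)\,\alpha^{-k-1}\,\alpha'\bigr) = \res\bigl(X\,\alpha^{-k-1}\,\alpha'\bigr),
\]
using $\beta(\alpha) = X$.

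It remains to simplify $\res(X\,\alpha^{-k-1}\alpha')$. Here I use the product-rule identity \autoref{lemres}(i): writing it as $\res(X \cdot (\alpha^{-k})'/(-k)) = -\tfrac{1}{k}\res\bigl(X\,(\alpha^{-k})'\bigr)$, and then applying $\res(\mu'\nu) = -\res(\mu\nu')$ with $\mu = \alpha^{-k}$ and $\nu = X$ (so $\nu' = 1$), I get
\[
-\tfrac{1}{k}\res\bigl((\alpha^{-k})'\,X\bigr) = -\tfrac{1}{k}\bigl(-\res(\alpha^{-k}\cdot 1)\bigr) = \tfrac{1}{k}\res(\alpha^{-k}),
\]
which is exactly the claimed coefficient. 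Summing $b_k X^k$ over $k \ge 1$ yields the formula. I should double-check at the start that $\res(\alpha^{-k})$ makes sense: since $\inf(\alpha) = 1$, we have $\inf(\alpha^{-k}) = -k$, so $\alpha^{-k} \in \CC((X))$ is a genuine Laurent series and its residue is defined.

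**The main obstacle** I anticipate is bookkeeping with the substitution lemma: \autoref{lemres}(iii) is stated as $\res(\alpha)\inf(\beta) = \res(\alpha(\beta)\beta')$, so I must be careful about which series plays the role of the "outer" Laurent series and which the "inner" substitution, and verify that $\beta \in (X)$ is exactly the hypothesis needed (it is, since the reverse of an element of $K[[X]]^\circ$ again lies in $K[[X]]^\circ \subseteq (X)$). One subtle point: $\gamma = \beta X^{-k-1}$ has $\inf(\gamma) \ge 1 - (k+1) = -k$, so it is a legitimate Laurent series, and the identity $\gamma(\alpha) = \beta(\alpha)\,\alpha^{-k-1}$ uses that substitution into $\CC((X))$ is multiplicative and extends the power-series composition — which is exactly the remark made after the field-of-fractions theorem. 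Once these compatibility checks are in place, the computation is the short three-line chain above.
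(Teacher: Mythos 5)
Your proposal is correct and takes essentially the same route as the paper: both extract the coefficient as $b_k=\res(\beta X^{-k-1})$ and then finish with the two residue identities of \autoref{lemres}. The only (cosmetic) difference is the order of operations — you apply the change-of-variables identity (iii) at the outset with $\alpha$ as the inner series (using $\inf(\alpha)=1$ and $\beta(\alpha)=X$) and then remove $\alpha'$ via the power rule and identity (i), whereas the paper first integrates by parts and then substitutes $\beta$ into $\alpha^{-k}$ (using $\inf(\beta)=1$ and $\alpha^{-k}(\beta)=X^{-k}$).
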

\begin{proof}
The proof is influenced by \cite{Gessel}.
Let $\beta\in \CC[[X]]^\circ$ be the reverse of $\alpha$, i.\,e. $\alpha(\beta)=X$. From $\alpha\in \CC[[X]]^\circ$ we know that $\alpha\ne 0$. In particular, $\alpha$ is invertible in $\CC((X))$. By \autoref{lemcomp}, we have $\alpha^{-k}(\beta)=X^{-k}$. 
Now the coefficient of $X^k$ in $\beta$ turns out to be
\[
\frac{1}{k}\res(kX^{-k-1}\beta)=-\frac{1}{k}\res\bigl((X^{-k})'\beta\bigr)=\frac{1}{k}\res(X^{-k}\beta')=\frac{1}{k}\res\bigl(\alpha^{-k}(\beta)\beta'\bigr)=\frac{1}{k}\res(\alpha^{-k})
\]
by \autoref{lemres}.
\end{proof}

Since \autoref{lagrange} is actually a statement about power series, it should be mentioned that $\res(\alpha^{-k})$ is just the coefficient of $X^{k-1}$ in the power series $(X/\alpha)^k$. This interpretation will be used in our generalization to higher dimensions in \autoref{lagrange2}.

\section{The main theorems}\label{secmain}

For $c\in\CC$ and $k\in\NN$ we extend the definition of usual binomial coefficient by
\[\binom{c}{k}:=\frac{c(c-1)\ldots(c-k+1)}{k!}\in\CC\] \index{*Binom@$\binom{c}{k}$}
(it is useful to know that numerator and denominator both have exactly $k$ factors). 
The next theorem is a vast generalization of the binomial theorem (take $c\in\NN$) and the geometric series (take $c=-1$). 

\begin{Thm}[\textsc{Newton}'s binomial theorem]\label{newton}\index{Newton's binomial theorem}
For $\alpha\in (X)$ and $c\in\CC$ the following holds
\begin{equation}\label{newtoneq}
\boxed{(1+\alpha)^c=\sum_{k=0}^\infty\binom{c}{k}\alpha^k.}
\end{equation}
\end{Thm}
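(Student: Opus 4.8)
The plan is to reduce everything to the special case $\alpha = X$, i.e.\ to the identity $(1+X)^c = \sum_{k=0}^\infty \binom{c}{k} X^k$, and then substitute. The substitution step is cheap: once we know $(1+X)^c = \sum \binom{c}{k} X^k$ as an identity of power series, we apply the substitution homomorphism $\,\cdot\circ\alpha$ for $\alpha\in(X)$. Indeed $(1+\alpha)^c = \exp(c\log(1+\alpha)) = \exp\bigl((c\log(1+X))\circ\alpha\bigr)$, and since $\exp$ and $\log$ are themselves given by substituting into power series, \autoref{lemcomp} (associativity and the distributivity relations \eqref{distcirc}, \eqref{distcirc2}) lets us pull the $\circ\alpha$ outside: $(1+\alpha)^c = (1+X)^c\circ\alpha = \bigl(\sum\binom{c}{k}X^k\bigr)\circ\alpha = \sum\binom{c}{k}\alpha^k$, where the last series converges by \autoref{infsum} because $|\alpha^k|\le|\alpha|^k\to 0$. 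So the real content is the one-variable case.

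For $(1+X)^c = \sum_{k\ge 0}\binom{c}{k}X^k$ I would argue by the differential-equation method, which works cleanly in characteristic $0$. Write $\beta := (1+X)^c = \exp(c\log(1+X))$ and $\gamma := \sum_{k=0}^\infty \binom{c}{k}X^k$ (this converges trivially, being a genuine power series). First compute $\beta'$ via the chain rule and \autoref{deflog}: $\beta' = \exp(c\log(1+X))\cdot c\cdot\tfrac{1}{1+X} = \tfrac{c}{1+X}\beta$, so $(1+X)\beta' = c\beta$. Next check that $\gamma$ satisfies the same equation: using the Pascal-type recurrence $(k+1)\binom{c}{k+1} + k\binom{c}{k} = c\binom{c}{k}$ (immediate from the closed form, since $(k+1)\binom{c}{k+1} = (c-k)\binom{c}{k}$), a term-by-term comparison of coefficients in $(1+X)\gamma'$ and $c\gamma$ shows they agree; also $\gamma(0)=1=\beta(0)$. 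Finally, the ODE $(1+X)\delta' = c\delta$ has a unique solution with prescribed constant term: if $\delta = \sum d_nX^n$ then comparing coefficients gives $(n+1)d_{n+1} = (c-n)d_n$, which determines all $d_n$ from $d_0$ (here we divide by $n+1$, legitimate in characteristic $0$). Hence $\beta = \gamma$.

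An alternative for the one-variable case, if one prefers to avoid ODEs, is to first establish it for $c\in\NN_0$ by the ordinary binomial theorem (where $\binom{c}{k}=0$ for $k>c$), then for $c\in\NN$ note $(1+X)^{-c}$ is determined as the inverse and a Vandermonde/Cauchy-product check confirms the formula with the extended coefficients, and finally observe that both sides of \eqref{newtoneq}, for fixed $k$, are polynomial in $c$: the coefficient of $X^k$ on the left is $\tfrac{1}{k!}$ times the $k$-th derivative at $0$ of $\exp(c\log(1+X))$, which one shows is a polynomial in $c$ of degree $\le k$ agreeing with $\binom{c}{k}$ at infinitely many integer values of $c$. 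Two polynomials agreeing at infinitely many points are equal. This ``polynomial identity'' route is conceptually appealing but requires more care in justifying that the coefficients depend polynomially on $c$; I expect the ODE argument to be the path of least resistance.

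The main obstacle is bookkeeping rather than conceptual depth: one must be careful that all the rearrangements in the substitution step are licensed. Specifically, $(c\log(1+X))\circ\alpha$ makes sense because $c\log(1+X)\in(X)$ and $\alpha\in(X)$; $\exp\circ(\text{something in }(X))$ is well-defined; and the key move $\exp(\xi)\circ\alpha = \exp(\xi\circ\alpha)$ for $\xi\in(X)$ follows from \eqref{distcirc2} applied to $\exp(\xi)=\prod\text{(partial sums)}$ — more directly, $\exp(\xi)\circ\alpha = \bigl(\sum \xi^n/n!\bigr)\circ\alpha = \sum (\xi\circ\alpha)^n/n! = \exp(\xi\circ\alpha)$ using \eqref{distcirc} and \eqref{distcirc2}. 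Once these are in place the proof is short.
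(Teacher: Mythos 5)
Your proposal is correct and follows essentially the same route as the paper: reduce to $\alpha=X$ by substituting afterwards, then differentiate $(1+X)^c=\exp(c\log(1+X))$ via the chain rule. The only difference is cosmetic — the paper iterates the derivative and reads off coefficients with Taylor's theorem \eqref{taylor}, while you stop at the first-order equation $(1+X)\beta'=c\beta$ and identify coefficients through the recursion $(n+1)d_{n+1}=(c-n)d_n$, which is the same computation in a different package (your explicit justification of the substitution step is more detailed than the paper's one-line remark).
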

\begin{proof}
It suffices to prove the equation for $\alpha=X$ (we may substitute $X$ by $\alpha$ afterward). By the chain rule,
\[\bigl((1+X)^c\bigr)'=\exp(c\log(1+X))'=c\frac{(1+X)^c}{1+X}=c(1+X)^{c-1}\]
and inductively, $\bigl((1+X)^c\bigr)^{(k)}=c(c-1)\ldots(c-k+1)(1+X)^{c-k}$. Now the claim follows from Taylor's theorem~\eqref{taylor}.
\end{proof}

A striking application of \autoref{newton} will be given in \autoref{Turan}.

\begin{Ex}
Let $\zeta\in\CC$ be an $n$-th root of unity and let $\alpha:=(1+X)^\zeta-1\in (X)$. Then \[\alpha\circ\alpha=\bigl(1+(1+X)^\zeta-1\bigr)^\zeta-1=(1+X)^{\zeta^2}-1\] 
and inductively $\alpha\circ\ldots\circ\alpha=(1+X)^{\zeta^n}-1=X$. In particular, the order of $\alpha$ in the group $\CC[[X]]^\circ$ divides $n$. Thus, in contrast to the group $K[[X]]^\times$ studied in \autoref{leminv}, the group $\CC[[X]]^\circ$ possesses “interesting” elements of finite order.
\end{Ex}

Since we do not call our indeterminate $q$ (as in many sources), it makes no sense to introduce the $q$-\emph{Pochhammer symbol}\index{Pochhammer symbol} $(q;q)_n$. Instead, we devise a non-standard notation in reminiscence of the binomial coefficient.

\begin{Def}\label{defgauss}
For $n\in\NN_0$ let $X^n!:=(1-X)(1-X^2)\ldots(1-X^n)$.\index{*Xnfac@$X^n$#!} For $0\le k\le n$ we call
\[\gauss{n}{k}:=\frac{X^n!}{X^k!X^{n-k}!}=\frac{1-X^n}{1-X^k}\ldots\frac{1-X^{n-k+1}}{1-X}\in\CC[[X]]\]\index{*Gaus@$\gauss{n}{k}$}
a \emph{Gaussian coefficient}.\index{Gaussian coefficient} If $k<0$ or $k>n$ let $\gauss{n}{k}:=0$.
\end{Def}

As for the binomial coefficients, we have $\gauss{n}{0}=\gauss{n}{n}=1$ and $\gauss{n}{k}=\gauss{n}{n-k}$ for all $n\in\NN_0$ and $k\in\ZZ$. Moreover, $\gauss{n}{1}=\frac{1-X^n}{1-X}=1+X+\ldots+X^{n-1}$.
The familiar recurrence formula for binomial coefficients needs to be altered as follows.

\begin{Lem}
For $n\in\NN_0$ and $k\in\ZZ$,
\begin{equation}\label{lemgauss}
\gauss{n+1}{k}=X^k\gauss{n}{k}+\gauss{n}{k-1}=\gauss{n}{k}+X^{n+1-k}\gauss{n}{k-1}.
\end{equation}
\end{Lem}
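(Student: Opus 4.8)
The plan is to prove both equalities in \eqref{lemgauss} by direct manipulation of the closed-form definition $\gauss{n}{k}=\frac{X^n!}{X^k!\,X^{n-k}!}$, splitting into the trivial boundary cases and the generic case $1\le k\le n$. For the boundary cases, if $k\le 0$ or $k>n+1$ then all three Gaussian coefficients appearing in the relevant identity are either $0$ or $1$ and the equalities are immediate; the cases $k=0$ and $k=n+1$ should be checked separately since there $\gauss{n}{k-1}$ or $\gauss{n}{k}$ vanishes while the other term equals $1=\gauss{n+1}{k}$. So the content is entirely in the range $1\le k\le n$.

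For the generic case I would start from $\gauss{n+1}{k}=\frac{X^{n+1}!}{X^k!\,X^{n+1-k}!}$ and use the telescoping factorisation $X^{n+1}!=(1-X^{n+1})\,X^n!$ together with $X^{n+1-k}!=(1-X^{n+1-k})\,X^{n-k}!$. The idea is to write $\gauss{n+1}{k}=\frac{1-X^{n+1}}{1-X^{n+1-k}}\gauss{n}{k}$ and then decompose the scalar rational factor. To get the first equality, I would use the algebraic identity $\frac{1-X^{n+1}}{1-X^{n+1-k}}=X^k+\frac{1-X^k}{1-X^{n+1-k}}$ (which one checks by clearing denominators: $1-X^{n+1}=X^k(1-X^{n+1-k})+(1-X^k)$). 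Multiplying through by $\gauss{n}{k}$ gives $\gauss{n+1}{k}=X^k\gauss{n}{k}+\frac{1-X^k}{1-X^{n+1-k}}\gauss{n}{k}$, and the second summand simplifies: $\frac{1-X^k}{1-X^{n+1-k}}\gauss{n}{k}=\frac{1-X^k}{1-X^{n+1-k}}\cdot\frac{X^n!}{X^k!\,X^{n-k}!}$, and since $X^k!=(1-X^k)\,X^{k-1}!$ the factor $1-X^k$ cancels, leaving $\frac{X^n!}{X^{k-1}!\,(1-X^{n+1-k})X^{n-k}!}=\frac{X^n!}{X^{k-1}!\,X^{n-k+1}!}=\gauss{n}{k-1}$, as desired. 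For the second equality I would symmetrically use $\frac{1-X^{n+1}}{1-X^k}=1+X^k\frac{1-X^{n+1-k}}{1-X^k}$, or simply invoke the symmetry $\gauss{n}{k}=\gauss{n}{n-k}$ applied to the first equality with $k$ replaced by $n+1-k$, which turns $X^k\gauss{n}{k}+\gauss{n}{k-1}$ into $\gauss{n}{k}+X^{n+1-k}\gauss{n}{k-1}$ after relabelling.

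The main obstacle, such as it is, is purely bookkeeping: making sure the cancellation $X^k!=(1-X^k)X^{k-1}!$ is legitimate, which requires $k\ge 1$, and handling the edge cases $k=0,n+1$ where this breaks down — hence the case split above. There is no analytic or structural difficulty here; everything reduces to the polynomial identity $1-X^{n+1}=X^k(1-X^{n+1-k})+(1-X^k)$ and its mirror image, which are verified by inspection. I would therefore present the proof as: (1) dispose of $k\notin\{1,\ldots,n+1\}$ and the two boundary values, (2) establish the scalar identity, (3) multiply by $\gauss{n}{k}$ and cancel, (4) deduce the second equality from symmetry.
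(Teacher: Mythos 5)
Your proposal is correct and follows essentially the same route as the paper: dispose of $k<0$, $k>n+1$ and the boundary values $k=0$, $k=n+1$ trivially, then for $1\le k\le n$ reduce the first equality to the polynomial identity $1-X^{n+1}=X^k(1-X^{n+1-k})+(1-X^k)$ via the closed form $\gauss{n}{k}=\frac{X^n!}{X^k!\,X^{n-k}!}$, and obtain the second equality from the symmetry $\gauss{n+1}{k}=\gauss{n+1}{n+1-k}$. The only difference is cosmetic: you split $\gauss{n+1}{k}$ into the two terms, while the paper combines the two terms into $\gauss{n+1}{k}$, which is the same computation read in the opposite direction.
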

\begin{proof}
For $k>n+1$ or $k<0$ all parts are $0$. Similarly, for $k=n+1$ or $k=0$ both sides equal $1$. Finally, for $1\le k\le n$ it holds that
\begin{align*}
X^k\gauss{n}{k}+\gauss{n}{k-1}&=\Bigl(X^k\frac{1-X^{n-k+1}}{1-X^k}+1\Bigr)\frac{X^n!}{X^{k-1}!X^{n-k+1}!}=\frac{1-X^{n+1}}{1-X^k}\frac{X^n!}{X^{k-1}!X^{n+1-k}!}\\
&=\gauss{n+1}{k}=\gauss{n+1}{n+1-k}=X^{n+1-k}\gauss{n}{n+1-k}+\gauss{n}{n-k}\\
&=\gauss{n}{k}+X^{n+1-k}\gauss{n}{k-1}.\qedhere
\end{align*}
\end{proof}

Since $\gauss{n}{0}$ and $\gauss{n}{1}$ are polynomials, \eqref{lemgauss} shows inductively that all Gaussian coefficients are polynomials. We may therefore evaluate $\gauss{n}{k}$ at $X=1$. Indeed \eqref{lemgauss} becomes the recurrence for the binomial coefficients if $X=1$. Hence $\gauss{n}{k}(1)=\binom{n}{k}$. This can be seen more directly by writing
\[\gauss{n}{k}=\frac{\frac{1-X^n}{1-X}\ldots\frac{1-X^{n-k+1}}{1-X}}{\frac{1-X^k}{1-X}\ldots\frac{1-X}{1-X}}=\frac{(1+X+\ldots+X^{n-1})\ldots(1+X+\ldots+X^{n-k})}{(1+X+\ldots+X^{k-1})\ldots(1+X)1}.\]
We will interpret the coefficients of $\gauss{n}{k}$ in \autoref{partseries}.

\begin{Ex}
\[\gauss{4}{2}=X^2\gauss{3}{2}+\gauss{3}{1}=X^2(1+X+X^2)+(1+X+X^2)=1+X+2X^2+X^3+X^4.\]
\end{Ex}

\begin{Thm}[\textsc{Gauss}' binomial theorem]\label{GaussBT}\index{Gauss' binomial theorem}
For $n\in\NN$ and $\alpha\in\CC((X))$ the following holds
\begin{empheq}[box=\fbox]{align}
\prod_{k=0}^{n-1}(1+\alpha X^k)&=\sum_{k=0}^n\gauss{n}{k}\alpha^kX^{\binom{k}{2}},\label{G0}\\
\prod_{k=0}^\infty(1+\alpha X^k)&=\sum_{k=0}^\infty\frac{\alpha^kX^{\binom{k}{2}}}{X^k!}.\label{E1}
\end{empheq}
\end{Thm}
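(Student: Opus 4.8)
The plan is to prove the finite identity \eqref{G0} first by induction on $n$, and then derive the infinite identity \eqref{E1} from it by a limiting argument. For the base case $n=1$ both sides of \eqref{G0} equal $1+\alpha$ (using $\gauss{1}{0}=\gauss{1}{1}=1$ and $\binom{0}{2}=\binom{1}{2}=0$). For the inductive step, I would multiply the right-hand side for $n$ by the new factor $(1+\alpha X^n)$ and collect the coefficient of $\alpha^kX^{\binom{k}{2}}$, hoping to recognize the recurrence \eqref{lemgauss}. Concretely, $\bigl(\sum_k\gauss{n}{k}\alpha^kX^{\binom{k}{2}}\bigr)(1+\alpha X^n)$ contributes $\gauss{n}{k}\alpha^kX^{\binom{k}{2}}$ from the $1$ and $\gauss{n}{k-1}\alpha^kX^{\binom{k-1}{2}+n}$ from the $\alpha X^n$; since $\binom{k-1}{2}+n=\binom{k}{2}+(n-k+1)$, the total coefficient of $\alpha^kX^{\binom{k}{2}}$ is $\gauss{n}{k}+X^{n+1-k}\gauss{n}{k-1}$, which is exactly $\gauss{n+1}{k}$ by the second form of \eqref{lemgauss}. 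This closes the induction.

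For \eqref{E1}, I would let $n\to\infty$ in \eqref{G0}. On the left, the partial products $\prod_{k=0}^{n-1}(1+\alpha X^k)$ converge in $\CC((X))$ — or in $\CC[[X]]$ if $\alpha\in(X)$, and more care is needed when $\alpha$ is merely a Laurent series, but one can always write $\alpha=\alpha_0+X\tilde\alpha$ and note the constant-term factor $(1+\alpha_0)$ separately, or simply observe that $\alpha X^k\to 0$ so \autoref{infsum} applies after shifting — because $|\alpha X^k|=|\alpha|2^{-k}\to 0$. On the right, $\gauss{n}{k}\to \dfrac{1}{X^k!}\cdot\dfrac{1}{X^\infty!}$... more precisely, $\gauss{n}{k}=\dfrac{X^n!}{X^k!\,X^{n-k}!}$, and as $n\to\infty$ the quotient $X^n!/X^{n-k}! = (1-X^{n-k+1})\cdots(1-X^n)$ converges to $1$ since each factor tends to $1$. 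Hence $\gauss{n}{k}\to 1/X^k!$ for each fixed $k$, and the term $\gauss{n}{k}\alpha^kX^{\binom{k}{2}}\to \alpha^kX^{\binom{k}{2}}/X^k!$; since these terms form a null sequence in $k$ (the power of $X$ grows like $\binom{k}{2}$), the sum converges and passing to the limit is justified by the continuity of the ring operations and of infinite sums with respect to $d$.

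The main obstacle I anticipate is bookkeeping the exponents of $X$ in the inductive step — getting $\binom{k-1}{2}+n=\binom{k}{2}+(n+1-k)$ right and matching it to the correct form of the two-term recurrence \eqref{lemgauss} — together with being slightly careful in the limiting argument about the mode of convergence. Since $\alpha$ ranges over $\CC((X))$ in \eqref{G0} but we want \eqref{E1} to make sense as well, I would note that \eqref{E1} as stated is really only asserted to hold when the left side converges, which by \autoref{infsum} requires $\alpha X^k\to 0$; this holds automatically for every $\alpha\in\CC((X))$ since $|X|<1$, so there is in fact no restriction and both series live in $\CC((X))$. A clean way to organize the limit is to fix the coefficient of each $X^m$: on both sides only finitely many values of $n$ (resp.\ $k$) affect it, so the identity of formal Laurent series \eqref{E1} follows coefficient-wise from \eqref{G0} without invoking the metric at all — this is probably the slickest route and the one I would ultimately write up.
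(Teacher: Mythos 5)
Your proposal is correct and takes essentially the same route as the paper: the finite identity \eqref{G0} by induction on $n$ using the second form of the recurrence \eqref{lemgauss} with the same exponent bookkeeping $\binom{k-1}{2}+n=\binom{k}{2}+(n+1-k)$, and then \eqref{E1} by comparing the coefficient of each fixed $X^m$, which is exactly the paper's argument and rests on the observation you state, namely that $\gauss{n}{k}-\frac{1}{X^k!}$ has large $\inf$ (the numerator $(1-X^{n-k+1})\cdots(1-X^n)-1$ lies in $(X^{n-k+1})$). Your decision to write up the coefficient-wise version rather than the naive termwise metric limit is the right one, since interchanging $\lim_{n\to\infty}$ with the sum over $k$ would require precisely this uniform estimate (compare the paper's Kronecker-delta warning example).
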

\begin{proof}\hfill
\begin{enumerate}[(i)]
\item We argue by induction on $n$. For $n=1$ both sides become $1+\alpha$. For the induction step we let all sums run from $-\infty$ to $\infty$ (this will not change their value, but makes index shifts much more transparent):
\begin{align*}
\prod_{k=0}^{n}(1+\alpha X^k)&=(1+\alpha X^n)\sum_{k=-\infty}^\infty\gauss{n}{k}\alpha^kX^{\binom{k}{2}}\\[-6mm]
&=\sum\gauss{n}{k}\alpha^kX^{\binom{k}{2}}+\sum\gauss{n}{k}\alpha^{k+1}X^{n-k}X^{\overbrace{\scriptstyle{\binom{k}{2}+k}}^{\binom{k+1}{2}}}\\
&=\sum\gauss{n}{k}\alpha^kX^{\binom{k}{2}}+\sum X^{n+1-k}\gauss{n}{k-1}\alpha^{k}X^{\binom{k}{2}}\\
&\overset{\eqref{lemgauss}}{=}\sum\gauss{n+1}{k}\alpha^kX^{\binom{k}{2}}.
\end{align*}

\item Since $\inf\bigl(\alpha^kX^{\binom{k}{2}}\bigr)=\frac{1}{2}(k^2-k)+k\inf(\alpha)\to\infty$, the right hand side converges.
For $m\in\ZZ$, the coefficient of $X^m$ in the left hand side of \eqref{E1} depends only on 
\[\prod_{k=0}^{n-1}(1+\alpha X^k)\overset{\eqref{G0}}{=}\sum_{k=0}^n\gauss{n}{k}\alpha^kX^{\binom{k}{2}},\]
as long as $n>m-\inf(\alpha)$. 
Moreover, if $n$ is large enough, $X^m$ does not appear in $\frac{\alpha^kX^{\binom{k}{2}}}{X^k!}$ for $k>n$. 
It is therefore enough to show that $X^m$ does not appear in 
\[\sum_{k=0}^n\gauss{n}{k}\alpha^kX^{\binom{k}{2}}-\sum_{k=0}^n\frac{\alpha^kX^{\binom{k}{2}}}{X^k!}=\sum_{k=0}^n\Bigl(\gauss{n}{k}-\frac{1}{X^k!}\Bigr)\alpha^kX^{\binom{k}{2}}.\]
In fact,
\[\Bigl(\gauss{n}{k}-\frac{1}{X^k!}\Bigr)X^{\binom{k}{2}}=\frac{(1-X^n)\ldots(1-X^{n-k+1})-1}{X^k!}X^{\binom{k}{2}}\in (X^{n-k+1+\binom{k}{2}})\subseteq (X^n).\qedhere\]
\end{enumerate}
\end{proof}

\begin{Rmk}\label{important}
Equation~\eqref{G0} is sometimes attributed to Cauchy, while \eqref{E1} is due to Euler.
We emphasize that in the proof of \autoref{GaussBT}, $\alpha$ is treated as a variable independent of $X$. The proof and the statement are therefore still valid if we substitute $X$ by some $\beta\in(X)$ \emph{without} changing $\alpha$ to $\alpha(\beta)$. 
\end{Rmk}

\begin{A}[\textsc{Rothe}'s binomial theorem]\index{Rothe's binomial theorem}
For $n\in\NN$ and $\alpha,\beta\in\CC((X))$ show that
\[\prod_{k=0}^{n-1}(\alpha+\beta X^k)=\sum_{k=0}^n\gauss{n}{k}\alpha^{n-k}\beta^kX^{\binom{k}{2}}.\]
\textit{Hint:} Replace $\alpha$ by $\alpha^{-1}\beta$ in \eqref{G0}.
\end{A}

The special case $(1-X)^{-n}=\sum_{k=0}^\infty\binom{n+k-1}{k}X^k$ of Newton's binomial theorem somehow “inverts” the ordinary binomial theorem $(1+X)^n=\sum_{k=0}^n\binom{n}{k}X^k$. In the same spirit, the following result inverts Gauss' binomial theorem. 
We will encounter many more such “dual pairs” in \autoref{genstir}, \autoref{vieta} and \eqref{mac1}, \eqref{a2}.

\begin{Thm}\label{coreuler}
For all $\alpha\in\CC[[X]]$,
\begin{align}
\prod_{k=1}^n\frac{1}{1-\alpha X^k}&=\sum_{k=0}^\infty\gauss{n+k-1}{k}\alpha^kX^k,\\
\prod_{k=1}^\infty\frac{1}{1-\alpha X^k}&=\sum_{k=0}^\infty\frac{\alpha^kX^k}{X^k!}.\label{E2}
\end{align}
\end{Thm}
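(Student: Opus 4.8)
The plan is to prove the finite identity first, by induction on $n$, and then to deduce \eqref{E2} from it by the same limiting argument used for \eqref{E1}. First one checks that everything is well-defined: since $\alpha\in\CC[[X]]$, each factor $1-\alpha X^k$ has constant term $1$ and is therefore a unit, so the finite product exists; the infinite product converges by \autoref{infsum} because $\bigl|\tfrac{1}{1-\alpha X^k}-1\bigr|=|\alpha X^k|\le 2^{-k}\to 0$; and both right-hand sides converge since $\inf\bigl(\alpha^kX^k\bigr)\ge k\to\infty$ (recall that $X^k!$ is a unit).

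For the induction write $f_n(\alpha):=\prod_{k=1}^n\frac{1}{1-\alpha X^k}$, treating $X$ as fixed. The case $n=1$ is the geometric series $f_1(\alpha)=\sum_{k\ge 0}\alpha^kX^k$, which is the asserted formula because $\gauss{k}{k}=1$. For the inductive step I would use the functional relation
\[(1-\alpha X)\,f_n(\alpha)=f_{n-1}(\alpha X),\]
obtained by pulling out the factor $k=1$ and shifting $k\mapsto k+1$ in the remaining product. Expanding the induction hypothesis $f_{n-1}(\alpha)=\sum_{k\ge 0}\gauss{n+k-2}{k}\alpha^kX^k$ and substituting $\alpha\mapsto\alpha X$ (which affects only the explicit powers of $\alpha$, not the Gaussian coefficients, since those are polynomials in $X$ alone) gives $f_{n-1}(\alpha X)=\sum_{k\ge 0}\gauss{n+k-2}{k}\alpha^kX^{2k}$. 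On the other hand, multiplying the conjectured formula for $f_n$ by $1-\alpha X$ and collecting the coefficient of $\alpha^k$ yields $\sum_{k\ge 0}\bigl(\gauss{n+k-1}{k}-\gauss{n+k-2}{k-1}\bigr)\alpha^kX^k$, and the recurrence \eqref{lemgauss} in the form $\gauss{m+1}{k}=X^k\gauss{m}{k}+\gauss{m}{k-1}$ with $m=n+k-2$ turns the bracket into $X^k\gauss{n+k-2}{k}$. The two series agree, which closes the induction.

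For \eqref{E2} I would argue coefficientwise, mimicking the proof of \eqref{E1}. Fix $m\in\NN_0$ and pick $n>m$. The tail $\prod_{k>n}\frac{1}{1-\alpha X^k}$ is a product $\prod_{k>n}(1+\beta_k)$ with $|\beta_k|=|\alpha X^k|\le 2^{-k}\le 2^{-n-1}$, so it is $\equiv 1\pmod{(X^{n+1})}$ by the estimate in the proof of \autoref{infsum}; hence $\prod_{k=1}^\infty\frac{1}{1-\alpha X^k}\equiv f_n(\alpha)\pmod{(X^{m+1})}$. By the finite identity just proved, $f_n(\alpha)=\sum_{k\ge 0}\gauss{n+k-1}{k}\alpha^kX^k$, and since
\[\gauss{n+k-1}{k}-\frac{1}{X^k!}=\frac{(1-X^n)(1-X^{n+1})\cdots(1-X^{n+k-1})-1}{X^k!}\in(X^n),\]
the series $\sum_k\gauss{n+k-1}{k}\alpha^kX^k$ and $\sum_k\frac{\alpha^kX^k}{X^k!}$ agree modulo $(X^{n+1})$, in particular on the coefficient of $X^m$. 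As $m$ was arbitrary, \eqref{E2} follows. The only slightly delicate point—and the closest thing to an obstacle—is keeping the two rôles of $\alpha$ and $X$ apart when performing the substitution $\alpha\mapsto\alpha X$ in the inductive step; everything else is a routine application of the Gaussian recurrence and the completeness of $\CC[[X]]$. (Alternatively, one could deduce \eqref{E2} by inverting the expansion of $\prod_{k\ge 1}(1-\alpha X^k)$ coming from \eqref{E1}, but verifying that the two power series are mutually inverse again reduces to the same recurrence, so the direct induction seems cleanest.)
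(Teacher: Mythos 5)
Your proposal is correct. For the finite identity your induction is essentially the paper's argument with different bookkeeping: the paper splits off the \emph{last} factor, multiplying by $1-\alpha X^{n+1}$ and applying \eqref{lemgauss} in the form $\gauss{n+k}{k}=\gauss{n+k-1}{k}+X^{n}\gauss{n+k-1}{k-1}$, whereas you split off the \emph{first} factor via $(1-\alpha X)f_n(\alpha)=f_{n-1}(\alpha X)$ and use the other form $\gauss{n+k-1}{k}=X^{k}\gauss{n+k-2}{k}+\gauss{n+k-2}{k-1}$; both are the same recurrence and the same amount of work, and your implicit cancellation of $1-\alpha X$ is harmless since $\alpha X\in(X)$ makes it a unit. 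Where you genuinely diverge is \eqref{E2}: the paper does not pass to the limit in the finite identity at all, but instead substitutes $-X\alpha$ into \eqref{E1} to expand $\prod_{k\ge 1}(1-\alpha X^k)$, multiplies this by the claimed right-hand side of \eqref{E2}, and observes that the Cauchy product collapses to $1$ because the inner sum $\sum_{k=0}^n(-1)^k\gauss{n}{k}X^{\binom{k}{2}}=\prod_{k=0}^{n-1}(1-X^k)$ vanishes for $n\ge 1$ by \eqref{G0} — in effect showing that the two Euler-type series are mutually inverse. Your route, truncating the infinite product modulo $(X^{n+1})$ and using $\gauss{n+k-1}{k}-\frac{1}{X^k!}\in(X^n)$, is precisely the coefficientwise technique the paper employs for \eqref{E1} (and reuses in the proof of \autoref{RRI}), so it is entirely in the spirit of the text; its advantage is that \eqref{E2} follows directly from the finite identity without invoking \eqref{E1}, while the paper's argument yields the inversion relation between the two infinite series as a by-product.
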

\begin{proof}\hfill
\begin{enumerate}[(i)]
\item Induction on $n$: For $n=1$ we obtain the geometric series $\frac{1}{1-\alpha X}=\sum_{k=0}^\infty\alpha^kX^k$. In general:
\begin{align*}
(1-\alpha X^{n+1})\sum_{k=0}^\infty\gauss{n+k}{k}\alpha^kX^k&=\sum_{k=0}^\infty\gauss{n+k}{k}\alpha^kX^k-X^n\sum_{k=0}^\infty\gauss{n+k}{k}\alpha^{k+1}X^{k+1}\\
&=\sum_{k=0}^\infty\biggl(\gauss{n+k}{k}-X^n\gauss{n+k-1}{k-1}\biggr)\alpha^kX^k\\
&\overset{\eqref{lemgauss}}{=}\sum_{k=0}^\infty\gauss{n+k-1}{k}\alpha^kX^k=\prod_{k=1}^n\frac{1}{1-\alpha X^k}.
\end{align*}

\item Replacing $\alpha$ by $-X\alpha$ in \eqref{E1} yields
\[\prod_{k=1}^\infty(1-\alpha X^k)=\prod_{k=0}^\infty(1-\alpha X^{k+1})=\sum_{k=0}^\infty(-1)^k\frac{\alpha^kX^{\binom{k}{2}+k}}{X^k!}.\]
Now we multiply with the right hand side of \eqref{E2}:
\begin{align*}
\sum_{k=0}^\infty(-1)^k&\frac{\alpha^kX^{\binom{k}{2}+k}}{X^k!}\sum_{k=0}^\infty\frac{\alpha^{k}X^k}{X^k!}=\sum_{n=0}^\infty\sum_{k=0}^n(-1)^k\frac{\alpha^{k+n-k}X^{\binom{k}{2}+n}}{X^k!X^{n-k}!}\\
&=\sum_{n=0}^\infty\frac{\alpha^nX^n}{X^n!}\sum_{k=0}^n(-1)^k\gauss{n}{k}X^{\binom{k}{2}}\overset{\ref{GaussBT}}{=}\sum_{n=0}^\infty\frac{\alpha^nX^n}{X^n!}\prod_{k=0}^{n-1}(1-X^k)=1.\qedhere
\end{align*}
\end{enumerate}
\end{proof}

Unlike Gauss' theorem, \autoref{coreuler} only applies to power series, but not to Laurent series.
If $\alpha\in (X)$, we can apply \eqref{E2} with $\alpha X^{-1}$ to obtain
\begin{equation}\label{E3}
\prod_{k=0}^\infty\frac{1}{1-\alpha X^k}=\sum_{k=0}^\infty\frac{\alpha^k}{X^k!}.
\end{equation}

Finally, we are in a position to derive one of the most powerful theorems on power series.

\begin{Thm}[\textsc{Jacobi}'s triple product identity]\label{jacobi}\index{Jacobi's triple product identity}
For every $\alpha\in \CC((X))\setminus\{0\}$ the following holds
\[\boxed{\prod_{k=1}^\infty(1-X^{2k})(1+\alpha X^{2k-1})(1+\alpha^{-1}X^{2k-1})=\sum_{k=-\infty}^\infty \alpha^kX^{k^2}.}\]
\end{Thm}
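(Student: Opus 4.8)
The plan is to exploit the infinite product form of Gauss' binomial theorem from \autoref{GaussBT}, namely \eqref{E1} and its companion \eqref{E2}, applied to a suitably shifted variable. First I would observe that the factor $\prod_{k=1}^\infty(1-X^{2k})$ should be thought of as a normalizing constant: by \eqref{E2} with $X$ replaced by $X^2$ and $\alpha$ replaced by $1$, we have $\prod_{k=1}^\infty(1-X^{2k})=\bigl(\sum_{k=0}^\infty\frac{(X^2)^k}{(X^2)^k!}\bigr)^{-1}$, but more usefully it will cancel against denominators that appear when I expand the other two products. So the strategy is: expand $\prod_{k=1}^\infty(1+\alpha X^{2k-1})$ using \eqref{E1} (with $X\mapsto X^2$, and with the role of ``$\alpha$'' played by $\alpha X$ so that $\alpha X\cdot(X^2)^{k-1}=\alpha X^{2k-1}$), expand $\prod_{k=1}^\infty(1+\alpha^{-1}X^{2k-1})$ the same way with $\alpha^{-1}X$ in place of ``$\alpha$'', multiply the two resulting series together via the Cauchy product, and finally multiply by $\prod_{k=1}^\infty(1-X^{2k})$.

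Concretely, after these substitutions \eqref{E1} gives
\[
\prod_{k=1}^\infty(1+\alpha X^{2k-1})=\sum_{i=0}^\infty\frac{\alpha^iX^{i}X^{2\binom{i}{2}}}{(X^2)^i!}=\sum_{i=0}^\infty\frac{\alpha^iX^{i^2}}{(X^2)^i!},
\]
using $2\binom{i}{2}+i=i^2$, and similarly $\prod_{k=1}^\infty(1+\alpha^{-1}X^{2k-1})=\sum_{j=0}^\infty\frac{\alpha^{-j}X^{j^2}}{(X^2)^j!}$. Taking the Cauchy product, the coefficient of a given power of $\alpha$, say $\alpha^n$ with $n\ge 0$ (the case $n<0$ being symmetric under $\alpha\leftrightarrow\alpha^{-1}$), collects the terms with $i-j=n$, giving $\sum_{j=0}^\infty\frac{X^{(j+n)^2+j^2}}{(X^2)^{j+n}!\,(X^2)^j!}$. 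Pulling out $X^{n^2}$ and writing $(j+n)^2+j^2=n^2+2j^2+2jn=n^2+2j(j+n)$, this becomes $X^{n^2}\sum_{j=0}^\infty\frac{(X^2)^{j(j+n)}}{(X^2)^{j}!\,(X^2)^{j+n}!}$. So after multiplying by the constant $\prod_{k=1}^\infty(1-X^{2k})$, proving the identity reduces to showing, for each fixed $n\ge 0$,
\[
\Bigl(\prod_{k=1}^\infty(1-X^{2k})\Bigr)\sum_{j=0}^\infty\frac{(X^2)^{j(j+n)}}{(X^2)^{j}!\,(X^2)^{j+n}!}=1.
\]

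The hard part will be this last ``constant-term'' identity, which is exactly the place where the $(1-X^{2k})$ factor earns its keep. The cleanest route I can see is to recognize the sum as (up to the shift by $n$) a specialization of \eqref{E2} or \eqref{E3}: writing $Y:=X^2$ and using $(X^2)^{j+n}!=(X^2)^n!\cdot\prod_{i=n+1}^{n+j}(1-X^{2i})$, one can try to identify $\sum_j\frac{Y^{j(j+n)}}{Y^j!\,Y^{j+n}!}$ with $\frac{1}{Y^n!}\prod_{k=1}^\infty\frac{1}{1-Y^k}$ by matching it against \eqref{E2} after a substitution like $\alpha\mapsto Y^n$ — indeed \eqref{E2} with $\alpha=Y^n$ reads $\prod_{k=1}^\infty\frac{1}{1-Y^{n+k}}=\sum_{k\ge0}\frac{Y^{nk}Y^k}{Y^k!}$, which is not quite it, so I would instead look for the right $q$-series identity (a finite or limiting form of the $q$-binomial theorem) that produces $Y^{j(j+n)}$ in the numerator; a telescoping/induction on $n$ using the Pascal-type recurrence \eqref{lemgauss} is a reliable fallback. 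Once that constant-term identity is in hand, reassembling the pieces — Cauchy product, the exponent bookkeeping $i^2$ and $n^2+2j(j+n)$, and summing over all $n\in\ZZ$ — is routine, and convergence throughout is guaranteed because $\inf$ of the general term tends to $\infty$, as already checked in the proof of \eqref{E1}. One should also note at the outset that it suffices to prove the identity for $\alpha\in(X)\setminus\{0\}$ or even $\alpha=X$ (treating $\alpha$ as an independent variable and substituting afterward, as in \autoref{important}), though here the products genuinely involve both $\alpha$ and $\alpha^{-1}$, so the Laurent-series setting $\alpha\in\CC((X))\setminus\{0\}$ has to be kept — but the formal manipulations above are valid verbatim in $\CC((X))[[\,\cdot\,]]$ since all exponent estimates only used $\inf(\alpha)$ finite.
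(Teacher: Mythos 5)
Your reduction is correct as far as it goes (and it is a genuinely different route from the paper, which follows Andrews by expanding only the $\alpha^{-1}$-product via \eqref{E1} and never compares coefficients of $\alpha$), but it stops exactly at the point where the real content of the triple product lies. After the Cauchy product and the exponent bookkeeping, everything hinges on the identity
\[\prod_{k=1}^\infty(1-Y^k)\sum_{j=0}^\infty\frac{Y^{j(j+n)}}{Y^j!\,Y^{j+n}!}=1\qquad(Y=X^2,\ n\ge 0),\]
the Durfee-rectangle identity. This is true, but it is not a specialization of \eqref{E1}--\eqref{E3}: your one concrete attempt (matching against \eqref{E2} with $\alpha=Y^n$) is, as you yourself note, ``not quite it'', and ``look for the right $q$-series identity'' or ``telescoping/induction on $n$ using \eqref{lemgauss}'' is a research plan, not a proof. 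Proving it with the paper's toolkit requires either a combinatorial Durfee argument or a finite Gaussian-binomial identity proved by induction with \eqref{lemgauss} and then a limiting argument of the kind used for \autoref{coreuler} and \autoref{RRI} — work of at least the same order as the part of Andrews' proof you are replacing. As written, the proposal defers the theorem's difficulty into an unproven lemma, so it is not a complete proof.

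A second, smaller gap: for a \emph{fixed} $\alpha\in\CC((X))\setminus\{0\}$ you cannot ``collect the coefficient of $\alpha^n$'' — both sides are single Laurent series in $X$, and the decomposition into powers of $\alpha$ is not intrinsic. To make that step legitimate you must first prove the identity with $\alpha$ a genuine independent indeterminate, e.g.\ in $R[[X]]$ with $R=\CC[\alpha,\alpha^{-1}]$ (where the coefficient of each $X^N$ is a Laurent polynomial in $\alpha$, so comparing $\alpha$-coefficients is meaningful), and then justify that substituting a nonzero Laurent series for $\alpha$ is a continuous ring homomorphism into $\CC((X))$; \autoref{important} only licenses substituting into $X$ of \eqref{E1} while keeping $\alpha$ fixed, not coefficient extraction in $\alpha$. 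This can be repaired, but it needs to be said explicitly — the paper's own proof is organized precisely so as to avoid this issue by manipulating the series with $\alpha\in\CC((X))$ throughout.
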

\begin{proof}
We follow Andrews~\cite{AndrewsJTP}. It is easy to see that both sides of the equation are well-defined Laurent series. By replacing $\alpha$ with $\alpha^{-1}$ (and $k$ by $-k$ on the right hand side) if necessary, we may assume that $\alpha\in\CC[[X]]$. 
According to \autoref{important} we are allowed to substitute $X$ by $X^2$ and simultaneously $\alpha$ by $\alpha^{-1} X$ in \eqref{E1}:
\begin{align*}
\prod_{k=1}^\infty (1+\alpha^{-1} X^{2k-1})&=\prod_{k=0}^\infty (1+\alpha^{-1} X^{2k+1})=\sum_{k=0}^\infty\frac{\alpha^{-k}X^{k^2}}{(1-X^2)\ldots(1-X^{2k})}\\
&=\prod_{k=1}^\infty\frac{1}{1-X^{2k}}\sum_{k=0}^\infty\alpha^{-k}X^{k^2}\prod_{l=0}^\infty(1-X^{2l+2k+2}).
\end{align*}
Since the inner product vanishes for negative $k$, we can extend the summation to all $k\in\ZZ$. 
A second application of \eqref{E1} with $X^2$ instead of $X$ and $-X^{2k+2}$ in the role of $\alpha$ allows us to rewrite the last product of the right hand side. This shows 
\begin{align*}
\prod_{k=1}^\infty (1+\alpha^{-1} X^{2k-1})(1-X^{2k})&=\sum_{k=-\infty}^\infty\alpha^{-k}X^{k^2}\sum_{l=0}^\infty\frac{(-1)^lX^{l^2+l+2kl}}{(1-X^2)\ldots(1-X^{2l})}\\
&=\sum_{l=0}^\infty \frac{(-\alpha X)^l}{(1-X^2)\ldots(1-X^{2l})}\sum_{k=-\infty}^\infty X^{(k+l)^2}\alpha^{-k-l}.
\end{align*}
After the index shift $k\mapsto -k-l$, the inner sum does not depend on $l$ anymore. We then apply \eqref{E3} on the first sum with $X$ replaced by $X^2$ and $-\alpha X\in (X)$ instead of $\alpha$:
\begin{align*}
\prod_{k=1}^\infty (1+\alpha^{-1} X^{2k-1})(1-X^{2k})=\prod_{k=0}^\infty\frac{1}{1+\alpha X^{2k+1}}\sum_{k=-\infty}^\infty X^{k^2}\alpha^{k}=\prod_{k=1}^\infty\frac{1}{1+\alpha X^{2k-1}}\sum_{k=-\infty}^\infty X^{k^2}\alpha^{k}.
\end{align*}
We are done by rearranging terms.
\end{proof}

\begin{Rmk}\label{import2}
Since the above proof is just a combination of \eqref{E1} and \eqref{E3}, we are still allowed to replace $X$ and $\alpha$ individually.
\end{Rmk}

A (somewhat analytical) proof only making use of \eqref{E1} can be found in \cite{Zhu}. There are numerous purely combinatorial proofs like \cite{Kolitsch,Lewis,Sudler,Sylvester,Wright,Zolnowsky}, which are meaningful for formal power series.

\begin{Ex}\hfill
\begin{enumerate}[(i)]
\item Choosing $\alpha\in\{\pm1,X\}$ in \autoref{jacobi} reveals the following elegant identities:
\begin{align}
\prod_{k=1}^\infty(1-X^{2k})(1+X^{2k-1})^2&=\sum_{k=-\infty}^\infty X^{k^2},\label{JTP1}\\
\prod_{k=1}^\infty\frac{(1-X^k)^2}{1-X^{2k}}=\prod_{k=1}^\infty(1-X^{2k})(1-X^{2k-1})^2&=\sum_{k=-\infty}^\infty(-1)^k X^{k^2},\label{JTP2}\\
\prod_{k=1}^\infty(1-X^{2k})(1+X^{2k})^2&=\frac{1}{2}\sum_{k=-\infty}^\infty X^{k^2+k}=\sum_{k=0}^\infty X^{k^2+k},\label{JTP3}
\end{align}
where in \eqref{JTP3} we made use of the bijection $k\mapsto -k-1$ on $\ZZ$. These formulas are needed in the proof of \autoref{thmLJ}.
In \eqref{JTP3} we find $X$ only to even powers. By equating the corresponding coefficients, we may replace $X^2$ by $X$ to obtain
\[\prod_{k=1}^\infty(1-X^{2k})(1+X^k)=\prod_{k=1}^\infty(1-X^{k})(1+X^{k})^2=\sum_{k=0}^\infty X^{\frac{k^2+k}{2}}.\]
A very similar identity will be proved in \autoref{Jhoch3thm}.

\item Relying on \autoref{import2}, we can replace $X$ by $X^3$ and $\alpha$ by $-X$ at the same time in \autoref{jacobi}. This leads to 
\[\prod_{k=1}^\infty (1-X^{6k})(1-X^{6k-2})(1-X^{6k-4})=\sum_{k=-\infty}^\infty (-1)^kX^{3k^2+k}.\]
Substituting $X^2$ by $X$ yields Euler's celebrated \emph{pentagonal number theorem}:\index{pentagonal number theorem}
\begin{equation}\label{EPTN}
\boxed{\prod_{k=1}^\infty(1-X^k)=\prod_{k=1}^\infty(1-X^{3k})(1-X^{3k-1})(1-X^{3k-2})=\sum_{k=-\infty}^\infty(-1)^kX^{\frac{3k^2+k}{2}}.}
\end{equation}
There is a well-known combinatorial proof of \eqref{EPTN} by Franklin, which is reproduced in the influential book by Hardy--Wright~\cite[Section~19.11]{Hardy}.

\item The following formulas arise in a similar manner by substituting $X$ by $X^5$ and selecting $\alpha\in\{-X,-X^3\}$ afterward 
(this is allowed by \autoref{import2}):
\begin{align}
\prod_{k=1}^\infty (1-X^{5k})(1-X^{5k-2})(1-X^{5k-3})=\sum_{k=-\infty}^\infty (-1)^kX^{\frac{5k^2+k}{2}},\label{mod51}\\
\prod_{k=1}^\infty (1-X^{5k})(1-X^{5k-1})(1-X^{5k-4})=\sum_{k=-\infty}^\infty (-1)^kX^{\frac{5k^2+3k}{2}}.\label{mod52}
\end{align}
This will be used in the proof of \autoref{RRI}.
\end{enumerate}
\end{Ex}

\begin{A}[\textsc{Ramanujan}'s theta function]\index{Ramanujan's theta function}
Let $\alpha,\beta\in\CC((X))$ such that $\alpha\beta\in(X)$. Prove
\[\prod_{k=1}^\infty(1-\alpha^k\beta^k)(1+\alpha^k\beta^{k-1})(1+\alpha^{k-1}\beta^k)=\sum_{k=-\infty}^\infty \alpha^{\frac{k^2+k}{2}}\beta^{\frac{k^2-k}{2}}.\]
\end{A}

\begin{A}
Prove
\begin{flalign*}
(\textnormal{a})&&\sum_{k=-\infty}^\infty X^{k^2}\sum_{k=-\infty}^\infty (-1)^kX^{k^2}&=\Bigl(\sum_{k=0}^\infty (-1)^kX^{2k^2}\Bigr)^2,\\
(\textnormal{b})&&2\sum_{k=-\infty}^\infty X^{k^2}\sum_{k=-\infty}^\infty X^{k^2+k}&=\Bigl(\sum_{k=-\infty}^\infty X^{\frac{k^2+k}{2}}\Bigr)^2&&(\textsc{Cauchy}),\index{Cauchy}\\
(\textnormal{c})&&\Bigl(\sum_{k=-\infty}^\infty X^{k^2}\Bigr)^4&=\Bigl(\sum_{k=-\infty}^\infty(-1)^kX^{k^2}\Bigr)^4+X\Bigl(\sum_{k=-\infty}^\infty X^{k^2+k}\Bigr)^4&&(\textsc{Gauss}).\index{Gauss}
\end{flalign*}
\textit{Hint:} $\alpha^4-\beta^4=(\alpha+\beta)(\alpha-\beta)(\alpha+\ii\beta)(\alpha-\ii\beta)$.
\end{A}

To obtain yet another triple product identity, we first consider a finite version due to Hirschhorn~\cite{Hirschhornpoly}.\index{Hirschhorn}

\begin{Lem}\label{Hirschhorn}
For all $n\in\NN_0$,
\begin{equation}\label{HH}
\prod_{k=1}^n(1-X^k)^2=\sum_{k=0}^n(-1)^k(2k+1)X^{\frac{k^2+k}{2}}\gauss{2n+1}{n-k}.
\end{equation}
\end{Lem}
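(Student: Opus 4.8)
The plan is to prove \eqref{HH} by induction on $n$, exploiting the Gaussian recurrence \eqref{lemgauss} to match the two sides. For $n=0$ both sides equal $1$. For the inductive step I would write $\prod_{k=1}^{n+1}(1-X^k)^2 = (1-X^{n+1})^2\sum_{k=0}^n(-1)^k(2k+1)X^{(k^2+k)/2}\gauss{2n+1}{n-k}$ and try to show this equals $\sum_{k=0}^{n+1}(-1)^k(2k+1)X^{(k^2+k)/2}\gauss{2n+3}{n+1-k}$. The key identity to establish is a relation expressing $\gauss{2n+3}{n+1-k}$ in terms of $\gauss{2n+1}{n-j}$ for a few nearby values of $j$, obtained by applying \eqref{lemgauss} twice (passing from $2n+3$ down to $2n+1$). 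Concretely, $\gauss{2n+3}{m} = \gauss{2n+2}{m} + X^{2n+3-m}\gauss{2n+2}{m-1}$ and then each $\gauss{2n+2}{\cdot}$ splits again, so $\gauss{2n+3}{m}$ becomes a combination of $\gauss{2n+1}{m}$, $\gauss{2n+1}{m-1}$, $\gauss{2n+1}{m-2}$ with explicit powers of $X$ as coefficients.

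After substituting this three-term expansion, the right-hand side of the target becomes a sum over $\gauss{2n+1}{n-k}$, $\gauss{2n+1}{n-k\pm1}$, which after reindexing I would collect as $\sum_k c_k(X)\,\gauss{2n+1}{n-k}$ for some explicit Laurent polynomials $c_k(X)$ in $X$. The claim then reduces to the purely "scalar" identity $c_k(X) = (1-X^{n+1})^2(-1)^k(2k+1)X^{(k^2+k)/2}$ for every $k$ (with the boundary terms $k=n+1$, and the vanishing of out-of-range contributions, handled separately since $\gauss{2n+1}{n+1}=\gauss{2n+1}{-1}=0$ forces $m$ to stay in range). Because the exponents $(k^2+k)/2$ grow quadratically, the shifts $k\mapsto k\pm1$ produce exponent differences that are linear in $k$, and these should combine with the powers of $X$ coming from \eqref{lemgauss} to give exactly the factor $(1-X^{n+1})^2$; verifying this is a finite bookkeeping computation.

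I expect the main obstacle to be precisely this bookkeeping: tracking the three-term expansion of $\gauss{2n+3}{n+1-k}$, performing the index shifts correctly, and checking that the coefficients of $X^{n+1}$ and $X^{2(n+1)}$ assemble with the right signs to form $(1-X^{n+1})^2$ rather than something else. A cleaner route — which I would try first — is to mimic the proof of \autoref{GaussBT}(i): let all sums run over $k\in\ZZ$ so that the index shifts are transparent, since $\gauss{2n+1}{n-k}=0$ automatically outside $-n-1\le k\le n$ (indeed $\gauss{2n+1}{j}=0$ unless $0\le j\le 2n+1$). With that convention the "edge" cases disappear and the whole step is a single manipulation: multiply by $(1-X^{n+1})^2$, expand $\gauss{2n+3}{\cdot}$ via two applications of \eqref{lemgauss}, shift indices, and read off that the result is $\sum_{k}(-1)^k(2k+1)X^{(k^2+k)/2}\gauss{2n+3}{n+1-k}$. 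An alternative, if the recurrence route proves unwieldy, is to interpret \eqref{HH} through Jacobi's triple product: the right-hand side, after letting $n\to\infty$, should limit to $\prod(1-X^k)^3 = \sum_{k\ge0}(-1)^k(2k+1)X^{(k^2+k)/2}$ (Jacobi's identity for the cube of Euler's product), so \eqref{HH} is a finite polynomial refinement of that classical formula; but for a self-contained proof the induction on $n$ using \eqref{lemgauss} is the most direct and is what I would write up.
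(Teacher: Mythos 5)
Your overall strategy, induction on $n$ with repeated use of \eqref{lemgauss} to relate Gaussian coefficients of upper index $2n+3$ (or $2n+1$) to those of upper index two lower, is exactly the paper's route (the paper writes $Q_n$ for the right-hand side of \eqref{HH} and shows $Q_n=(1-X^n)^2Q_{n-1}$). But there is a genuine gap in how you propose to execute it. The device you invoke to kill the boundary issues is false as stated: $\gauss{2n+1}{n+1}$ is \emph{not} zero (it equals $\gauss{2n+1}{n}$; a Gaussian coefficient $\gauss{m}{j}$ vanishes only for $j<0$ or $j>m$), so replacing $\sum_{k=0}^{n}$ by $\sum_{k\in\ZZ}$ is not harmless --- the added terms with $-n-1\le k\le -1$ are all nonzero, and the $\ZZ$-sum is in fact \emph{twice} the original. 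The missing idea, which is the one extra ingredient the paper uses, is that the summand $(-1)^k(2k+1)X^{(k^2+k)/2}\gauss{2n+1}{n-k}$ is invariant under $k\mapsto -k-1$ (the weight $(-1)^k(2k+1)$ and the exponent $\frac{k^2+k}{2}$ are invariant, and $\gauss{2n+1}{n+k+1}=\gauss{2n+1}{n-k}$ by the symmetry of Gaussian coefficients); hence one may sum over all of $\ZZ$ \emph{and divide by $2$}, and only then are the index shifts free of edge effects. Without this symmetrization, your finite-range version produces nonvanishing boundary terms after the shifts (e.g.\ a $k=-1$ term involving $\gauss{2n-1}{n}$) whose cancellation you have not identified, so the step "handled separately since the out-of-range coefficients vanish" would fail.

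The second problem is that the actual content of the induction step is deferred to "bookkeeping". In the paper this is a threefold application of \eqref{lemgauss}: two of the four resulting sums give $(1+X^{2n})Q_{n-1}$ directly, while the other two, after the shifts $k\mapsto k+1$ and $k\mapsto k-1$, carry the weights $2k+3$ and $2k-1$, which combine to $2k+1$ only because one is working with the halved sum over all of $\ZZ$; this yields $-2X^nQ_{n-1}$ and hence $Q_n=(1+X^{2n}-2X^n)Q_{n-1}=(1-X^n)^2Q_{n-1}$. That averaging of $2k+3$ and $2k-1$ is precisely where the division by $2$ enters, so the computation cannot be carried out along the lines you sketch without the symmetry observation; as written, your proposal neither performs the computation nor supplies the device that makes it work. (Your Jacobi-triple-product remark is fine as motivation but, as you note, only identifies the $n\to\infty$ limit and cannot prove the finite identity --- indeed in the paper \eqref{Jhoch3} is \emph{deduced} from this lemma.)
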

\begin{proof}
The proof is by induction on $n$: Both sides are $1$ if $n=0$. So assume $n\ge 1$ and let $Q_n$ be the right hand side of \eqref{HH}. 
The summands of $Q_n$ are invariant under the index shift $k\mapsto -k-1$ and vanish for $k>n$. Hence, we may sum over $k\in\ZZ$ and divide by $2$. A threefold application of \eqref{lemgauss} gives:
\begin{align*}
Q_n&=X^n\frac{1}{2}\sum_{k=-\infty}^\infty(-1)^k(2k+1)X^{\frac{k^2-k}{2}}\gauss{2n}{n-k}+\frac{1}{2}\sum(-1)^k(2k+1)X^{\frac{k^2+k}{2}}\gauss{2n}{n-k-1}\\
&=X^n\frac{1}{2}\sum(-1)^k(2k+1)X^{\frac{k^2-k}{2}}\gauss{2n-1}{n-k}+X^{2n}\frac{1}{2}\sum(-1)^k(2k+1)X^{\frac{k^2+k}{2}}\gauss{2n-1}{n-k-1}\\
&\quad+\frac{1}{2}\sum(-1)^k(2k+1)X^{\frac{k^2+k}{2}}\gauss{2n-1}{n-k-1}+X^n\frac{1}{2}\sum(-1)^k(2k+1)X^{\frac{k^2+3k+2}{2}}\gauss{2n-1}{n-k-2}.
\end{align*}
The second and third sum amount to $(1+X^{2n})Q_{n-1}$. We apply the transformations $k\mapsto k+1$ and $k\mapsto k-1$ in the first sum and fourth sum respectively:
\begin{align*}
Q_n&=(1+X^{2n})Q_{n-1}-X^n\frac{1}{2}\sum(-1)^k\biggl((2k+3)X^{\frac{k^2+k}{2}}\gauss{2n-1}{n-k-1}+(2k-1)X^{\frac{k^2+k}{2}}\gauss{2n-1}{n-k-1}\biggr)\\
&=(1+X^{2n})Q_{n-1}-2X^nQ_{n-1}=(1-X^n)^2Q_{n-1}=\prod_{k=1}^n(1-X^k)^2.\qedhere
\end{align*}
\end{proof}

\begin{Thm}[\textsc{Jacobi}]\label{Jhoch3thm}\index{Jacobi}
We have
\begin{equation}\label{Jhoch3}
\boxed{\prod_{k=1}^\infty(1-X^k)^3=\sum_{k=0}^\infty(-1)^k(2k+1)X^{\frac{k^2+k}{2}}.}
\end{equation}
\end{Thm}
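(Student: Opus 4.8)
The plan is to obtain Jacobi's identity~\eqref{Jhoch3} as a limiting case of the finite identity~\eqref{HH} in \autoref{Hirschhorn}. The left-hand side of \eqref{HH} is $\prod_{k=1}^n(1-X^k)^2$, which converges to $\prod_{k=1}^\infty(1-X^k)^2$ as $n\to\infty$ (it is a product over a null sequence, so \autoref{infsum} applies, and the partial products form a Cauchy sequence). Hence it suffices to show that the right-hand side of \eqref{HH} converges to $\prod_{k=1}^\infty(1-X^k)\sum_{k=0}^\infty(-1)^k(2k+1)X^{(k^2+k)/2}$, because dividing the resulting identity $\prod(1-X^k)^2 = \prod(1-X^k)\sum(-1)^k(2k+1)X^{(k^2+k)/2}$ by $\prod(1-X^k)$ (which is invertible, having constant term $1$) yields exactly \eqref{Jhoch3}.

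So the real work is the limit of the sum $\sum_{k=0}^n(-1)^k(2k+1)X^{(k^2+k)/2}\gauss{2n+1}{n-k}$. First I would fix a coefficient degree $m$ and check that for $k$ large (roughly $k>\sqrt{2m}$) the term $X^{(k^2+k)/2}$ already has $\inf > m$, so only finitely many values of $k$ matter; for those $k$, I need $\gauss{2n+1}{n-k}$ to stabilize as $n\to\infty$. The natural guess, matching \autoref{coreuler} and \eqref{E3}, is that $\gauss{2n+1}{n-k}=\gauss{2n+1}{n+k+1}$ tends to $\prod_{j=1}^\infty\frac{1}{1-X^j}$ in the sense of formal power series. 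Concretely, writing $\gauss{2n+1}{n-k}=\dfrac{(1-X^{n+k+2})(1-X^{n+k+3})\cdots(1-X^{2n+1})}{(1-X)(1-X^2)\cdots(1-X^{n-k})}$, the numerator is a finite product of factors $1-X^j$ with $j\ge n+k+2\to\infty$, hence converges to $1$, while the denominator converges to $X^\infty! := \prod_{j\ge1}(1-X^j)$; so $\gauss{2n+1}{n-k}\to\dfrac{1}{\prod_{j\ge1}(1-X^j)}$ independently of $k$. Pulling this common factor out of the (effectively finite) sum gives
\[
\lim_{n\to\infty}\sum_{k=0}^n(-1)^k(2k+1)X^{\frac{k^2+k}{2}}\gauss{2n+1}{n-k}
=\frac{1}{\prod_{j\ge1}(1-X^j)}\sum_{k=0}^\infty(-1)^k(2k+1)X^{\frac{k^2+k}{2}}.
\]

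Combining the two limits, $\prod_{k=1}^\infty(1-X^k)^2 = \dfrac{1}{\prod_{k\ge1}(1-X^k)}\sum_{k=0}^\infty(-1)^k(2k+1)X^{(k^2+k)/2}$, and multiplying through by $\prod_{k\ge1}(1-X^k)$ finishes the proof. The main obstacle I anticipate is making the interchange of limit and summation rigorous: one must argue carefully, coefficient by coefficient in $X$, that for each target degree $m$ only boundedly many $k$ contribute and that for those $k$ the Gaussian coefficients $\gauss{2n+1}{n-k}$ agree with their limit up to degree $m$ once $n$ is large enough (here the ultrametric estimates of \autoref{ultra} and \autoref{infsum} do the bookkeeping cleanly — the "tail" $X^{n+k+2},\dots$ in the numerator has norm $\le 2^{-(n+k+2)}$, which is the key quantitative input). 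Everything else is routine given \autoref{Hirschhorn} and \autoref{coreuler}.
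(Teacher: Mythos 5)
Your proposal is correct and follows essentially the same route as the paper: both deduce \eqref{Jhoch3} from the finite identity \eqref{HH} of \autoref{Hirschhorn} by a coefficientwise limit as $n\to\infty$. The only difference is organizational: the paper first multiplies \eqref{HH} by $\prod_{l=1}^n(1-X^l)$, so that $\gauss{2n+1}{n-k}\prod_{l=1}^n(1-X^l)$ becomes the polynomial $(1-X^{n-k+1})\cdots(1-X^n)(1-X^{n+k+2})\cdots(1-X^{2n+1})$ and the coefficient of $X^n$ can be compared directly, whereas you keep the Gaussian coefficient, prove $\gauss{2n+1}{n-k}\to\prod_{j\ge1}(1-X^j)^{-1}$ (exactly the kind of estimate used in the paper's proof of \autoref{RRI}), and then cancel the invertible Euler product at the end.
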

\begin{proof}
By \autoref{Hirschhorn}, we have
\begin{align*}
\prod_{k=1}^n(1-X^k)^3&=\sum_{k=0}^n(-1)^k(2k+1)X^{\frac{k^2+k}{2}}\gauss{2n+1}{n-k}\prod_{l=1}^{n}(1-X^l)\\
&=\sum_{k=0}^n(-1)^k(2k+1)X^{\frac{k^2+k}{2}}(1-X^{n-k+1})\ldots(1-X^n)(1-X^{n+k+2})\ldots(1-X^{2n+1}).
\end{align*}
Now the claim follows easily by comparing the coefficient of $X^n$ as in the proof of \autoref{coreuler}.
\end{proof}

In an analytic framework, \eqref{Jhoch3} can be derived from \autoref{jacobi} (see \cite[Theorem~357]{Hardy}). A combinatorial proof was given in \cite{Joichi}.

As a preparation for the infamous Rogers--Ramanujan identities~\cite{RogersR}, we start again with a finite version due to Bressoud~\cite{Bressoud}.\index{Bressoud} The impatient reader may skip these technical results and start right away with the applications in \autoref{seccomb} (\autoref{RRI} is only needed in \autoref{nrpartbi}\eqref{schur1},\eqref{schur2}).

\begin{Lem}\label{Bressoud}
For $n\in\NN_0$,
\begin{align}
\sum_{k=0}^\infty \gauss{n}{k}X^{k^2}&=\sum_{k=-\infty}^\infty (-1)^k\gauss{2n}{n+2k}X^{\frac{5k^2+k}{2}},\label{Bress1}\\
\sum_{k=0}^\infty \gauss{n}{k}X^{k^2+k}&=\sum_{k=-\infty}^\infty (-1)^k\gauss{2n+1}{n+2k}X^{\frac{5k^2-3k}{2}}.\label{Bress2}
\end{align}
\end{Lem}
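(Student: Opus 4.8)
The plan is to prove both identities \eqref{Bress1} and \eqref{Bress2} simultaneously by induction on $n$, following Bressoud's original argument. The strategy is to set up a recurrence that the Gaussian coefficients on the left-hand sides satisfy via \eqref{lemgauss}, and show that the right-hand sides satisfy the same recurrence with the same initial data. First I would verify the base case $n=0$: in \eqref{Bress1} both sides reduce to $1$ (only $k=0$ survives on each side since $\gauss{0}{k}=\delta_{k,0}$), and similarly for \eqref{Bress2}. For the induction step, I would apply the recurrence $\gauss{n}{k}=\gauss{n-1}{k}+X^{n-k}\gauss{n-1}{k-1}$ (or the other form in \eqref{lemgauss}, chosen to make the exponent bookkeeping cleanest) to the left-hand side of \eqref{Bress1}, splitting the sum $\sum_k\gauss{n}{k}X^{k^2}$ into two pieces; after an index shift $k\mapsto k+1$ in the second piece and collecting the power of $X$ from $k^2$ versus $(k+1)^2$, the two pieces should reassemble into a combination of $\sum_k\gauss{n-1}{k}X^{k^2}$ and $\sum_k\gauss{n-1}{k}X^{k^2+k}$ — that is, into the left-hand sides of both \eqref{Bress1} and \eqref{Bress2} at level $n-1$. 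This is the crucial observation: the two identities are coupled, so neither can be proved alone.

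Next I would carry out the matching manipulation on the right-hand sides. Here the summation runs over all $k\in\ZZ$, which (as in the proofs of \autoref{Hirschhorn} and \autoref{Jhoch3thm}) makes index shifts transparent. Applying \eqref{lemgauss} to $\gauss{2n}{n+2k}$ and $\gauss{2n+1}{n+2k}$, together with shifts of the summation index $k$ designed to realign the quadratic exponents $\frac{5k^2\pm k}{2}$, $\frac{5k^2\pm 3k}{2}$, one expresses the right-hand side of \eqref{Bress1} at level $n$ in terms of the right-hand sides of \eqref{Bress1} and \eqref{Bress2} at level $n-1$, with exactly the same coefficients $1$ and $X^{\text{(something)}}$ that appeared on the left. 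Then the induction hypothesis closes the argument for \eqref{Bress1}, and the analogous computation (peeling $\gauss{2n+1}{n+2k}$ down to Gaussian coefficients at $2n$ and $2n-1$, or $2n+1$ and $2n$) closes \eqref{Bress2}.

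The main obstacle I anticipate is purely bookkeeping: the exponents $\frac{5k^2+k}{2}$ and $\frac{5k^2-3k}{2}$ are quadratic in $k$, so an index shift $k\mapsto k+c$ changes them by a linear-in-$k$ amount, and one must track very carefully how the extra powers of $X$ coming from $\gauss{n}{k}$'s recurrence (which carry exponents linear in $k$ and $n$) combine with these. The delicate point is to choose, at each application of \eqref{lemgauss}, which of the two forms in that lemma to use, and which direction to shift $k$, so that everything lines up into the two target sums rather than producing spurious extra terms. A secondary subtlety is ensuring the extension of the left-hand sums (naturally over $k\ge 0$) to all of $\ZZ$ is harmless — this holds because $\gauss{n}{k}=0$ for $k<0$, so the symmetrization costs nothing. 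Once the symmetric form is in place on both sides, the recurrences should match term by term, and the induction is routine.
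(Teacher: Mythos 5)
Your plan is essentially the paper's proof (which follows Chapman's simplification of Bressoud): one verifies $\alpha_0=\beta_0=\tilde\alpha_0=\tilde\beta_0=1$ and shows that both pairs of sides satisfy the same coupled recurrences $\alpha_n=\alpha_{n-1}+X^n\beta_{n-1}$ and $\beta_n-X^n\alpha_n=(1-X^n)\beta_{n-1}$, exactly the coupled induction you describe, using \eqref{lemgauss} and index shifts over all of $\ZZ$. The only ingredient you leave implicit under ``bookkeeping'' is the cancellation $\sum_k(-1)^k\gauss{2n-2}{n+2k}X^{\frac{5(k^2+k)}{2}}=0$ obtained from the substitution $k\mapsto-k-1$, which the paper needs to make the right-hand sides obey the same recurrences.
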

\begin{proof}
We follow a simplified proof by Chapman~\cite{Chapman}. Let $\alpha_n$ and $\tilde{\alpha}_n$ be the left and the right hand side respectively of \eqref{Bress1}. Similarly, let $\beta_n$ and $\tilde{\beta}_n$ be the left and right hand side respectively of \eqref{Bress2}. Note that all four sums are actually finite.
We show both equations at the same time by establishing a common recurrence relation between $\alpha_n$, $\beta_n$ and $\tilde{\alpha}_n$, $\tilde{\beta}_n$. 

We compute $\alpha_0=\beta_0=\tilde{\alpha}_0=\tilde{\beta}_0=1$. For $n\ge 1$,
\begin{align*}
\alpha_n&\overset{\eqref{lemgauss}}{=}\sum_{k=-\infty}^\infty \biggl(\gauss{n-1}{k}+X^{n-k}\gauss{n-1}{k-1}\biggr)X^{k^2}=\alpha_{n-1}+X^n\sum \gauss{n-1}{k-1}X^{k(k-1)}\\
&=\alpha_{n-1}+X^n\sum \gauss{n-1}{k}X^{k(k+1)}=\alpha_{n-1}+X^n\beta_{n-1},\\
\beta_n-X^n\alpha_n&=\sum_{k=-\infty}^\infty \gauss{n}{k}X^{k^2+k}(1-X^{n-k})=\sum \frac{X^n!}{X^k!X^{n-k}!}X^{k^2+k}(1-X^{n-k})\\
&=(1-X^n)\sum \gauss{n-1}{k}X^{k^2+k}=(1-X^n)\beta_{n-1}.
\end{align*}
These recurrences characterize $\alpha_n$ and $\beta_n$ uniquely. The familiar index transformation $k\mapsto -k-1$ implies $\sum(-1)^k\gauss{2n-2}{n+2k}X^{\frac{5(k^2+k)}{2}}=0$. This is used in the following computation:
\begin{align*}
\tilde{\alpha}_n-\tilde{\alpha}_{n-1}&=\sum_{k=-\infty}^\infty (-1)^k\biggl(\gauss{2n}{n+2k}-\gauss{2n-2}{n-1+2k}\biggr)X^{\frac{5k^2+k}{2}}\\
&\overset{\eqref{lemgauss}}{=}\sum (-1)^k\biggl(\gauss{2n-1}{n+2k}+X^{n-2k}\gauss{2n-1}{n+2k-1}-\gauss{2n-2}{n-1+2k}\biggr)X^{\frac{5k^2+k}{2}}\\
&\overset{\eqref{lemgauss}}{=}\sum (-1)^k\biggl(X^{n+2k}\gauss{2n-2}{n+2k}+X^{n-2k}\gauss{2n-1}{n+2k-1}\biggr)X^{\frac{5k^2+k}{2}}=X^n\tilde{\beta}_{n-1},\\
\tilde{\beta}_n-X^n\tilde{\alpha}_n&=\sum_{k=-\infty}^\infty (-1)^k\biggl(\gauss{2n+1}{n+2k}-X^{n+2k}\gauss{2n}{n+2k}\biggr)X^{\frac{5k^2-3k}{2}}\\
&=\sum (-1)^k\gauss{2n}{n+2k-1}X^{\frac{5k^2-3k}{2}}\\
&=\sum (-1)^k\biggl(\gauss{2n-1}{n+2k-1}+X^{n-2k+1}\gauss{2n-1}{n+2k-2}\biggr)X^{\frac{5k^2-3k}{2}}\\
&=\tilde{\beta}_{n-1}+X^n\sum (-1)^k\gauss{2n-1}{n+2k-2}X^{\frac{5k^2-7k+2}{2}}\\
&=\tilde{\beta}_{n-1}+X^n\sum (-1)^{1-k}\gauss{2n-1}{n-2k}X^{\frac{5(1-k)^2-7(1-k)+2}{2}}\\
&=\tilde{\beta}_{n-1}-X^n\sum (-1)^k\gauss{2n-1}{n+2k-1}X^{\frac{5k^2-3k}{2}}=(1-X^n)\tilde{\beta}_{n-1}.
\end{align*}
By induction on $n$, it follows that $\alpha_n=\tilde{\alpha}_n$ and $\beta_n=\tilde{\beta}_n$ as desired.
\end{proof}

\begin{Thm}[\textsc{Rogers--Ramanujan} identities]\label{RRI}\index{Rogers--Ramanujan identities}
We have
\begin{empheq}[box=\fbox]{align}
\prod_{k=1}^\infty\frac{1}{(1-X^{5k-1})(1-X^{5k-4})}&=\sum_{k=0}^\infty\frac{X^{k^2}}{X^k!},\label{RR1}\\
\prod_{k=1}^\infty\frac{1}{(1-X^{5k-2})(1-X^{5k-3})}&=\sum_{k=0}^\infty\frac{X^{k^2+k}}{X^k!}.\label{RR2}
\end{empheq}
\end{Thm}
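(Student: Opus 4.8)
The strategy is to take the finite identities from \autoref{Bressoud} and pass to the limit $n\to\infty$, then massage the resulting sums and products into the desired form using the Euler-type identities \eqref{E1}, \eqref{E2} and the Jacobi-triple-product consequences \eqref{mod51}, \eqref{mod52}. I will treat \eqref{RR1} in detail; \eqref{RR2} is entirely analogous, starting from \eqref{Bress2} instead of \eqref{Bress1}.

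\textbf{Step 1: the limit of the left-hand side of \eqref{Bress1}.} As $n\to\infty$, the Gaussian coefficient $\gauss{n}{k}=\frac{1-X^n}{1-X^k}\cdots\frac{1-X^{n-k+1}}{1-X}$ converges coefficientwise to $\frac{1}{X^k!}$ (its difference from $\frac{1}{X^k!}$ lies in $(X^{n-k+1})$, exactly as exploited in the proof of \autoref{GaussBT}(ii)). Since $\inf(X^{k^2})=k^2\to\infty$, the left-hand side $\sum_k\gauss{n}{k}X^{k^2}$ converges to $\sum_{k=0}^\infty\frac{X^{k^2}}{X^k!}$, which is precisely the right-hand side of \eqref{RR1}. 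So it remains only to identify $\lim_{n\to\infty}\tilde\alpha_n$ with the infinite product on the left of \eqref{RR1}.

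\textbf{Step 2: the limit of the right-hand side of \eqref{Bress1}.} Here $\tilde\alpha_n=\sum_{k=-\infty}^\infty(-1)^k\gauss{2n}{n+2k}X^{\frac{5k^2+k}{2}}$. As $n\to\infty$, $\gauss{2n}{n+2k}=\frac{X^{2n}!}{X^{n+2k}!\,X^{n-2k}!}$ converges coefficientwise to $\prod_{k=1}^\infty\frac{1}{1-X^k}$ (for each fixed $k$, since both $n+2k$ and $n-2k$ tend to infinity). Interchanging limit and sum (justified because $\inf\!\bigl(X^{(5k^2+k)/2}\bigr)\to\infty$) gives
\[
\lim_{n\to\infty}\tilde\alpha_n=\prod_{j=1}^\infty\frac{1}{1-X^j}\sum_{k=-\infty}^\infty(-1)^kX^{\frac{5k^2+k}{2}}.
\]

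\textbf{Step 3: recognize the theta-sum and simplify.} The sum $\sum_{k=-\infty}^\infty(-1)^kX^{\frac{5k^2+k}{2}}$ is exactly the right-hand side of \eqref{mod51}, so it equals $\prod_{k=1}^\infty(1-X^{5k})(1-X^{5k-2})(1-X^{5k-3})$. Therefore
\[
\sum_{k=0}^\infty\frac{X^{k^2}}{X^k!}=\prod_{j=1}^\infty\frac{(1-X^{5k})(1-X^{5k-2})(1-X^{5k-3})}{1-X^j}.
\]
Finally I split the product $\prod_j(1-X^j)$ according to residues mod $5$: the factors $1-X^{5k}$, $1-X^{5k-2}$, $1-X^{5k-3}$ in the numerator cancel the corresponding factors in the denominator, leaving exactly $\prod_{k=1}^\infty\frac{1}{(1-X^{5k-1})(1-X^{5k-4})}$, which is the left-hand side of \eqref{RR1}. (All these manipulations of infinite products are legitimate by \autoref{infsum} and \autoref{ultra}, since the relevant factors form null sequences and none of the factors involved is a zero divisor.)

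\textbf{Main obstacle.} The only genuinely delicate point is justifying the two interchanges of limit and infinite summation in Steps 1 and 2 — that is, checking that the passages $n\to\infty$ really commute with $\sum_k$. This is handled uniformly by the ultrametric estimate: in each sum the general term has $\inf$ tending to $\infty$ with $|k|$, so only finitely many terms contribute to any fixed coefficient $X^m$, and for those finitely many $k$ the individual Gaussian coefficients stabilize once $n$ is large. Everything else is bookkeeping with the splitting of products modulo $5$, identical in spirit to the coefficient comparison already used in the proofs of \autoref{coreuler} and \autoref{Jhoch3thm}.
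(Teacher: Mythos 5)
Your proposal is correct and follows essentially the same route as the paper's proof: pass to the limit $n\to\infty$ in \autoref{Bressoud}, identify the theta sum via \eqref{mod51} (resp.\ \eqref{mod52}), and cancel the factors in residue classes $0,\pm2$ modulo $5$. Your termwise justification of the limit interchange (only finitely many $k$ contribute to each coefficient, and each $\gauss{2n}{n+2k}$ stabilizes) is equivalent to the paper's explicit estimate placing $X^{\frac{5k^2+k}{2}}\bigl(\gauss{2n}{n+2k}-\prod_l(1-X^l)^{-1}\bigr)$ in $(X^{n+1})$.
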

\begin{proof}
As in the proof of \autoref{coreuler} we can show that
\begin{align*}
\sum_{k=0}^\infty\frac{X^{k^2}}{X^k!}&=\lim_{n\to\infty}\sum_{k=0}^\infty\frac{X^{k^2}(1-X^n)\ldots(1-X^{n-k+1})}{X^k!}=\lim_{n\to\infty}\sum_{k=0}^\infty\gauss{n}{k}X^{k^2}\\
&\overset{\eqref{Bress1}}{=}\lim_{n\to\infty}\sum_{k=-\infty}^\infty (-1)^k\gauss{2n}{n+2k}X^{\frac{5k^2+k}{2}}.
\end{align*}
Since
\begin{align*}
X^{\frac{5k^2+k}{2}}\Bigl(\gauss{2n}{n+2k}-\prod_{l=1}^\infty\frac{1}{1-X^l}\Bigr)&=X^{\frac{5k^2+k}{2}}\frac{(1-X^{n-2k+1})\ldots(1-X^{2n})(1-X^{n+2k+1})\ldots -1}{(1-X)(1-X^2)\ldots}\\
&\in(X^{\frac{5k^2+k}{2}+n-2|k|+1})\subseteq(X^{n+1}),
\end{align*}
we obtain similarly
\begin{align*}
\lim_{n\to\infty}&\sum_{k=-\infty}^\infty (-1)^k\gauss{2n}{n+2k}X^{\frac{5k^2+k}{2}}=\sum_{k=-\infty}^\infty (-1)^kX^{\frac{5k^2+k}{2}}\prod_{l=1}^\infty\frac{1}{1-X^l}\\
&\overset{\eqref{mod51}}{=}\frac{\prod_{k=1}^\infty(1-X^{5k})(1-X^{5k-2})(1-X^{5k-3})}{\prod_{k=1}^\infty(1-X^k)}=\prod_{k=1}^\infty\frac{1}{(1-X^{5k-1})(1-X^{5k-4})}.
\end{align*}
The second identity follows in the same way by using \eqref{mod52} instead of \eqref{mod51}.
\end{proof}

The Rogers--Ramanujan identities were long believed to lie deeper within the theory of elliptic functions (Hardy~\cite[p. 385]{Hardy}\index{Hardy} wrote “No proof is really easy (and it would perhaps be unreasonable to expect an easy proof).”; Andrews~\cite[p. 105]{Andrews} wrote “…no doubt it would be unreasonable to expect a really easy proof.”). Meanwhile, a great number of proofs were found, some of which are combinatorial (see \cite{AndrewsRR} or the recent book \cite{Sills}). An interpretation of these identities is given in \autoref{nrpartbi} below.
We point out that there are many “finite identities”, like \autoref{Bressoud}, approaching the Rogers--Ramanujan identities (as there are many rational sequences approaching $\sqrt{2}$). 

One can find many more interesting identities, like the \emph{quintuple product},\index{quintuple product} along with comprehensive references (and analytic proofs) in Johnson~\cite{Johnson}. \index{Johnson}

\section{Applications to combinatorics}\label{seccomb}

In this section we bring the abstract theorems and identities of the previous section to life. 
If $a_0,a_1,\ldots$ is a sequence of numbers usually arising from combinatorial context, the power series $\alpha=\sum a_nX^n$ is called the \emph{generating function}\index{generating function} of $(a_n)_n$. This is merely a change of view, but we will see that clever power series manipulations often reveal explicit formulas for $a_n$, which can hardly be seen by inductive arguments. 
As a matter of fact, some generating functions turn out to be \emph{rational}\index{rational function} functions (i.\,e. elements of $\CC(X)$). 
We give a first impression with the most familiar generating functions.

\begin{Ex}\label{exgen}\hfill
\begin{enumerate}[(i)]
\item The number of $k$-element subsets of an $n$-element set is $\binom{n}{k}$ with generating function $(1+X)^n$. 
A $k$-element multi-subset $\{a_1,\ldots,a_k\}$ of $\{1,\ldots,n\}$ with $a_1\le\ldots\le a_k$ (where elements are allowed to appear more than once) can be turned into a $k$-element subset $\{a_1,a_2+1,\ldots,a_k+k-1\}$ of $\{1,\ldots,n+k-1\}$ and vice versa. The number of $k$-element multi-subsets of an $n$-element set is therefore $\binom{n+k-1}{k}$ with generating function $(1-X)^{-n}$ by Newton's binomial theorem. 

\item The number of $k$-dimensional subspaces of an $n$-dimensional vector space over a finite field with $q<\infty$ elements is $\gauss{n}{k}$ evaluated at $X=q$ (indeed there are $(q^n-1)(q^n-q)\ldots(q^n-q^{n-k+1})$ linearly independent $k$-tuples and $(q^k-1)(q^k-q)\ldots(q^k-q^{k-1})$ of them span the same subspace). The generating function is closely related to Gauss' binomial theorem.

\item The \emph{Fibonacci numbers}\index{Fibonacci numbers} $f_n$ are defined by $f_n:=n$\index{*Fn@$f_n$} for $n=0,1$ and $f_{n+1}:=f_n+f_{n-1}$ for $n\ge 1$. The generating function $\alpha$ satisfies $\alpha=X+X^2\alpha+X\alpha$ and is therefore given by $\alpha=\frac{X}{1-X-X^2}$. An application of the partial fraction decomposition \eqref{partial} leads to the well-known \emph{Binet formula}\index{Binet formula}
\[f_n=\frac{1}{\sqrt{5}}\Bigl(\frac{1+\sqrt{5}}{2}\Bigr)^n-\frac{1}{\sqrt{5}}\Bigl(\frac{1-\sqrt{5}}{2}\Bigr)^n.\]

\item The \emph{Catalan numbers}\index{Catalan numbers} $c_n$ are defined by $c_n:=n$\index{*Cn@$c_n$} for $n=0,1$ and 
\[c_n:=\sum_{k=1}^{n-1}c_kc_{n-k}\] 
for $n\ge 2$ (most authors shift the index by $1$). Its generating function $\alpha$ fulfills $\alpha-\alpha^2=X$, i.\,e. it is the reverse of $X-X^2$. This quadratic equation has only one solution $\alpha=\frac{1}{2}(1-\sqrt{1-4X})$ in $\CC[[X]]^\circ$. Now $c_n$ can be computed by Newton's theorem. Slightly more elegant is an application of Lagrange--Bürmann's inversion formula.
Since 
\[\Bigl(\frac{X}{X-X^2}\Bigr)^{n+1}=(1-X)^{-n-1}\overset{\eqref{newtoneq}}{=}\sum_{k=0}^\infty\binom{-n-1}{k}(-1)^kX^k,\]
we compute
\[c_{n+1}=\frac{\res((X-X^2)^{-n-1})}{n+1}=\frac{1}{n+1}(-1)^n\binom{-n-1}{n}=\frac{1}{n+1}\frac{(n+1)\ldots2n}{n!}=\frac{1}{n+1}\binom{2n}{n}.\]
\end{enumerate}
\end{Ex}

We now focus on combinatorial objects which defy explicit formulas. 

\begin{Thm}[\textsc{Lambert}]\index{Lambert}
Let $d_n$\index{*Dn@$d_n$} be the number of \textup(positive\textup) divisors of $n\in\NN$. Then
\[\sum_{n=1}^\infty d_nX^n=\sum_{k=1}^\infty\frac{X^k}{1-X^k}.\]
\end{Thm}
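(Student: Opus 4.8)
The plan is to expand each summand on the right-hand side as a geometric series and then collect coefficients of $X^n$. Since each $X^k$ lies in $(X)$, the identity from \autoref{bspinv} (or \autoref{bsppower}) gives $\frac{X^k}{1-X^k}=\sum_{j=1}^\infty X^{jk}$. The family $\{X^k/(1-X^k):k\ge 1\}$ is a null sequence because $\inf(X^k/(1-X^k))=k\to\infty$, so by \autoref{infsum} the infinite sum on the right is a well-defined power series, and by \eqref{infsums} its coefficient of $X^n$ can be computed termwise.

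The key computation is then the double sum
\[
\sum_{k=1}^\infty\frac{X^k}{1-X^k}=\sum_{k=1}^\infty\sum_{j=1}^\infty X^{jk}=\sum_{n=1}^\infty\Bigl(\sum_{\substack{k,j\ge 1\\ jk=n}}1\Bigr)X^n,
\]
where the rearrangement into a single sum over $n$ is justified because for each fixed $n$ only the finitely many pairs $(j,k)$ with $jk=n$ contribute (equivalently, apply the discrete Fubini theorem). The inner coefficient is precisely the number of ways to write $n=jk$ with $j,k\ge 1$, i.e. the number of divisors $d_n$ of $n$. Hence $\sum_k X^k/(1-X^k)=\sum_n d_n X^n$, which is the claim.

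I expect no real obstacle here; the only points requiring a word of care are the convergence of the outer sum (handled by \autoref{infsum} since $|X^k/(1-X^k)|=2^{-k}\to 0$) and the legitimacy of interchanging the order of summation, which is exactly the discrete Fubini's theorem proved in the excerpt. Everything else is the bijection between divisors $k\mid n$ and factorizations $n=jk$, which is immediate.
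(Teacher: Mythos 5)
Your proof is correct and follows essentially the same route as the paper: expand each term $\frac{X^k}{1-X^k}$ as a geometric series, collect the double sum $\sum_{k,j\ge 1}X^{jk}$, and observe that the coefficient of $X^n$ counts the factorizations $n=jk$, i.e. equals $d_n$. The extra remarks on convergence and the interchange of summation are fine but are exactly the routine justifications the paper leaves implicit.
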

\begin{proof}
We have
\[\sum_{k=1}^\infty\frac{X^k}{1-X^k}=\sum_{k=1}^\infty X^k\sum_{l=0}^\infty X^{kl}=\sum_{k,l=1}^\infty X^{kl}=\sum_{n=1}^\infty d_nX^n.\qedhere\]
\end{proof}

\begin{A}[\textsc{Clausen}]\index{Clausen}
Prove
\[\sum_{n=1}^\infty d_nX^n=\sum_{k=1}^\infty\frac{1+X^k}{1-X^k}X^{k^2}.\]
\textit{Hint:} If $d$ is a divisor of $n$, so is $\frac{n}{d}$.
\end{A}

\begin{Thm}
Let $s(n,q)$ \textup(resp. $k(n,q)$\textup) be the number of similarity classes of \textup(resp. invertible\textup) $n\times n$-matrices over the finite field $\FF_q$. Then
\begin{align*}
1+\sum_{n=1}^\infty s(n,q)X^n&=\prod_{k=1}^\infty\frac{1}{1-qX^k}=\sum_{k=0}^{\infty}\frac{(qX)^k}{X^k!},\\ 
1+\sum_{n=1}^\infty k(n,q)X^n&=\prod_{k=1}^\infty\frac{1-X^k}{1-qX^k}.
\end{align*}
In particular, $s(n,q)$ and $k(n,q)$ are polynomials in $q$.
\end{Thm}
\begin{proof}
Recall that the similarity classes of matrices are represented by rational canonical forms. A matrix $A$ in rational canonical form is described by a series of non-constant monic polynomials $\alpha_1\mid\ldots\mid\alpha_m$ in $\FF_q[X]$ such that $\alpha_1\ldots\alpha_m$ is the characteristic polynomial of $A$.
The same information is encoded in the sequence $\beta_1:=\alpha_1$, $\beta_2:=\frac{\alpha_2}{\alpha_1},\ldots,\beta_m:=\frac{\alpha_m}{\alpha_{m-1}}$ such that 
\[\sum_{k=1}^m(m-k+1)\deg\beta_k=n.\] 
Now the number of monic polynomials of degree $d\ge 1$ is $q^d$. Hence, $s(n,q)$ 
is the coefficient of $X^n$ in 
\begin{equation}\label{expand}
\prod_{k=1}^\infty\Bigl(1+\sum_{i=1}^\infty q^iX^{ik}\Bigr)=\prod_{k=1}^\infty\sum_{i=0}^\infty (qX^k)^i=\prod_{k=1}^\infty\frac{1}{1-qX^k}\overset{\eqref{E2}}{=}\sum_{k=0}^{\infty}\frac{(qX)^k}{X^k!}.
\end{equation}

Now recall that $A$ is invertible if and only if $0$ is not an eigenvalue of $A$. Equivalently, the characteristic polynomial $\alpha_1\ldots\alpha_m$ (and in turn $\alpha_1,\ldots,\alpha_m$) has a non-vanishing constant term. The number of monic polynomials of degree $d\ge 1$ with non-vanishing constant-term is $q^d-q^{d-1}$. Hence, $k(n,q)$ 
is the coefficient of $X^n$ in 
\begin{equation}\label{expandGL}
\prod_{k=1}^\infty\Bigl(1+\sum_{i=1}^\infty (q^i-q^{i-1})X^{ik}\Bigr)=\prod_{k=1}^\infty\Bigl(\sum_{i=0}^\infty (qX^k)^i-X^k\sum_{i=0}^\infty (qX^k)^i\Bigr)=\prod_{k=1}^\infty\frac{1-X^k}{1-qX^k}.
\end{equation}
The last assertion follows by expanding the left hand side of \eqref{expand} and \eqref{expandGL} respectively. 
\end{proof}

\begin{Ex}
From
\begin{align*}
\prod_{k=1}^\infty\frac{1}{1-qX^k}&=(1+qX+q^2X^2+q^3X^3+\ldots)(1+qX^2+q^2X^4+\ldots)(1+qX^3+\ldots)\ldots\\
&=1+qX+(q^2+q)X^2+(q^3+q^2+q)X^3+(q^4+q^3+2q^2+q)X^4+\ldots,
\end{align*}
we obtain $s(4,q)=q^4+q^3+2q^2+q$. 
Similarly, $k(1,q)=q-1$, $k(2,q)=q^2-1$ and $k(3,q)=q^3-q$.
\end{Ex}

\begin{Def}\label{defpart}
A \emph{partition}\index{partition} of $n\in\NN$ is a sequence of positive integers $\lambda=(\lambda_1,\ldots,\lambda_l)$ such that 
\[\lambda_1+\ldots+\lambda_l=n\qquad \text{and}\qquad\lambda_1\ge\ldots\ge\lambda_l.\] 
We call $\lambda_1,\ldots,\lambda_l$ the \emph{parts} of $\lambda$. We will often collect identical parts with exponent notation like $(2,2,2,1,1)=(2^3,1^2)$. 
The set of partitions of $n$ is denoted by $P(n)$ and its cardinality is $p(n):=|P(n)|$.\index{*Pn@$p(n)$}
For $k\in\NN_0$ let $p_k(n)$\index{*Pkn@$p_k(n)$} be the number of partitions of $n$ with each part $\lambda_i\le k$. Finally, let $p_{k,l}(n)$ be the number of partitions of $n$ with each part $\le k$ and at most $l$ parts in total. Clearly, $p_1(n)=p_{n,1}(n)=1$ and $p_n(n)=p_{n,n}(n)=p(n)$. Moreover, $p_{k,l}(n)=0$ whenever $n>kl$. For convenience let $p(0)=p_0(0)=p_{0,0}(0)=1$ ($0$ can be interpreted as the empty sum). 
\end{Def}

\begin{Ex}
The partitions of $n=7$ are
\begin{gather*}
(7),(6,1),(5,2),(5,1^2),(4,3),(4,2,1),(4,1^3),(3^2,1),\\
(3,2^2),(3,2,1^2),(3,1^4),(2^3,1),(2^2,1^3),(2,1^5),(1^7).
\end{gather*}
Hence, $p(7)=15$, $p_3(7)=8$ and $p_{3,3}(7)=2$.
\end{Ex}

\begin{Thm}\label{partseries}
The generating functions of $p(n)$, $p_k(n)$ and $p_{k,l}(n)$ are given by
\begin{align*}
\Aboxed{\sum_{n=0}^\infty p(n)X^n&=\prod_{k=1}^\infty\frac{1}{1-X^k},}\\
\sum_{n=0}^\infty p_k(n)X^n&=\frac{1}{X^k!},\\
\sum_{n=0}^\infty p_{k,l}(n)X^n&=\gauss{k+l}{k}.\tag{\textnormal{\textsc{Cayley}}}\index{Cayley}
\end{align*}
\end{Thm}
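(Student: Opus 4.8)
The plan is to treat the three generating functions in turn, deriving each from a bijective/combinatorial reading of a product expansion. For the first, I would expand each factor $\frac{1}{1-X^k}=\sum_{j_k=0}^\infty X^{kj_k}$ as a geometric series (legitimate by \autoref{bspinv} and \autoref{lemcomp}, since $X^k\in(X^2)\subseteq(X)$), so that the infinite product $\prod_{k=1}^\infty\frac{1}{1-X^k}$ converges by \autoref{infsum}. Multiplying out and collecting the coefficient of $X^n$, a monomial $X^n$ arises exactly once for each choice of exponents $(j_1,j_2,\ldots)$ with $\sum_k k\,j_k=n$; such a tuple records a partition of $n$ in which the part $k$ occurs $j_k$ times. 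Hence the coefficient of $X^n$ is precisely $p(n)$. One should note (as in the proof of \autoref{GaussBT}) that the coefficient of $X^n$ depends only on the factors with $k\le n$, so no convergence subtlety beyond \autoref{infsum} is needed.

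For the second identity, $\frac{1}{X^k!}=\prod_{j=1}^k\frac{1}{1-X^j}$ by \autoref{defgauss}, and the same expansion restricted to $j=1,\ldots,k$ shows the coefficient of $X^n$ counts partitions of $n$ all of whose parts lie in $\{1,\ldots,k\}$, i.e. $p_k(n)$. Alternatively this is the $\alpha=1$ specialization of \eqref{E2} (or \eqref{E3}), but the direct geometric-series argument is cleanest and self-contained.

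The third — Cayley's formula — is the substantive one and the main obstacle. Here the plan is induction on $k+l$ using the recurrence \eqref{lemgauss}, matched against a combinatorial recurrence for $p_{k,l}(n)$. The base cases $k=0$ or $l=0$ give $\gauss{0}{0}=\gauss{k}{0}=\gauss{k}{k}=1$ on one side and $\sum_n p_{k,0}(n)X^n=\sum_n p_{0,l}(n)X^n=1$ on the other (only the empty partition of $0$). For the step, split the partitions counted by $p_{k,l}(n)$ according to whether the largest part equals $k$ or is at most $k-1$: those with all parts $\le k-1$ number $p_{k-1,l}(n)$, and those using at least one part equal to $k$ biject (by deleting one copy of $k$) with partitions of $n-k$ into at most $l-1$ parts each $\le k$, giving $p_{k,l-1}(n-k)$. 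Hence $p_{k,l}(n)=p_{k-1,l}(n)+p_{k,l-1}(n-k)$, which at the level of generating functions reads $G_{k,l}=G_{k-1,l}+X^k G_{k,l-1}$; this is exactly the first form of \eqref{lemgauss} with $\gauss{k+l}{k}=X^k\gauss{k+l-1}{k}+\gauss{k+l-1}{k-1}$ after writing $n+1=k+l$, $k$ for the lower index. One must double-check the index bookkeeping (the roles of $k$ and $n+1-k$ in \eqref{lemgauss}) and confirm the geometric series $\sum_n p_{k,l}(n)X^n$ is a genuine polynomial, which follows since $p_{k,l}(n)=0$ for $n>kl$ (\autoref{defpart}), consistent with $\gauss{k+l}{k}\in\CC[X]$. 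Alternatively one could dualize the second identity via conjugate partitions — a partition with $\le l$ parts, each $\le k$, fits in a $k\times l$ box and its conjugate also does — but the recurrence proof is the most economical route to a complete argument.
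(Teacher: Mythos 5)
Your proposal is correct and follows essentially the same route as the paper: the geometric-series expansion of $\prod(1-X^j)^{-1}$ for the first two identities (the paper obtains the first as $p(n)=\lim_{k\to\infty}p_k(n)$ rather than expanding the infinite product directly, a negligible difference), and for Cayley's formula the identical recurrence $p_{k,l}(n)=p_{k-1,l}(n)+p_{k,l-1}(n-k)$ obtained by splitting on whether the largest part equals $k$, combined with induction via \eqref{lemgauss}. Your index bookkeeping with $n+1=k+l$ matches the paper's use of $\gauss{k+l}{k}=\gauss{k+l-1}{k-1}+X^k\gauss{k+l-1}{k}$, so no gap remains.
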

\begin{proof}
It is easy to see that $p_k(n)$ is the coefficient of $X^n$ in 
\begin{equation}\label{eulereq}
\begin{gathered}
(1+X^1+X^{1+1}+\ldots)(1+X^2+X^{2+2}+\ldots)\ldots(1+X^k+X^{k+k}+\ldots)\\
=\frac{1}{1-X}\frac{1}{1-X^2}\ldots\frac{1}{1-X^k}=\frac{1}{X^k!}.
\end{gathered}
\end{equation}
This shows the second equation. The first follows from $p(n)=\lim_{k\to \infty}p_k(n)$. For the last claim we argue by induction on $k+l$ using \eqref{lemgauss}. If $k=0$ or $l=0$, then both sides equal $1$. Thus, let $k,l\ge 1$. Pick a partition $\lambda=(\lambda_1,\lambda_2,\ldots)$ of $n$ with each part $\le k$ and at most $l$ parts. If $\lambda_1<k$, then all parts are $\le k-1$ and $\lambda$ is counted by $p_{k-1,l}(n)$. If on the other hand $\lambda_1=k$, then $(\lambda_2,\lambda_3,\ldots)$ is counted by $p_{k,l-1}(n-k)$. Conversely, each partition counted by $p_{k,l-1}(n-k)$ can be extended to a partition counted by $p_{k,l}(n)$. We have proven the recurrence 
\[p_{k,l}(n)=p_{k-1,l}(n)+p_{k,l-1}(n-k).\]
Induction yields
\begin{align*}
\sum p_{k,l}(n)X^n&=\sum p_{k-1,l}(n)X^n+X^k\sum p_{k,l-1}(n)X^n\\
&=\gauss{k+l-1}{k-1}+X^k\gauss{k+l-1}{k}\overset{\eqref{lemgauss}}{=}\gauss{k+l}{k}.\qedhere
\end{align*}
\end{proof}

\begin{Thm}\label{nrpartbi}
The following assertions hold for $n,k,l\in\NN_0$:
\begin{enumerate}[(i)]
\item\label{parta} $p_{k,l}(n)=p_{l,k}(n)=p_{k,l}(kl-n)$ for $n\le kl$.

\item The number of partitions of $n$ into exactly $k$ parts is the number of partitions with largest part $k$.

\item\label{partb} \textup(\textsc{Glaisher}\textup)\index{Glaisher} The number of partitions of $n$ into parts not divisible by $k$ equals the number of partitions with no part repeated $k$ times \textup(or more\textup).

\item\label{eulerodd} \textup(\textsc{Euler}\textup)\index{Euler} The number of partitions of $n$ into unequal parts is the number of partitions into odd parts.

\item\label{schur1} \textup(\textsc{Schur}\textup)\index{Schur} The number of partitions of $n$ in parts which differ by more than $1$ equals the number of partitions in parts of the form $\pm 1+5k$.

\item\label{schur2} \textup(\textsc{Schur}\textup)\index{Schur} The number of partitions of $n$ in parts which differ by more than $1$ and are larger than $1$ equals the number of partitions into parts of the form $\pm2+5k$. 
\end{enumerate}
\end{Thm}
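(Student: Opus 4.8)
The plan is to split the six statements into three groups, handled by increasingly heavy tools. Parts (i) and (ii) follow from two elementary bijections on Young diagrams. Let $\lambda^\ast$ denote the conjugate of a partition $\lambda$, defined by $\lambda^\ast_j:=|\{i:\lambda_i\ge j\}|$; it is a partition of the same integer, satisfies $(\lambda^\ast)^\ast=\lambda$, and interchanges "largest part" with "number of parts". Statement (ii) is exactly this conjugation bijection between partitions of $n$ with exactly $k$ parts and partitions of $n$ with largest part $k$, and it likewise gives the first identity $p_{k,l}(n)=p_{l,k}(n)$ in (i), since a partition with each part $\le k$ and at most $l$ parts conjugates to one with each part $\le l$ and at most $k$ parts. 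For the second identity $p_{k,l}(n)=p_{k,l}(kl-n)$ I would pad $\lambda$ with zeros to length $l$ and send it to $(k-\lambda_l,\ldots,k-\lambda_1)$, the complementary partition inside the $l\times k$ box; this is an involution between the $p_{k,l}$-partitions of $n$ and those of $kl-n$. (Alternatively, one observes that the generating function $\gauss{k+l}{k}$ from \autoref{partseries} has degree $kl$ and is palindromic, as one reads off the product formula in \autoref{defgauss}.)

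For (iii) (Glaisher) I would compare generating functions. Partitions in which no part occurs $k$ or more times are generated by $\prod_{m=1}^\infty(1+X^m+\cdots+X^{(k-1)m})=\prod_{m=1}^\infty\frac{1-X^{km}}{1-X^m}$, all products being convergent by \autoref{infsum}. Writing this as $\prod_{m}(1-X^{km})\cdot\prod_{m}\frac{1}{1-X^m}$ (using $\lim(\alpha_j\beta_j)=\lim\alpha_j\cdot\lim\beta_j$), reindexing $\prod_{m}(1-X^{km})=\prod_{k\mid j}(1-X^j)$, and splitting $\prod_{m}\frac{1}{1-X^m}=\prod_{k\mid j}\frac{1}{1-X^j}\cdot\prod_{k\nmid j}\frac{1}{1-X^j}$ (legitimate because $\log$ turns these products into sums covered by the rearrangement corollary), the factors over $k\mid j$ cancel and we are left with $\prod_{k\nmid j}\frac{1}{1-X^j}$, the generating function for partitions into parts not divisible by $k$. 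Part (iv) is the special case $k=2$, where "no part repeated" means "distinct parts" and "not divisible by $2$" means "odd"; it also follows directly from $\prod_{m}(1+X^m)=\prod_{m}\frac{1-X^{2m}}{1-X^m}$.

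For (v) and (vi) the genuine analytic work is already done in \autoref{RRI}, and it only remains to interpret both sides. On the left of \eqref{RR1} the exponents $5k-1$ and $5k-4$ run precisely through the positive integers congruent to $\pm1\pmod5$, so that side generates partitions into such parts. On the right, $\frac{X^{k^2}}{X^k!}$ counts partitions with exactly $k$ parts and successive differences $>1$: from such a $\lambda$ with $\lambda_1>\cdots>\lambda_k\ge1$ and $\lambda_i-\lambda_{i+1}\ge2$, subtracting the staircase $(2k-1,2k-3,\ldots,3,1)$ produces an arbitrary partition into at most $k$ parts of total size $n-k^2$, and partitions into at most $k$ parts are conjugates of partitions with all parts $\le k$, hence have generating function $\frac{1}{X^k!}$ by \autoref{partseries}. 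Summing over $k\ge0$ (the term $k=0$ being the empty partition of $0$), $\sum_{k\ge0}\frac{X^{k^2}}{X^k!}$ generates all partitions with successive differences $>1$, and comparison with \eqref{RR1} yields (v). For (vi) I would run the same argument on \eqref{RR2}: there the exponents $5k-2,5k-3$ exhaust the integers $\equiv\pm2\pmod5$, and subtracting the staircase $(2k,2k-2,\ldots,4,2)$ forces the smallest part to be $\ge2$, so $\sum_{k\ge0}\frac{X^{k^2+k}}{X^k!}$ generates partitions with successive differences $>1$ whose parts are all $\ge2$.

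I expect no real obstacle here, only bookkeeping. The delicate points are in (v)--(vi): one must choose the staircase so that the residual object is exactly "at most $k$ parts" (whose generating function is $\frac{1}{X^k!}$ only after conjugating, not "$k$ parts" on the nose), and one must match $5k-1,5k-4$ with the residues $\pm1$ and $5k-2,5k-3$ with $\pm2$. The only other thing needing a word of justification is the rearrangement of the infinite product in (iii), which is handled by \autoref{infsum} together with the rearrangement corollary (via $\log$).
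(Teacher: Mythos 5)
Your argument is correct, and for parts (iii)--(vi) it is essentially the paper's proof: the same product manipulation $\prod_m\frac{1-X^{km}}{1-X^m}=\prod_{k\nmid j}\frac{1}{1-X^j}$ for Glaisher (with Euler as the case $k=2$), and the same interpretation of \autoref{RRI} via the staircases $(2k-1,2k-3,\ldots,1)$ and $(2k,2k-2,\ldots,2)$, the only cosmetic difference being that you subtract the staircase while the paper adds it to a partition counted by $p_k(n-k^2)$. Where you genuinely diverge is in (i) and (ii): you invoke conjugation of Young diagrams, which gives $p_{k,l}(n)=p_{l,k}(n)$ and the equivalence ``exactly $k$ parts $\leftrightarrow$ largest part $k$'' directly, whereas the paper stays inside the power-series framework, reading $p_{k,l}(n)=p_{l,k}(n)$ off the symmetry $\gauss{k+l}{k}=\gauss{k+l}{l}$ of the generating function in \autoref{partseries} and then deducing (ii) from (i) by the difference $p_{n,k}(n)-p_{n,k-1}(n)=p_k(n)-p_{k-1}(n)$; the complement bijection $(\lambda_1,\ldots,\lambda_l)\mapsto(k-\lambda_l,\ldots,k-\lambda_1)$ for $p_{k,l}(n)=p_{k,l}(kl-n)$ is identical in both. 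Your route is shorter and more visual but imports a combinatorial tool the paper deliberately postpones (it mentions Young/Ferrers diagrams only after the theorem); the paper's route buys uniformity, deriving everything from the generating functions already established. Note that your conjugation step also quietly reappears in (v), where you use ``at most $k$ parts $\leftrightarrow$ parts $\le k$'' to identify the generating function $\frac{1}{X^k!}$ — the paper gets this from its part (ii) instead — so if you keep the conjugation argument you should state it once and cite it at both places.
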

\begin{proof}\hfill
\begin{enumerate}[(i)]
\item Since $\gauss{k+l}{k}=\gauss{k+l}{l}$, we obtain $p_{k,l}(n)=p_{l,k}(n)$ by \autoref{partseries}. Let $\lambda=(\lambda_1,\ldots,\lambda_s)$ be a partition counted by $p_{k,l}(n)$. After adding zero parts if necessary, we may assume that $s=l$. Then $\bar{\lambda}:=(k-\lambda_l,k-\lambda_{l-1},\ldots,k-\lambda_1)$ is a partition counted by $p_{k,l}(kl-n)$. Since $\bar{\bar{\lambda}}=\lambda$, we obtain a bijection between the partitions counted by $p_{k,l}(n)$ and $p_{k,l}(kl-n)$.

\item\label{pkn} The number of partitions of $n$ with largest part $k$ is $p_k(n)-p_{k-1}(n)$. The number of partitions with exactly $k$ parts is
\[p_{n,k}(n)-p_{n,k-1}(n)\overset{\eqref{parta}}{=}p_{k,n}(n)-p_{k-1,n}(n)=p_k(n)-p_{k-1}(n).\]

\item Looking at \eqref{eulereq} again, it turns out that the desired generating function is
\[
\prod_{k\,\nmid\, m}\frac{1}{1-X^m}=\prod_{m=1}^\infty\frac{1-X^{km}}{1-X^m}=(1+X+\ldots+X^{k-1})(1+X^2+\ldots+X^{2(k-1)})\ldots.
\]

\item Take $k=2$ in \eqref{partb}.

\item According to \cite[Section~2.4]{Sills}, it was Schur, who first gave this interpretation of the Rogers--Ramanujan identities. The coefficient of $X^n$ on the left hand side of \eqref{RR1} is the number of partitions into parts of the form $\pm1+5k$. The right hand side can be rewritten (thanks to \autoref{partseries}) as
\[\sum_{k=0}^\infty\sum_{n=0}^\infty p_k(n)X^{n+k^2}=\sum_{n=0}^\infty\sum_{k=0}^np_k(n-k^2)X^n,\]
where as usual we interpret $p_k(n-k^2)=0$ if $n<k^2$.
By \eqref{pkn}, $p_k(n-k^2)$ counts the partitions of $n-k^2$ with at most $k$ parts.
If $(\lambda_1,\ldots,\lambda_k)$ is such a partition (allowing $\lambda_i=0$ here), then $(\lambda_1+2k-1,\lambda_2+2k-3,\ldots,\lambda_k+1)$ is a partition of $n-k^2+1+3+\ldots+2k-1=n$ with exactly $k$ parts, which all differ by more than $1$.

\item This follows similarly using $k^2+k=2+4+\ldots+2k$. \qedhere
\end{enumerate}
\end{proof}

There is a remarkable connection between \eqref{partb}, \eqref{eulerodd} and \eqref{schur1} of \autoref{nrpartbi}: Numbers not divisible by $3$ are of the form $\pm1+3k$, while odd numbers are of the form $\pm1+4k$. 

\begin{Ex}
For $n=7$ the following partitions are counted by \autoref{nrpartbi}:
\begin{center}
\begin{tabular}{llllll}
exactly three parts:& $(5,1^2)$,& $(4,2,1)$,& $(3^2,1)$,& $(3,2^2)$\\
largest part $3$:&$(3^2,1)$,& $(3,2^2)$,& $(3,2,1^2)$,& $(3,1^4)$\\\hline
unequal parts:&$(7)$,& $(6,1)$,& $(5,2)$,& $(4,3)$,& $(4,2,1)$\\
odd parts:& $(7)$,& $(5,1^2)$,& $(3^2,1)$,& $(3,1^4)$,& $(1^7)$\\\hline
parts differ by more than $1$:& $(7)$,& $(6,1)$,& $(5,2)$\\
parts of the form $\pm1+5k$& $(6,1)$,& $(4,1^3)$,& $(1^7)$\\\hline
parts $\ge 2$ differ by more than $1$:& $(7)$,& $(5,2)$\\
parts of the form $\pm2+5k$& $(7)$,& $(3,2^2)$
\end{tabular}
\end{center}
\end{Ex}

Some of the statements in \autoref{nrpartbi} permit nice combinatorial proofs utilizing \emph{Young diagrams}\index{Young diagram} (or \emph{Ferrers diagrams}).\index{Ferrers diagram} We refer the reader to the introductory book by Andrews--Eriksson~\cite{AndrewsEriksson}. \index{Andrews--Eriksson}
The following exercise (inspired by \cite{AndrewsEriksson}) can be solved with formal power series.

\begin{A}
Prove the following statements for $n,k\in\NN$:
\begin{enumerate}[(a)]
\item The number of partitions of $n$ into even parts is the number of partitions whose parts have even multiplicity.

\item (\textsc{Legendre})\index{Legendre} If $n$ is not of the form $\frac{1}{2}(3k^2+k)$ with $k\in\ZZ$, then the number of partitions of $n$ into an even number of unequal parts is the number of partitions into an odd number of unequal parts.\\
\textit{Hint:} Where have we encountered $\frac{1}{2}(3k^2+k)$ before?

\item (\textsc{Fine})\index{Fine} If $n$ is not of the form $\frac{1}{2}(3k^2+k)$ with $k\in\ZZ$, then the number of partitions of $n$ into unequal parts with largest part even is the number of partitions into unequal parts with largest part odd.

\item (\textsc{Subbarao})\index{Subbuarao} The number of partitions of $n$ where each part appears 2, 3 or 5 times equals the number of partitions into parts of the form $\pm 2+12k$, $\pm3+12k$ or $6+12k$.

\item (\textsc{MacMahon})\index{MacMahon} The number of partitions of $n$ where each part appears at least twice equals the number of partitions in parts not of the form $\pm1+6k$. 
\end{enumerate}
\end{A}

The reader may have noticed that Euler's pentagonal number theorem~\eqref{EPTN} is just the inverse of the generating function of $p(n)$ from \autoref{partseries}, i.\,e. 
\[\sum_{n=0}^\infty p(n)X^n\cdot \sum_{k=-\infty}^\infty(-1)^kX^{\frac{3k^2+k}{2}}=1\]
and therefore
\[\sum_{k=-n}^n(-1)^kp\Bigl(n-\frac{3k^2+k}{2}\Bigr)=0\]
for $n\in\NN$, where $p(k):=0$ whenever $k<0$. This leads to a recurrence formula
\begin{align*}
p(0)&=1,\\
p(n)&=p(n-1)+p(n-2)-p(n-5)-p(n-7)+\ldots\qquad(n\in\NN).
\end{align*}

\begin{Ex}
We compute
\begin{align*}
p(1)&=p(0)=1,&p(4)&=p(3)+p(2)=3+2=5,\\
p(2)&=p(1)+p(0)=2,&p(5)&=p(4)+p(3)-p(0)=5+3-1=7,\\
p(3)&=p(2)+p(1)=3,&p(6)&=p(5)+p(4)-p(1)=7+5-1=11
\end{align*}
(see \url{https://oeis.org/A000041} for more terms).
\end{Ex}

The generating functions we have seen so far all have integer coefficients. If $\alpha,\beta\in\ZZ[[X]]$ and $d\in\NN$, we write $\alpha\equiv\beta\pmod{d}$, if all coefficients of $\alpha-\beta$ are divisible by $d$. This is compatible with the ring structure of $\ZZ[[X]]$, namely if $\alpha\equiv\beta\pmod{d}$ and $\gamma\equiv\delta\pmod{d}$, then $\alpha+\gamma\equiv\beta+\delta\pmod{d}$ and $\alpha\gamma\equiv\beta\delta\pmod{d}$. Now suppose $\alpha\in 1+(X)$. Then the proof of \autoref{leminv} shows $\alpha^{-1}\in\ZZ[[X]]$. In this case $\alpha\equiv\beta\pmod{d}$ is equivalent to $\alpha^{-1}\equiv\beta^{-1}\pmod{d}$. If $d=p$ happens to be a prime, we have
\[(\alpha+\beta)^p=\sum_{k=0}^p\frac{p(p-1)\ldots(p-k+1)}{k!}\alpha^k\beta^{p-k}\equiv\alpha^p+\beta^p\pmod{p},\]
as in any commutative ring.

With this preparation, we come to a remarkable discovery by Ramanujan~\cite{RamanujanCongruence}.

\begin{Thm}[\textsc{Ramanujan}]\label{R57}\index{Ramanujan}
The following congruences hold for all $n\in\NN_0$:
\[\boxed{p(5n+4)\equiv 0\pmod{5},\qquad p(7n+5)\equiv 0\pmod{7}.}\]
\end{Thm}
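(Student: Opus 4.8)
The plan is to prove the two congruences by a unified strategy: work in $\ZZ[[X]]$ modulo $5$ (resp. $7$), manipulate the generating function $\sum p(n)X^n=\prod(1-X^k)^{-1}$ using the identities already established (Euler's pentagonal number theorem \eqref{EPTN} and Jacobi's cube identity \eqref{Jhoch3}), and extract the arithmetic progression of coefficients by a ``dissection'' argument. Throughout I will use the freshman's dream $(\alpha+\beta)^p\equiv\alpha^p+\beta^p\pmod p$ and its consequence $\prod(1-X^k)^p\equiv\prod(1-X^{pk})\pmod p$, which lets me replace troublesome factors by their $p$-th power versions at the cost of a congruence.

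For the modulus $5$ case I would start from
\[
\sum_{n=0}^\infty p(n)X^n=\frac{1}{\prod_{k=1}^\infty(1-X^k)}=\frac{\prod(1-X^k)^4}{\prod(1-X^k)^5}\equiv\frac{\prod(1-X^k)^4}{\prod(1-X^{5k})}\pmod 5,
\]
and then write $\prod(1-X^k)^4=\prod(1-X^k)\cdot\prod(1-X^k)^3$, expanding the first factor by \eqref{EPTN} and the second by \eqref{Jhoch3}. This gives $\prod(1-X^k)^4$ as a double sum $\sum_{i,j}(-1)^{i+j}(2j+1)X^{(3i^2+i)/2+(j^2+j)/2}$. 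The goal is to show that whenever $\frac{3i^2+i}{2}+\frac{j^2+j}{2}\equiv 4\pmod 5$, the coefficient $(2j+1)$ is divisible by $5$. Completing the square, $8\bigl(\frac{3i^2+i}{2}+\frac{j^2+j}{2}\bigr)=(6i+1)^2+(2j+1)^2-2$, so the congruence on the exponent becomes $(6i+1)^2+(2j+1)^2\equiv 0\pmod 5$; since $-1$ is a quadratic residue mod $5$ this forces $6i+1\equiv 0$ and $2j+1\equiv 0\pmod 5$ (the only way a sum of two squares vanishes mod $5$ when $-1$ is a nonsquare-free... more precisely, one checks $a^2+b^2\equiv0\pmod5\Rightarrow a\equiv b\equiv0$, which is true because $5\equiv1\pmod4$ but $5$ is prime so $\sqrt{-1}$ exists and $a^2+b^2=(a+ib)(a-ib)$ in $\FF_5$). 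In particular $5\mid 2j+1$, killing that coefficient. Since $\prod(1-X^{5k})^{-1}$ only contributes exponents divisible by $5$, it does not disturb the residue class mod $5$, so every coefficient of $X^{5n+4}$ on the right is $\equiv0\pmod5$, hence $p(5n+4)\equiv0\pmod5$.

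For the modulus $7$ case the analogous identity is
\[
\sum p(n)X^n\equiv\frac{\prod(1-X^k)^6}{\prod(1-X^{7k})}\pmod 7,
\]
and I would factor $\prod(1-X^k)^6=\bigl(\prod(1-X^k)^3\bigr)^2$, expanding each cube by \eqref{Jhoch3} to get $\sum_{i,j}(-1)^{i+j}(2i+1)(2j+1)X^{(i^2+i)/2+(j^2+j)/2}$. Here $8\bigl(\frac{i^2+i}{2}+\frac{j^2+j}{2}\bigr)=(2i+1)^2+(2j+1)^2-2$, so the exponent lying in the class $\equiv5\pmod7$ translates to $(2i+1)^2+(2j+1)^2\equiv 0\pmod 7$; since $-1$ is \emph{not} a quadratic residue mod $7$, this forces $7\mid 2i+1$ and $7\mid 2j+1$ simultaneously, so $49\mid(2i+1)(2j+1)$ — in particular $7$ divides the coefficient. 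Again the $\prod(1-X^{7k})^{-1}$ factor only shifts by multiples of $7$, so $p(7n+5)\equiv0\pmod7$ follows. (Note the pleasant asymmetry: for $p=5$ we needed the ``mixed'' product pentagonal$\times$cube because $5-1=4=1+3$, whereas for $p=7$ we used cube$\times$cube because $7-1=6=3+3$.)

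The main obstacle is the elementary number-theoretic lemma at the heart of each case: showing that the relevant sum of two squares can only be $\equiv0$ modulo $p$ in the trivial way, and translating ``the exponent lies in the target residue class'' into precisely that condition via completing the square. This requires care with the factor $8$ (invertible mod $5$ and mod $7$), with the index shifts, and — in the $7$ case — with noting that one needs only $7\mid(2i+1)$ \emph{or} $7\mid(2j+1)$, which already follows once $(2i+1)^2\equiv-(2j+1)^2$ and $-1\notin(\FF_7^\times)^2$ force both to vanish. I would also need to justify the passage to the limit / that only finitely many terms contribute to each coefficient, which is routine given \autoref{infsum} and the fact that the exponents $\frac{3i^2+i}{2}$ and $\frac{j^2+j}{2}$ tend to infinity.
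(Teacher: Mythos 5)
Your strategy is the classical Hardy--Wright route rather than the paper's: you use $\sum p(n)X^n\equiv\alpha^{p-1}/\alpha(X^p)\pmod p$ with $\alpha=\prod(1-X^k)$ and expand $\alpha^{4}=\alpha\cdot\alpha^{3}$ resp. $\alpha^{6}=(\alpha^{3})^{2}$ via \eqref{EPTN} and \eqref{Jhoch3}, then argue via quadratic forms on the exponents; the paper instead writes $\alpha^{-1}=(\alpha^3)^3/(\alpha^5)^2$ resp. $(\alpha^3)^2/\alpha^7$ and only tracks which residue classes of exponents can occur. Your modulus-$7$ half is correct: $8\bigl(\tfrac{i^2+i}{2}+\tfrac{j^2+j}{2}\bigr)=(2i+1)^2+(2j+1)^2-2$, and since $-1$ is a non-residue mod $7$, an exponent $\equiv 5\pmod 7$ forces $7\mid 2i+1$ and $7\mid 2j+1$; dividing by $\prod(1-X^{7k})$ is harmless for the reasons you give (and because the congruence passes to inverses, as noted just before \autoref{R57}).

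The modulus-$5$ half, however, fails at its central lemma as written. First, the completing-the-square identity is false: $8\cdot\tfrac{3i^2+i}{2}=12i^2+4i\neq(6i+1)^2-1$ (the pentagonal exponent needs the multiplier $24$, not $8$), so ``exponent $\equiv 4\pmod 5$'' does not translate into $(6i+1)^2+(2j+1)^2\equiv 0\pmod 5$. Second, and more seriously, the arithmetic fact you invoke is wrong: $a^2+b^2\equiv 0\pmod 5$ does \emph{not} force $a\equiv b\equiv 0\pmod 5$, e.g. $1^2+2^2\equiv 0$; precisely because $-1$ \emph{is} a square mod $5$, the form factors as $a^2+b^2=(a+2b)(a-2b)$ in $\FF_5$ and has many nontrivial zeros (your parenthetical argues against itself here — the implication you want holds for primes $\equiv 3\pmod 4$, like $7$, not for $5$). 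The repair is that the relevant form is not $x^2+y^2$: reducing correctly, $8\bigl(\tfrac{3i^2+i}{2}+\tfrac{j^2+j}{2}\bigr)\equiv 2(i+1)^2+(2j+1)^2-3\pmod 5$, so the exponent is $\equiv 4\pmod 5$ if and only if $2(i+1)^2+(2j+1)^2\equiv 0\pmod 5$, and \emph{this} does force $5\mid i+1$ and $5\mid 2j+1$ because $2$ is a quadratic non-residue mod $5$ (check $2a^2+b^2$ over the squares $0,1,4$). With that corrected lemma your dissection goes through and yields $p(5n+4)\equiv 0\pmod 5$.
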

\begin{proof}
Let $\alpha:=\prod(1-X^k)$. By the remarks above, $\alpha^5=\prod(1-X^k)^5\equiv \prod(1-X^{5k})\equiv\alpha(X^5)\pmod{5}$ and $\alpha^{-5}\equiv\alpha(X^5)^{-1}\pmod{5}$. For $k\in\ZZ$ we compute modulo $5$:
\[\frac{k^2+k}{2}\equiv\begin{cases}
0&\text{if }k\equiv 0,-1\pmod{5},\\
1&\text{if }k\equiv 1,-2\pmod{5},\\
3&\text{if }k\equiv 2\pmod{5}.
\end{cases}\]
This allows to write Jacobi's identity \eqref{Jhoch3} in the form
\begin{align*}
\alpha^3&=\sum_{\mathclap{k\,\equiv\, 0,-1\text{ (mod $5$)}}}(-1)^k(2k+1)X^{\frac{k^2+k}{2}}+\sum_{\mathclap{k\,\equiv\, 1,-2\text{ (mod $5$)}}}(-1)^k(2k+1)X^{\frac{k^2+k}{2}}+\sum_{\mathclap{k\,\equiv\, 2\text{ (mod $5$)}}}(-1)^k(\underbrace{2k+1}_{\mathclap{\equiv\,0\text{ (mod $5$)}}})X^{\frac{k^2+k}{2}}\\
&\equiv\alpha_0+\alpha_1\pmod{5},
\end{align*}
where $\alpha_i$ is formed by the monomials $a_kX^k$ with $k\equiv i\pmod{5}$.
Now \autoref{partseries} implies
\begin{equation}\label{rameq}
\sum_{n=0}^\infty p(n)X^n=\alpha^{-1}=\frac{(\alpha^3)^3}{(\alpha^5)^2}\equiv\frac{(\alpha_0+\alpha_1)^3}{\alpha(X^5)^2}\pmod{5}.
\end{equation}
If we expand $(\alpha_0+\alpha_1)^3$, then only terms $X^k$ with $k\equiv 0,1,2,3\pmod{5}$ occur, while in $\alpha(X^5)^{-2}$ only terms $X^{5k}$ occur. Therefore, the right hand side of \eqref{rameq} contains no terms of the form $X^{5k+4}$. So we must have $p(5k+4)\equiv 0\pmod{5}$.

For the congruence modulo $7$ we compute similarly $\frac{1}{2}(k^2+k)\equiv 0,1,3,6\pmod{7}$, where the last case only occurs if $k\equiv 3\pmod{7}$ and in this case $2k+1\equiv 0\pmod{7}$. As before we may write $\alpha^3\equiv\alpha_0+\alpha_1+\alpha_3\pmod{7}$. Then
\[\sum_{n=0}^\infty p(n)X^n=\alpha^{-1}=\frac{(\alpha^3)^2}{\alpha^7}\equiv\frac{(\alpha_0+\alpha_1+\alpha_3)^2}{\alpha(X^7)}\pmod{7}.
\]
Again $X^{7k+5}$ does not appear on the right hand side.
\end{proof}

Ramanujan has also discovered the congruence $p(11n+6)\equiv 0\pmod{11}$ for all $n\in\NN_0$ (the reader finds the history of this and other results in \cite{AndrewsEriksson,Hirschhorn}, for instance). This was believed to be more difficult to prove, until elementary proofs were found by Marivani~\cite{Marivani},\index{Marivani} Hirschhorn~\cite{HirschhornR11}\index{Hirschhorn} and others (see also \cite[Section~3.5]{Hirschhorn}). The details are however extremely tedious to verify by hand. 

By the Chinese remainder theorem, two congruences of coprime moduli can be combined as in
\[p(35n+19)\equiv 0\pmod{35}.\]
Ahlgren~\cite{Ahlgren}\index{Ahlgren} (building on Ono~\cite{Ono})\index{Ono} has shown that in fact for every integer $k$ coprime to $6$ there is such a congruence modulo $k$. Unfortunately, they do not look as nice as \autoref{R57}. For instance,
\[p(11^3\cdot13n+237)\equiv 0\pmod{13}.\]

The next result explains the congruence modulo $5$ and is known as Ramanujan's “most beautiful” formula (since \autoref{RRI} was first discovered by Rogers).

\begin{Thm}[\textsc{Ramanujan}]\index{Ramanujan!most beautiful formula}
We have
\[\boxed{\sum_{n=0}^\infty p(5n+4)X^n=5\prod_{k=1}^\infty\frac{(1-X^{5k})^5}{(1-X^k)^6}.}\]
\end{Thm}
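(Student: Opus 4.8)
The plan is to single out, from the partition generating function $\sum_{n\ge0}p(n)X^n=\prod_{k\ge1}(1-X^k)^{-1}$ (\autoref{partseries}), the sub-series formed by the terms whose exponent is $\equiv4\pmod5$, and to bring it into closed form. Put $E:=\prod_{k\ge1}(1-X^k)$, so that $E(X^5)=\prod_{k\ge1}(1-X^{5k})$ and $J:=E(X^{25})=\prod_{k\ge1}(1-X^{25k})$; after dividing by $X^4$ and renaming $X^5$ as $X$, the asserted identity is equivalent to $\sum_{n\ge0}p(5n+4)X^{5n+4}=5X^4J^5/E(X^5)^6$. The crucial preliminary is the \emph{$5$-dissection of Euler's product}
\[E=J\bigl(R^{-1}-X-X^2R\bigr),\]
where $R$ is a particular unit power series in $\CC[[X^5]]$ (an explicit ratio of four infinite products). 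One derives it by grouping the pentagonal sum \eqref{EPTN} according to $k\bmod5$: the exponent $\tfrac{3k^2+k}{2}$ is $\equiv0,2,2,0,1\pmod5$ for $k\equiv0,1,2,3,4\pmod5$, so no exponent is $\equiv3$ or $\equiv4\pmod5$, the class $\equiv1$ reassembles, by the pentagonal theorem once again with $X^{25}$ in place of $X$, to exactly $-XJ$, and the classes $\equiv0$ and $\equiv2$ are identified, by \eqref{mod51} and \eqref{mod52} with $X$ replaced by $X^5$, with $JR^{-1}$ and $-X^2JR$ respectively.

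Next I pass to the fifth-root-of-unity conjugates. Let $\zeta\in\CC$ be a primitive fifth root of unity. Since $R$ and $J$ are power series in $X^5$ (in fact $X^{25}$), they are fixed by the substitution $X\mapsto\zeta^iX$; applying that substitution to the $5$-dissection (substitution commutes with the product $E=\prod(1-X^k)$ by \autoref{lemcomp}, and with inversion) gives, for $0\le i\le4$,
\[\prod_{k\ge1}\bigl(1-\zeta^{ik}X^k\bigr)=E(\zeta^iX)=JD_i,\qquad D_i:=R^{-1}-\zeta^iX-\zeta^{2i}X^2R,\]
so that $\sum_{n\ge0}p(n)\zeta^{in}X^n=(JD_i)^{-1}$. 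The roots-of-unity filter (with $\zeta^{-4i}=\zeta^i$) then yields
\[\sum_{n\ge0}p(5n+4)X^{5n+4}=\frac15\sum_{i=0}^4\zeta^i(JD_i)^{-1}=\frac1{5J}\sum_{i=0}^4\frac{\zeta^i}{D_i}=\frac1{5J}\cdot\frac{\sum_{i=0}^4\zeta^i\prod_{j\ne i}D_j}{\prod_{i=0}^4D_i}.\]

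It remains to evaluate this fraction, and here is where the prime $5$ makes its appearance. The denominator telescopes: from $\prod_{i=0}^4\prod_{k\ge1}(1-\zeta^{ik}X^k)=\prod_{k\ge1}\prod_{i=0}^4(1-\zeta^{ik}X^k)$, and since the inner product is $1-X^{5k}$ for $5\nmid k$ and $(1-X^k)^5$ for $5\mid k$, one obtains $\prod_{i=0}^4D_i=E(X^5)^6/J^6$. For the numerator, view $D_i=-X^2R\,\zeta^{2i}-X\,\zeta^i+R^{-1}$ as a quadratic in $\zeta^i$; its discriminant equals $X^2+4X^2=5X^2$, so $D_i=-X^2R(\zeta^i-u_+)(\zeta^i-u_-)$ with $u_\pm=\tfrac{-1\pm\sqrt5}{2XR}\in\CC((X))$. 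Using the identity $\prod_{i=0}^4(z-\zeta^i)=z^5-1$ (which gives $\prod_{i=0}^4(\zeta^i-u)=1-u^5$ and $\sum_{i=0}^4(\zeta^i-u)^{-1}=5u^4/(1-u^5)$) together with a partial-fraction split of $z/\bigl((z-u_+)(z-u_-)\bigr)$, the numerator collapses:
\[\sum_{i=0}^4\zeta^i\prod_{j\ne i}D_j=X^8R^4(1-u_+^5)(1-u_-^5)\sum_{i=0}^4\frac{\zeta^i}{(\zeta^i-u_+)(\zeta^i-u_-)}=X^8R^4\cdot\frac{5(u_+^5-u_-^5)}{u_+-u_-}=25X^4,\]
since $(u_+^5-u_-^5)/(u_+-u_-)=5/(XR)^4$. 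Therefore $\sum_{n\ge0}p(5n+4)X^{5n+4}=\frac1{5J}\cdot\dfrac{25X^4}{E(X^5)^6/J^6}=\dfrac{5X^4J^5}{E(X^5)^6}$, which, undoing the initial reductions, is the stated formula.

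The genuinely substantive step is the first one, the $5$-dissection of $E$: the residue classes $\equiv1,3,4\pmod5$ are read off instantly, but verifying that the classes $\equiv0$ and $\equiv2$ are $JR^{-1}$ and $-X^2JR$ for one and the same unit $R$ --- equivalently that their product is $-X^2J^2$ --- amounts to identifying two theta-function quotients, and that is what \eqref{mod51} and \eqref{mod52} supply. Everything downstream is formal manipulation in $\CC[[X]]$ and $\CC((X))$ with fifth roots of unity plus the short finite computation above; working over $\CC$ is exactly what puts $\zeta$ and $\sqrt5$ at our disposal.
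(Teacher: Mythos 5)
Your passage to the conjugates $E(\zeta^iX)=JD_i$, the evaluation $\prod_{i=0}^4D_i=E(X^5)^6/J^6$, the partial-fraction computation of $\sum_{i}\zeta^i/D_i$ and the closing identity $(u_+^5-u_-^5)/(u_+-u_-)=5/(XR)^4$ are all correct, and this half of the argument is a nice variant of the paper's expansion of the four conjugate factors (Galois-orbit bookkeeping there, partial fractions and $\prod_{i=0}^4(z-\zeta^i)=z^5-1$ here). The gap sits exactly at the step you yourself call substantive: the claim that the classes $\equiv0$ and $\equiv2$ of the pentagonal series \eqref{EPTN} are $JR^{-1}$ and $-X^2JR$ for one and the same unit $R$, ``by \eqref{mod51} and \eqref{mod52} with $X$ replaced by $X^5$''. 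After that substitution, \eqref{mod51} and \eqref{mod52} only provide triple-product forms of the two theta series $\sum(-1)^kX^{(25k^2+5k)/2}$ and $\sum(-1)^kX^{(25k^2+15k)/2}$; they say nothing about the dissection components of $E$. In fact each of the classes $\equiv0$ and $\equiv2$ is a sum of \emph{two} distinct theta series (parametrize $k=5m$ and $k=5m-2$, resp. $k=5m+1$ and $k=5m+2$), so writing either class as a single infinite product --- which is what $A=JR^{-1}$ with $R$ an explicit ratio of four products amounts to --- is an instance of the quintuple product identity, which is neither proved in the paper nor obtainable from \autoref{jacobi} by a substitution. As written, the crucial relation ``class${}_{\equiv0}\cdot{}$class${}_{\equiv2}=-X^2J^2$'' is therefore unproven.

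The repair is cheap, and it is precisely what the paper does instead: you never need the explicit product $R$, only the product relation. Cube the pentagonal series and invoke Jacobi's identity \eqref{Jhoch3}; since $\frac{k^2+k}{2}\not\equiv2\pmod5$, the coefficient of every $X^{5k+2}$ in $\alpha^3=(\alpha_0+\alpha_1+\alpha_2)^3$ vanishes, and those monomials occur exactly in $3\alpha_0(\alpha_0\alpha_2+\alpha_1^2)$; as $\alpha_0\ne0$ in the integral domain $\CC[[X]]$, this gives $\alpha_0\alpha_2=-\alpha_1^2=-X^2J^2$, i.e. the paper's \eqref{short}. Substituting this for your appeal to \eqref{mod51} and \eqref{mod52}, the rest of your computation goes through verbatim and yields $5X^4J^5/E(X^5)^6$ as desired.
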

\begin{proof}
The arguments are taken from \cite[Chapter~5]{Hirschhorn}, leaving out some unessential details. 
This time we start with Euler's pentagonal number theorem. Since
\[\frac{3k^2+k}{2}\equiv \begin{cases}
0&\text{if }k\equiv 0,-2\pmod{5},\\
1&\text{if }k\equiv -1\pmod{5},\\
2&\text{if }k\equiv 1,2\pmod{5},
\end{cases}\]
we can write \eqref{EPTN} in the form
\[\alpha:=\prod_{k=1}^\infty(1-X^k)=\sum_{k=-\infty}^\infty (-1)^kX^{\frac{3k^2+k}{2}}=\alpha_0+\alpha_1+\alpha_2,\]
where $\alpha_i$ is formed by the terms $a_kX^k$ with $k\equiv i\pmod{5}$. In fact,
\begin{equation}\label{alpha1}
\alpha_1=\sum_{k=-\infty}^\infty (-1)^{5k-1}X^{\frac{3(5k-1)^2+5k-1}{2}}=-X\sum(-1)^kX^{\frac{75k^2-25k}{2}}=-X\alpha(X^{25}).
\end{equation}
On the other hand we have
\[\sum_{k=0}^\infty(-1)^k(2k+1)X^{\frac{k^2+k}{2}}\overset{\eqref{Jhoch3}}{=}\alpha^3=(\alpha_0+\alpha_1+\alpha_2)^3.\]
When we expand the right hand side, the monomials of the form $X^{5k+2}$ all occur in $3\alpha_0(\alpha_0\alpha_2+\alpha_1^2)$. 
Since we have already realized in the proof of \autoref{R57} that $(k^2+k)/2\not\equiv 2\pmod{5}$, we conclude that \begin{equation}\label{short}
\alpha_1^2=-\alpha_0\alpha_2.
\end{equation}

Let $\zeta\in\CC$ be a primitive $5$-th root of unity. 
Using that 
\[X^5-1=\prod_{i=0}^4(X-\zeta^i)=\zeta^{1+2+3+4}\prod(\zeta^{-i}X-1)=\prod(\zeta^iX-1),\] 
we compute
\[\prod_{i=0}^4\alpha(\zeta^iX)=\prod_{k=1}^\infty\prod_{i=0}^4(1-\zeta^{ik}X^{k})=\alpha(X^5)^5\prod_{5\,\nmid\,k}(1-X^{5k})=\frac{\alpha(X^5)^6}{\alpha(X^{25})}.\]
This leads to 
\begin{align}
\sum& p(n)X^n=\frac{1}{\alpha}=\frac{\alpha(X^{25})}{\alpha(X^5)^6}\alpha(\zeta X)\alpha(\zeta^2X)\alpha(\zeta^3X)\alpha(\zeta^4X)\notag\\
&=\frac{\alpha(X^{25})}{\alpha(X^5)^6}(\alpha_0+\zeta\alpha_1+\zeta^2\alpha_2)(\alpha_0+\zeta^2\alpha_1+\zeta^4\alpha_2)(\alpha_0+\zeta^3\alpha_1+\zeta\alpha_2)(\alpha_0+\zeta^4\alpha_1+\zeta^3\alpha_2).\label{long}
\end{align}
We are only interested in the monomials $X^{5n+4}$. Those arise from the products $\alpha_0^2\alpha_2^2$, $\alpha_0\alpha_1^2\alpha_2$ and $\alpha_1^4$. To facilitate the expansion of the right hand side of \eqref{long}, we notice that the Galois automorphism $\gamma$ of the cyclotomic field $\QQ_5$ sending $\zeta$ to $\zeta^2$ permutes the four factors cyclically. Whenever we obtain a product involving some $\zeta^i$, say $\alpha_0^2\alpha_2^2\zeta^3$, the full orbit under $\langle\gamma\rangle$ must occur, which is $\alpha_0^2\alpha_2^2(\zeta+\zeta^2+\zeta^3+\zeta^4)=-\alpha_0^2\alpha_2^2$. Now there are six choices to form $\alpha_0^2\alpha_2^2$. Four of them form a Galois orbit, while the two remaining appear without $\zeta$. The whole contribution is therefore $(1+1-1)\alpha_0^2\alpha_2^2=\alpha_0^2\alpha_2^2$. In a similar manner we compute,
\[
\sum p(5n+4)X^{5n+4}=\frac{\alpha(X^{25})}{\alpha(X^5)^6}(\alpha_0^2\alpha_2^2-3\alpha_0\alpha_1^2\alpha_2+\alpha_1^4)\overset{\eqref{short}}{=}5\frac{\alpha(X^{25})}{\alpha(X^5)^6}\alpha_1^4\overset{\eqref{alpha1}}{=}5X^4\frac{\alpha(X^{25})^5}{\alpha(X^5)^6}.
\]
The claim follows after dividing by $X^4$ and replacing $X^5$ by $X$.
\end{proof}

Partitions can be generalized to higher dimensions. A \emph{plane partition}\index{plane partition} of $n\in\NN$ is an $n\times n$-matrix $\lambda=(\lambda_{ij})$ consisting of non-negative integers such that 
\begin{itemize}
\item $\lambda_{i,1}\ge\lambda_{i,2}\ge\ldots$ and $\lambda_{1,j}\ge\lambda_{2,j}\ge\ldots$ for all $i,j$,
\item $\sum_{i,j=1}^n\lambda_{ij}=n$.
\end{itemize}
Ordinary partitions can be regarded as plane partitions with only one non-zero row.
The number $pp(n)$ of plane partitions of $n$ has the fascinating generating function
\[\sum_{n=0}^\infty pp(n)X^n=\prod_{k=1}^\infty\frac{1}{(1-X^k)^k}=1+X+3X^2+6X^3+13X^4+24X^5+\ldots\]
discovered by MacMahon\index{MacMahon} (see \cite[Corollary~7.20.3]{Stanley2}).

\section{Stirling numbers}

We cannot resist presenting a few more exciting combinatorial objects related to power series. Since there are literally hundreds of such combinatorial identities, our selection is inevitably biased by personal taste.

\begin{Def}
A \emph{set partition}\index{set partition}\index{partition!of sets} of $n\in\NN$ is a disjoint union $A_1\mathbin{\dot\cup}\ldots\mathbin{\dot\cup} A_k=\{1,\ldots,n\}$ of non-empty sets $A_i$ in no particular order (we may require $\min A_1<\ldots<\min A_k$ to fix an order). The number of set partitions of $n$ is called the $n$-th \emph{Bell number}\index{Bell number} $b(n)$.\index{*Bn@$b(n)$} The number of set partitions of $n$ with exactly $k$ parts is the \emph{Stirling number of the second kind}\index{Stirling number!of second kind} $\stirr{n}{k}$.\index{*St2@$\stirr{n}{k}$} In particular, $\stirr{n}{n}=\stirr{n}{1}=n$. We set $\stirr{0}{0}=b(0)=1$ describing the empty partition of the empty set. 
\end{Def}

\begin{Ex}
The set partitions of $n=3$ are 
\[\{1,2,3\}=\{1\}\cup\{2,3\}=\{1,3\}\cup\{2\}=\{1,2\}\cup\{3\}=\{1\}\cup\{2\}\cup\{3\}.\] 
Hence, $b(3)=5$ and $\stirr{3}{2}=3$.
\end{Ex}

Unlike the binomial or Gaussian coefficients the Stirling numbers do not obey a symmetry as in Pascal's triangle.
While the generating functions of $b(n)$ and $\stirr{n}{k}$ have no particularly nice shape, there are close approximations which we are about to see.

\begin{Lem}
For $n,k\in\NN_0$,
\begin{equation}\label{stirrek}
\stirr{n+1}{k}=k\stirr{n}{k}+\stirr{n}{k-1}.
\end{equation}
\end{Lem}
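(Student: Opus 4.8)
The plan is to prove the recurrence by a direct combinatorial argument, splitting the set partitions of $\{1,\ldots,n+1\}$ with exactly $k$ blocks according to how the largest element $n+1$ behaves. First I would dispose of the degenerate case $k=0$: here $\stirr{n+1}{0}=0$ for every $n\ge 0$, while the right-hand side is $0\cdot\stirr{n}{0}+\stirr{n}{-1}=0$ under the natural conventions $\stirr{m}{-1}=0$ and $\stirr{m}{0}=0$ for $m\ge 1$ (and $\stirr{0}{0}=1$ makes no difference since it is multiplied by $k=0$). So we may assume $k\ge 1$.

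For $k\ge 1$, every set partition of $\{1,\ldots,n+1\}$ into exactly $k$ non-empty blocks is of exactly one of two kinds. If $\{n+1\}$ is a block on its own, deleting it leaves a set partition of $\{1,\ldots,n\}$ into exactly $k-1$ blocks; this is plainly a bijection onto all such partitions, contributing $\stirr{n}{k-1}$. Otherwise $n+1$ sits in a block together with at least one smaller element; removing $n+1$ from that block yields a set partition of $\{1,\ldots,n\}$ into exactly $k$ blocks, and conversely every such partition is obtained from precisely $k$ partitions of $\{1,\ldots,n+1\}$ — one for each block into which $n+1$ may be reinserted — so this kind contributes $k\stirr{n}{k}$. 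Adding the two disjoint contributions gives $\stirr{n+1}{k}=k\stirr{n}{k}+\stirr{n}{k-1}$.

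One could instead argue formally through a generating-function identity for the $\stirr{n}{k}$, but since the definition adopted here is combinatorial, the bijective proof is shorter and more transparent. There is essentially no obstacle; the only point deserving a moment's care is that the second correspondence (delete $n+1$, then reinsert it into one of the $k$ blocks) is genuinely $k$-to-$1$. This is immediate once one observes that the $k$ blocks of the resulting partition of $\{1,\ldots,n\}$ are pairwise distinct sets, so the $k$ reinsertions produce $k$ distinct partitions of $\{1,\ldots,n+1\}$ in which $n+1$ is not isolated, and every such partition arises this way exactly once.
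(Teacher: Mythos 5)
Your proof is correct and follows essentially the same route as the paper: classify the set partitions of $\{1,\ldots,n+1\}$ with $k$ blocks according to whether $n+1$ forms a singleton block (giving $\stirr{n}{k-1}$) or is inserted into one of the $k$ blocks of a partition of $\{1,\ldots,n\}$ (giving $k\stirr{n}{k}$). Your explicit treatment of the degenerate case $k=0$ and the check that the insertion map is $k$-to-$1$ are fine additions, but the argument is the paper's.
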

\begin{proof}
Without loss of generality, let $1\le k\le n$. 
Let $A_1\cup\ldots\cup A_{k-1}$ be a set partition of $n$ with $k-1$ parts. Then $A_1\cup\ldots\cup A_{k-1}\cup\{n+1\}$ is a set partition of $n+1$ with $k$ parts. Now let $A_1\cup\ldots\cup A_k$ be a set partition of $n$. We can add the number $n+1$ to each of the $k$ sets $A_1,\ldots,A_k$ to obtain a set partition of $n+1$ with $k$ parts. 
Conversely, every set partition of $n+1$ arises in precisely one of the two described ways.
\end{proof}

\begin{Lem}\label{bellrek}
For $n\in\NN_0$, 
\[b(n+1)=\sum_{k=0}^n\binom{n}{k}b(k).\]
\end{Lem}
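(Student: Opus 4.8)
The plan is to give a direct combinatorial (double-counting) proof. Consider a set partition of $\{1,\ldots,n+1\}$ and focus on the block $B$ containing the element $n+1$. If $|B| = n+1-k$ for some $0 \le k \le n$, then $B$ is determined by choosing which of the remaining $n$ elements join $n+1$; equivalently, the complement $\{1,\ldots,n+1\}\setminus B$ is a $k$-element subset of $\{1,\ldots,n\}$, and it can be chosen in $\binom{n}{k}$ ways. Once $B$ is fixed, the rest of the partition is an arbitrary set partition of the $k$-element set $\{1,\ldots,n+1\}\setminus B$, and there are $b(k)$ of these (the value $b(k)$ depends only on the cardinality $k$, not on which specific $k$-element set we partition).

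First I would make precise that every set partition of $\{1,\ldots,n+1\}$ arises in exactly one way from this construction: the block containing $n+1$ is uniquely determined by the partition, so the pair (block $B$, partition of the complement of $B$) is recovered without ambiguity. Summing over the possible sizes $k \in \{0,1,\ldots,n\}$ of the complement of $B$ gives
\[b(n+1) = \sum_{k=0}^n \binom{n}{k} b(k),\]
which is the claimed identity.

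The only point requiring a word of care is the edge behaviour: when $k = n$ the block $B = \{n+1\}$ is a singleton and the complement is all of $\{1,\ldots,n\}$, contributing $\binom{n}{n}b(n) = b(n)$; when $k = 0$ the block $B$ is all of $\{1,\ldots,n+1\}$ and the "partition of the empty complement" is the empty partition, contributing $\binom{n}{0}b(0) = 1$, consistent with the convention $b(0) = 1$ fixed in the definition. No step here is a genuine obstacle — the argument is a clean bijective decomposition — so there is nothing delicate to overcome; one could alternatively derive the same recurrence from the exponential generating function $\sum b(n) X^n/n!$ once that is available, but the combinatorial proof is shorter and self-contained here.
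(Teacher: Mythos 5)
Your proof is correct and follows essentially the same decomposition as the paper: single out the block containing $n+1$ and count partitions of its complement, the only cosmetic difference being that you index by the size $k$ of the complement directly, whereas the paper indexes by $|A|-1$ and then uses $\binom{n}{k}=\binom{n}{n-k}$ to reindex.
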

\begin{proof}
Every set partition $\mathcal{A}$ of $n+1$ has a unique part $A$ containing $n+1$. If $k:=|A|-1$, there are $\binom{n}{k}$ choices for $A$. Moreover, $\mathcal{A}\setminus\{ A\}$ is a uniquely determined partition of the set $\{1,\ldots,n\}\setminus A$ with $n-k$ elements. Hence, there are $b(n-k)$ possibilities for this partition. Consequently,
\[b(n+1)=\sum_{k=0}^n\binom{n}{k}b(n-k)=\sum_{k=0}^n\binom{n}{k}b(k).\qedhere\]
\end{proof}

\begin{Thm}
For $n\in\NN_0$ we have
\begin{align*}
\sum_{k=0}^n\stirr{n}{k}X^k&=\exp(-X)\sum_{k=0}^\infty\frac{k^n}{k!}X^k,\\
\sum_{k=0}^\infty \frac{b(k)}{k!}X^k&=\exp\bigl(\exp(X)-1\bigr).
\end{align*}
\end{Thm}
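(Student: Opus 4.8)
The plan is to prove both generating-function identities by exploiting the recurrences \eqref{stirrek} and \autoref{bellrek}, translating each into a statement about formal power series and then solving the resulting "differential" equation in $\CC[[X]]$.

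For the first identity, I would fix $k$ and work with the single power series $S_k:=\sum_{n\ge 0}\stirr{n}{k}\frac{X^n}{n!}$, the exponential generating function of the $k$-th column of the Stirling triangle. (Passing from $\sum_{k}\stirr{n}{k}X^k$ to this dual bookkeeping is the key re-indexing trick; the claimed identity will drop out once I have a closed form for each $S_k$ and then sum a suitable rearrangement — or, cleaner, I multiply through by $\exp(X)$ and compare coefficients of $X^n$ on both sides, which amounts to the Vandermonde-type identity $k^n=\sum_{j}\binom{n}{j}\stirr{j}{k}(-1)^{\cdots}$, itself a consequence of \eqref{stirrek}.) Concretely: the recurrence \eqref{stirrek} says $\stirr{n+1}{k}=k\stirr{n}{k}+\stirr{n}{k-1}$, and dividing by $n!$ and summing shows $S_k'=kS_k+S_{k-1}$ with $S_0=1$. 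Using the product rule and the fact that $\exp(-kX)'=-k\exp(-kX)$ (\autoref{lemfunc}), this linear recurrence integrates to $S_k=\frac{(\exp(X)-1)^k}{k!}$ by induction on $k$ — the base case $S_0=1$ is immediate, and the inductive step checks that $\frac{(\exp(X)-1)^k}{k!}$ has the right derivative via the chain rule and the binomial expansion of $(\exp(X)-1)^k$. Then $\sum_{k\ge 0}S_k=\sum_{k\ge 0}\frac{(\exp(X)-1)^k}{k!}=\exp(\exp(X)-1)$ by the definition of $\exp$ applied to $\exp(X)-1\in(X)$ (\autoref{infsum} guarantees convergence). Finally I recover the stated form by noting $\sum_k\stirr{n}{k}X^k$ is obtained by collecting, for fixed $n$, the coefficient structure: more precisely $\exp(-X)\sum_{k\ge 0}\frac{k^n}{k!}X^k$ has $n$-th... — here I will instead directly verify $\exp(X)\cdot\bigl(\exp(-X)\sum_k \frac{k^n}{k!}X^k\bigr)=\sum_k\frac{k^n}{k!}X^k$ and match coefficients against $\exp(X)\sum_k\stirr{n}{k}X^k$, using that $k^n=\sum_{j\le n}\stirr{n}{j}k(k-1)\cdots(k-j+1)$, the classical expansion of powers into falling factorials, which follows from \eqref{stirrek} by induction on $n$.

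For the second identity I set $B:=\sum_{k\ge 0}\frac{b(k)}{k!}X^k$. Differentiating termwise gives $B'=\sum_{k\ge 0}\frac{b(k+1)}{k!}X^k$, and substituting \autoref{bellrek} yields $B'=\sum_{k\ge 0}\frac{1}{k!}\bigl(\sum_{j=0}^k\binom{k}{j}b(j)\bigr)X^k=\bigl(\sum_j\frac{b(j)}{j!}X^j\bigr)\bigl(\sum_i\frac{X^i}{i!}\bigr)=B\exp(X)$, where the middle step is exactly the Cauchy-product expansion of $\binom{k}{j}/k!=1/(j!(k-j)!)$. So $B$ satisfies $B'=B\exp(X)$ with $B(0)=b(0)=1$. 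On the other hand, setting $C:=\exp(\exp(X)-1)$, the chain rule (\autoref{der}) together with $(\exp(X)-1)'=\exp(X)$ gives $C'=C\cdot\exp(X)$, and $C(0)=\exp(0)=1$. Since in characteristic $0$ a power series is determined by its derivative and constant term (the remark after the definition of the derivative), $B=C$, which is the claim.

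The only real subtlety — and the main obstacle — is the first identity, specifically cleanly connecting the "ordinary" generating function $\sum_k\stirr{n}{k}X^k$ (a polynomial in $X$ for each fixed $n$) with the exponential-generating-function machinery that makes the recurrence tractable. The cleanest route, which I would follow in the final write-up, is to avoid the column series $S_k$ entirely and instead prove the finite identity $\sum_{k=0}^n\stirr{n}{k}X^k=\exp(-X)\sum_{k=0}^{\infty}\frac{k^n}{k!}X^k$ by showing both sides, after multiplication by $\exp(X)$, equal $\sum_{k\ge 0}\frac{k^n}{k!}X^k$; the left side then requires precisely the falling-factorial expansion $k^n=\sum_{j}\stirr{n}{j}\,k^{\underline j}$ together with the elementary computation $\exp(X)X^j=\sum_{k}\frac{k^{\underline j}}{k!}X^k$ (shift the summation index by $j$ and use $\binom{k}{j}j!=k^{\underline j}$). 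Everything else is a routine induction via \eqref{stirrek} and \autoref{bellrek} and the derivative rules of \autoref{der}.
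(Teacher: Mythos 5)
Your proposal is correct, and for the first identity it takes a genuinely different route from the paper's. The paper proves $\sum_{k}\stirr{n}{k}X^k=\exp(-X)\sum_{k}\frac{k^n}{k!}X^k$ by induction on $n$: via \eqref{stirrek} the left-hand side for $n+1$ equals $X\bigl(\sum_k\stirr{n}{k}X^k\bigr)'+X\sum_k\stirr{n}{k}X^k$, and applying the product rule to the induction hypothesis directly yields $\exp(-X)\sum_k\frac{k^{n+1}}{k!}X^k$, with no auxiliary identity. You instead multiply by the unit $\exp(X)$ and reduce the claim to the classical expansion $k^n=\sum_{j}\stirr{n}{j}\,k(k-1)\cdots(k-j+1)$ combined with $\exp(X)X^j=\sum_k\frac{k(k-1)\cdots(k-j+1)}{k!}X^k$; the expansion is indeed a quick induction from \eqref{stirrek}, using $k\cdot k(k-1)\cdots(k-j+1)=k(k-1)\cdots(k-j)+j\,k(k-1)\cdots(k-j+1)$. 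Your route costs one extra (standard and combinatorially meaningful) identity but exposes the Dobinski-like structure of the right-hand side, whereas the paper's induction is shorter and stays entirely inside the derivative toolkit. Your opening detour through the column series $S_k=\sum_n\stirr{n}{k}X^n/n!$ is muddled but harmless since you discard it in the final plan (and, as a bonus, $\sum_k S_k=\exp(\exp(X)-1)$ would give the Bell identity directly). For the second identity your argument is essentially the paper's, run in the opposite direction: both rest on $\bigl(\exp(\exp(X)-1)\bigr)'=\exp(X)\exp(\exp(X)-1)$ and \autoref{bellrek}. One small imprecision: to get $B=C$ from $B'=B\exp(X)$, $C'=C\exp(X)$, $B(0)=C(0)$ you cannot literally invoke ``a series is determined by its derivative and constant term'' (you do not yet know $B'=C'$); the fix is immediate, either because the equation $B'=B\exp(X)$ expresses the $(n+1)$-st coefficient through the earlier ones (exactly the paper's coefficient recursion), or by noting that $(B/C)'=0$ with $C$ invertible.
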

\begin{proof}\hfill
\begin{enumerate}[(i)]
\item For $n=0$, we have 
\[\exp(-X)\sum_{k=0}^\infty\frac{1}{k!}X^k=\exp(-X)\exp(X)=\exp(0)=1\]
as claimed. Assuming the claim for $n$, we have
\begin{align*}
\sum_{k=0}^{n+1}\stirr{n+1}{k}X^k&\overset{\eqref{stirrek}}{=}\sum k\stirr{n}{k}X^k+\sum\stirr{n}{k-1}X^k=X\Bigl(\sum\stirr{n}{k}X^k\Bigr)'+X\sum\stirr{n}{k}X^k\\
&=X\Bigl(\exp(-X)\sum\frac{k^n}{k!}X^k\Bigr)'+X\exp(-X)\sum\frac{k^n}{k!}X^k\\
&=\exp(-X)\sum\frac{k^{n+1}}{k!}X^k.
\end{align*}

\item Since $\exp(X)-1\in (X)$, we can substitute $X$ by $\exp(X)-1$ in $\exp(X)$. 
Let 
\[\alpha:=\exp\bigl(\exp(X)-1\bigr)=\sum\frac{a_n}{n!}X^n.\]
Then $a_0=\exp(\exp(0)-1)=\exp(0)=1=b(0)$. The chain rule gives
\begin{align*}
\sum_{n=0}^\infty\frac{a_{n+1}}{n!}X^n&=\alpha'=\exp(X)\exp(\exp(X)-1)\\
&=\Bigl(\sum_{k=0}^\infty\frac{1}{k!}X^k\Bigr)\Bigl(\sum_{k=0}^\infty\frac{a_k}{k!}X^k\Bigr)=\sum_{n=0}^\infty\sum_{k=0}^n\frac{a_k}{k!(n-k)!}X^n.
\end{align*}
Therefore, $a_{n+1}=\sum_{k=0}^n\binom{n}{k}a_k$ for $n\ge 0$ and the claim follows from \autoref{bellrek}.\qedhere
\end{enumerate}
\end{proof}

Now we discuss permutations. 

\begin{Def}
Let $S_n$\index{*Sn@$S_n$} be the symmetric group consisting of all permutations on the set $\{1,\ldots,n\}$.
The number of permutations in $S_n$ with exactly $k$ (disjoint) cycles including fixed points is denoted by the \emph{Stirling number of the first kind}\index{Stirling number!of first kind} $\stir{n}{k}$.\index{*St1@$\stir{n}{k}$}
By agreement, $\stir{0}{0}=1$ (the identity on the empty set has zero cycles). 
\end{Def}

\begin{Ex}
There are $\stir{4}{2}=11$ permutations in $S_4$ with exactly two cycles: 
\begin{gather*}
(1,2,3)(4),\ (1,3,2)(4),\ (1,2,4)(3),\ (1,4,2)(3),\ (1,3,4)(2),\ (1,4,3)(2),\\ (1)(2,3,4),\ (1)(2,4,3),\ (1,2)(3,4),\ (1,3)(2,4),\ (1,4)(2,3). 
\end{gather*}
\end{Ex}

Since $|S_n|=n!$, there is no need for a generating function of the number of permutations.

\begin{Lem}
For $k,n\in\NN_0$,
\begin{equation}\label{lemstir}
\stir{n+1}{k}=\stir{n}{k-1}+n\stir{n}{k}.
\end{equation}
\end{Lem}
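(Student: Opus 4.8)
The plan is to give the standard combinatorial argument by tracking the behaviour of the element $n+1$ in a permutation of $\{1,\ldots,n+1\}$ with exactly $k$ cycles. There are exactly two cases: either $n+1$ is a fixed point of the permutation, or it is not. Summing the counts from these two disjoint and exhaustive cases will give the two terms on the right-hand side of \eqref{lemstir}.

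First I would assume without loss of generality that $1\le k\le n$ (the boundary cases $k=0$ and $k=n+1$ are checked directly: $\stir{n+1}{0}=0=\stir{n}{-1}+n\stir{n}{0}$ since $\stir{n}{0}=0$ for $n\ge 1$, and $\stir{n+1}{n+1}=1=\stir{n}{n}+n\stir{n}{n+1}$). For the first case, if $n+1$ is a fixed point of $\pi\in S_{n+1}$, then deleting the singleton cycle $(n+1)$ yields a permutation of $\{1,\ldots,n\}$ with exactly $k-1$ cycles, and this is clearly a bijection onto the set of such permutations; hence this case contributes $\stir{n}{k-1}$.

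For the second case I would set up the relevant bijection explicitly. Given $\sigma\in S_n$ with exactly $k$ cycles and given $j\in\{1,\ldots,n\}$, define $\sigma_j\in S_{n+1}$ by $\sigma_j(j):=n+1$, $\sigma_j(n+1):=\sigma(j)$, and $\sigma_j(i):=\sigma(i)$ for all other $i$; this amounts to inserting $n+1$ into $\sigma$'s cycle structure immediately after $j$, so $\sigma_j$ has exactly $k$ cycles and $n+1$ is not a fixed point. The $n$ permutations $\sigma_1,\ldots,\sigma_n$ are pairwise distinct (they send $n+1$ to possibly-equal values but are distinguished by $\sigma_j^{-1}(n+1)=j$). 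Conversely, if $\tau\in S_{n+1}$ has exactly $k$ cycles and $\tau(n+1)\ne n+1$, put $j:=\tau^{-1}(n+1)$ (note $j\ne n+1$ since $\tau(n+1)\ne n+1$) and let $\sigma$ be obtained from $\tau$ by deleting $n+1$ from its cycle (i.e. $\sigma(j):=\tau(n+1)$ and $\sigma(i):=\tau(i)$ otherwise); then $\sigma\in S_n$ has exactly $k$ cycles and $\tau=\sigma_j$. This shows the second case contributes $n\stir{n}{k}$, and adding the two cases finishes the proof.

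\textbf{Main obstacle.} There are no real obstacles here; the only point requiring care is verifying that the insertion map $\sigma\mapsto\sigma_j$ genuinely preserves the number of cycles and that the inverse construction is well-defined (in particular that $j\ne n+1$), i.e. pinning down the bijection between $\{1,\ldots,n\}\times\{\sigma\in S_n:\sigma\text{ has }k\text{ cycles}\}$ and $\{\tau\in S_{n+1}:\tau\text{ has }k\text{ cycles},\ \tau(n+1)\ne n+1\}$ cleanly rather than by a hand-wave.
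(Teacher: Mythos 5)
Your proof is correct and follows essentially the same route as the paper: split according to whether $n+1$ forms a singleton cycle (giving $\stir{n}{k-1}$) or is inserted into an existing cycle of a permutation of $\{1,\ldots,n\}$ in one of $n$ ways (giving $n\stir{n}{k}$). The only difference is cosmetic — you write out the insertion bijection and its inverse explicitly, where the paper counts the $n$ insertion positions in cycle notation.
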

\begin{proof}
Without loss of generality, let $1\le k\le n$.
Let $\sigma\in S_n$ with exactly $k-1$ cycles. By appending the $1$-cycle $(n+1)$ to $\sigma$ we obtain a permutation counted by $\stir{n+1}{k}$. 
Now assume that $\sigma$ has $k$ cycles. When we write $\sigma$ as a sequence of $n$ numbers and $2k$ parentheses, there are $n$ meaningful positions where we can add the digit $n+1$. For example, there are three ways to add $4$ in $\sigma=(1,2)(3)$, namely
\[(4,1,2)(3),\quad(1,4,2)(3),\quad(1,2)(4,3).\]
This yields $n$ distinct permutations counted by $\stir{n+1}{k}$. Conversely, every permutation counted by $\stir{n+1}{k}$ arises in precisely one of the described ways.
\end{proof}

While the recurrence relations we have seen so far appear arbitrary, they can be explained in a unified way (see \cite{Konvalina}).

It is time to present the next dual pair of formulas resembling Theorems~\ref{GaussBT} and \ref{coreuler}. 

\begin{Thm}\label{genstir}
The following generating functions of the Stirling numbers hold for $n\in\NN_0$:
\begin{empheq}[box=\fbox]{align*}
\prod_{k=0}^{n-1}(1+kX)&=\sum_{k=0}^n\stir{n}{n-k}X^k,\\
\prod_{k=1}^n\frac{1}{1-kX}&=\sum_{k=0}^\infty\stirr{n+k}{n}X^k.
\end{empheq}
\end{Thm}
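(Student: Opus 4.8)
The plan is to prove each identity by induction on $n$, exactly mirroring the proofs of \autoref{GaussBT} and \autoref{coreuler} but now using the Stirling recurrences \eqref{lemstir} and \eqref{stirrek} in place of \eqref{lemgauss}. For the first identity, let $n=0$ give the empty product $1$ on the left and the single term $\stir{0}{0}X^0 = 1$ on the right. For the induction step I would multiply through:
\[
\prod_{k=0}^{n}(1+kX) = (1+nX)\sum_{k=0}^n\stir{n}{n-k}X^k = \sum_{k=0}^n\stir{n}{n-k}X^k + n\sum_{k=0}^n\stir{n}{n-k}X^{k+1},
\]
then shift the index in the second sum to $X^k$, collecting the coefficient of $X^k$ as $\stir{n}{n-k} + n\stir{n}{n-k+1}$. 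To match $\stir{n+1}{n+1-k}$ via \eqref{lemstir}, one writes $\stir{n+1}{n+1-k} = \stir{n}{n-k} + n\stir{n}{n+1-k}$ (this is \eqref{lemstir} with $k$ replaced by $n+1-k$), which is exactly the collected coefficient. Boundary terms ($k=0$ and $k=n+1$) should be checked separately or absorbed by letting indices run over all of $\ZZ$ with the usual convention that $\stir{m}{j}=0$ for $j<0$ or $j>m$, just as in the proof of \eqref{G0}.

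For the second identity, the base case $n=1$ is the geometric series $\frac{1}{1-X} = \sum_{k=0}^\infty X^k$, matching $\sum_k \stirr{1+k}{1}X^k = \sum_k X^k$ since $\stirr{m}{1}=1$. For the step I would multiply the claimed expansion for $n$ by $\frac{1}{1-(n+1)X}$, or equivalently verify
\[
(1-(n+1)X)\sum_{k=0}^\infty\stirr{n+1+k}{n+1}X^k = \sum_{k=0}^\infty\stirr{n+k}{n}X^k,
\]
expanding the left side as $\sum_k\bigl(\stirr{n+1+k}{n+1} - (n+1)\stirr{n+k}{n+1}\bigr)X^k$ and applying \eqref{stirrek} in the form $\stirr{n+1+k}{n+1} = (n+1)\stirr{n+k}{n+1} + \stirr{n+k}{n}$ (this is \eqref{stirrek} with the roles $n\mapsto n+k$, $k\mapsto n+1$), so the coefficient of $X^k$ collapses to $\stirr{n+k}{n}$ as required. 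Then divide by $(1-(n+1)X)$ — which is a unit in $\CC[[X]]$ — to get the formula for $n+1$.

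I do not expect a genuine obstacle here; the only care needed is bookkeeping at the index boundaries in the first (finite) identity, where the naive index shift produces a spurious term $n\stir{n}{-1}X^{n+1} = 0$ and the top term $\stir{n}{n}X^0 = 1$ must be recognized as $\stir{n+1}{n+1}X^0$. The cleanest exposition, as the author already does for \eqref{G0} and in \autoref{Hirschhorn}, is to let all sums run over $k\in\ZZ$ so that both recurrences apply uniformly and no case distinction is needed. One could alternatively give direct combinatorial arguments — the first identity by classifying permutations in $S_n$ according to how cycles of $\{1,\dots,n\}$ are built up one element at a time, weighting a new non-fixed-point insertion by the current size; the second by a stars-and-bars count of weak compositions refined by set-partition structure — but the inductive route is shorter and keeps the "dual pair" parallel with \autoref{coreuler} transparent.
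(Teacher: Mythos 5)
Your proof is correct and follows essentially the same route as the paper: induction on $n$, with the product rewritten so that the recurrences \eqref{lemstir} and \eqref{stirrek} (suitably re-indexed) collapse the coefficients, exactly paralleling the proofs of \autoref{GaussBT} and \autoref{coreuler}. The only cosmetic difference is that you run the second induction from $n$ to $n+1$ while the paper steps from $n-1$ to $n$; the boundary bookkeeping you flag is handled in the paper by the same convention of vanishing Stirling numbers outside the natural range.
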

\begin{proof}
This is another induction on $n$.
\begin{enumerate}[(i)]
\item The case $n=0$ yields $1$ on both sides of the equation. Assuming the claim for $n$, we compute
\begin{align*}
\prod_{k=0}^n(1+kX)&=(1+nX)\sum\stir{n}{n-k}X^k=\sum\biggl(\stir{n}{n-k}+n\stir{n}{n-k+1}\biggr)X^k\\
&\overset{\eqref{lemstir}}{=}\sum\stir{n+1}{n+1-k}X^k.
\end{align*}

\item For $n=0$, we get $1$ on both sides. Assume the claim for $n-1$. Then
\begin{align*}
(1-nX)\sum_{k=0}^\infty\stirr{n+k}{n}X^k&=\sum\biggl(\stirr{n+k}{n}-n\stirr{n+k-1}{n}\biggr)X^k\\
&\overset{\eqref{stirrek}}{=}\sum\stirr{n-1+k}{n-1}X^k=\prod_{k=1}^{n-1}\frac{1}{1-kX}.\qedhere
\end{align*}
\end{enumerate}
\end{proof}

For those who still do not have enough, the next exercise might be of interest.

\begin{A}\label{bern}\hfill
\begin{enumerate}[(a)]
\item Prove \emph{Vandermonde's identity}\index{Vandermonde's identity} $\sum_{k=0}^n\binom{a}{k}\binom{b}{n-k}=\binom{a+b}{n}$ for all $a,b\in\CC$ by using Newton's binomial theorem.

\item For every prime $p$ and $1<k<p$, show that $\stir{p}{k}$ is divisible by $p$ (a property shared with $\binom{p}{k}$).

\item Prove that 
\[(-1)^n\frac{\log(1-X)^n}{n!}=\sum_{k=0}^\infty\stir{k}{n}\frac{X^k}{k!}\]
for $n\in\NN_0$. 

\item Determine all $n\in\NN$ such that the Catalan number $c_n$ is odd. \\
\textit{Hint:} Consider the generating function modulo $2$.

\item The \emph{Bernoulli numbers}\index{Bernoulli numbers} $b_n\in\QQ$\index{*Bn@$b_n$} are defined directly by their (exponential) generating function
\[\frac{X}{\exp(X)-1}=\sum_{n=0}^\infty \frac{b_n}{n!}X^n.\]
Compute $b_0,\ldots,b_3$ and show that $b_{2n+1}=0$ for every $n\in\NN$.\\
\textit{Hint:} Replace $X$ by $-X$.
\end{enumerate}
\end{A}

The \emph{cycle type}\index{cycle type} of a permutation $\sigma\in S_n$ is denoted by $(1^{a_1},\ldots,n^{a_n})$, meaning that $\sigma$ has precisely $a_k$ cycles of length $k$. 

\begin{Lem}\label{lemperm}
The number of permutations $\sigma\in S_n$ with cycle type $(1^{a_1},\ldots,n^{a_n})$ is
\[\frac{n!}{1^{a_1}\ldots n^{a_n}a_1!\ldots a_n!}.\]
\end{Lem}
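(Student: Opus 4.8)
The plan is to count, in two different ways, the number of ways to build a permutation of the prescribed cycle type by writing out a full arrangement of the symbols $1,\ldots,n$ and then parsing it into cycles. First I would observe that there are exactly $n!$ linear orderings $(x_1,\ldots,x_n)$ of the set $\{1,\ldots,n\}$. From such an ordering we produce a permutation of cycle type $(1^{a_1},\ldots,n^{a_n})$ by the standard recipe: read off the first $a_1$ entries as $a_1$ fixed points (one-cycles), the next $2a_2$ entries in blocks of two as $a_2$ two-cycles, the next $3a_3$ entries in blocks of three as three-cycles, and so on. Since $\sum_k k a_k=n$, this uses up all $n$ symbols and yields a well-defined $\sigma\in S_n$ of the desired type, and conversely every permutation of that type is obtained this way. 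So the $n!$ orderings surject onto the set of permutations of the given cycle type.

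Next I would count the overcounting, i.e. how many orderings give rise to the same permutation $\sigma$. There are two independent sources. First, a single $k$-cycle $(y_1,y_2,\ldots,y_k)$ can be written starting from any of its $k$ elements, so each $k$-block admits $k$ equivalent presentations; over all $a_k$ blocks of length $k$ and all $k$ this contributes a factor $\prod_{k=1}^n k^{a_k}$. Second, within the $a_k$ blocks of a fixed length $k$, the blocks themselves may be listed in any of the $a_k!$ orders without changing $\sigma$; this contributes a factor $\prod_{k=1}^n a_k!$. These two types of rearrangement are independent and together generate exactly the full fibre over $\sigma$ — any two orderings producing the same $\sigma$ differ by a cyclic rotation inside each block and a permutation of same-length blocks. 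Hence every permutation of cycle type $(1^{a_1},\ldots,n^{a_n})$ has a fibre of size $\prod_{k=1}^n k^{a_k}a_k!$, and dividing gives the claimed count $\dfrac{n!}{1^{a_1}\cdots n^{a_n}\,a_1!\cdots a_n!}$.

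The main obstacle — really the only subtle point — is making the last claim about the fibre precise: I must argue that the group of symmetries acting on orderings-that-produce-$\sigma$ is \emph{exactly} the direct product of the cyclic rotations of the individual blocks with the block-permutation groups, with no further coincidences and no elements of the product collapsing. This follows because from the ordering one can reconstruct, for each fixed block position, which cycle of $\sigma$ it encodes and at which element it starts, so two orderings give the same $\sigma$ if and only if they induce the same partition of positions into blocks (forcing the block-permutation part) and the same cycles (forcing only a cyclic shift of the starting element within each block); since distinct cycles of $\sigma$ are disjoint and each $k$-cycle has exactly $k$ starting points, the stabiliser structure is exactly as described and acts freely on the fibre. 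I would phrase this as a clean bijection between the $n!$ orderings and the set of pairs (permutation of the given type, choice of starting element in each cycle together with an ordering of the equal-length cycles), from which the formula is immediate.
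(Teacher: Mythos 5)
Your proof is correct, and the delicate point you flag is handled properly: for a fixed $\sigma$ of the given type, the orderings parsing to $\sigma$ are exactly those obtained by distributing its $a_k$ cycles of length $k$ over the $a_k$ blocks of length $k$ ($a_k!$ ways for each $k$) and choosing a starting element in each cycle ($k$ ways per $k$-cycle), all of which are distinct, so the fibre has size $\prod_k k^{a_k}a_k!$ and the count $n!/\bigl(1^{a_1}\cdots n^{a_n}a_1!\cdots a_n!\bigr)$ follows. Your route differs from the paper's in its decomposition: you set up a surjection from all $n!$ linear orderings of $\{1,\ldots,n\}$ onto the permutations of the prescribed type (parsing a word into blocks) and compute the fibre size, whereas the paper constructs the permutations directly, first choosing the cycle supports via the multinomial coefficient $n!/\bigl((1!)^{a_1}\cdots(n!)^{a_n}\bigr)$, dividing by $a_1!\cdots a_n!$ because equal-size supports are interchangeable, and then multiplying by $(k-1)!$ for the cyclic orders on each $k$-element support. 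Both arguments quotient out the same symmetries (the $k$ rotations of each $k$-cycle and the $a_k!$ permutations of equal-length cycles); yours buys a single clean word-to-cycles bijection of the kind that underlies exponential-formula arguments, at the cost of having to verify that the symmetry group acts freely on the fibre, while the paper's product-of-choices count avoids any discussion of fibres but splits the bookkeeping into two separate correction factors.
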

\begin{proof}
Each cycle of $\sigma$ determines a subset of $\{1,\ldots,n\}$. The number of possibilities to choose such subsets is given by the multinomial coefficient
\[\frac{n!}{(1!)^{a_1}\ldots (n!)^{a_n}}.\]
Since the $a_i$ subsets of size $i$ can be permuted in $a_i!$ ways, each corresponding to the same permutation (as disjoint cycles commute), the number of relevant choices is only
\[\frac{n!}{(1!)^{a_1}\ldots (n!)^{a_n}a_1!\ldots a_n!}.\]
A given subset $\{\lambda_1,\ldots,\lambda_k\}\subseteq\{1,\ldots,n\}$ can be arranged in $k!$ permutations, but only $(k-1)!$ different cycles, since $(\lambda_1,\ldots,\lambda_k)=(\lambda_2,\ldots,\lambda_k,\lambda_1)=\ldots$. 
Hence, the number of permutations in question is
\[\frac{n!}{(1!)^{a_1}\ldots (n!)^{a_n}a_1!\ldots a_n!}\bigl((1-1)!\bigr)^{a_1}\ldots\bigl((n-1)!\bigr)^{a_n}=\frac{n!}{1^{a_1}\ldots n^{a_n}a_1!\ldots a_n!}.\qedhere\]
\end{proof}

The following is a sibling to Glaisher's theorem. For a non-negative real number $r$ we denote the largest integer $n\le r$ by $n=\lfloor r\rfloor$.

\begin{Thm}[\textsc{Erd{\H{o}}s--Tur{\'a}n}]\label{Turan}\index{Erd{\H{o}}s--Tur{\'a}n}
Let $n,d\in\NN$. The number of permutations in $S_n$ whose cycle lengths are not divisible by $d$ is
\[n!\prod_{k=1}^{\lfloor n/d\rfloor}\frac{kd-1}{kd}.\]
\end{Thm}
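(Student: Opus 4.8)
The plan is to set up the exponential generating function for the number $t_d(n)$ of permutations in $S_n$ all of whose cycle lengths are coprime to $d$ (for the statement we want cycle lengths \emph{not divisible by} $d$, which is the same condition up to the wording $d\nmid k$), and then evaluate it in closed form using Newton's binomial theorem (\autoref{newton}).

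\smallskip
\textbf{Step 1: the generating function.} By \autoref{lemperm}, the number of permutations of $S_n$ with cycle type $(1^{a_1},\dots,n^{a_n})$ is $n!/\bigl(\prod_k k^{a_k}a_k!\bigr)$. Summing $t_d(n)/n!$ over all cycle types supported on the lengths $k$ with $d\nmid k$ gives, by the exponential formula (which here is just a direct multiplication of power series, justified by \autoref{infsum} and \autoref{lemfunc}):
\[
\sum_{n=0}^\infty\frac{t_d(n)}{n!}X^n=\prod_{\substack{k\ge 1\\ d\,\nmid\, k}}\Bigl(\sum_{a=0}^\infty\frac{1}{k^a a!}X^{ka}\Bigr)=\prod_{\substack{k\ge 1\\ d\,\nmid\, k}}\exp\Bigl(\frac{X^k}{k}\Bigr)=\exp\Bigl(\sum_{\substack{k\ge 1\\ d\,\nmid\, k}}\frac{X^k}{k}\Bigr).
\]
All these manipulations are legitimate: the argument of $\exp$ lies in $(X)$, and the infinite product converges since $X^k/k\to 0$.

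\smallskip
\textbf{Step 2: simplify the exponent.} Using \autoref{bsplog}, namely $\sum_{k\ge1}X^k/k=-\log(1-X)=\log\!\bigl(\tfrac1{1-X}\bigr)$, and the substitution $X\mapsto X^d$ (allowed since $X^d\in(X)$), we get $\sum_{d\mid k}X^k/k=\tfrac1d\sum_{j\ge1}X^{dj}/j=-\tfrac1d\log(1-X^d)$. Hence
\[
\sum_{\substack{k\ge 1\\ d\,\nmid\, k}}\frac{X^k}{k}=-\log(1-X)+\tfrac1d\log(1-X^d),
\]
so by \autoref{lemfunc} and \autoref{defpower},
\[
\sum_{n=0}^\infty\frac{t_d(n)}{n!}X^n=\exp\!\bigl(-\log(1-X)\bigr)\cdot\exp\!\bigl(\tfrac1d\log(1-X^d)\bigr)=\frac{(1-X^d)^{1/d}}{1-X}.
\]

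\smallskip
\textbf{Step 3: extract the coefficient.} By Newton's binomial theorem (\autoref{newton}) applied with $c=1/d$ and $\alpha=-X^d$, together with the geometric series $1/(1-X)=\sum X^m$, the coefficient of $X^n$ on the right is
\[
[X^n]\;(1-X^d)^{1/d}\frac{1}{1-X}=\sum_{\substack{j\ge 0\\ dj\le n}}\binom{1/d}{j}(-1)^j.
\]
Here one must show this sum equals $\prod_{k=1}^{\lfloor n/d\rfloor}\frac{kd-1}{kd}$ after multiplying by $n!$. The cleanest route is a \emph{partial-sum telescoping}: since $(1-X^d)^{1/d}\cdot\frac{1}{1-X^d}=(1-X^d)^{1/d-1}$, one checks that the partial sums $S_N:=\sum_{j=0}^N\binom{1/d}{j}(-1)^j$ satisfy $S_N=(-1)^N\binom{1/d-1}{N}$ (a standard "hockey-stick" style identity for binomial coefficients with a complex upper index, provable by induction on $N$ using $\binom{c}{j}+\binom{c}{j-1}=\binom{c+1}{j}$), and then
\[
(-1)^N\binom{1/d-1}{N}=\prod_{k=1}^{N}\frac{1-kd}{kd}\cdot(-1)^N\cdot(-1)^N=\prod_{k=1}^{N}\frac{kd-1}{kd},
\]
where $N=\lfloor n/d\rfloor$. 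Multiplying by $n!$ gives the claimed count.

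\smallskip
\textbf{Expected main obstacle.} The conceptual content is entirely in Step 1 (the passage from \autoref{lemperm} to the product form of the generating function) and is routine once one is comfortable with the exponential formula in the formal setting. The only genuinely fiddly point is the combinatorial identity in Step 3 identifying the partial sum $\sum_{j=0}^N\binom{1/d}{j}(-1)^j$ with the product $\prod_{k=1}^N\frac{kd-1}{kd}$; I would isolate this as a short lemma or inline induction rather than letting it clutter the main argument. A small care point throughout is that "cycle lengths not divisible by $d$" is exactly the condition $d\nmid k$ used in indexing the product, so no translation is needed — but one should state this explicitly to match the theorem's phrasing.
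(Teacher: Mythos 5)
Your proof is correct, and Steps 1--2 coincide exactly with the paper's argument: the same appeal to \autoref{lemperm}, the same product of exponentials, and the same closed form $\sqrt[d]{1-X^d}\,\frac{1}{1-X}$ for the exponential generating function. The only divergence is the final coefficient extraction. The paper avoids any summation identity by rewriting $(1-X^d)^{1/d}\frac{1}{1-X}=\frac{1-X^d}{1-X}(1-X^d)^{(1-d)/d}=(1+X+\ldots+X^{d-1})\sum_{q\ge 0}\binom{(1-d)/d}{q}(-X^d)^q$, so that the coefficient of $X^n$ is the single term $(-1)^q\binom{(1-d)/d}{q}$ with $q=\lfloor n/d\rfloor$; your route expands the two factors separately and then compresses the resulting partial sum via $\sum_{j=0}^N(-1)^j\binom{c}{j}=(-1)^N\binom{c-1}{N}$, a correct Pascal-rule induction, but an extra lemma that the paper's rearrangement renders unnecessary. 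Two cosmetic points: the phrase ``cycle lengths coprime to $d$'' in your opening is not equivalent to ``not divisible by $d$'' when $d$ is composite --- your computation uses the correct condition $d\nmid k$ throughout, so only the wording needs fixing; and in your last displayed chain the middle expression carries a stray factor $(-1)^N$ (it should read $(-1)^N\binom{1/d-1}{N}=(-1)^N\prod_{k=1}^N\frac{1-kd}{kd}=\prod_{k=1}^N\frac{kd-1}{kd}$), though the two ends of the chain agree as intended.
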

\begin{proof}
According to \cite{Maroti}, the idea of the proof is credited to Pólya.\index{Pólya}
We need to count permutations with cycle type $(1^{a_1},\ldots,n^{a_n})$ where $a_k=0$ whenever $d\mid k$. 
By \autoref{lemperm}, the total number divided by $n!$ is the coefficient of $X^n$ in 
\begin{align*}
\prod_{\substack{k=1\\d\,\nmid\, k}}^\infty\sum_{a=0}^\infty \frac{1}{a!}\Bigl(\frac{X^k}{k}\Bigr)^a&=\prod_{d\,\nmid\, k}\exp\Bigl(\frac{X^k}{k}\Bigr)\overset{\eqref{func2}}{=}\exp\Bigl(\sum_{d\,\nmid\, k}\frac{X^k}{k}\Bigr)=\exp\Bigl(\sum_{k=1}^\infty\frac{X^k}{k}-\sum_{k=1}^\infty\frac{X^{dk}}{dk}\Bigr)\\
&=\exp\Bigl(-\log(1-X)+\frac{1}{d}\log(1-X^d)\Bigr)\overset{\eqref{funclog}}{=}\sqrt[d]{1-X^d}\,\frac{1}{1-X}\\
&=\frac{1-X^d}{1-X}(1-X^d)^{\frac{1-d}{d}}\overset{\eqref{newtoneq}}{=}\Bigl(\sum_{r=0}^{d-1} X^r\Bigr)\biggl(\sum_{q=0}^\infty\binom{(1-d)/d}{q}(-X^d)^q\biggr).
\end{align*}
Therein, $X^n$ appears if and only if $n=qd+r$ with $0\le r<d$ and $q=\lfloor n/d\rfloor$ (euclidean division). In this case the coefficient is
\[(-1)^q\binom{(1-d)/d}{q}=(-1)^q\prod_{k=1}^q\frac{\frac{1}{d}-k}{k}=\prod_{k=1}^q\frac{kd-1}{kd}.\qedhere\]
\end{proof}

\begin{Ex}
A permutation has odd order as an element of $S_n$ if and only if all its cycles have odd length. The number of such permutations is therefore
\[n!\prod_{k=1}^{\lfloor n/2\rfloor}\frac{2k-1}{2k}=\begin{cases}
1^2\cdot 3^2\cdot\ldots\cdot (n-1)^2&\text{if $n$ is even},\\
1^2\cdot 3^2\cdot\ldots\cdot (n-2)^2\cdot n&\text{if $n$ is odd}.
\end{cases}\]
\end{Ex}

\begin{A}
Find and prove a similar formula for the number of permutations $\sigma\in S_n$ whose cycle lengths are all divisible by $d$.
\end{A}

\begin{Def}
A pair $(a,b)$ with $1\le a<b\le n$ is called an \emph{inversion}\index{inversion} of $\sigma\in S_n$ if $\sigma(a)>\sigma(b)$. Let $\inv(\sigma)$\index{*Inv@$\inv(\sigma)$} be the number of inversions of $\sigma$ and let $\rho(n,k):=|\{\sigma\in S_n:\inv(\sigma)=k\}|$.\index{*Rho@$\rho(n,k)$} As usual, let $\rho(n,k):=0$ for $k<0$. 
\end{Def}

Obviously, $0\le\inv(\sigma)\le\binom{n}{2}$ for all $\sigma\in S_n$. Moreover, $\id$ is the only permutation with no inversions and 
\[\pi=\begin{pmatrix}
1&2&\cdots&n\\
n&n-1&\cdots&1
\end{pmatrix}=(1,n)(2,n-1)\ldots\]
is the only permutation with $\binom{n}{2}$ inversions. If $(a,b)$ is an inversion of $\sigma$, then $(a,b)$ is no inversion of $\pi\sigma$ and vice versa. Hence, $\inv(\pi\sigma)=\binom{n}{2}-\inv(\sigma)$ and $\rho(n,k)=\rho\bigl(n,\binom{n}{2}-k\bigr)$ for all $k$. 
It is well-known that $\sgn(\sigma)=(-1)^{\inv(\sigma)}$. 

\begin{Thm}[\textsc{Rodrigues}]\index{Rodrigues}
For $n\in\NN_0$,
\[\sum_{k=0}^{\binom{n}{2}}\rho(n,k)X^k=\frac{X^n!}{(1-X)^n}.\]
\end{Thm}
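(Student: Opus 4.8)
The plan is to prove the generating function identity
\[
\sum_{k=0}^{\binom{n}{2}}\rho(n,k)X^k=\frac{X^n!}{(1-X)^n}=\prod_{j=1}^n\frac{1-X^j}{1-X}=\prod_{j=1}^n(1+X+\ldots+X^{j-1})
\]
by induction on $n$, exactly in the style used for the Gaussian coefficients and Stirling numbers earlier in the paper. Write $I_n(X):=\sum_{k}\rho(n,k)X^k$; the case $n=0$ (and $n=1$) is trivial since $I_0=I_1=1$. The key combinatorial input is a recurrence: every $\sigma\in S_n$ is obtained from a unique $\tau\in S_{n-1}$ by inserting the value $n$ into one of the $n$ available slots of the one-line notation of $\tau$. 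If $n$ is inserted so that there are exactly $i$ entries to its right (with $0\le i\le n-1$), then $n$ forms exactly $i$ new inversions with those entries and the relative order of the remaining entries is unchanged, so $\inv(\sigma)=\inv(\tau)+i$. Summing over the $n$ insertion positions gives
\[
\rho(n,k)=\sum_{i=0}^{n-1}\rho(n-1,k-i),
\]
which at the level of generating functions reads $I_n(X)=(1+X+\ldots+X^{n-1})\,I_{n-1}(X)$. Feeding in the induction hypothesis $I_{n-1}(X)=\prod_{j=1}^{n-1}(1+\ldots+X^{j-1})$ immediately yields the claim, and one checks the degree bound $\deg I_n=\sum_{j=1}^n(j-1)=\binom{n}{2}$, consistent with $0\le\inv(\sigma)\le\binom{n}{2}$.

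There are two routine points to nail down. First, one should justify the insertion bijection $S_{n-1}\times\{0,\ldots,n-1\}\to S_n$: given $\sigma\in S_n$, locate the position of the value $n$ in its one-line notation, record the number $i$ of entries to its right, delete $n$, and read off the remaining word as a permutation $\tau$ of $\{1,\ldots,n-1\}$; this is clearly inverse to the insertion map. Second, the inversion-counting claim: deleting the entry $n$ from $\sigma$ destroys precisely the $i$ pairs $(a,b)$ with $\sigma(a)=n$ and $b>a$ (all of which are inversions, since $n$ is the largest value) and affects no other pair, so $\inv(\tau)=\inv(\sigma)-i$. Neither step presents any real difficulty.

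The only mild subtlety — and hence the ``main obstacle,'' though it is quite minor — is bookkeeping with the index ranges when translating $\rho(n,k)=\sum_{i=0}^{n-1}\rho(n-1,k-i)$ into the product of polynomials: one should let $k$ range over all of $\ZZ$ (with $\rho(n-1,k-i):=0$ outside $0\le k-i\le\binom{n-1}{2}$) so that the Cauchy product $(\sum_i X^i)(\sum_k\rho(n-1,k)X^k)$ is manipulated cleanly, and then observe that the resulting generating function is automatically a polynomial of degree $\binom{n}{2}$. With that convention the induction step is a one-line computation and the proof is complete.

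\medskip
\noindent\textit{Remark.} This also re-proves the classical fact $\sum_{\sigma\in S_n}X^{\inv(\sigma)}=X^n!/(1-X)^n$ and, evaluated at $X=-1$, recovers $\sum_\sigma\sgn(\sigma)=0$ for $n\ge 2$ since then $1+(-1)=0$ is a factor; evaluated at $X=1$ it gives $n!=|S_n|$, as it must.
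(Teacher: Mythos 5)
Your proof is correct and follows essentially the same route as the paper: insert the value $n$ into one of the $n$ slots of the one-line notation of a permutation in $S_{n-1}$, note that this adds exactly the number of entries to its right to the inversion count, deduce the recurrence $\rho(n,k)=\sum_{i=0}^{n-1}\rho(n-1,k-i)$, and conclude by induction that the generating function picks up a factor $1+X+\ldots+X^{n-1}$ at each step. The bookkeeping conventions you flag (summing over all $k\in\ZZ$ with $\rho=0$ outside the valid range) are exactly how the paper handles it as well, so there is nothing to fix.
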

\begin{proof}
Induction on $n$:
For $n=0$, both sides become $1$. Let $n\ge 2$ and $0\le k\le n$. For $\sigma\in S_{n-1}$ let $\hat\sigma\in S_n$ such that 
\[(\hat\sigma(1),\ldots,\hat\sigma(n))=\bigl(\sigma(1),\ldots,\sigma(k),n,\sigma(k+1),\ldots,\sigma(n-1)\bigr).\]
Then $\inv(\hat\sigma)=\inv(\sigma)+n-k-1$. Since every permutation of $S_n$ arises in this way, we obtain the recursion 
\[\rho(n,k)=\sum_{l=k-n+1}^k\rho(n-1,l).\]
By induction, we have
\[\sum_{k=0}^\infty\rho(n,k)X^k=\sum_{l=0}^\infty\rho(n-1,l)X^l(1+X+\ldots+X^{n-1})=\frac{X^{n-1}!}{(1-X)^{n-1}}\frac{1-X^n}{1-X}=\frac{X^n!}{(1-X)^n}.\qedhere\]
\end{proof}

\begin{Ex}
For $n=3$ we compute
\[\sum_{k=0}^3\rho(3,k)X^k=\frac{(1-X^2)(1-X^3)}{(1-X)(1-X)}=(1+X)(1+X+X^2)=1+2X+2X^2+X^3.\]
\end{Ex}

We insert a well-known application of Bernoulli numbers.

\begin{Thm}[\textsc{Faulhaber}]\index{Faulhaber}
For every $d\in\NN$ there exists a polynomial $\alpha\in\QQ[X]$ of degree $d+1$ such that $1^d+2^d+\ldots+n^d=\alpha(n)$ for every $n\in\NN$.
\end{Thm}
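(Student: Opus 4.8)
The plan is to identify the exponential generating function of the sequence $S_d(n):=1^d+2^d+\dots+n^d$ (for fixed $n\in\NN$) with a product involving the generating function $\frac{X}{\exp(X)-1}=\sum_{m\ge 0}\frac{b_m}{m!}X^m$ of the Bernoulli numbers from \autoref{bern}, and then read off the coefficient of $X^d$.

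First I would note that, for a fixed $n\in\NN$, the series $\sum_{d=0}^\infty\frac{S_d(n)}{d!}X^d$ is a well-defined element of $\CC[[X]]$, and interchanging the \emph{finite} sum over $j\in\{1,\dots,n\}$ with the sum over $d$ gives
\[
\sum_{d=0}^\infty\frac{S_d(n)}{d!}X^d=\sum_{j=1}^n\sum_{d=0}^\infty\frac{(jX)^d}{d!}=\sum_{j=1}^n\exp(jX)=\sum_{j=1}^n\exp(X)^j,
\]
the last equality by \autoref{lemfunc}. Since $\exp(X)\ne 1$, the geometric sum formula of \autoref{bspinv} yields $\sum_{j=1}^n\exp(X)^j=\exp(X)\,\frac{1-\exp(X)^n}{1-\exp(X)}=\exp(X)\,\frac{\exp(nX)-1}{\exp(X)-1}$.

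The one point needing care is that $\exp(X)-1\in(X)$ is \emph{not} invertible, so the fraction must be read as: both numerator and denominator have $\inf$ equal to $1$, and I would split off the factor $X$,
\[
\frac{\exp(nX)-1}{\exp(X)-1}=\frac{\exp(nX)-1}{X}\cdot\frac{X}{\exp(X)-1},
\]
where $\frac{\exp(nX)-1}{X}=\sum_{m=0}^\infty\frac{n^{m+1}}{(m+1)!}X^m\in\CC[[X]]$ and $\frac{X}{\exp(X)-1}=\sum_{m=0}^\infty\frac{b_m}{m!}X^m$ by the definition of the Bernoulli numbers. Forming the Cauchy product of the three power series $\exp(X)$, $\frac{\exp(nX)-1}{X}$ and $\frac{X}{\exp(X)-1}$ and comparing the coefficient of $X^d$ with the left-hand side above gives the finite sum
\[
\frac{S_d(n)}{d!}=\sum_{i+j+k=d}\frac{1}{i!}\cdot\frac{n^{j+1}}{(j+1)!}\cdot\frac{b_k}{k!}\qquad(i,j,k\in\NN_0).
\]

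Finally I would set $\alpha(X):=d!\sum_{i+j+k=d}\frac{b_k}{i!\,(j+1)!\,k!}\,X^{j+1}\in\QQ[X]$ (the $b_k$ are rational), so that $S_d(n)=\alpha(n)$ for every $n\in\NN$. Its degree is exactly $d+1$, since the monomial $X^{d+1}$ arises only from $j=d$, i.e. $i=k=0$, with coefficient $d!\,\frac{b_0}{(d+1)!}=\frac{1}{d+1}\ne 0$ (using $b_0=1$). There is no real obstacle; the only subtlety is the non-invertibility of $\exp(X)-1$, which is precisely why the Bernoulli generating function $X/(\exp(X)-1)$ — rather than $1/(\exp(X)-1)$ — is the natural object to invoke.
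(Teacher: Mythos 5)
Your proof is correct and follows essentially the same route as the paper: form the exponential generating function $\sum_d S_d(n)X^d/d!$, evaluate the geometric sum of $\exp(jX)$, factor out $X/(\exp(X)-1)$ to bring in the Bernoulli numbers from \autoref{bern}, and compare coefficients of $X^d$. The only (cosmetic) difference is that you keep the extra factor $\exp(X)$ so as to sum $j=1,\ldots,n$ directly, whereas the paper sums $k=0,\ldots,n-1$ and compensates by evaluating the resulting polynomial at $X+1$; both yield the leading coefficient $\tfrac{1}{d+1}$ from $b_0=1$.
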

\begin{proof}
We compute the generating function
\begin{align*}
\sum_{d=0}^\infty\Bigl(\sum_{k=0}^{n-1}k^d\Bigr)\frac{X^d}{d!}&=\sum_{k=0}^{n-1}\sum_{d=0}^\infty\frac{(kX)^d}{d!}=\sum_{k=0}^{n-1}\exp(kX)=\sum_{k=0}^{n-1}\exp(X)^k=\frac{\exp(X)^n-1}{\exp(X)-1}\\
&=\frac{\exp(nX)-1}{X}\frac{X}{\exp(X)-1}\overset{\ref{bern}}{=}\sum_{k=0}^\infty\frac{n^{k+1}}{(k+1)!}X^k\sum_{l=0}^\infty\frac{b_l}{l!}X^l\\
&=\sum_{d=0}^\infty\sum_{k=0}^d\Bigl(\frac{n^{k+1}b_{d-k}d!}{(k+1)!(d-k)!}\Bigr)\frac{X^d}{d!}\\
&=\sum_{d=0}^\infty\sum_{k=0}^d\Bigl(\frac{1}{k+1}\binom{d}{k}b_{d-k}n^{k+1}\Bigr)\frac{X^d}{d!}
\end{align*}
and define 
\[\alpha:=\sum_{k=0}^d\frac{1}{k+1}\binom{d}{k}b_{d-k}(X+1)^{k+1}\in\QQ[X].\] 
Since $b_0=1$, $\alpha$ is a polynomial of degree $d+1$ with leading coefficient $\frac{1}{d+1}$.
\end{proof}

\begin{Ex}
For $d=3$ the formula in the proof evaluates with some effort (using \autoref{bern}) to:
\[\alpha=b_3(X+1)+\frac{3}{2}b_2(X+1)^2+b_1(X+1)^3+\frac{1}{4}b_0(X+1)^4=\frac{1}{4}(X+1)^2X^2=\binom{X+1}{2}^2.\]
This is known as \emph{Nicomachus's identity}:\index{Nicomachus' identity}
\[1^3+2^3+\ldots+n^3=(1+2+\ldots+n)^2.\]
\end{Ex}

Even though Faulhaber's formula $1^d+2^d+\ldots+n^d=\alpha(n)$ has not much to do with power series, there still is a dual formula, again featuring Bernoulli numbers:
\[\sum_{k=1}^\infty\frac{1}{k^{2d}}=(-1)^{d+1}\frac{(2\pi)^{2d}b_{2d}}{2(2d)!}\qquad(d\in\NN).\]
Strangely, no such formula is known to hold for odd negative exponents (perhaps because $b_{2d+1}=0$?). In fact, it is unknown if \emph{Apéry's constant}\index{Apéry's constant} $\sum_{k=1}^\infty\frac{1}{k^3}=1{,}202\ldots$ is transcendent.

We end this section with a power series proof of the famous four-square theorem. 

\begin{Thm}[\textsc{Lagrange--Jacobi}]\label{thmLJ}\index{Lagrange--Jacobi}
Every positive integer is the sum of four squares. More precisely,\index{*Qn@$q(n)$}
\[q(n):=\bigl|\{(a,b,c,d)\in\ZZ^4:a^2+b^2+c^2+d^2=n\}\bigr|=8\sum_{4\,\nmid\, d\,\mid\,n}d\]
for $n\in\NN$.
\end{Thm}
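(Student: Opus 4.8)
The plan is to use the power-series machinery, in particular Jacobi's triple product identity \autoref{jacobi} and its specializations \eqref{JTP1}, \eqref{JTP2}, \eqref{JTP3}, to compute the generating function $\sum_{n=0}^\infty q(n)X^n$ in closed form, and then extract the coefficient-wise divisor-sum formula. The starting observation is that $q(n)$ counts lattice points on a sphere, so the ordinary generating function factors as a fourth power of a theta series:
\[
\sum_{n=0}^\infty q(n)X^n=\Bigl(\sum_{a\in\ZZ}X^{a^2}\Bigr)^4=:\theta(X)^4,
\]
because a representation $a^2+b^2+c^2+d^2=n$ corresponds to choosing one monomial $X^{a^2}$ from each of four factors. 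By \eqref{JTP1} we have the product expansion $\theta(X)=\prod_{k\ge1}(1-X^{2k})(1+X^{2k-1})^2$, and by \eqref{JTP2}, \eqref{JTP3} we have companion product formulas for $\sum(-1)^kX^{k^2}$ and $\sum X^{k^2+k}$. So the first main step is to establish, purely formally, an identity expressing $\theta(X)^4$ in terms of the Lambert-type series $\sum_{k\ge1}\frac{kX^k}{1-(-X)^k}$ or equivalently a divisor-sum generating function.

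The key algebraic step I would carry out is the classical identity
\[
\theta(X)^4=1+8\sum_{k=1}^\infty\frac{kX^k}{1+(-X)^k},
\]
whose right-hand side, when expanded as $\sum_{k,l\ge1}(-1)^{(k+1)l}\,kX^{kl}$ and reorganized by $n=kl$, produces exactly $8\sum_{d\mid n,\,4\nmid d}d$ once one checks the sign bookkeeping: the terms with $k$ odd always contribute $+d$; the terms with $k$ even split according to whether $l$ is odd (contributing $-d$) or even, and a short computation shows the net effect is to kill precisely the divisors $d$ of $n$ that are multiples of $4$. I expect the routine-but-careful part to be this sign analysis, sorting divisors $d=kl$ of $n$ by the $2$-adic valuations of $k$ and $l$; one verifies $\sum_{d\mid n}d-\sum_{4\mid d\mid n}d$ matches term by term. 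The genuinely hard step — the one that does real work — is proving the closed form $\theta^4=1+8\sum k X^k/(1+(-X)^k)$ itself; the cleanest route within the tools available here is to use the hint attached to the exercise just before the theorem, namely the factorization $\alpha^4-\beta^4=(\alpha+\beta)(\alpha-\beta)(\alpha+\ii\beta)(\alpha-\ii\beta)$, together with the three Jacobi specializations, to derive the needed quartic theta identity (the relation $\theta(X)^4-\big(\sum(-1)^kX^{k^2}\big)^4=X\big(\sum X^{k^2+k}\big)^4\cdot(\text{something})$ from exercise part (c), combined with logarithmic differentiation of the product forms to introduce the Lambert series).

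Once the closed form for $\theta(X)^4$ is in hand, the four-square existence statement is immediate: for every $n\in\NN$ the divisor $d=1$ satisfies $4\nmid 1$ and $1\mid n$, so the sum $\sum_{4\nmid d\mid n}d$ is at least $1$, hence $q(n)\ge8>0$, so $n$ is a sum of four squares. I would present this as a one-line corollary after the generating-function identity. The main obstacle, to reiterate, is packaging the theta-function manipulations — logarithmic differentiation of the triple-product expansions to convert products into Lambert series, plus the identity from the preceding exercise — into a self-contained formal derivation; everything downstream (extracting coefficients, the divisor-sum rearrangement, the existence conclusion) is bookkeeping that the reader can check as in the proof of \autoref{coreuler}.
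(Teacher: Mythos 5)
Your reduction of the theorem to the single identity
\[
\Bigl(\sum_{k\in\ZZ}X^{k^2}\Bigr)^4=1+8\sum_{k=1}^\infty\frac{kX^k}{1+(-X)^k}
\]
is exactly the right target, and your endgame (Lambert expansion, divisor bookkeeping, existence because $d=1$ is always counted) matches the paper's closing computation — up to a fixable sign slip: for even $k$ the expansion of $\frac{kX^k}{1+X^k}$ carries the sign $(-1)^{l-1}$, not $(-1)^{l}$, so an even $k$ with odd cofactor $l$ contributes $+k$; with the correct signs the cancellation removing the divisors divisible by $4$ does go through (check $n=4$: $1-2+4=3=1+2$), whereas with your signs it does not.

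The genuine gap sits precisely at the step you yourself flag as the hard one: you never derive the quartic identity, you only point to the exercise identity $\bigl(\sum X^{k^2}\bigr)^4=\bigl(\sum(-1)^kX^{k^2}\bigr)^4+X\bigl(\sum X^{k^2+k}\bigr)^4$ together with ``logarithmic differentiation of the product forms''. That identity relates the three fourth powers to one another but introduces no Lambert series, and logarithmic differentiation of \eqref{JTP1}--\eqref{JTP3} only produces Lambert-type expressions for $X\beta'/\beta$ and the like, not for a fourth power of a theta series; no mechanism bridging the two is indicated. The paper supplies exactly this missing mechanism: it squares Jacobi's identity \eqref{Jhoch3}, so that $\alpha:=\prod(1-X^k)^6$ becomes a double sum with weights $(2k+1)(2l+1)$; a change of variables split by the parity of $k+l$ turns the weights into $(2s+1)^2-(2t)^2$, whence $\alpha=\beta\gamma+4X(\beta\gamma'-\beta'\gamma)$ with $\beta=\sum X^{t^2}$ and $\gamma=\frac12\sum X^{s^2+s}$; only then does the infinite product rule applied to the product forms \eqref{JTP1} and \eqref{JTP3} convert the derivative terms into Lambert series, while \eqref{JTP2} identifies $\alpha/(\beta\gamma)$ with $\bigl(\sum(-1)^kX^{k^2}\bigr)^4$, and the substitution $X\mapsto-X$ yields the displayed closed form. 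Without this double-sum manipulation (or some substitute identity that actually carries the arithmetic weights $k$ into the picture), your outline names the decisive identity but does not prove it, so it does not yet constitute a proof.
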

\begin{proof}
We follow Hirschhorn~\cite[Section~2.4]{Hirschhorn}.\index{Hirschhorn} Obviously, it suffices to prove the second assertion (due to Jacobi). Since the summands $(-1)^k(2k+1)X^{\frac{k^2+k}{2}}$ in \eqref{Jhoch3} are invariant under the transformation $k\mapsto-k-1$, we can write 
\[\prod_{k=1}^\infty(1-X^k)^3=\frac{1}{2}\sum_{k=-\infty}^\infty(-1)^k(2k+1) X^{\frac{k^2+k}{2}}.\]
Taking the square on both sides yields
\[\alpha:=\prod_{k=1}^{\infty}(1-X^k)^6=\frac{1}{4}\sum_{k,l=-\infty}^\infty (-1)^{k+l}(2k+1)(2l+1)X^{\frac{k^2+k+l^2+l}{2}}.\]
The pairs $(k,l)$ with $k\equiv l\pmod{2}$ are transformed by $(k,l)\mapsto(s,t):=\frac{1}{2}(k+l,k-l)$, while the pairs $k\not\equiv l\pmod{2}$ are transformed by $(s,t):=\frac{1}{2}(k-l-1,k+l+1)$.
Notice that $k=s+t$ and $l=s-t$ or $l=t-s-1$ respectively. Hence,
\begin{align*}
\alpha&=\frac{1}{4}\sum_{s,t=-\infty}^\infty (2s+2t+1)(2s-2t+1)X^{\frac{(s+t)^2+s+t+(s-t)^2+s-t}{2}}\\
&\quad-\frac{1}{4}\sum_{s,t=-\infty}^\infty(2s+2t+1)(2t-2s-1)X^{\frac{(s+t)^2+s+t+(t-s-1)^2+t-s-1}{2}}\\
&=\frac{1}{4}\sum_{s,t}\bigl((2s+1)^2-(2t)^2\bigr)X^{s^2+s+t^2}-\frac{1}{4}\sum_{s,t}\bigl((2t)^2-(2s+1)^2\bigr)X^{s^2+s+t^2}\\
&=\frac{1}{2}\sum_{s,t}\bigl((2s+1)^2-(2t)^2\bigr)X^{s^2+s+t^2}\\
&=\frac{1}{2}\sum_{t=-\infty}^\infty X^{t^2}\sum_{s=-\infty}^\infty(2s+1)^2X^{s^2+s}-\frac{1}{2}\sum_{s=-\infty}^\infty X^{s^2+s}\sum_{t=-\infty}^\infty (2t)^2X^{t^2}.
\end{align*}
For $\beta:=\sum X^{t^2}$ and $\gamma:=\frac{1}{2}\sum X^{s^2+s}$ we have
$\gamma+4X\gamma'=\frac{1}{2}\sum(2s+1)^2X^{s^2+s}$ and therefore
\[\alpha=\beta(\gamma+4X\gamma')-4X\beta'\gamma=\beta\gamma+4X(\beta\gamma'-\beta'\gamma).\]
Now we apply the infinite product rule to \eqref{JTP1} and \eqref{JTP3}:
\begin{align*}
\beta'&=\Bigl(\prod_{k=1}^\infty(1-X^{2k})(1+X^{2k-1})^2\Bigr)'=\beta\sum_{k=1}^\infty\Bigl(2\frac{(2k-1)X^{2k-2}}{1+X^{2k-1}}-\frac{2kX^{2k-1}}{1-X^{2k}}\Bigr),\\
\gamma'&=\Bigl(\prod_{k=1}^\infty(1-X^{2k})(1+X^{2k})^2\Bigr)'=\gamma\sum_{k=1}^\infty\Bigl(2\frac{2kX^{2k-1}}{1+X^{2k}}-\frac{2kX^{2k-1}}{1-X^{2k}}\Bigr).
\end{align*}
We substitute:
\begin{align*}
\alpha&=\beta\gamma\Bigl(1+8\sum_{k=1}^\infty\Bigl(\frac{2kX^{2k}}{1+X^{2k}}-\frac{(2k-1)X^{2k-1}}{1+X^{2k-1}}\Bigr)\Bigr).
\end{align*}
Here, 
\[\beta\gamma=\prod(1-X^{2k})^2(1+X^{2k-1})^2(1+X^{2k})^2=\prod(1-X^{2k})^2(1+X^k)^2=\prod(1-X^{2k})^4(1-X^k)^{-2}.\] 
After we set this off against $\alpha$, it remains
\[\Bigl(\sum_{k=-\infty}^\infty (-1)^kX^{k^2}\Bigr)^4\overset{\eqref{JTP2}}{=}\prod_{k=1}^\infty\frac{(1-X^k)^8}{(1-X^{2k})^4}=\frac{\alpha}{\beta\gamma}=1+8\sum_{k=1}^\infty\Bigl(\frac{2kX^{2k}}{1+X^{2k}}-\frac{(2k-1)X^{2k-1}}{1+X^{2k-1}}\Bigr).\]
Finally we replace $X$ by $-X$:
\begin{align*}
\sum q(n)X^n&=\Bigl(\sum_{k=-\infty}^\infty X^{k^2}\Bigr)^4=1+8\sum_{k=1}^\infty\Bigl(\frac{2kX^{2k}}{1+X^{2k}}+\frac{(2k-1)X^{2k-1}}{1-X^{2k-1}}\Bigr)\\
&=1+8\sum_{k=1}^\infty\Bigl(\frac{(2k-1)X^{2k-1}}{1-X^{2k-1}}+\frac{2kX^{2k}}{1-X^{2k}}-\frac{2kX^{2k}}{1-X^{2k}}+\frac{2kX^{2k}}{1+X^{2k}}\Bigr)\\
&=1+8\sum_{k=1}^\infty\Bigl(\frac{kX^{k}}{1-X^{k}}-\frac{4kX^{4k}}{1-X^{4k}}\Bigr)=1+8\sum_{4\,\nmid\, k}\frac{kX^{k}}{1-X^{k}}\\
&=1+8\sum_{4\,\nmid\, k}k\sum_{l=1}^\infty X^{kl}=1+8\sum_{n=1}^\infty\sum_{4\,\nmid\, d\,\mid\, n}dX^n.\qedhere
\end{align*}
\end{proof}

\begin{Ex}
For $n=28$ we obtain 
\[\sum_{4\,\nmid\, d\,\mid\, 28}d=1+2+7+14=24.\] 
Hence, there are $8\cdot 24=192$ possibilities to express $28$ as a sum of four squares. However, they all arise as permutations and sign-choices of
\[28=5^2+1^2+1^2+1^2=4^2+2^2+2^2+2^2=3^2+3^2+3^2+1^2.\]
\end{Ex}

\autoref{thmLJ} is best possible in the sense that every integer $n\equiv 7\pmod{8}$ is not the sum of three squares since $a^2+b^2+c^2\not\equiv 7\mod{8}$.

If $n,m\in\NN$ are sums of four squares, so is $nm$ by the following identity of Euler (encoding the multiplicativity of the norm in \emph{Hamilton's quaternion}\index{Hamilton's quaternion} skew field):
\begin{gather*}
(a_1^2+a_2^2+a_3^2+a_4^2)(b_1^2+b_2^2+b_3^2+b_4^2)=(a_1b_1+a_2b_2+a_3b_3+a_4b_4)^2\\
+(a_1b_2-a_2b_1+a_3b_4-a_4b_3)^2+(a_1b_3-a_3b_1+a_4b_2-a_2b_4)^2+(a_1b_4-a_4b_1+a_2b_3-a_3b_2)^2
\end{gather*}
This reduces the proof of the first assertion (Lagrange's) of \autoref{thmLJ} to the case where $n$ is a prime.

\emph{Waring's problem}\index{Waring's problem} ask for the smallest number $g(k)$\index{*Gn@$g(n)$} such that every positive integer is the sum of $g(k)$ non-negative $k$-th powers. Hilbert proved that $g(k)<\infty$ for all $k\in\NN$. We have $g(1)=1$, $g(2)=4$ (\autoref{thmLJ}), $g(3)=9$, $g(4)=19$ and in general it is conjectured that
\[g(k)=\Bigl\lfloor\Bigl(\frac{3}{2}\Bigr)^k\Bigr\rfloor+2^k-2\]
(see \cite{Waring}).
Curiously, only the numbers $23=2\cdot2^3+7\cdot 1^3$ and $239=2\cdot 4^3+4\cdot 3^3+3\cdot 1^3$ require nine cubes. 
It is even conjectured that every sufficiently large integer is a sum of only four non-negative cubes (see \cite{4cubes}).

\section{Multivariate power series}\label{secmult}

In \autoref{important} it became clear that power series in more than one indeterminate make sense. We give proper definitions now.

\begin{Def}\label{defmulti}\hfill
\begin{enumerate}[(i)]
\item The ring of formal power series in $n$ indeterminates $X_1,\ldots,X_n$ over a field $K$ is defined inductively via
\[K[[X_1,\ldots,X_n]]:=K[[X_1,\ldots,X_{n-1}]][[X_n]].\]\index{*KXY2@$K[[X_1,\ldots,X_n]]$}
Its elements have the form
\[\alpha=\sum_{k_1,\ldots,k_n\ge 0}a_{k_1,\ldots,k_n}X_1^{k_1}\ldots X_n^{k_n}\]
where $a_{k_1,\ldots,k_n}\in K$. We (still) call $a_{0,\ldots,0}$ the \emph{constant term} of $\alpha$. Let 
\begin{align*}
\inf(\alpha)&:=\inf\{k_1+\ldots+k_n:a_{k_1,\ldots,k_n}\ne0\},\\
|\alpha|&:=2^{-\inf(\alpha)}.
\end{align*} 

\item If all but finitely many coefficients of $\alpha$ are zero, we call $\alpha$ a (formal) polynomial in $X_1,\ldots,X_n$. In this case, 
\[\deg(\alpha):=\sup\{k_1+\ldots+k_n:a_{k_1,\ldots,k_n}\ne 0\}\] 
is the \emph{degree}\index{degree!of multivariant polynomial} of $\alpha$, where $\deg(0)=\sup\varnothing=-\infty$. Moreover, a polynomial $\alpha$ is called \emph{homogeneous}\index{polynomial!homogeneous} if all monomials occurring in $\alpha$ (with non-zero coefficient) have the same degree.
The set of polynomials is denoted by $K[X_1,\ldots,X_n]$.\index{*KXY@$K[X_1,\ldots,X_n]$}
\end{enumerate}
\end{Def}

Once we have convinced ourselves that \autoref{intdom} remains true when $K$ is replaced by an integral domain, it becomes evident that also $K[[X_1,\ldots,X_n]]$ is an integral domain. Likewise, the norm still gives rise to a complete ultrametric (to prove $|\alpha\beta|=|\alpha||\beta|$ one may assume that $\alpha$ and $\beta$ are homogeneous polynomials) and the crucial \autoref{infsum} holds in $K[[X_1,\ldots,X_n]]$ too. 
We stress that this metric is finer than the one induced from $K[[X_1,\ldots,X_{n-1}]]$ as, for example, $\lim_{k\to\infty}X_1^kX_2$ converges in the former, but not in the latter (with $n=2$). 
Moreover, a power series $\alpha$ is invertible in $K[[X_1,\ldots,X_n]]$ if and only if its constant term is non-zero. Indeed, after scaling, the constant term is $1$ and 
\[\alpha^{-1}=\frac{1}{1-(1-\alpha)}=\sum_{k=0}^\infty(1-\alpha)^k\] 
converges.

The degree function equips $K[X_1,\ldots,X_n]$ with a \emph{grading},\index{grading} i.\,e. we have
\[K[X_1,\ldots,X_n]=\bigoplus_{d=0}^\infty P_d\]
and $P_dP_e\subseteq P_{d+e}$ where $P_d$ denotes the set of homogeneous polynomials of degree $d$.
In the following we will restrict ourselves mostly to polynomials of a special type. Note that if $\alpha,\beta_1,\ldots,\beta_n\in K[X_1,\ldots,X_n]$, we can substitute $X_i$ by $\beta_i$ in $\alpha$ to obtain $\alpha(\beta_1,\ldots,\beta_n)\in K[X_1,\ldots,X_n]$. It is important that these substitutions happen simultaneously and not one after the other (more about this at the end of the section).

\begin{Def}
A polynomial $\alpha\in K[X_1,\ldots,X_n]$ is called \emph{symmetric}\index{polynomial!symmetric} if
\[\alpha(X_{\pi(1)},\ldots,X_{\pi(n)})=\alpha(X_1,\ldots,X_n)\] 
for all permutations $\pi\in S_n$. 
\end{Def}

It is easy to see that the symmetric polynomials form a subring of $K[X_1,\ldots,X_n]$. 

\begin{Ex}\hfill
\begin{enumerate}[(i)]
\item The \emph{elementary symmetric polynomials}\index{polynomial!elementary symmetric}\index{*Sigmak@$\sigma_k$} are $\sigma_0:=1$ and 
\[\sigma_k:=\sum_{1\le i_1<\ldots<i_k\le n}X_{i_1}\ldots X_{i_k}\qquad(k\ge 1).\]
Note that $\sigma_k=0$ for $k>n$ (empty sum).

\item The \emph{complete symmetric polynomials}\index{polynomial!complete symmetric}\index{*Tauk@$\tau_k$} are $\tau_0:=1$ and
\[\tau_k:=\sum_{1\le i_1\le \ldots\le i_k\le n}X_{i_1}\ldots X_{i_k}\qquad(k\ge 1).\]

\item The \emph{power sum polynomials}\index{polynomial!power sum}\index{*Rhok@$\rho_k$} are $\rho_k:=X_1^k+\ldots+X_n^k$ for $k\ge 0$.
\end{enumerate}
\end{Ex}

Keep in mind that $\sigma_k$, $\tau_k$ and $\rho_k$ depend on $n$.
All three sets of polynomials are homogeneous. The elementary and complete symmetric polynomials are special instances of \emph{Schur polynomials},\index{Schur polynomial} which we do not attempt to define here. 

\begin{Thm}[\textsc{Vieta}]\label{vieta}\index{Vieta}
The following identities hold in $K[[X_1,\ldots,X_n,Y]]$:
\begin{empheq}[box=\fbox]{align}
\prod_{k=1}^n(1+X_kY)&=\sum_{k=0}^n\sigma_kY^k,\label{vieta1}\\
\prod_{k=1}^n\frac{1}{1-X_kY}&=\sum_{k=0}^\infty \tau_kY^k.\label{vieta2}
\end{empheq}
\end{Thm}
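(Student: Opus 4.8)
The plan is to prove both identities by induction on $n$, mirroring the proofs of \autoref{GaussBT} and \autoref{genstir}. The key observation is that the elementary and complete symmetric polynomials in $X_1,\ldots,X_n$ satisfy simple recurrences when one passes from $n-1$ to $n$ variables: if we write $\sigma_k^{(n)}$ for the $k$-th elementary symmetric polynomial in $X_1,\ldots,X_n$, then $\sigma_k^{(n)}=\sigma_k^{(n-1)}+X_n\sigma_{k-1}^{(n-1)}$, since a monomial of degree $k$ either avoids $X_n$ or contains it exactly once. Likewise $\tau_k^{(n)}=\sum_{j=0}^k X_n^j\tau_{k-j}^{(n-1)}$, because a monomial of degree $k$ in $X_1,\ldots,X_n$ is determined by how many times $X_n$ appears and by the remaining monomial in $X_1,\ldots,X_{n-1}$; equivalently $\tau_k^{(n)}=\tau_k^{(n-1)}+X_n\tau_{k-1}^{(n)}$.

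For \eqref{vieta1}, the base case $n=0$ (or $n=1$) is immediate: the empty product is $1$ and $\sum_{k=0}^0\sigma_k Y^k=\sigma_0=1$ (resp.\ $\prod_{k=1}^1(1+X_kY)=1+X_1Y=\sigma_0+\sigma_1 Y$). For the induction step I would compute
\[
\prod_{k=1}^n(1+X_kY)=(1+X_nY)\sum_{k=0}^{n-1}\sigma_k^{(n-1)}Y^k=\sum_{k=0}^{n-1}\sigma_k^{(n-1)}Y^k+\sum_{k=0}^{n-1}X_n\sigma_k^{(n-1)}Y^{k+1},
\]
and after an index shift in the second sum this becomes $\sum_{k=0}^n\bigl(\sigma_k^{(n-1)}+X_n\sigma_{k-1}^{(n-1)}\bigr)Y^k=\sum_{k=0}^n\sigma_k^{(n)}Y^k$ by the recurrence above (with the convention $\sigma_{-1}=\sigma_n^{(n-1)}=0$). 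For \eqref{vieta2}, first note the right-hand side converges since $\inf(\tau_kY^k)\ge k\to\infty$. With base case $n=0$ giving $1=\sum_k\tau_0 Y^k$ trivially (only $\tau_0=1$ in zero variables... more safely start at $n=1$ with the geometric series $\tfrac1{1-X_1Y}=\sum_k X_1^kY^k=\sum_k\tau_k^{(1)}Y^k$), the induction step multiplies by $\tfrac1{1-X_nY}$:
\[
\prod_{k=1}^n\frac{1}{1-X_kY}=\frac{1}{1-X_nY}\sum_{k=0}^\infty\tau_k^{(n-1)}Y^k=\Bigl(\sum_{j=0}^\infty X_n^jY^j\Bigr)\Bigl(\sum_{k=0}^\infty\tau_k^{(n-1)}Y^k\Bigr)=\sum_{m=0}^\infty\Bigl(\sum_{j=0}^m X_n^j\tau_{m-j}^{(n-1)}\Bigr)Y^m,
\]
and the inner sum equals $\tau_m^{(n)}$ by the recurrence. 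Alternatively, one can deduce \eqref{vieta2} from \eqref{vieta1} in the style of \autoref{coreuler}: multiply $\prod_k(1-X_kY)=\sum_k(-1)^k\sigma_kY^k$ by $\sum_k\tau_kY^k$ and check the product telescopes to $1$, using the classical Newton-type identity $\sum_{j=0}^m(-1)^j\sigma_j\tau_{m-j}=0$ for $m\ge1$ — but I would avoid this since that identity itself wants proof, so the direct induction is cleaner.

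The only genuinely delicate point is bookkeeping: the polynomials $\sigma_k$, $\tau_k$ silently depend on the number of variables, so the induction must carefully distinguish $\sigma_k^{(n-1)}$ from $\sigma_k^{(n)}$ (the paper flags exactly this in the remark ``$\sigma_k$, $\tau_k$ and $\rho_k$ depend on $n$''). I would either introduce superscript notation for the proof or argue that the needed recurrences hold and suppress the superscripts, being explicit about the conventions $\sigma_k=0$ for $k>n$ or $k<0$. No convergence subtlety arises for \eqref{vieta1} (it is a polynomial identity in the larger ring $K[[X_1,\ldots,X_n,Y]]$, or even in $K[X_1,\ldots,X_n,Y]$), and for \eqref{vieta2} the only point is the one-line convergence remark using $\inf(\tau_kY^k)\ge k$, which is immediate. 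So there is really no hard part here beyond notational discipline — this is a warm-up theorem whose proof is a direct transcription of the one-variable templates already established.
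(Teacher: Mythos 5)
Your proof is correct, but it is not the route the paper takes. The paper proves \eqref{vieta1} by direct expansion of the product (choosing $1$ or $X_kY$ from each factor, the terms with exactly $k$ factors of the form $X_iY$ collect to $\sigma_kY^k$), and \eqref{vieta2} by writing each factor as a geometric series $\sum_{l\ge 0}(X_kY)^l$ and multiplying out, so that the coefficient of $Y^k$ is $\sum_{l_1+\ldots+l_n=k}X_1^{l_1}\cdots X_n^{l_n}$, which is $\tau_k$ by definition; no induction on $n$ is used and the dependence of $\sigma_k,\tau_k$ on $n$ never has to be tracked. Your induction via the recurrences $\sigma_k^{(n)}=\sigma_k^{(n-1)}+X_n\sigma_{k-1}^{(n-1)}$ and $\tau_k^{(n)}=\sum_{j=0}^{k}X_n^{j}\tau_{k-j}^{(n-1)}$ is sound: the recurrences are correct, the base cases and index shifts check out, and the one-line convergence remark is all that \eqref{vieta2} needs. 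What it buys is consistency with the inductive template of \autoref{GaussBT}, \autoref{coreuler} and \autoref{genstir}, together with explicit Pascal-type recurrences for $\sigma_k$ and $\tau_k$; what it costs is precisely the superscript bookkeeping you flag, which the paper's direct expansion avoids entirely. You were also right to reject the alternative of deducing \eqref{vieta2} from \eqref{vieta1} via $\sum_{j=0}^{m}(-1)^j\sigma_j\tau_{m-j}=0$: in this paper that identity is the first Girard--Newton identity (\autoref{GNI}) and is itself derived from Vieta's theorem, so invoking it here would be circular.
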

\begin{proof}
The first equation is only a matter of expanding the product. The second equation follows from
\[
\prod_{k=1}^n\frac{1}{1-X_kY}=\prod_{k=1}^n\sum_{l=0}^\infty (X_kY)^l=\sum_{k=0}^\infty\Bigl(\sum_{l_1+\ldots+l_n=k}X_1^{l_1}\ldots X_n^{l_n}\Bigr)Y^k=\sum_{k=0}^\infty \tau_kY^k.\qedhere
\]
\end{proof}

When we specialize $X_1=\ldots=X_n=1$ in Vieta's theorem (as we may), we recover the generating functions of the binomial coefficients and the multiset counting coefficients from \autoref{exgen}. When we substitute $X_k=k$ for $k=1,\ldots,n$, we obtain a new formula for the Stirling numbers by virtue of \autoref{genstir}. 

It is easy to see that the grading by degree carries over to symmetric polynomials. The following theorem shows that the elementary symmetric polynomials are the building blocks of all symmetric polynomials.

\begin{Thm}[Fundamental theorem on symmetric polynomials]\label{sympoly}\index{polynomial!symmetric!fundamental theorem}
For every symmetric polynomial $\alpha\in K[X_1,\ldots,X_n]$ there exists a unique $\gamma\in K[X_1,\ldots,X_n]$ such that $\alpha=\gamma(\sigma_1,\ldots,\sigma_n)$. 
\end{Thm}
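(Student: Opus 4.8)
The plan is to prove existence and uniqueness separately, both by induction. For \emph{existence}, I would use a double induction: the outer induction on the number of indeterminates $n$, and the inner induction on the degree $d$ of the symmetric polynomial $\alpha$. The base case $n=1$ is trivial since every polynomial in $K[X_1]$ is symmetric and $\sigma_1=X_1$. For the inductive step, given a symmetric $\alpha\in K[X_1,\ldots,X_n]$, I would consider the specialization $\bar\alpha:=\alpha(X_1,\ldots,X_{n-1},0)\in K[X_1,\ldots,X_{n-1}]$, which is symmetric in $X_1,\ldots,X_{n-1}$ and hence, by the outer induction hypothesis, equals $g(\bar\sigma_1,\ldots,\bar\sigma_{n-1})$ for some polynomial $g$, where $\bar\sigma_i$ denotes the $i$-th elementary symmetric polynomial in $X_1,\ldots,X_{n-1}$. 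Since $\sigma_i(X_1,\ldots,X_{n-1},0)=\bar\sigma_i$ for $i\le n-1$, the polynomial $\alpha-g(\sigma_1,\ldots,\sigma_{n-1})$ is symmetric in $X_1,\ldots,X_n$ and vanishes when $X_n=0$; being symmetric it then vanishes whenever \emph{any} $X_i=0$, so it is divisible by $X_1\cdots X_n=\sigma_n$. Writing $\alpha-g(\sigma_1,\ldots,\sigma_{n-1})=\sigma_n\cdot\beta$, the quotient $\beta$ is again symmetric (as $\sigma_n$ is a non-zero-divisor and the product is symmetric) and has strictly smaller degree, so the inner induction on $d$ applies to give $\beta=h(\sigma_1,\ldots,\sigma_n)$; then $\gamma:=g+X_n\cdot h$ (with the $\sigma$'s plugged in) works. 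One should check the degree bookkeeping so the inner induction is well-founded: $\deg\beta=\deg\alpha-n<\deg\alpha$.

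For \emph{uniqueness}, it suffices to show that if $\gamma(\sigma_1,\ldots,\sigma_n)=0$ in $K[X_1,\ldots,X_n]$ then $\gamma=0$; equivalently, that $\sigma_1,\ldots,\sigma_n$ are algebraically independent over $K$. I would again induct on $n$. Suppose $\gamma\ne 0$ is of minimal degree with $\gamma(\sigma_1,\ldots,\sigma_n)=0$. Write $\gamma=\sum_{j\ge 0}\gamma_j(X_1,\ldots,X_{n-1})X_n^j$. Setting $X_n=0$ in the relation $\gamma(\sigma_1,\ldots,\sigma_n)=0$ and using $\sigma_n(\ldots,0)=0$, $\sigma_i(\ldots,0)=\bar\sigma_i$, we get $\gamma_0(\bar\sigma_1,\ldots,\bar\sigma_{n-1})=0$, so by the induction hypothesis $\gamma_0=0$. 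Hence $\gamma=X_n\cdot\tilde\gamma$ for some $\tilde\gamma$, and since $\sigma_n$ is a non-zero-divisor in the domain $K[X_1,\ldots,X_n]$, the relation $\sigma_n\cdot\tilde\gamma(\sigma_1,\ldots,\sigma_n)=0$ forces $\tilde\gamma(\sigma_1,\ldots,\sigma_n)=0$ with $\deg\tilde\gamma<\deg\gamma$, contradicting minimality. The base case $n=1$ is clear.

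The main obstacle, and the step requiring the most care, is the divisibility claim in the existence argument: that a symmetric polynomial vanishing on $\{X_n=0\}$ is divisible by $\sigma_n=X_1\cdots X_n$. The point is that vanishing at $X_n=0$ together with symmetry forces vanishing at each hyperplane $X_i=0$; since $K[X_1,\ldots,X_n]$ is a UFD and the $X_i$ are pairwise non-associate primes, divisibility by each $X_i$ yields divisibility by the product. (The UFD property of polynomial rings over a field is standard and, in any case, established in the appendix referenced in the introduction.) Everything else is routine induction and degree tracking; I would keep the two nested inductions clearly separated to make the well-foundedness transparent.
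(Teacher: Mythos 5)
Your argument is correct, but it is a genuinely different proof from the one in the paper. The paper works with a fixed number of indeterminants and runs an induction on the lexicographically largest exponent tuple $f(\alpha)$: one subtracts $a_{d_1,\ldots,d_n}\sigma_1^{d_1-d_2}\cdots\sigma_n^{d_n}$ to strictly decrease $f(\alpha)$, and the same leading-tuple bookkeeping yields uniqueness, since distinct monomials $\sigma_1^{e_1}\cdots\sigma_n^{e_n}$ have distinct leading tuples; this is algorithmic (see \autoref{exsym}) and needs no divisibility theory. You instead induct on $n$, specialize $X_n=0$, subtract $g(\sigma_1,\ldots,\sigma_{n-1})$, and divide by $\sigma_n=X_1\cdots X_n$; the divisibility step is fine (symmetry gives vanishing on each hyperplane $X_i=0$, and $K[X_1,\ldots,X_n]$ is a UFD with the $X_i$ pairwise non-associate primes, cf. \autoref{gaussall}), and your uniqueness induction via $\gamma_0(\bar\sigma_1,\ldots,\bar\sigma_{n-1})=0$ and cancelling $\sigma_n$ is correct. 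The one place where your write-up is thinner than it should be is the asserted equality $\deg\beta=\deg\alpha-n$: a priori $g(\sigma_1,\ldots,\sigma_{n-1})$ could have degree larger than $\deg\alpha$, so you must either reduce to homogeneous $\alpha$ and choose $g$ isobaric, or invoke the algebraic independence of $\bar\sigma_1,\ldots,\bar\sigma_{n-1}$ (available from your uniqueness induction in $n-1$ variables) to see that the weighted degree of $g$ is at most $\deg\bar\alpha\le\deg\alpha$, whence $\deg\bigl(\alpha-g(\sigma_1,\ldots,\sigma_{n-1})\bigr)\le\deg\alpha$ and $\deg\beta\le\deg\alpha-n$. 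With that observation made explicit, your double induction is well-founded and the proof is complete; what your route buys is a conceptually clean reduction in the number of variables, while the paper's route buys an explicit computation scheme and a uniqueness proof that uses no input beyond the monomial ordering.
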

\begin{proof}
We first prove the \emph{existence} of $\gamma$: Without loss of generality, let
\[\alpha=\sum_{i_1,\ldots,i_n}a_{i_1,\ldots,i_n}X_1^{i_1}\ldots X_n^{i_n}\ne 0.\]
We order the tuples $(i_1,\ldots,i_n)$ lexicographically and argue by induction on 
\[f(\alpha):=\max\bigl\{(i_1,\ldots,i_n):a_{i_1,\ldots,i_n}\ne 0\bigr\}\]
(see \autoref{exsym} below for an illustration). 
If $f(\alpha)=(0,\ldots,0)$, then $\gamma:=\alpha=a_{0,\ldots,0}\in K$. Now let $f(\alpha)=(d_1,\ldots,d_n)>(0,\ldots,0)$.
Since $\alpha=\alpha(X_{\pi(1)},\ldots,X_{\pi(n)})$ for all $\pi\in S_n$, $d_1\ge\ldots\ge d_n$. Let 
\[\beta:=a_{d_1,\ldots,d_n}\sigma_1^{d_1-d_2}\sigma_2^{d_2-d_3}\ldots\sigma_{n-1}^{d_{n-1}-d_n}\sigma_n^{d_n}.\]
Then we have $f(\sigma_k^{d_k-d_{k+1}})=(d_k-d_{k+1})f(\sigma_k)=(d_k-d_{k+1},\ldots,d_k-d_{k+1},0,\ldots,0)$ and
\[f(\beta)=f(\sigma_1^{d_1-d_2})+\ldots+f(\sigma_n^{d_n})=(d_1,\ldots,d_n).\]
Hence, the symmetric polynomial $\alpha-\beta$ satisfies $f(\alpha-\beta)<(d_1,\ldots,d_n)$ and the existence of $\gamma$ follows by induction.

Now we show the \emph{uniqueness} of $\gamma$: Let $\gamma,\delta\in K[X_1,\ldots,X_n]$ such that $\gamma(\sigma_1,\ldots,\sigma_n)=\delta(\sigma_1,\ldots,\sigma_n)$. For $\rho:=\gamma-\delta$ it follows that $\rho(\sigma_1,\ldots,\sigma_n)=0$. We have to show that $\rho=0$. By way of contradiction, suppose $\rho\ne 0$. Let $d_1\ge\ldots\ge d_n$ be the lexicographically largest $n$-tuple such that the coefficient of $X_1^{d_1-d_2}X_2^{d_2-d_3}\ldots X_n^{d_n}$ in $\rho$ is non-zero. As above, $f(\sigma_1^{d_1-d_2}\ldots\sigma_n^{d_n})=(d_1,\ldots,d_n)$. For every other summand $X_1^{e_1-e_2}\ldots X_n^{e_n}$ of $\rho$ we obtain $f(\sigma_1^{e_1-e_2}\ldots\sigma_n^{e_n})<(d_1,\ldots,d_n)$. This yields $f\bigl(\rho(\sigma_1,\ldots,\sigma_n)\bigr)=(d_1,\ldots,d_n)$ in contradiction to $\rho(\sigma_1,\ldots,\sigma_n)=0$.
\end{proof}

\begin{Ex}\label{exsym}
Consider $\alpha=XY^3+X^3Y-X-Y\in K[X,Y]$. With the notation from the proof above, $f(\alpha)=(3,1)$ and \[\beta:=\sigma_1^2\sigma_2=(X+Y)^2XY=X^3Y+2X^2Y^2+XY^3.\]
Thus, $\alpha-\beta=-2X^2Y^2-X-Y$. In the next step we have $f(\alpha-\beta)=(2,2)$ and
\[\beta_2:=-2\sigma_2^2=-2X^2Y^2.\]
It remains: $\alpha-\beta-\beta_2=-X-Y=-\sigma_1$. Finally,
\[\alpha=\beta+\beta_2-\sigma_1=\sigma_1^2\sigma_2-2\sigma_2^2-\sigma_1=\gamma(\sigma_1,\sigma_2)\]
where $\gamma=X^2Y-2Y^2-X$.
\end{Ex}

From an algebraic point of view, \autoref{sympoly} (applied to $\alpha=0$) states that the elementary symmetric polynomials $\sigma_1,\ldots,\sigma_n$ are algebraically independent over $K$, so they form a transcendence basis of $K(X_1,\ldots,X_n)$ (recall that $K(X_1,\ldots,X_n)$ has transcendence degree $n$). 
The identities in the next theorem express the $\sigma_i$ recursively in terms of the $\tau_j$ and in terms of the $\rho_j$. So the latter sets of symmetric polynomials form transcendence bases too. It is no coincidence that $\deg(\sigma_k)=\deg(\tau_k)=\deg(\rho_k)=k$ for $k\le n$. A theorem from invariant theory (in characteristic $0$) implies that 
any algebraically independent, homogeneous generators of the ring of symmetric polynomials have degrees $1,\ldots,n$ in some order (see \cite[Proposition~3.7]{Humphreys}). 

\begin{Thm}[\textsc{Girard--Newton} identities]\label{GNI}\index{Girard--Newton identities}
The following identities hold in $K[X_1,\ldots,X_n]$ for all $n,k\in\NN$:
\begin{empheq}[box=\fbox]{align*}
\sum_{i=0}^k(-1)^i\sigma_i\tau_{k-i}&=0,\\
\sum_{i=1}^k\rho_i\tau_{k-i}&=k\tau_k,\\
\sum_{i=1}^k(-1)^i\sigma_{k-i}\rho_i&=-k\sigma_k.
\end{empheq}
\end{Thm}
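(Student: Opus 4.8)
The plan is to work with the two generating functions from Vieta's theorem (\autoref{vieta}). Set $E(Y):=\prod_{k=1}^n(1+X_kY)=\sum_{i\ge0}\sigma_iY^i$ and $H(Y):=\prod_{k=1}^n\frac{1}{1-X_kY}=\sum_{i\ge0}\tau_iY^i$ in $K[[X_1,\ldots,X_n,Y]]$. Since $H(Y)=\prod(1-X_kY)^{-1}$ and $E(-Y)=\prod(1-X_kY)$, we have immediately $E(-Y)H(Y)=1$. Comparing the coefficient of $Y^k$ on both sides gives $\sum_{i=0}^k(-1)^i\sigma_i\tau_{k-i}=0$ for $k\ge1$, which is the first identity. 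So the first one is essentially free once we write down the product forms.

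For the remaining two identities I would introduce the power-sum generating function and relate it to $E$ and $H$ via logarithmic derivatives. The natural object is $P(Y):=\sum_{k\ge1}\rho_kY^{k-1}=\sum_{k\ge1}\bigl(\sum_j X_j^k\bigr)Y^{k-1}=\sum_j\frac{X_j}{1-X_jY}$, obtained by summing geometric series. On the other hand, applying the infinite product rule (\autoref{der}, here only finitely many factors) to $H(Y)$ with respect to $Y$, one computes $H'(Y)=H(Y)\sum_j\frac{X_j}{1-X_jY}=H(Y)P(Y)$; similarly $E'(Y)=E(Y)\sum_j\frac{X_j}{1+X_jY}$, and replacing $Y$ by $-Y$ shows that $\frac{d}{dY}E(-Y)=-E(-Y)\sum_j\frac{X_j}{1+X_j(-Y)}\cdot(\cdots)$ — more cleanly, differentiating $E(-Y)H(Y)=1$ and using $H'=HP$ gives $E(-Y)'H(Y)+E(-Y)H(Y)P(Y)=0$, hence $E(-Y)'=-E(-Y)P(Y)$, i.e. writing $\tilde E(Y):=E(-Y)=\sum_i(-1)^i\sigma_iY^i$ we get $\tilde E'(Y)=-\tilde E(Y)P(Y)$. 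Now I extract coefficients: from $H'(Y)=H(Y)P(Y)$ the coefficient of $Y^{k-1}$ reads $k\tau_k=\sum_{i=1}^k\tau_{k-i}\rho_i$, which is the second identity; from $\tilde E'(Y)=-\tilde E(Y)P(Y)$ the coefficient of $Y^{k-1}$ reads $k(-1)^k\sigma_k=-\sum_{i=1}^k(-1)^{k-i}\sigma_{k-i}\rho_i$, and multiplying through by $(-1)^{k-1}$ gives $-k\sigma_k=\sum_{i=1}^k(-1)^i\sigma_{k-i}\rho_i$, the third identity. (Alternatively the third can be deduced from the first two by an elementary induction, but the logarithmic-derivative route treats all three uniformly.)

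The only genuinely delicate point is bookkeeping: all identities take place in the polynomial ring $K[X_1,\ldots,X_n]$ and $K$ is an arbitrary field, so I must be careful that the "derivative" and "geometric series" manipulations are purely formal. Differentiation with respect to $Y$ is the formal derivative on $K[[X_1,\ldots,X_n]][[Y]]$ and needs no characteristic hypothesis; the factors $1+X_kY$ and $1-X_kY$ are units in this ring (constant term $1$), so $\frac{X_j}{1-X_jY}=\sum_{l\ge0}X_j^{l+1}Y^l$ is a legitimate power series and the infinite product rule from \autoref{der} applies verbatim to the finite products $E(Y)$, $H(Y)$ (one may also just use the finite product rule inductively, avoiding any appeal to the complex-coefficient restriction in \autoref{der}). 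The coefficient comparisons are then valid coefficient-wise identities of integer-linear combinations of monomials, so they hold over any $K$. The main obstacle, such as it is, is simply making sure the identity $\frac{d}{dY}E(-Y)=-E'(-Y)$-type chain-rule steps and the index shifts in the coefficient extraction are done without sign errors; there is no conceptual difficulty beyond that.
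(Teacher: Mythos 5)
Your proof is correct and follows essentially the same route as the paper: both use Vieta's generating functions $\prod(1\mp X_kY)^{\pm1}$ and compare coefficients of $Y^k$, obtaining the first identity from the product being $1$ and the other two by differentiating with respect to $Y$ and recognizing $\sum_j X_jY/(1-X_jY)=\sum_{i\ge1}\rho_iY^i$. The only cosmetic difference is that you obtain the relation for $\sum(-1)^i\sigma_iY^i$ by differentiating the identity $E(-Y)H(Y)=1$ rather than differentiating the product directly, which changes nothing of substance.
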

\begin{proof}
Let $\sigma=\sum(-1)^k\sigma_kY^k=\prod(1-X_kY)$ and $\tau:=\sum\tau_kY^k=\prod\frac{1}{1-X_kY}$ as in Vieta's theorem.
\begin{enumerate}[(i)]
\item The claim follows by comparing coefficients of $Y^k$ in
\[1=\sigma\tau=\sum_{k=0}^\infty\Bigl(\sum_{i=0}^k(-1)^i\sigma_i\tau_{k-i}\Bigr)Y^k.\]

\item We differentiate with respect to $Y$ using the product rule while noticing that $\bigl(\frac{1}{1-X_kY}\bigr)'=\frac{X_k}{(1-X_kY)^2}$:
\begin{align*}
\sum_{k=1}^\infty k\tau_kY^k&=Y\tau'=\tau\sum_{k=1}^n\frac{X_kY}{1-X_kY}=\tau\sum_{k=1}^n\sum_{i=1}^\infty(X_kY)^i\\
&=\tau\sum_{i=1}^\infty\rho_iY^i=\sum_{k=1}^\infty\Bigl(\sum_{i=1}^k\rho_i\tau_{k-i}\Bigr)Y^k.
\end{align*}

\item We differentiate again with respect to $Y$ (this idea is often attributed to \cite[p.~212]{Berlekamp}):
\begin{align*}
-\sum_{k=0}^\infty(-1)^kk\sigma_kY^k&=-Y\sigma'=\sigma\sum_{k=1}^n\frac{X_kY}{1-X_kY}=\sigma\sum_{i=1}^\infty\rho_iY^i=\sum_{k=1}^\infty\Bigl(\sum_{i=1}^{k}(-1)^{k-i}\sigma_{k-i}\rho_{i}\Bigr) Y^k.\qedhere
\end{align*}
\end{enumerate}
\end{proof}

Now that we know that each of the $\sigma_i$, $\tau_i$ and $\rho_i$ can be expressed by the other two sets of polynomials, it is natural to ask for explicit formulas. This is achieved by Waring's formula. Here $P(n)$ stands for the set of partitions of $n$ as introduced in \autoref{defpart}. 

\begin{Thm}[\textsc{Waring}'s formula]\index{Waring's formula}
The following holds in $\CC[X_1,\ldots,X_n]$ for all $n,k\in\NN$:
\begin{empheq}[box=\fbox]{align*}
\rho_k&=(-1)^kk\sum_{(1^{a_1},\ldots,k^{a_k})\in P(k)}(-1)^{a_1+\ldots+a_k}\frac{(a_1+\ldots+a_k-1)!}{a_1!\ldots a_k!}\sigma_1^{a_1}\ldots\sigma_k^{a_k},\\
&=-k\sum_{(1^{a_1},\ldots,k^{a_k})\in P(k)}(-1)^{a_1+\ldots+a_k}\frac{(a_1+\ldots+a_k-1)!}{a_1!\ldots a_k!}\tau_1^{a_1}\ldots\tau_k^{a_k}.
\end{empheq}
\end{Thm}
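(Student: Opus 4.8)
The plan is to derive both formulas from the Girard--Newton identities of \autoref{GNI} by converting the linear recurrences into a single closed-form identity via the logarithmic derivative of the generating series. Write $\sigma=\sum_{k=0}^n(-1)^k\sigma_kY^k=\prod_{k=1}^n(1-X_kY)$ and $\tau=\sum_{k=0}^\infty\tau_kY^k=\prod_{k=1}^n\frac{1}{1-X_kY}$ as in Vieta's theorem, and set $\rho:=\sum_{k=1}^\infty\rho_kY^k$. The key observation, already implicit in the proof of \autoref{GNI}, is the identity
\[
\rho=\sum_{k=1}^n\frac{X_kY}{1-X_kY}=-Y\frac{\sigma'}{\sigma}=Y\frac{\tau'}{\tau},
\]
so that $\rho=-Y(\log\sigma)'=Y(\log\tau)'$ in $\CC[[X_1,\ldots,X_n,Y]]$ (the logarithm is applied to the unit power series $\sigma$ and $\tau$, each with constant term $1$, exactly as in \autoref{deflog} and the functional equation \eqref{funclog}; formally, $\log\tau=-\sum_{k=1}^n\log(1-X_kY)=\sum_{k=1}^n\sum_{j\ge1}\frac{(X_kY)^j}{j}=\sum_{j\ge1}\frac{\rho_j}{j}Y^j$, which one could even take as the starting point).

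Next I would integrate this in the formal sense. Since $\log\tau$ has zero constant term and derivative $\rho/Y=\sum_{k\ge1}\rho_kY^{k-1}$, we get $\log\tau=\sum_{k=1}^\infty\frac{\rho_k}{k}Y^k$ directly, and similarly $\log\sigma=-\sum_{k=1}^\infty\frac{\rho_k}{k}Y^k$. Exponentiating, \autoref{lemfunc} gives
\[
\tau=\exp\Bigl(\sum_{k=1}^\infty\frac{\rho_k}{k}Y^k\Bigr),\qquad \sigma=\exp\Bigl(-\sum_{k=1}^\infty\frac{\rho_k}{k}Y^k\Bigr).
\]
Now expand the exponential: using the functional equation once more, $\tau=\prod_{k=1}^\infty\exp\bigl(\frac{\rho_k}{k}Y^k\bigr)=\prod_{k=1}^\infty\sum_{a_k=0}^\infty\frac{1}{a_k!}\bigl(\frac{\rho_k}{k}\bigr)^{a_k}Y^{ka_k}$. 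Collecting the coefficient of $Y^m$ means summing over all $(a_1,a_2,\ldots)$ with $\sum_k ka_k=m$, i.e. over partitions $(1^{a_1},2^{a_2},\ldots)$ of $m$, giving
\[
\tau_m=\sum_{(1^{a_1},\ldots,m^{a_m})\in P(m)}\frac{\rho_1^{a_1}\cdots\rho_m^{a_m}}{1^{a_1}a_1!\cdots m^{a_m}a_m!}.
\]
This (and the analogue for $\sigma_m$ with an extra sign $(-1)^{a_1+\cdots+a_m}$, absorbing the $(-1)^m$ coming from $\sigma$) is the \emph{inverse} Waring direction, expressing $\tau_m$, $\sigma_m$ in terms of the $\rho_j$. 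The stated theorem is the other direction, so I would instead differentiate $\rho=Y(\log\tau)'$ no further but rather read off $\rho_k$ from $\rho=Y\tau'/\tau$: equivalently, take logarithmic derivatives of the identity $\tau=\exp(\sum\frac{\rho_j}{j}Y^j)$ the other way is circular, so the honest route to the displayed formulas is to apply the logarithm to Vieta's product \eqref{vieta2}: $\log\tau=-\sum_{k=1}^n\log(1-X_kY)$, then substitute $\tau=\sum\tau_jY^j$ and use the Mercator series for $\log$ composed with a power series of positive order. Concretely, $\sum_{k\ge1}\frac{\rho_k}{k}Y^k=\log\bigl(1+\sum_{j\ge1}\tau_jY^j\bigr)=\sum_{\ell\ge1}\frac{(-1)^{\ell-1}}{\ell}\bigl(\sum_{j\ge1}\tau_jY^j\bigr)^\ell$; expanding the $\ell$-th power multinomially and collecting $Y^k$, the coefficient becomes $\sum_{(1^{a_1},\ldots,k^{a_k})\in P(k)}(-1)^{a_1+\cdots+a_k-1}\frac{(a_1+\cdots+a_k-1)!}{a_1!\cdots a_k!}\tau_1^{a_1}\cdots\tau_k^{a_k}$, where $\ell=a_1+\cdots+a_k$ counts the chosen factors and $\frac{(a_1+\cdots+a_k)!}{a_1!\cdots a_k!}\cdot\frac{1}{\ell}=\frac{(\ell-1)!}{a_1!\cdots a_k!}$ accounts for the multinomial count divided by $\ell$. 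Multiplying by $k$ and rearranging signs yields $\rho_k=-k\sum_{P(k)}(-1)^{a_1+\cdots+a_k}\frac{(a_1+\cdots+a_k-1)!}{a_1!\cdots a_k!}\tau_1^{a_1}\cdots\tau_k^{a_k}$, the second boxed formula. For the first, repeat with $\log\sigma=\sum_{k=1}^n\log(1-X_kY)=-\sum_{k\ge1}\frac{\rho_k}{k}Y^k$ in the role of Vieta's identity \eqref{vieta1}: here $\sigma=\sum_{j\ge0}(-1)^j\sigma_jY^j$, so $-\sum\frac{\rho_k}{k}Y^k=\log\bigl(1+\sum_{j\ge1}(-1)^j\sigma_jY^j\bigr)$, and the same multinomial expansion gives the coefficient of $Y^k$ as $\sum_{P(k)}\frac{(-1)^{\ell-1}}{\ell}\binom{\ell}{a_1,\ldots,a_k}\prod_i((-1)^i\sigma_i)^{a_i}$; since $\sum_i ia_i=k$, the product of signs is $(-1)^k$, and multiplying by $-k$ produces $\rho_k=(-1)^kk\sum_{P(k)}(-1)^{a_1+\cdots+a_k}\frac{(a_1+\cdots+a_k-1)!}{a_1!\cdots a_k!}\sigma_1^{a_1}\cdots\sigma_k^{a_k}$, the first boxed formula.

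\textbf{Main obstacle.} The conceptual content is light — everything reduces to taking $\log$ of Vieta's products and expanding the Mercator series composed with a series of positive order (legitimate by \autoref{defsub}, since $\sum_{j\ge1}\tau_jY^j\in(Y)$ in the ring $\CC[[X_1,\ldots,X_n]][[Y]]$) — so the real work is purely bookkeeping: verifying that when one raises $\bigl(\sum_{j\ge1}c_jY^j\bigr)^\ell$ and collects $Y^k$, the combinatorial weight attached to a partition $(1^{a_1},\ldots,k^{a_k})$ with $\ell=\sum a_i$ parts is exactly the multinomial coefficient $\binom{\ell}{a_1,\ldots,a_k}=\frac{\ell!}{a_1!\cdots a_k!}$, and that dividing by $\ell$ (from the Mercator coefficient $\frac{(-1)^{\ell-1}}{\ell}$) turns $\ell!$ into $(\ell-1)!$. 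One must also track the two sources of signs carefully: the $(-1)^{\ell-1}$ from the logarithm and, in the $\sigma$-case, the $(-1)^{i a_i}$ from each factor $((-1)^i\sigma_i)^{a_i}$, whose product over $i$ collapses to $(-1)^{\sum i a_i}=(-1)^k$. These sign computations, together with keeping the substitution-into-a-positive-order-series legitimate at each step, are the only places where an error could creep in; there is no genuine analytic or structural difficulty.
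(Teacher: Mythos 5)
Your proposal is correct and takes essentially the same route as the paper: apply the logarithm to Vieta's products to obtain $\sum_{k\ge1}\frac{\rho_k}{k}Y^k=\log\bigl(1+\sum_{j\ge1}\tau_jY^j\bigr)$ (resp. the analogous identity with the $\sigma_j$), expand the Mercator series via the multinomial theorem, and compare coefficients of $Y^k$. The only cosmetic difference is that the paper works with $\prod_i(1+X_iY)$ and places the sign $(-1)^k$ on $\rho_k/k$, whereas you keep $\sigma=\prod_k(1-X_kY)$ and carry the signs $(-1)^j$ on the $\sigma_j$; the two versions differ by the substitution $Y\mapsto-Y$ and the sign bookkeeping comes out identically.
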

\begin{proof}
We introduce a new variable $Y$ and compute in $\CC[[X_1,\ldots,X_n,Y]]$. The generating function of $(-1)^k\frac{\rho_k}{k}$ is
\begin{align*}
\sum_{k=1}^\infty (-1)^k\frac{\rho_k}{k}Y^k&=-\sum_{i=1}^n\sum_{k=1}^\infty (-1)^{k-1}\frac{(X_iY)^k}{k}=-\sum_{i=1}^n\log(1+X_iY)\overset{\eqref{funclog}}{=}-\log\Bigl(\prod_{i=1}^n(1+X_iY)\Bigr)\\
&\overset{\eqref{vieta1}}{=}-\log\Bigl(1+\sum_{i=1}^n\sigma_iY^i\Bigr)=\sum_{l=1}^\infty\frac{(-1)^l}{l}\Bigl(\sum_{i=1}^n\sigma_iY^i\Bigr)^l.
\end{align*}
Now we use the multinomial theorem to expand the inner sum:
\begin{align*}
\sum_{k=1}^\infty (-1)^k\frac{\rho_k}{k}Y^k&=\sum_{l=1}^\infty\frac{(-1)^l}{l}\sum_{a_1+\ldots+a_n=l}\frac{l!}{a_1!\ldots a_n!}\sigma_1^{a_1}\ldots\sigma_n^{a_n}Y^{a_1+2a_2+\ldots+na_n}\\
&=\sum_{k=1}^\infty\sum_{(1^{a_1},\ldots,k^{a_k})\in P(k)}(-1)^{a_1+\ldots+a_k}\frac{(a_1+\ldots+a_k-1)!}{a_1!\ldots a_k!}\sigma_1^{a_1}\ldots\sigma_k^{a_k} Y^k.
\end{align*}
Note that $\sigma_k=0$ for $k>n$. This implies the first equation. 
For the second we start similarly:
\[\sum_{k=1}^\infty \frac{\rho_k}{k}Y^k=\sum_{i=1}^n\sum_{k=1}^\infty \frac{(X_iY)^k}{k}=\sum_{i=1}^n\log\bigl((1-X_iY)^{-1}\bigr)=\log\Bigl(\prod_{i=1}^n\frac{1}{1-X_iY}\Bigr)
\overset{\eqref{vieta2}}{=}\log\Bigl(1+\sum_{i=1}^\infty\tau_iY^i\Bigr).\]
Since we are only interested in the coefficient of $Y^k$, we can truncate the sum to
\[\log\Bigl(1+\sum_{i=1}^k\tau_iY^i\Bigr)=-\sum_{l=1}^\infty\frac{(-1)^l}{l}\sum_{a_1+\ldots+a_k=l}\frac{l!}{a_1!\ldots a_k!}\tau_1^{a_1}\ldots\tau_k^{a_k}Y^{a_1+2a_2+\ldots+ka_k}\]
and argue as before.
\end{proof}

The first instances of Waring's formula are
\begin{align*}
\rho_1=\sigma_1,&&\rho_2=\sigma_1^2-2\sigma_2,&&\rho_3=\sigma_1^3-3\sigma_1\sigma_2+3\sigma_3.
\end{align*}

\begin{Ex}
Since we are dealing with polynomials, it is legitimate to replace the indeterminates by actual numbers. Let $x,y,z\in\CC$ be the roots of
\[\alpha=X^3+2X^2-3X+1\in\CC[X]\]
(guaranteed to exist by the fundamental theorem of algebra). 
By Vieta's theorem, 
\[\sigma_1(x,y,z)=-2,\qquad\sigma_2(x,y,z)=-3,\qquad\sigma_3(x,y,z)=-1.\]
We compute with the first Waring formula
\[x^3+y^3+z^3=\rho_3(x,y,z)=(-2)^3-3(-2)(-3)+3(-1)=-29\]
without knowing what $x,y,z$ are! 
Here is an alternative approach for those who like matrices. The companion matrix
\[A=\begin{pmatrix}
0&0&-1\\
1&0&3\\
0&1&-2
\end{pmatrix}
\]
of $\alpha$ has characteristic polynomial $\alpha$. Hence, the eigenvalues of $A^k$ are $x^k$, $y^k$ and $z^k$. This shows $\rho_k(x,y,z)=\tr(A^k)$. 
\end{Ex}

We invite the reader to prove the other four transition formulas.

\begin{A}\label{exwaring}
Show that the following holds in $\CC[X_1,\ldots,X_n]$ for all $n,k\in\NN$:
\begin{align}
\sigma_k&=(-1)^k\sum_{(1^{a_1},\ldots,k^{a_k})\in P(k)}(-1)^{a_1+\ldots+a_k}\frac{(a_1+\ldots+a_k)!}{a_1!\ldots a_k!}\tau_1^{a_1}\ldots\tau_k^{a_k},\notag\\
&=(-1)^k\sum_{(1^{a_1},\ldots,k^{a_k})\in P(k)}\frac{(-1)^{a_1+\ldots+a_k}}{1^{a_1}a_1!\ldots k^{a_k}a_k!}\rho_1^{a_1}\ldots \rho_k^{a_k},\label{Wsec}\\
\tau_k&=(-1)^k\sum_{(1^{a_1},\ldots,k^{a_k})\in P(k)}(-1)^{a_1+\ldots+a_k}\frac{(a_1+\ldots+a_k)!}{a_1!\ldots a_k!}\sigma_1^{a_1}\ldots\sigma_k^{a_k},\notag\\
&=\sum_{(1^{a_1},\ldots,k^{a_k})\in P(k)}\frac{1}{1^{a_1}a_1!\ldots k^{a_k}a_k!}\rho_1^{a_1}\ldots \rho_k^{a_k}.\label{W4th}
\end{align}
\textit{Hint:} For \eqref{Wsec} and \eqref{W4th}, mimic the proof of \autoref{Turan} (these are specializations of \emph{Frobenius' formula}\index{Frobenius' formula} on Schur polynomials).
\end{A}

\begin{A}
Use \autoref{exwaring} to solve the non-linear system
\begin{align*}
x+y+z&=3,\\
x^2+y^2+z^2&=15,\\
x^3+y^3+z^3&=45.
\end{align*}
\textit{Hint:} As the solution is too complicated to guess, look up \emph{Cardano's formula}.\index{Cardano's formula}
\end{A}

We leave polynomials to fully develop multivariate power series.

\begin{Def}
For $\alpha\in K[[X_1,\ldots,X_n]]$ and $1\le i\le n$ let $\partial_i\alpha$\index{*Part@$\partial_i\alpha$} be the $i$-th \emph{partial derivative}\index{partial derivative} with respect to $X_i$, i.\,e. we regard $\alpha$ as a power series in $X_i$ with coefficients in $K[[X_1,\ldots,X_{i-1},X_{i+1},\ldots,X_n]]$ and form the usual (formal) derivative. For $k\in\NN_0$ let $\partial_i^k\alpha$ be the $k$-th derivative with respect to $X_i$. 
\end{Def}

Note that $\partial_i$ is a linear operator, which commutes with all $\partial_j$ (\emph{Schwarz' theorem}).\index{Schwarz' theorem} Indeed, by linearity it suffices to check
\[\partial_i\partial_j(X_i^kX_j^l)=\partial_i(lX_i^kX_j^{l-1})=klX_i^{k-1}X_j^{l-1}=\partial_j(kX_i^{k-1}X_j^l)=\partial_j\partial_i(X_i^kX_j^l).\]
We need a fairly general form of the product rule.

\begin{Lem}[\textsc{Leibniz}' rule]\index{Leibniz' rule}
Let $\alpha_1,\ldots,\alpha_s\in\CC[[X_1,\ldots,X_n]]$ and $k_1,\ldots,k_n\in\NN_0$. Then
\[\boxed{\partial_1^{k_1}\ldots\partial_n^{k_n}(\alpha_1\ldots\alpha_s)=\sum_{l_{11}+\ldots+l_{1s}=k_1}\ldots\sum_{l_{n1}+\ldots+l_{ns}=k_n}\frac{k_1!\ldots k_n!}{\prod_{i,j}l_{ij}!}\prod_{t=1}^s\partial_1^{l_{1t}}\ldots\partial_n^{l_{nt}}\alpha_t.}\]
\end{Lem}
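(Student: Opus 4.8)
The plan is to reduce the multivariate, many-factor Leibniz rule to the one-variable, two-factor product rule already established in \autoref{der} by a straightforward double induction. First I would prove the single-variable, $s$-factor version: for $\beta_1,\ldots,\beta_s\in\CC[[X_1,\ldots,X_n]]$ and $k\in\NN_0$,
\[
\partial_i^{k}(\beta_1\ldots\beta_s)=\sum_{l_1+\ldots+l_s=k}\frac{k!}{l_1!\ldots l_s!}\prod_{t=1}^s\partial_i^{l_t}\beta_t.
\]
This follows by induction on $k$ (base case $k=0$ trivial; the step uses the two-factor product rule $(\partial_i\beta)\gamma+\beta(\partial_i\gamma)$ together with the Pascal-type identity $\binom{k}{l}=\binom{k-1}{l-1}+\binom{k-1}{l}$ inside the multinomial coefficient, exactly as in the classical derivation of the $n$-th power of a derivative), or alternatively by a secondary induction on $s$ reducing to the two-factor case $\partial_i^k(\alpha\gamma)=\sum\binom{k}{l}(\partial_i^l\alpha)(\partial_i^{k-l}\gamma)$, which is the form of Leibniz' rule noted right after \autoref{der}. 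Either route is routine; I would cite the two-factor product rule from \autoref{der} and present only the inductive step.

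Next I would iterate over the indeterminants. Since the $\partial_i$ pairwise commute (Schwarz' theorem, verified just above the statement), I can apply the single-variable $s$-factor formula successively: apply it first with $i=n$ and exponent $k_n$ to $\alpha_1\ldots\alpha_s$, obtaining a sum over $(l_{n1},\ldots,l_{ns})$ with $\sum_j l_{nj}=k_n$ of $\frac{k_n!}{\prod_j l_{nj}!}\prod_t \partial_n^{l_{nt}}\alpha_t$; then apply $\partial_{n-1}^{k_{n-1}}$ to each resulting product of the $s$ factors $\partial_n^{l_{nt}}\alpha_t$, and so on down to $i=1$. At each stage the new multinomial coefficient $\frac{k_i!}{\prod_j l_{ij}!}$ multiplies in, and the factors of the product at stage $i$ are precisely $\partial_{i+1}^{l_{i+1,t}}\ldots\partial_n^{l_{nt}}\alpha_t$, so after $n$ stages one lands on $\prod_t \partial_1^{l_{1t}}\ldots\partial_n^{l_{nt}}\alpha_t$ with coefficient $\frac{k_1!\ldots k_n!}{\prod_{i,j}l_{ij}!}$, summed over all choices of the $l_{ij}$ with the row-sum constraints $\sum_j l_{ij}=k_i$. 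I would make this precise as an induction on $n$ (or simply spell out the telescoping application), the key point being that linearity of $\partial_i$ lets the outer derivative pass through the finite sum produced by the inner ones.

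I do not anticipate a genuine obstacle here — the result is essentially bookkeeping — so the only thing to be careful about is the indexing: keeping the double-indexed variables $l_{ij}$ straight, making sure the constraint is on the sum over the factor-index $j$ for each fixed derivative-index $i$, and confirming that the coefficient multiplies correctly as a product $k_1!\ldots k_n!$ over $\prod_{i,j}l_{ij}!$ rather than collapsing. A clean way to sidestep notational pain is to prove the statement first for $n=1$ (general $s$), then deduce the general case by the iteration above; I would present it in that order.
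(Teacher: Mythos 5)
Your proposal is correct and follows essentially the same route as the paper: establish the single-variable $s$-factor case, then iterate over the indeterminants one at a time, letting the multinomial coefficients multiply. The only cosmetic difference is that the paper dispatches the $n=1$ case by noting it is "more or less equivalent" to the multinomial theorem for the commuting operators that differentiate each factor, whereas you prove that same identity explicitly by induction on $k$ via the two-factor product rule and the Pascal identity — the same content, spelled out.
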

\begin{proof}
For $n=1$ the claim is more or less equivalent to the familiar multinomial theorem
\[(a_1+\ldots+a_s)^k=\sum_{l_1+\ldots+l_s=k}\frac{k!}{l_1!\ldots l_s!}a_1^{l_1}\ldots a_s^{l_s},\]
where $a_1,\ldots,a_s$ lie in any commutative ring. With every new indeterminate we simply apply the case $n=1$ to the formula for $n-1$. In this way the multinomial coefficients are getting multiplied.
\end{proof}
 
Our next goal is the multivariate chain rule for (higher) derivatives.
We equip $\CC[[X_1,\ldots,X_n]]^n$ with the direct product ring structure and use the shorthand notation $\alpha:=(\alpha_1,\ldots,\alpha_n)$ and $0:=(0,\ldots,0)$. Write 
\[\alpha\circ\beta:=\bigl(\alpha_1(\beta_1,\ldots,\beta_n),\ldots,\alpha_n(\beta_1,\ldots,\beta_n)\bigr)\]
provided this is well-defined. 
It is not difficult to show that
\begin{equation}\label{distmult}
\begin{split}
(\alpha+\beta)\circ \gamma=(\alpha\circ\gamma)+(\beta\circ\gamma),\\
(\alpha\cdot\beta)\circ \gamma=(\alpha\circ\gamma)\cdot(\beta\circ\gamma)
\end{split}
\end{equation}
as in \autoref{lemcomp}.
It was remarked by M. Hardy~\cite{MHardy}\index{Hardy} that Leibniz' rule as well as the chain rule become slightly more transparent when we give up on counting multiplicities of derivatives as follows. 

\begin{Thm}[\textsc{Faà di Bruno}'s rule]\label{faa}\index{Faadi@Faà di Bruno's rule}
Let $\alpha,\beta_1,\ldots,\beta_n\in K[[X_1,\ldots,X_n]]$ such that $\alpha(\beta_1,\ldots,\beta_n)$ is defined. Then for $1\le k_1,\ldots,k_s\le n$ we have 
\[\boxed{\partial_{k_1}\ldots\partial_{k_s}\bigl(\alpha(\beta_1,\ldots,\beta_n)\bigr)=\sum_{t=1}^s\sum_{\substack{A_1\dot{\cup}\ldots\dot{\cup} A_t\\=\{1,\ldots,s\}}}\sum_{1\le i_1,\ldots,i_t\le n}(\partial_{A_1}\beta_{i_1})\ldots(\partial_{A_t}\beta_{i_t})(\partial_{i_1}\ldots\partial_{i_t}\alpha)(\beta_1,\ldots,\beta_n),}
\]
where $A_1\dot{\cup}\ldots\dot{\cup} A_t$ runs through the set partitions of $s$ and $\partial_{A_t}:=\prod_{a\in A_t}\partial_{k_a}$.
\end{Thm}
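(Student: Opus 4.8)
The natural approach is induction on $s$, the number of partial derivatives applied. For $s=1$ the statement is exactly the multivariate chain rule: $\partial_{k_1}(\alpha(\beta_1,\ldots,\beta_n)) = \sum_{i=1}^n (\partial_{k_1}\beta_i)(\partial_i\alpha)(\beta_1,\ldots,\beta_n)$, which is the single-variable chain rule from \autoref{der} applied in each indeterminant separately (using that $\alpha$ is, say, a polynomial or that the $\beta_j$ lie in the maximal ideal so substitution is legitimate). The only set partition of a one-element set is the trivial one, so the right-hand side collapses correctly.

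For the inductive step, assume the formula holds for $s-1$ derivatives $\partial_{k_2}\ldots\partial_{k_s}$, then apply $\partial_{k_1}$ to both sides. Each summand on the right is a product of factors of two types: factors $\partial_{A_j}\beta_{i_j}$ depending on $X_{k_1}$ through the $\beta$'s, and the factor $(\partial_{i_1}\ldots\partial_{i_t}\alpha)(\beta_1,\ldots,\beta_n)$, whose derivative with respect to $X_{k_1}$ is, by the $s=1$ case, $\sum_{i_0=1}^n (\partial_{k_1}\beta_{i_0})(\partial_{i_0}\partial_{i_1}\ldots\partial_{i_t}\alpha)(\beta_1,\ldots,\beta_n)$. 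Applying the ordinary (finite) product rule from \autoref{der}, differentiating the term indexed by a set partition $A_1\dot\cup\ldots\dot\cup A_t$ of $\{2,\ldots,s\}$ produces two kinds of contributions: (a) the index $1$ is adjoined to one of the existing blocks $A_j$, i.e.\ $A_j$ becomes $A_j\cup\{1\}$ and correspondingly $\partial_{A_j}$ gains a factor $\partial_{k_1}$ — this comes from differentiating the factor $\partial_{A_j}\beta_{i_j}$; and (b) the index $1$ forms a new singleton block $\{1\}$ with a fresh index $i_0$, giving a factor $\partial_{k_1}\beta_{i_0}$ and raising the order of the $\alpha$-derivative by one — this comes from differentiating the $\alpha$-factor.

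The combinatorial heart of the argument is that these two operations together enumerate, without repetition, exactly the set partitions of $\{1,\ldots,s\}$: every set partition of $\{1,\ldots,s\}$ restricts to a set partition of $\{2,\ldots,s\}$ (possibly with one fewer block, namely when $\{1\}$ was a singleton), and is recovered uniquely either by inserting $1$ into an existing block (case (a)) or by adjoining the singleton $\{1\}$ (case (b)). This is precisely the bijection underlying the recurrence $\stirr{n+1}{k}=k\stirr{n}{k}+\stirr{n}{k-1}$ used earlier. Matching the index sets $i_1,\ldots,i_t$ (resp.\ $i_0,i_1,\ldots,i_t$) is straightforward since the sums over $1\le i_j\le n$ are already present on both sides. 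Thus the differentiated right-hand side reassembles into the claimed formula with $s$ in place of $s-1$, completing the induction. The main obstacle is purely bookkeeping: keeping the roles of the blocks $A_j$, the indices $i_j$, and the base-point $(\beta_1,\ldots,\beta_n)$ straight through the product rule so that the reindexing is visibly a bijection — but no genuine difficulty arises, since all the analytic input (product rule, chain rule, legitimacy of substitution) is already available from \autoref{der} and \eqref{distmult}.
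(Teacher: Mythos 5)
Your proposal is correct and follows essentially the same route as the paper: establish the $s=1$ case as the multivariate chain rule (the paper reduces to monomial $\alpha$ via \eqref{distmult} and the product rule), then induct on $s$, applying the product rule so that the new index either joins an existing block $A_j$ or spawns a singleton block with a fresh index via the $s=1$ case. Whether the extra derivative is applied first ($\partial_{k_1}$) or last ($\partial_{k_{s+1}}$) is immaterial since the partial derivatives commute, so no gap arises.
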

\begin{proof}
By \eqref{distmult}, we may assume that $\alpha=X_1^{a_1}\ldots X_n^{a_n}$. Then by the product rule,
\begin{equation}\label{s1}
\partial_k\bigl(\alpha(\beta_1,\ldots,\beta_n)\bigr)=\sum_{i=1}^n(\partial_k\beta_i)a_i\beta_1^{a_1}\ldots\beta_i^{a_i-1}\ldots\beta_n^{a_n}=\sum_{i=1}^n(\partial_k\beta_i)(\partial_i\alpha)(\beta_1,\ldots,\beta_n).
\end{equation}
This settles the case $s=1$. Now assume that the claim for some $s$ is established. When we apply some $\partial_{k_{s+1}}$ on the right hand side of the induction hypothesis, we need the product rule again. There are two cases: either $s+1$ is added to some of the existing sets $A_t$ or $\partial_{k_{s+1}}$ is applied to $(\partial_{i_1}\ldots\partial_{i_t}\alpha)(\beta_1,\ldots,\beta_n)$. In the latter case $t$ increases to $t+1$, $A_{t+1}=\{t+1\}$ and $i_{t+1}$ is introduced as in \eqref{s1}.
\end{proof}

\begin{Ex}
For $n=1$ and $K=\CC$, \autoref{faa} “simplifies” to
\begin{align*}
(\alpha(\beta))^{(s)}&=\sum_{t=1}^s\sum_{A_1\dot\cup\ldots\dot\cup A_t}\beta^{(|A_1|)}\ldots\beta^{(|A_t|)}\alpha^{(t)}(\beta)\\
&=\sum_{(1^{a_1},\ldots,s^{a_s})\in P(s)}\frac{s!}{(1!)^{a_1}\ldots (s!)^{a_s}a_1!\ldots a_s!}(\beta')^{a_1}\ldots(\beta^{(s)})^{a_s}\alpha^{(a_1+\ldots+a_s)}(\beta),
\end{align*}
where $(1^{a_1},\ldots,s^{a_s})$ runs over the partitions of $s$ and the coefficient is explained just as in \autoref{lemperm}.
\end{Ex}

\section{MacMahon's master theorem}\label{secMMM}

In this final section we enter a non-commutative world by making use of matrices. The ultimate goal is the \emph{master theorem} found and named by MacMahon~\cite[Chapter~II]{MacMahon}. 
Since $K[[X_1,\ldots,X_n]]$ can be embedded in its field of fractions, the familiar rules of linear algebra (over fields) remain valid in the ring $K[[X_1,\ldots,X_n]]^{n\times n}$ of $n\times n$-matrices with coefficients in $K[[X_1,\ldots,X_n]]$. In particular, the determinant of $A=(\alpha_{ij})_{i,j}$ can be defined by \emph{Leibniz' formula}\index{Leibniz' formula} (not rule)
\[\det(A):=\sum_{\sigma\in S_n}\sgn(\sigma)\alpha_{1\sigma(1)}\ldots\alpha_{n\sigma(n)}.\]\index{*DetA@$\det(A)$}
It follows that $\det(A(0))=\det(A)(0)$ by \eqref{distmult}.
Recall that the \emph{adjoint}\index{adjoint matrix} of $A$ is defined by $\adj(A):=\bigl((-1)^{i+j}\det(A_{ji})\bigr)_{i,j}$\index{*Adj@$\adj(A)$} where $A_{ji}$ is obtained from $A$ by deleting the $j$-th row and $i$-th column. Then 
\[A\adj(A)=\adj(A)A=\det(A)1_n,\] 
where $1_n$ denotes the identity $n\times n$-matrix. 
This shows that $A$ is invertible if and only if $\det(A)$ is invertible in $K[[X_1,\ldots,X_n]]$, i.\,e. $\det(A)$ has a non-zero constant term.
Expanding the entries of $A$ as $\alpha_{ij}=\sum a^{(i,j)}_{k_1,\ldots,k_n}X_1^{k_1}\ldots X_n^{k_n}$ gives rise to a natural bijection
\begin{align*}
\Omega\colon K[[X_1,\ldots,X_n]]^{n\times n}&\to K^{n\times n}[[X_1,\ldots,X_n]],\\
A&\mapsto\sum_{k_1,\ldots,k_n}\bigl(a^{(i,j)}_{k_1,\ldots,k_n}\bigr)_{i,j}X_1^{k_1}\ldots X_n^{k_n}.
\end{align*}
Clearly, $\Omega$ is a vector space isomorphism. To verify that it is even a ring isomorphism, it is enough to consider matrices $A$, $B$ with only one non-zero entry each. But then $AB=0$ or $AB$ is just the multiplication in $K[[X_1,\ldots,X_n]]$. 
So we can now freely pass from one ring to the other, keeping in mind that we are dealing with power series with non-commuting coefficients! 
Allowing some flexibility, we can also expand $A=\sum_i A_iX_k^i$ where $k$ is fixed and $A_i\in K[[X_1,\ldots,X_{k-1},X_{k+1},\ldots,X_n]]^{n\times n}$. 
This suggests defining \index{*PartialA@$\partial_kA$}
\[\partial_kA:=\sum_{i=1}^\infty iA_iX_k^{i-1}=(\partial_k\alpha_{ij})_{i,j}.\] 
The sum and product differentiation rules remain correct, but the power rule $\partial_k(A^s)=s\partial_k(A)A^{s-1}$ (and in turn Leibniz' rule) does not hold in general, since $A$ might not commute with $\partial_kA$. 

The next two results are just a warm-up and are not needed later on.

\begin{Lem}\label{lemJac}
Let $A\in\CC[[X_1,\ldots,X_n]]^{n\times n}$ and $1\le k\le n$. Then $\partial_k\det(A)=\tr\bigl(\adj(A)\partial_kA\bigr)$.
\end{Lem}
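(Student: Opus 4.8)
The plan is to expand the determinant via the Leibniz formula $\det(A)=\sum_{\sigma\in S_n}\sgn(\sigma)\prod_{i=1}^n\alpha_{i\sigma(i)}$ and differentiate using the product rule established in \autoref{der} (which remains valid coefficientwise for multivariate power series). Applying $\partial_k$ to each summand produces, by the product rule, a sum over the choice of which factor $\alpha_{i\sigma(i)}$ gets differentiated:
\[
\partial_k\det(A)=\sum_{\sigma\in S_n}\sgn(\sigma)\sum_{i=1}^n(\partial_k\alpha_{i\sigma(i)})\prod_{\substack{j=1\\j\ne i}}^n\alpha_{j\sigma(j)}.
\]

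Next I would reorganize the double sum by first fixing the differentiated row index $i$ and summing over $\sigma\in S_n$. For fixed $i$, collecting the terms according to the value $m:=\sigma(i)$ and noting that $\sum_{\sigma:\sigma(i)=m}\sgn(\sigma)\prod_{j\ne i}\alpha_{j\sigma(j)}=(-1)^{i+m}\det(A_{im})$ is precisely the cofactor expansion of the minor obtained by deleting row $i$ and column $m$, I get
\[
\partial_k\det(A)=\sum_{i=1}^n\sum_{m=1}^n(\partial_k\alpha_{im})(-1)^{i+m}\det(A_{im}).
\]

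Finally I would recognize the right-hand side as a trace. Since $\adj(A)=\bigl((-1)^{i+j}\det(A_{ji})\bigr)_{i,j}$, the $(m,i)$-entry of $\adj(A)$ is $(-1)^{i+m}\det(A_{im})$, and the $(i,m)$-entry of $\partial_kA$ is $\partial_k\alpha_{im}$. Hence $\sum_{i,m}(\partial_k\alpha_{im})(-1)^{i+m}\det(A_{im})=\sum_{i=1}^n\bigl((\partial_kA)\adj(A)\bigr)_{ii}=\tr\bigl((\partial_kA)\adj(A)\bigr)=\tr\bigl(\adj(A)(\partial_kA)\bigr)$, using cyclicity of the trace. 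This gives the claim.

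The only subtlety worth flagging is that we are working over a ring of power series whose matrix entries commute with one another (the coefficient ring here is still the commutative ring $\CC[[X_1,\ldots,X_n]]$, not the non-commutative $\CC^{n\times n}[[\ldots]]$ of the later master-theorem discussion), so the Leibniz formula, cofactor expansion, and cyclicity of the trace all apply verbatim; no non-commutativity issue arises. I expect no genuine obstacle — the main point is just bookkeeping in passing from the cofactor sum to the trace, and one should state explicitly which product rule (the finite one from \autoref{der}, applied coefficientwise since $\partial_k$ acts as an ordinary derivative in $X_k$) is being invoked.
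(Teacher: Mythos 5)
Your proof is correct and follows essentially the same route as the paper: expand $\det(A)$ by Leibniz' formula, differentiate with the product rule, recognize the cofactors, and assemble the result as a trace. The only cosmetic difference is your indexing (differentiating along rows and then invoking $\tr(BC)=\tr(CB)$), whereas the paper's bookkeeping directly yields the diagonal entries of $\adj(A)\,\partial_kA$.
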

\begin{proof}
Write $A=(\alpha_{ij})$. By Leibniz' formula and the product rule, it follows that
\begin{align*}
\partial_k\det(A)&=\partial_k\Bigl(\sum_{\sigma\in S_n}\sgn(\sigma)\alpha_{1\sigma(1)}\ldots\alpha_{n\sigma(n)}\Bigr)\\
&=\sum_{i=1}^n\sum_{\sigma\in S_n}\sgn(\sigma)\alpha_{1\sigma(1)}\ldots\partial_k(\alpha_{i\sigma(i)})\ldots\alpha_{n\sigma(n)}\\
&=\sum_{i=1}^n\sum_{j=1}^n\sum_{\substack{\sigma\in S_n\\\sigma(j)=i}}\sgn(\sigma)\alpha_{1\sigma(1)}\ldots\partial_k(\alpha_{ji})\ldots\alpha_{n\sigma(n)}.
\end{align*}
The permutations $\sigma\in S_n$ with $\sigma(j)=i$ correspond naturally to 
\[\tau:=(i,i+1,\ldots,n)^{-1}\sigma(j,j+1,\ldots,n)\in S_{n-1}\]
with $\sgn(\tau)=(-1)^{i+j}\sgn(\sigma)$. 
Hence, Leibniz' formula applied to $\det(A_{ji})$ gives
\[\sum_{j=1}^n\sum_{\substack{\sigma\in S_n\\\sigma(j)=i}}\sgn(\sigma)\alpha_{1\sigma(1)}\ldots\partial_k(\alpha_{ji})\ldots\alpha_{n\sigma(n)}=\sum_{j=1}^n(-1)^{i+j}\det(A_{ji})\partial_k(\alpha_{ji}).\]
Since this is the entry of $\adj(A)\partial_kA$ at position $(i,i)$, the claim follows.
\end{proof}

If $A\in\CC^{n\times n}[[X_1,\ldots,X_n]]$ has zero constant term, then $\exp(A)=\sum_{k=0}^\infty\frac{A^k}{k!}$ converges and is even invertible since it has constant term $1_n$. 

\begin{Thm}[\textsc{Jacobi}'s determinant formula]\index{Jacobi's determinant formula}
Let $A\in\CC^{n\times n}[[X_1,\ldots,X_n]]$ with zero constant term. Then 
\[\boxed{\det(\exp(A))=\exp(\tr(A)).}\]
\end{Thm}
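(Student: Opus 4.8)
The plan is to show that both $\det(\exp(A))$ and $\exp(\tr(A))$ satisfy the same system of first-order partial differential equations with the same constant term, and then to conclude by a uniqueness argument valid in characteristic $0$. Two preliminary observations are needed. Since $A$ has zero constant term, $\exp(A)$ has constant term $1_n$; hence $\det(\exp(A))$ has constant term $\det(1_n)=1$ and is a unit, and $\exp(A)$ itself is invertible with $\exp(A)^{-1}=\exp(-A)$ — indeed $A$ commutes with $-A$, so the binomial-theorem computation from the proof of \autoref{lemfunc} applies verbatim to matrix coefficients and yields $\exp(A)\exp(-A)=\exp(0)=1_n$. Likewise $\tr(A)$ has zero constant term, so $\exp(\tr(A))$ is defined and has constant term $1$.

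Next, the differential equations. Fix $k$ and set $B:=\exp(A)$. By \autoref{lemJac} together with the identity $B\adj(B)=\det(B)1_n$ (so $\adj(B)=\det(B)B^{-1}$ since $B$ is invertible), $\partial_k\det(B)=\tr\bigl(\adj(B)\partial_kB\bigr)=\det(B)\,\tr\bigl(B^{-1}\partial_kB\bigr)$. The heart of the argument is to show $\tr\bigl(B^{-1}\partial_kB\bigr)=\partial_k\tr(A)$. The product rule applied to the $m$-fold product $A^m$ gives $\partial_k(A^m)=\sum_{j=0}^{m-1}A^j(\partial_kA)A^{m-1-j}$, hence $B^{-1}\partial_kB=\exp(-A)\sum_{m\ge1}\tfrac{1}{m!}\sum_{j=0}^{m-1}A^j(\partial_kA)A^{m-1-j}$. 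Using the cyclic invariance of the trace and the fact that $\exp(-A)$, $A^j$ and $A^{m-1-j}$ all commute (being power series in $A$), each summand satisfies $\tr\bigl(\exp(-A)A^j(\partial_kA)A^{m-1-j}\bigr)=\tr\bigl(\exp(-A)A^{m-1}\partial_kA\bigr)$, independently of $j$; summing the $m$ equal terms and then over $m$, and using continuity of multiplication and of the trace, gives $\tr\bigl(B^{-1}\partial_kB\bigr)=\sum_{m\ge1}\tfrac{1}{(m-1)!}\tr\bigl(\exp(-A)A^{m-1}\partial_kA\bigr)=\tr\bigl(\exp(-A)\exp(A)\partial_kA\bigr)=\tr(\partial_kA)=\partial_k\tr(A)$. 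On the other side, the one-variable chain rule of \autoref{der} in the variable $X_k$ together with $\exp'=\exp$ gives $\partial_k\exp(\tr(A))=\exp(\tr(A))\,\partial_k\tr(A)$. Writing $\gamma:=\tr(A)$, both $\phi:=\det(\exp(A))$ and $\psi:=\exp(\tr(A))$ therefore satisfy $\partial_k\phi=(\partial_k\gamma)\phi$ for every $k=1,\dots,n$.

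Finally the uniqueness step. Since $\psi$ is a unit, set $\eta:=\phi\psi^{-1}$. Differentiating $\phi=\psi\eta$ with the product rule and substituting $\partial_k\psi=(\partial_k\gamma)\psi$ and $\partial_k\phi=(\partial_k\gamma)\phi$ yields $\psi\,\partial_k\eta=0$, so $\partial_k\eta=0$ for all $k$. In characteristic $0$ a power series all of whose partial derivatives vanish equals its own constant term — for each $i$, $\partial_i\eta=0$ forces every coefficient of a monomial in which $X_i$ occurs to vanish — hence $\eta=\eta(0)=\phi(0)\psi(0)^{-1}=1$, i.e. $\det(\exp(A))=\exp(\tr(A))$. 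The main obstacle is the trace computation in the middle paragraph: arranging the non-commuting matrix coefficients via cyclicity of the trace so that the inner sum over $j$ collapses into a factor $m$; everything else is routine bookkeeping.
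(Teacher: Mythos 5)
Your proof is correct, and it takes a genuinely different route from the paper's, even though both arguments pivot on the identity $\partial\det(B)=\tr(\adj(B)\,\partial B)$ of \autoref{lemJac}. The paper introduces an auxiliary variable $Y$ and works with $B=\exp(AY)$: there the derivative with respect to $Y$ satisfies $B'=AB$ on the nose, the trace identity $\tr(B^{-1}AB)=\tr(A)$ is immediate because $B$ is a power series in $A$, and the resulting ordinary differential equation is solved by a coefficient recursion in $Y$ before specializing $Y=1$. You instead differentiate directly in the original indeterminants $X_k$, which forces you to confront the fact that $\partial_kA$ need not commute with $A$; your cyclic-trace collapse of $\sum_{j}A^j(\partial_kA)A^{m-1-j}$ into $m\,A^{m-1}\partial_kA$ inside the trace is exactly the right way to do this, and it replaces the paper's commutativity-for-free trick. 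Your uniqueness step (all partial derivatives of $\eta=\phi\psi^{-1}$ vanish, hence $\eta$ is its constant term, using characteristic $0$) likewise replaces the paper's recursion on the $Y$-coefficients. What your version buys is directness: no auxiliary variable and no final specialization argument. What the paper's version buys is that the single differentiation variable makes every non-commutativity issue disappear, so the computation is shorter. Two small points worth making explicit if you write this up: the interchange of $\tr$ and $\partial_k$ with the infinite sum defining $\exp$ should be justified by the convergence $\inf\bigl(\partial_k(A^m)\bigr)\ge m-1\to\infty$ together with continuity of these linear maps, and the identity $\partial_k\exp(\gamma)=\exp(\gamma)\,\partial_k\gamma$ for a scalar $\gamma$ with zero constant term is a routine multivariate extension of the cited one-variable chain rule (provable directly from $\partial_k(\gamma^m)=m\gamma^{m-1}\partial_k\gamma$), not literally the statement in \autoref{der}.
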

\begin{proof}
We introduce a new variable $Y$ and consider $B:=\exp(AY)$. Denoting the derivative with respect to $Y$ by $'$, we have 
\[B'=\Bigl(\sum_{k=0}^\infty\frac{A^k}{k!}Y^k\Bigr)'=\sum_{k=1}^\infty\frac{A^k}{(k-1)!}Y^{k-1}=AB.\] 
Invoking \autoref{lemJac} and using that $B$ is invertible, we compute:
\begin{align*}
\det(B)'&=\tr(\adj(B)B')=\det(B)\tr(B^{-1}AB)=\det(B)\tr(A).
\end{align*}
This is a differential equation, which can be solved as follows.
Write $\det(B)=\sum_{k=0}^\infty B_kY^k$ with $B_k\in\CC[[X_1,\ldots,X_n]]$. Then $B_0=\det(B(0))=\det(\exp(0)1_n)=\det(1_n)=1$ and $B_{k+1}=\frac{1}{k+1}\tr(A)B_k$ for $k\ge 0$. This yields
\[\det(B)=1+\tr(A)Y+\frac{\tr(A)^2}{2}Y^2+\ldots=\exp(\tr(A)Y).\]
Since we already know that $\exp(A)$ converges, we are allowed to specialize $Y=1$ in $B$, from which the claim follows.
\end{proof}

\begin{Def}
For $\alpha=(\alpha_1,\ldots,\alpha_n)\in K[[X_1,\ldots,X_n]]^n$ we call \index{*Jalpha@$J(\alpha)$}
\[J(\alpha):=(\partial_j\alpha_i)_{i,j}\in K[[X_1,\ldots,X_n]]^{n\times n}\] 
the \emph{Jacobi matrix} of $\alpha$. \index{Jacobi matrix}
\end{Def}

\begin{Ex}
The Jacobi matrix of the power sum polynomials $\rho=(\rho_1,\ldots,\rho_n)$ is a deformed \emph{Vandermonde matrix}\index{Vandermonde matrix} $J(\rho)=(iX_j^{i-1})_{i,j}$ with determinant $n!\prod_{i<j}(X_j-X_i)$. The next theorem furnishes a new proof for the algebraic independence of $\rho_1,\ldots,\rho_n$.
\end{Ex}

\begin{Thm}\label{thmjacobi}
Polynomials $\alpha_1,\ldots,\alpha_n\in \CC[X_1,\ldots,X_n]$ form a transcendence basis of $\CC(X_1,\ldots,X_n)$ if and only if $\det(J(\alpha))\ne 0$.
\end{Thm}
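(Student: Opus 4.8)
The plan is to reduce the statement to one about algebraic independence and then to differentiate suitable polynomial relations. First I would note that, since $\CC(X_1,\ldots,X_n)$ has transcendence degree $n$ over $\CC$, the tuple $(\alpha_1,\ldots,\alpha_n)$ is a transcendence basis exactly when $\alpha_1,\ldots,\alpha_n$ are algebraically independent over $\CC$. The only tool needed beyond this is the chain rule in the form of \eqref{s1} (combined with \eqref{distmult}): for a polynomial $F$ in indeterminants $Z_1,\ldots,Z_m$ and for $\gamma_1,\ldots,\gamma_m\in\CC[X_1,\ldots,X_n]$ one has $\partial_k\bigl(F(\gamma_1,\ldots,\gamma_m)\bigr)=\sum_{l=1}^m(\partial_{Z_l}F)(\gamma_1,\ldots,\gamma_m)\,\partial_k\gamma_l$ for $1\le k\le n$. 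I would then treat the two implications separately.

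For the implication ``$\det(J(\alpha))\ne 0$ implies independence'', I would argue by contradiction. Assume $\alpha_1,\ldots,\alpha_n$ are algebraically dependent and choose a nonzero $P\in\CC[Y_1,\ldots,Y_n]$ of minimal total degree with $P(\alpha_1,\ldots,\alpha_n)=0$; such a $P$ is necessarily non-constant. Applying $\partial_k$ to this relation for each $k=1,\ldots,n$ gives $\sum_{i=1}^n(\partial_{Y_i}P)(\alpha_1,\ldots,\alpha_n)\,\partial_k\alpha_i=0$, so the vector $w:=\bigl((\partial_{Y_i}P)(\alpha_1,\ldots,\alpha_n)\bigr)_i$ satisfies $w^{\mathsf T}J(\alpha)=0$. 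If $\det(J(\alpha))\ne 0$ then $w=0$, i.e. $(\partial_{Y_i}P)(\alpha_1,\ldots,\alpha_n)=0$ for all $i$; but since $P$ is non-constant and $\Char\CC=0$, some $\partial_{Y_i}P$ is a nonzero polynomial of strictly smaller total degree that still vanishes on $(\alpha_1,\ldots,\alpha_n)$, contradicting minimality.

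The main work is the converse: independence implies $\det(J(\alpha))\ne 0$. Here, for each $j$ the $n+1$ elements $\alpha_1,\ldots,\alpha_n,X_j$ are algebraically dependent while the $\alpha_i$ are not, so $X_j$ is algebraic over $K_0:=\CC(\alpha_1,\ldots,\alpha_n)$; let $m_j\in K_0[Z]$ be its minimal polynomial, which is separable because $\Char\CC=0$, whence $m_j'(X_j)\ne 0$. Clearing denominators of the coefficients of $m_j$ produces $F_j\in\CC[Y_1,\ldots,Y_n,Z]$ with $F_j(\alpha_1,\ldots,\alpha_n,X_j)=0$ and $(\partial_ZF_j)(\alpha_1,\ldots,\alpha_n,X_j)=h_j\,m_j'(X_j)\ne 0$, where $h_j\in\CC[\alpha_1,\ldots,\alpha_n]\setminus\{0\}$ is the common denominator. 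Differentiating $F_j(\alpha_1,\ldots,\alpha_n,X_j)=0$ with respect to $X_k$ and using $\partial_kX_j=\delta_{kj}$ yields $\sum_{i=1}^n(\partial_{Y_i}F_j)(\alpha,X_j)\,\partial_k\alpha_i=-\delta_{kj}(\partial_ZF_j)(\alpha,X_j)$, which in matrix form says $B\,J(\alpha)=D$ with $B=\bigl((\partial_{Y_i}F_j)(\alpha,X_j)\bigr)_{j,i}$ and $D=\diag\bigl(-(\partial_ZF_1)(\alpha,X_1),\ldots,-(\partial_ZF_n)(\alpha,X_n)\bigr)$. Since $\CC[X_1,\ldots,X_n]$ is a domain, $\det(D)$ is a nonzero product of nonzero polynomials, so $\det(B)\det(J(\alpha))=\det(D)\ne 0$ forces $\det(J(\alpha))\ne 0$.

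I expect the only genuine obstacle to be the construction of the auxiliary polynomials $F_j$ in the converse: one must know that $X_j$ satisfies a polynomial equation over $\CC[\alpha_1,\ldots,\alpha_n]$ whose $Z$-derivative does not vanish at $X_j$, which is precisely separability of the minimal polynomial and therefore relies on $\Char\CC=0$ (the statement genuinely fails over imperfect fields, e.g. $\alpha_1=X_1^p$ over $\FF_p(X_1)$). Everything else — the minimal-degree argument in one direction and the bookkeeping with the chain rule and multiplicativity of $\det$ in the other — is routine.
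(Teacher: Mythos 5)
Your proof is correct, and its skeleton coincides with the paper's argument (which follows Humphreys): differentiate an annihilating relation via the chain rule \eqref{s1} and package the result as a matrix identity $B\,J(\alpha)=D$ with $D$ diagonal and non-singular; the ``dependent $\Rightarrow$ singular'' direction is the paper's argument up to contraposition. The one genuine difference is how the non-vanishing of the diagonal entries is secured in the converse. The paper stays inside the polynomial ring: for each $i$ it picks an annihilator $\beta_i\in\CC[X_0,X_1,\ldots,X_n]\setminus\CC$ of $(X_i,\alpha_1,\ldots,\alpha_n)$ of \emph{minimal total degree}, notes that $X_0$ must occur in $\beta_i$ (by independence of the $\alpha_j$), and concludes $(\partial_0\beta_i)(X_i,\alpha_1,\ldots,\alpha_n)\ne 0$ because $\partial_0\beta_i$ is a nonzero polynomial of strictly smaller degree. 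You instead pass to the field $\CC(\alpha_1,\ldots,\alpha_n)$, use that the minimal polynomial of $X_j$ over it is separable in characteristic $0$, and clear denominators to land back in $\CC[Y_1,\ldots,Y_n,Z]$; note that the nonvanishing of the cleared denominator $h_j$ at $(\alpha_1,\ldots,\alpha_n)$ is a second (harmless) use of algebraic independence. Your route makes the role of separability explicit and explains why the statement fails over imperfect fields, at the cost of invoking a little field theory; the paper's minimal-degree trick is more self-contained but uses characteristic $0$ just as essentially, since $X_0$ occurring in $\beta_i$ only guarantees $\partial_0\beta_i\ne 0$ when the characteristic is $0$. Both proofs are complete and correct.
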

\begin{proof}
The proof follows Humphreys~\cite[Proposition~3.10]{Humphreys}.\index{Humphreys}
Suppose first that $\alpha_1,\ldots,\alpha_n$ are algebraically dependent. Then there exists $\beta\in\CC[X_1,\ldots,X_n]\setminus\CC$ such that $\beta(\alpha_1,\ldots,\alpha_n)=0$ and $\deg(\beta)$ is as small as possible.
By \eqref{s1},
\[\sum_{i=1}^n(\partial_k\alpha_i)(\partial_i\beta)(\alpha_1,\ldots,\alpha_n)=\partial_k(\beta(\alpha_1,\ldots,\alpha_n))=0\]
for $k=1,\ldots,n$. This is a homogeneous linear system over $\CC(X_1,\ldots,X_n)$ with coefficient matrix $J(\alpha)^\TT$ (the transpose of $J(\alpha)$). Since $\beta\notin\CC$, there exists $1\le k\le n$ such that $\partial_k\beta\ne 0$. Now $(\partial_k\beta)(\alpha_1,\ldots,\alpha_n)\ne 0$, because $\deg(\beta)$ was chosen to be minimal. Hence, the linear system has a non-trivial solution and $\det(J(\alpha))$ must be $0$.

Assume conversely that $\alpha_1,\ldots,\alpha_n$ are algebraically independent over $\CC$. Since $\CC(X_1,\ldots,X_n)$ has transcendence degree $n$, the polynomials $X_i,\alpha_1,\ldots,\alpha_n$ are algebraically dependent for each $i=1,\ldots,n$. Let $\beta_i\in\CC[X_0,X_1,\ldots,X_n]\setminus\CC$ such that $\beta_i(X_i,\alpha_1,\ldots,\alpha_n)=0$ and $\deg(\beta_i)$ as small as possible. Again by \eqref{s1}, 
\[\delta_{ik}(\partial_0\beta_i)(X_i,\alpha_1,\ldots,\alpha_n)+\sum_{j=1}^n(\partial_k\alpha_j)(\partial_j\beta_i)(X_i,\alpha_1,\ldots,\alpha_n)=\partial_k(\beta_i(X_i,\alpha_1,\ldots,\alpha_n))=0\]
for $i=1,\ldots,n$. Since $\alpha_1,\ldots,\alpha_n$ are algebraically independent, $X_0$ must occur in every $\beta_i$.
In particular, $\partial_0\beta_i\ne 0$ has smaller degree than $\beta_i$. The choice of $\beta_i$ implies $(\partial_0\beta_i)(X_i,\alpha_1,\ldots,\alpha_n)\ne 0$ for $i=1,\ldots,n$. This leads to the following matrix equation in $\CC[X_1,\ldots,X_n]$:
\[\bigl((\partial_j\beta_i)(X_i,\alpha_1,\ldots,\alpha_n)\bigr)_{i,j}J(\alpha)=-\bigl(\delta_{ij}(\partial_0\beta_i)(X_i,\alpha_1,\ldots,\alpha_n)\bigr)_{i,j}.\]
Since the determinant of the diagonal matrix on the right hand side does not vanish, also $\det(J(\alpha))$ cannot vanish.
\end{proof}

\begin{Def}
Let $C_a\subseteq K[[X_1,\ldots,X_n]]$ be the set of power series with constant term $a\in K$, i.\,e. $\alpha\in C_a\iff\alpha(0)=a$. Let \index{*KXYcirc@$K[[X_1,\ldots,X_n]]^\circ$}
\[K[[X_1,\ldots,X_n]]^\circ:=\bigl\{\alpha\in C_0^n:\det(J(\alpha))\notin C_0\bigr\}\subseteq K[[X_1,\ldots,X_n]]^n.\]
\end{Def}

The condition $\det(J(\alpha))\notin C_0$ means that $J(\alpha)(0)$ is invertible in $K^{n\times n}$. 
For $n=1$ we have $\alpha\in K[[X_1,\ldots,X_n]]^\circ\iff\alpha(0)=0\ne\alpha'(0)\iff\alpha\in(X)\setminus(X^2)$, so our notation is consistent with \autoref{revgroup}. The following is a multivariate analog.

\begin{Thm}[Inverse function theorem]\label{IFT}\index{inverse function theorem}
The set $K[[X_1,\ldots,X_n]]^\circ$ is a group with respect to $\circ$ and 
\[K[[X_1,\ldots,X_n]]^\circ\to\GL(n,K),\qquad \alpha\mapsto J(\alpha)(0)\]
is a group epimorphism.
\end{Thm}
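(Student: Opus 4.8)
The plan is to regard $(C_0^n,\circ)$ as a monoid and identify its unit group with $K[[X_1,\ldots,X_n]]^\circ$; the epimorphism onto $\GL(n,K)$ then falls out of the chain rule. First I would record the routine facts: substitution of a tuple $\gamma\in C_0^n$ induces a continuous ring endomorphism of $K[[X_1,\ldots,X_n]]$ (convergence of the relevant sums is \autoref{infsum}, and the algebraic identities are \eqref{distmult}, proved as in \autoref{lemcomp}), so $\circ$ is a well-defined associative operation on $C_0^n$ with two-sided identity $\id:=(X_1,\ldots,X_n)$. The crucial structural input is the \emph{chain rule for Jacobi matrices}: applying the case $s=1$ of \autoref{faa} to each entry gives $\partial_k\bigl((\alpha\circ\beta)_i\bigr)=\sum_j\bigl((\partial_j\alpha_i)\circ\beta\bigr)\,\partial_k\beta_j$, i.e. $J(\alpha\circ\beta)=\bigl(J(\alpha)\circ\beta\bigr)\,J(\beta)$, where $J(\alpha)\circ\beta$ is the matrix obtained by substituting $\beta$ into every entry of $J(\alpha)$. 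Evaluating at $0$ and using $\beta(0)=0$ (so that $(J(\alpha)\circ\beta)(0)=J(\alpha)(0)$) together with $\det(A(0))=\det(A)(0)$ from \eqref{distmult}, one obtains the key identity $J(\alpha\circ\beta)(0)=J(\alpha)(0)\,J(\beta)(0)$.

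From the key identity everything formal follows. For $\alpha\in K[[X_1,\ldots,X_n]]^\circ$ the matrix $J(\alpha)(0)$ is invertible by definition of this set, and $J(\id)(0)=1_n$; moreover $J(\alpha\circ\beta)(0)=J(\alpha)(0)J(\beta)(0)$ is invertible whenever both factors are, so $K[[X_1,\ldots,X_n]]^\circ$ is closed under $\circ$ and is a submonoid of $(C_0^n,\circ)$ on which $\alpha\mapsto J(\alpha)(0)$ is a monoid homomorphism into $\GL(n,K)$. Surjectivity is immediate: given $A\in\GL(n,K)$, the linear tuple $\alpha_A:=\bigl(\sum_j A_{ij}X_j\bigr)_i$ lies in $C_0^n$, has constant Jacobi matrix $A$, hence $\alpha_A\in K[[X_1,\ldots,X_n]]^\circ$ with $J(\alpha_A)(0)=A$.

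The substantive step is to show every $\alpha\in K[[X_1,\ldots,X_n]]^\circ$ has an inverse for $\circ$. Put $A:=J(\alpha)(0)\in\GL(n,K)$. Decomposing a tuple $\beta=\sum_{d\ge1}\beta^{(d)}$ into homogeneous components, I would check that the degree-$d$ component of $\alpha\circ\beta$ equals $A\,\beta^{(d)}$ plus a term depending only on $\beta^{(1)},\ldots,\beta^{(d-1)}$: a monomial $\beta_1^{k_1}\cdots\beta_n^{k_n}$ with $k_1+\cdots+k_n=m$ has order $\ge m$, so it contributes to degree $d$ only through $\beta$-components of degree $\le d-m+1$, which equals $d$ precisely when $m=1$, in which case the contribution is exactly the linear part of $\alpha$ applied to $\beta^{(d)}$, i.e. $A\,\beta^{(d)}$. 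Since $A$ is invertible, the requirements ``degree-$1$ part of $\alpha\circ\beta=(X_1,\ldots,X_n)$ and degree-$d$ part $=0$ for $d\ge2$'' force $\beta^{(1)}=A^{-1}(X_1,\ldots,X_n)$ and then determine $\beta^{(d)}$ recursively and uniquely; the resulting $\beta\in C_0^n$ satisfies $\alpha\circ\beta=\id$. (Equivalently, $\beta$ is the unique fixed point of the contraction $\gamma\mapsto A^{-1}\bigl((X_1,\ldots,X_n)-\alpha^{(\ge2)}\circ\gamma\bigr)$ on the complete space $C_0^n$; cf. \autoref{vollständig}.)

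Finally I would upgrade this right inverse to a two-sided one exactly as in \autoref{revgroup}: from $\alpha\circ\beta=\id$ the key identity gives $J(\beta)(0)=A^{-1}\in\GL(n,K)$, so $\beta\in K[[X_1,\ldots,X_n]]^\circ$, and applying the construction to $\beta$ yields $\gamma\in K[[X_1,\ldots,X_n]]^\circ$ with $\beta\circ\gamma=\id$; then $\gamma=\id\circ\gamma=(\alpha\circ\beta)\circ\gamma=\alpha\circ(\beta\circ\gamma)=\alpha\circ\id=\alpha$, whence $\beta\circ\alpha=\beta\circ\gamma=\id$. Thus $K[[X_1,\ldots,X_n]]^\circ$ is a group, and by the key identity $\alpha\mapsto J(\alpha)(0)$ is a group homomorphism onto $\GL(n,K)$. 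The main obstacle is the inverse construction — precisely, pinning down cleanly that the degree-$d$ component of $\alpha\circ\beta$ is \emph{affine} in $\beta^{(d)}$ with invertible ``leading operator'' $A$, which is exactly what powers the recursion (or the contraction estimate); the rest is bookkeeping already modeled in \autoref{lemcomp} and \autoref{revgroup}.
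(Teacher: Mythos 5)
Your proof is correct, and its outer skeleton coincides with the paper's: both rest on the chain rule \eqref{s1} giving $J(\alpha\circ\beta)=J(\alpha)(\beta)J(\beta)$, hence the key identity $J(\alpha\circ\beta)(0)=J(\alpha)(0)J(\beta)(0)$ as in \eqref{Jhom}, from which closure, the homomorphism property and surjectivity (via linear tuples) follow identically. Where you genuinely diverge is the construction of inverses, which is the substantive step. The paper first normalizes to $J(\alpha)(0)=1_n$ by multiplying with $S=J(\alpha)^{-1}(0)$ and then runs a monomial-by-monomial correction adapted from the proof of \autoref{sympoly}: monomials are ordered by total degree and then lexicographically, and at each stage one offending monomial of $\beta_{i,j}(\alpha_1,\ldots,\alpha_n)-X_i$ is removed, using that $X_k$ is the unique degree-one monomial of $\alpha_k$; afterwards the linear change of variables is undone. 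You instead skip the normalization and solve degree by degree: the degree-$d$ homogeneous component of $\alpha\circ\beta$ is $A\beta^{(d)}$ plus a term in $\beta^{(1)},\ldots,\beta^{(d-1)}$ (only the linear part of $\alpha$ can reach $\beta^{(d)}$, exactly as you argue), and invertibility of $A=J(\alpha)(0)$ makes the recursion solvable and the inverse unique. You also make explicit the upgrade from the one-sided identity $\alpha\circ\beta=\id$ to a two-sided inverse via the \autoref{revgroup} trick, a point the paper passes over rather quickly (its construction actually yields $\beta\circ\alpha=\id$ in the normalized case and then composes with linear maps). The trade-off: the paper's argument recycles an existing proof and keeps each correction step entirely elementary, while yours avoids the linear-change bookkeeping and the lexicographic ordering, exhibits the triangular structure (and uniqueness of the reverse) directly, and packages the convergence as either a degree recursion or a contraction on the complete space of \autoref{vollständig}. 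Both are valid proofs of the theorem.
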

\begin{proof}
Let $\alpha,\beta\in K[[X_1,\ldots,X_n]]^\circ$. Clearly, $\alpha\circ\beta\in C_0^n$. By \eqref{s1},
\[\partial_j(\alpha_i(\beta))=\sum_{k=1}^n(\partial_j\beta_k)(\partial_k\alpha_i)(\beta)\]
and $J(\alpha\circ\beta)=J(\alpha)(\beta)\cdot J(\beta)$. It follows that 
\begin{equation}\label{Jhom}
J(\alpha\circ\beta)(0)=J(\alpha)(0)J(\beta)(0)\in\GL(n,K)
\end{equation}
and $\alpha\circ\beta\in K[[X_1,\ldots,X_n]]^\circ$. The associativity $(\alpha\circ\beta)\circ\gamma=\alpha\circ(\beta\circ\gamma)$ holds more generally when both sides are defined. 
This can be reduced to the easy case where $\alpha=(0,\ldots,0,X_i,0,\ldots,0)$ by fully exploiting \eqref{distmult}. The identity element of $K[[X_1,\ldots,X_n]]^\circ$ is clearly $(X_1,\ldots,X_n)$. 

For the construction of inverse elements, we first assume that $J(\alpha)(0)=1_n$. Here we can adapt the proof of \autoref{sympoly}. 
We sort the $n$-tuples $(k_1,\ldots,k_n)$ first by $\sum_{i=1}^nk_i$ and then lexicographically (for tuples with the same sum). 
Define $\beta_{i,1}:=X_i\in C_0$. For a given $\beta_{i,j}$ let $f(i,j):=(k_1,\ldots,k_n)$ be the minimal tuple such that the coefficient $c$ of $X_1^{k_1}\ldots X_n^{k_n}$ in $\beta_{i,j}(\alpha_1,\ldots,\alpha_n)-X_i$ is non-zero (if there is no such tuple we are done). Now let
\[\beta_{i,j+1}:=\beta_{i,j}-cX_1^{k_1}\ldots X_{n}^{k_n}\in C_0.\]
Since $(\partial_j\alpha_k)(0)=\delta_{kj}$, $X_k$ is the unique monomial of degree $1$ in $\alpha_k$. Consequently, $X_1^{k_1}\ldots X_n^{k_n}$ is the unique lowest degree monomial in $\alpha_1^{k_1}\ldots\alpha_n^{k_n}$. Hence, going from $\beta_{i,j}(\alpha_1,\ldots,\alpha_n)$ to $\beta_{i,j+1}(\alpha_1,\ldots,\alpha_n)$ replaces $X_1^{k_1}\ldots X_n^{k_n}$ with terms of higher degree. Consequently, $f(i,j+1)>f(i,j)$ and $\beta_i:=\lim_{j\to\infty}\beta_{i,j}\in C_0$ exists with $\beta_i(\alpha_1,\ldots,\alpha_n)=X_i$. 

Now we consider the general case. As explained before, $\det(J(\alpha))\notin C_0$ implies that $J(\alpha)$ is invertible. Let $S:=(s_{ij})=J(\alpha)^{-1}(0)\in K^{n\times n}$ and $\tilde{X}_i:=\sum_{j=1}^ns_{ij}X_j\in C_0$ for $i=1,\ldots,n$. Then $\tilde{\alpha}:=\tilde{X}\circ\alpha\in C_0^n$ 
fulfills
\[J(\tilde{\alpha})(0)=(\partial_j\tilde{\alpha}_i)_{i,j}(0)=\Bigl(\sum_{k=1}^ns_{ik}(\partial_j\alpha_k)(0)\Bigr)_{i,j}=SJ(\alpha)(0)=1_n.\]
By the construction above, there exists $\tilde{\beta}\in C_0^n$ with $\tilde{\beta}\circ\tilde{\alpha}=(X_1,\ldots,X_n)$. Define $\beta:=\tilde{\beta}\circ\tilde{X}\in C_0^n$.
Then
\[\beta\circ\alpha=\tilde{\beta}\circ\tilde{X}\circ\alpha=\tilde{\beta}\circ\tilde{\alpha}=(X_1,\ldots,X_n).\]
By \eqref{Jhom}, $J(\beta)(0)J(\alpha)(0)=1_n$ and $J(\beta)(0)=S$. Hence, $\beta\in K[[X_1,\ldots,X_n]]^\circ$ is a left inverse of $\alpha$ with respect to $\circ$. As usual, $\beta$ is also a right inverse (see proof of \autoref{revgroup}). Hence, $K[[X_1,\ldots,X_n]]^\circ$ is a group. The map $\alpha\mapsto J(\alpha)(0)$ is a homomorphism by \eqref{Jhom}. For $A=(a_{ij})\in\GL(n,K)$ let $\alpha_i:=a_{i1}X_1+\ldots+a_{in}X_n$. Then $\alpha\in C_0$ and $J(\alpha)(0)=A$. So our map is surjective.
\end{proof}

If $\alpha_1,\ldots,\alpha_n\in\CC[X_1,\ldots,X_n]$ are polynomials such that $\det(J(\alpha))\in\CC^\times$, the \emph{Jacobi conjecture}\index{Jacobi conjecture} (put forward by Keller~\cite{Keller}\index{Keller} in 1939) claims that there exist polynomials $\beta_1,\dots,\beta_n$ such that $\beta\circ\alpha=(X_1,\ldots,X_n)$. The following counterexample with $n=3$ was found by Anthropic Claude Fable in 2026\footnote{see \href{https://x.com/__alpoge__/status/2079028340955197566}{post on X}}
\[
\alpha=\bigl((1+X Y)^3 Z + Y^2 (1 + X Y) (4 + 3X Y),\ Y + 3X (1 + X Y)^2 Z + 3X Y^2 (4 + 3X Y),\ 2X - 3X^2 Y - X^3 Z\bigr).
\]
It can be checked that $\det(J(\alpha))=-2$ and $\alpha(0, 0, -1/4)=(-1/4, 0, 0)=\alpha(1, -3/2, 13/2)$.
Hence, there can be no beta with $\beta\circ\alpha=(X_1,X_2,X_3)$.
The conjecture is still open for $n=2$ (see \cite{Essen}). 

An explicit formula for the reverse (i.\,e. the inverse with respect to $\circ$) is given by the following multivariate version of \autoref{lagrange}. To simplify the proof (which is still difficult) we restrict ourselves to those $\beta\in\CC[[X_1,\ldots,X_n]]^n$ such that $\beta_i\in X_iC_1\subseteq C_0$. Note that $J(\beta)(0)=1_n$ here.

\begin{Thm}[\textsc{Lagrange--Good}'s inversion formula]\label{lagrange2}\index{Lagrange--Good's inversion formula}
Let $\alpha\in\CC[[X_1,\ldots,X_n]]$ and $\beta_i\in X_iC_1$ for $i=1,\ldots,n$. Then
\begin{equation}\label{betaex}
\alpha=\sum_{k_1,\ldots,k_n\ge 0}c_{k_1,\ldots,k_n}\beta_1^{k_1}\ldots\beta_n^{k_n}
\end{equation}
where $c_{k_1,\ldots,k_n}\in\CC$ is the coefficient of $X_1^{k_1}\ldots X_n^{k_n}$ in 
\[\alpha \Bigl(\frac{X_1}{\beta_1}\Bigr)^{k_1+1}\ldots\Bigl(\frac{X_n}{\beta_n}\Bigr)^{k_n+1}\det(J(\beta)).\]
\end{Thm}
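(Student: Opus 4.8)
The plan is to reduce \eqref{betaex} to the evaluation of a multivariate residue, which amounts to a formal ``change of variables'' and is proved by a Piola-type identity. Write $\beta_i=X_iu_i$ with $u_i\in C_1$, so $u_i$ is an invertible power series with $u_i(0)=1$; then $J(\beta)(0)=1_n$, hence $\det J(\beta)\in C_1$, and $X_i/\beta_i=u_i^{-1}\in C_1$. Each monomial $\beta^k:=\beta_1^{k_1}\cdots\beta_n^{k_n}$ has the shape $X_1^{k_1}\cdots X_n^{k_n}$ times a unit in $C_1$, so the family $(\beta^k)_{k\in\NN_0^n}$ is triangular with respect to the monomial basis ordered by total degree. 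Consequently every $\alpha\in\CC[[X_1,\ldots,X_n]]$ has a unique expansion $\alpha=\sum_k c_k\beta^k$ with $c_k\in\CC$ (it converges by \autoref{infsum} since $|\beta^k|\to0$), and it suffices to identify these $c_k$ with the coefficients claimed in the theorem.

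All series below lie in the localization $R:=\CC[[X_1,\ldots,X_n]][X_1^{-1},\ldots,X_n^{-1}]$, which contains every $\beta^m$ with $m\in\ZZ^n$ because $\beta_i^{-1}=X_i^{-1}u_i^{-1}$. For $\gamma\in R$ let $\res(\gamma)$ be the coefficient of $(X_1\cdots X_n)^{-1}$ in $\gamma$; it is $\CC$-linear, satisfies $\res(\partial_j\gamma)=0$ for every $j$ exactly as in the case $n=1$, and for a power series $\delta$ one has $[X^k]\delta=\res\bigl(\delta\,X_1^{-k_1-1}\cdots X_n^{-k_n-1}\bigr)$. Let $\hat c_k$ denote the coefficient prescribed in the theorem. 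Using $(X_i/\beta_i)^{k_i+1}X_i^{-k_i-1}=\beta_i^{-k_i-1}$, then substituting $\alpha=\sum_j c_j\beta^j$ (the resulting sum commutes with $\res$, since $\beta^{j-k-\mathbf1}\det J(\beta)$ equals $X_1^{-k_1-1}\cdots X_n^{-k_n-1}$ times a power series of $\inf=|j|$, hence tends to $0$ in $R$), we obtain with $\mathbf1=(1,\ldots,1)$
\[\hat c_k=\res\bigl(\alpha\,\beta^{-k-\mathbf1}\det J(\beta)\bigr)=\sum_j c_j\,\res\bigl(\beta^{j-k-\mathbf1}\det J(\beta)\bigr).\]
Thus everything comes down to the identity
\[\res\bigl(\beta^{m-\mathbf1}\det J(\beta)\bigr)=\begin{cases}1&m=0,\\0&m\ne0,\end{cases}\qquad(m\in\ZZ^n),\]
which then yields $\hat c_k=c_k$ immediately.

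For $m=0$ the identity is easy: $\beta^{-\mathbf1}\det J(\beta)=X_1^{-1}\cdots X_n^{-1}\cdot\bigl(\prod_i u_i^{-1}\bigr)\det J(\beta)$ and $\bigl(\prod_i u_i^{-1}\bigr)\det J(\beta)\in C_1$, whose constant term — which is exactly $\res(\beta^{-\mathbf1}\det J(\beta))$ — equals $1$. For $m\ne0$ the goal is to write $\beta^{m-\mathbf1}\det J(\beta)$ as a finite sum of terms $\partial_j(\cdot)$ with arguments in $R$, so that the residue vanishes. Absorbing $\beta_i^{m_i-1}$ into the $i$-th row of $J(\beta)$ gives $\beta^{m-\mathbf1}\det J(\beta)=\det(g_{ij})$ with $g_{ij}=\beta_i^{m_i-1}\partial_j\beta_i$: if $m_i\ne0$ then $g_{ij}=\tfrac1{m_i}\partial_j(\beta_i^{m_i})$ is a gradient, while if $m_i=0$ then $g_{ij}=\delta_{ij}X_i^{-1}+\partial_j\log u_i$, which we split by multilinearity of $\det$ into its diagonal part and the gradient $(\partial_j\log u_i)_j$. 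Expanding, $\det(g_{ij})$ becomes a sum of determinants of the form $\bigl(\prod_{i\in T}X_i^{-1}\bigr)\det\bigl((\partial_j h_i)_{i,j\notin T}\bigr)$, where $T$ ranges over subsets of $\{i:m_i=0\}$ and the $h_i$ lie in $R$ — crucially $h_r=\tfrac1{m_r}\beta_r^{m_r}$ for some fixed $r$ with $m_r\ne0$, which forces $r\notin T$. By the antisymmetry of the determinant together with the commutativity of mixed partials (the Piola identity: $\sum_j\partial_j C_{ij}=0$ for the cofactor matrix $(C_{ij})$ of any Jacobian), $\det\bigl((\partial_j h_i)_{i,j\notin T}\bigr)=\sum_{j\notin T}\partial_j(\cdot)$ with arguments in $R$, using the row $r$ to carry the derivative. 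Since $j\notin T$, multiplying back by $\prod_{i\in T}X_i^{-1}$ keeps each summand a $\partial_j$ of an element of $R$, so $\res$ kills it, finishing the proof.

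The one genuine difficulty is the case $m\ne0$ of the key identity: one must absorb the monomial factor $\beta^m$ into the Jacobian correctly, handle the rows where $m_i=0$ (which are not gradients, since $\log X_i\notin R$) by multilinearity, and organize the Piola-type cancellation with the right bookkeeping of signs and the distinguished row $r$. Everything else — the triangularity of $(\beta^k)_k$, the elementary properties of $\res$, and the manipulation of the factors $(X_i/\beta_i)^{k_i+1}$ — is routine.
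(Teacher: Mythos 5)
Your proof is correct, but it takes a genuinely different route from the paper's, which follows Hofbauer. The paper gets the existence of an expansion $\alpha=\sum\bar c_k\beta_1^{k_1}\ldots\beta_n^{k_n}$ from the inverse function theorem (\autoref{IFT}) and reduces the claim to showing that $X_1^{k_1}\ldots X_n^{k_n}$ does not occur in $\rho_{k}=(X_1/\beta_1)^{k_1+1}\ldots(X_n/\beta_n)^{k_n+1}\det(J(\beta))$ for $k\ne0$; this is verified by computing $(\partial_1^{k_1}\ldots\partial_n^{k_n}\rho_{k})(0)$ via Leibniz' rule, an index transformation, and the observation that the resulting signed sum over $S_n$ is the determinant of a matrix whose column sums vanish. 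You instead obtain existence (and uniqueness) of the expansion from the triangularity of $\beta^k=X_1^{k_1}\ldots X_n^{k_n}\cdot(\text{unit in }C_1)$ with respect to total degree — more elementary than invoking \autoref{IFT} — and reduce everything to the residue identity $\res\bigl(\beta^{m-\mathbf{1}}\det(J(\beta))\bigr)=\delta_{m,0}$ for $m\in\ZZ^n$, proved by writing the left-hand side for $m\ne0$ as a sum of partial derivatives: gradient rows $\frac{1}{m_i}\partial_j(\beta_i^{m_i})$ when $m_i\ne0$, the splitting $\beta_i^{-1}\partial_j\beta_i=\delta_{ij}X_i^{-1}+\partial_j\log u_i$ when $m_i=0$, and the divergence-free property of the cofactor matrix, which indeed follows from antisymmetry plus Schwarz' theorem and is unaffected by the $h_i$ depending on the variables indexed by $T$ (its proof only uses equality of mixed partials in the remaining variables). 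The supporting steps all check out: the localization $\CC[[X_1,\ldots,X_n]][X_1^{-1},\ldots,X_n^{-1}]$ is a safe setting (unlike genuine multivariate Laurent series, which the paper deliberately avoids), $\res\circ\partial_j=0$ there, the interchange of $\res$ with the infinite sum is justified by your degree estimate, the power rule for negative exponents holds since $\beta_i$ is invertible in this ring, and the sign bookkeeping in the expansion over $T$ is harmless because the selected entries sit on the diagonal. What your approach buys is a conceptual explanation — the Jacobian factor makes $\beta^{m-\mathbf{1}}\det(J(\beta))$ exact unless $m=0$, the formal shadow of the change-of-variables formula — at the cost of setting up a small multivariate residue calculus; the paper's argument stays entirely inside power series and Taylor coefficients but is a less illuminating brute-force verification of the same diagonal vanishing statement.
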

\begin{proof}
The proof is taken from Hofbauer~\cite{Hofbauer}.\index{Hofbauer}
By construction $\beta\in C_0^n$ and $J(\beta)(0)=1_n$. 
By the inverse function theorem, there exists $\gamma\in\CC[[X_1,\ldots,X_n]]^\circ$ such that $\gamma\circ\beta=(X_1,\ldots,X_n)$. Replacing $X_i$ by $\gamma_i(\beta)$ in $\alpha$ yields an expansion in the form \eqref{betaex} where we denote the coefficients by $\bar{c}_{k_1,\ldots,k_n}$ for the moment. Observe that $\tau_i:=X_i/\beta_i\in C_1$ and $\det(J(\beta))\in C_1$. 
For $l_1,\ldots,l_n\ge 0$ we define
\[\rho_{l_1,\ldots,l_n}:=\tau_1^{l_1+1}\ldots\tau_n^{l_n+1}\det(J(\beta))\in C_1.\]
Then $c_{l_1,\ldots,l_n}$ is, by definition, the coefficient of $X_1^{l_1}\ldots X_n^{l_n}$ in $\alpha\rho_{l_1,\ldots,l_n}$. So it also must be the coefficient of $X_1^{l_1}\ldots X_n^{l_n}$ in
\[\sum_{\substack{k_1,\ldots,k_n\ge 0\\\forall i\,:\,k_i\le l_i}}\bar{c}_{k_1,\ldots,k_n}X_1^{k_1}\ldots X_n^{k_n}\rho_{l_1-k_1,\ldots,l_n-k_n}.\]
It is easy to see that $c_{0,\ldots,0}=\alpha(0)=\bar{c}_{0,\ldots,0}$ as claimed. Hence, it suffices to show that $X_1^{k_1}\ldots X_n^{k_n}$ does not occur in $\rho_{k_1,\ldots,k_n}$ for $(k_1,\ldots,k_n)\ne(0,\ldots,0)$. 
By the product rule,
\[\tau_i\partial_j\beta_i=\partial_j(\beta_i\tau_i)-\beta_i\partial_j\tau_i=\delta_{ij}-X_i\frac{\partial_j\tau_i}{\tau_i}.\]
Since the (Jacobi) determinant is linear in every row, it follows that
\[\rho_{k_1,\ldots,k_n}=\det\bigl(\delta_{ij}\tau_i^{k_i}-X_i\tau_i^{k_i-1}\partial_j\tau_i\bigr)=\sum_{\sigma\in S_n}\sgn(\sigma)\prod_{i=1}^n\bigl(\delta_{i\sigma(i)}\tau_i^{k_i}-X_i\tau_i^{k_i-1}\partial_{\sigma(i)}\tau_i\bigr).\]
By the (multivariate) Taylor series, we want to show that $(\partial_1^{k_1}\ldots\partial_n^{k_n}\rho_{k_1,\ldots,k_n})(0)=0$. 

Leibniz' rule applied to the inner product yields
\[P_\sigma:=\sum_{l_{11}+\ldots+l_{1n}=k_1}\ldots\sum_{l_{n1}+\ldots+l_{nn}=k_n}\frac{k_1!\ldots k_n!}{\prod_{i,j}l_{ij}!}\prod_{t=1}^n\partial_1^{l_{1t}}\ldots\partial_n^{l_{nt}}\bigl(\delta_{t\sigma(t)}\tau_t^{k_t}-X_t\tau_t^{k_t-1}\partial_{\sigma(t)}\tau_t\bigr).\]
Therein, we find
\[\bigl(\partial_1^{l_{1t}}\ldots\partial_n^{l_{nt}}(X_t\tau_t^{k_t-1}\partial_{\sigma(t)}\tau_t)\bigr)(0)=l_{tt}\bigl(\partial_1^{l_{1t}}\ldots\partial_t^{l_{tt}-1}\ldots\partial_n^{l_{nt}}(\tau_t^{k_t-1}\partial_{\sigma(t)}\tau_t)\bigr)(0).\]
In particular, the product is zero if $\sigma(t)\ne t$ and $l_{tt}=0$. 
We will disregard this case in the following. If $k_t=0$, then $\partial_t$ does not occur at all, i.\,e. $l_{t1}=\ldots=l_{tn}=0$. 
Hence, $\sigma$ must fix every $t$ outside of $W:=\{i:k_i>0\}\ne\emptyset$. Let $S_W\le S_n$ be the subgroup of those permutations.
For $t\notin W$ the corresponding factor of $P_\sigma(0)$ is $\bigl(\partial_1^{l_{1t}}\ldots\partial_n^{l_{nt}}\tau_t^{k_t}\bigr)(0)=\bigl(\partial_1^{l_{1t}}\ldots\partial_n^{l_{nt}}(1)\bigr)(0)$, which vanishes unless $l_{1t}=\ldots=l_{nt}=0$. 
Since moreover $l_{jt}=0$ for all $t$ whenever $k_j=0$, only the indices $l_{ij}$ with $i,j\in W$ remain.
We set $\mu_i:=\tau_i^{k_i}$ for $i\in W$ and observe that $\frac{1}{k_t}\partial_{\sigma(t)}(\mu_t)=\tau_t^{k_t-1}\partial_{\sigma(t)}\tau_t$. Hence, the inner product of $P_\sigma(0)$ takes the form
\[\prod_{t\in W}\bigl(\delta_{t\sigma(t)}\partial_1^{l_{1t}}\ldots\partial_n^{l_{nt}}\mu_t-\frac{l_{tt}}{k_t}\partial_1^{l_{1t}}\ldots\partial_t^{l_{tt}-1}\ldots\partial_{\sigma(t)}^{l_{\sigma(t)t}+1}\ldots\partial_n^{l_{nt}}\mu_t\bigr).\]
Finally, we transform the indices via $l_{jt}\mapsto m_{jt}:=l_{jt}-\delta_{jt}+\delta_{j\sigma(t)}$ for $j,t\in W$. Since $l_{tt}\ge 1$ whenever $\sigma(t)\ne t$, this is a bijection onto the set of those non-negative integers $(m_{jt})_{j,t\in W}$ such that $\sum_{t\in W}m_{jt}=k_j$ for $j\in W$ and $m_{\sigma(t)t}>0$ whenever $\sigma(t)\ne t$. Note that
\[\frac{l_{tt}}{l_{1t}!\ldots l_{nt}!}=\frac{l_{\sigma(t)t}+1}{l_{1t}!\ldots(l_{tt}-1)!\ldots (l_{\sigma(t)t}+1)!\ldots l_{nt}!}=\frac{m_{\sigma(t)t}}{m_{1t}!\ldots m_{nt}!}.\]
This turns $P_\sigma(0)$ into
\[P_\sigma(0)=\sum_{m_{ij}}\frac{k_1!\ldots k_n!}{\prod_{i,j\in W}m_{ij}!}\prod_{t\in W}\partial_1^{m_{1t}}\ldots\partial_n^{m_{nt}}(\mu_t)(0)\Bigl(\delta_{t\sigma(t)}-\frac{m_{\sigma(t)t}}{k_t}\Bigr),\]
where $(m_{ij})$ runs over all non-negative integers with $\sum_{t\in W}m_{jt}=k_j$ for $j\in W$ (the additional condition $m_{\sigma(t)t}>0$ from above can be dropped, because the factor $\delta_{t\sigma(t)}-m_{\sigma(t)t}/k_t$ vanishes for $m_{\sigma(t)t}=0$ and $\sigma(t)\ne t$). In this way, the range of summation no longer depends on $\sigma$ and since only the last term actually depends on $\sigma$, we conclude
\[(\partial_1^{k_1}\ldots\partial_n^{k_n}\rho_{k_1,\ldots,k_n})(0)=\sum_{m_{ij}}\frac{k_1!\ldots k_n!}{\prod_{i,j\in W}m_{ij}!}\prod_{t\in W}\partial_1^{m_{1t}}\ldots\partial_n^{m_{nt}}(\mu_t)(0)\sum_{\sigma\in S_W}\sgn(\sigma)\prod_{t\in W}\Bigl(\delta_{t\sigma(t)}-\frac{m_{\sigma(t)t}}{k_t}\Bigr).\]
The final sum is the determinant of $(\delta_{ij}-m_{ji}/k_i)_{i,j\in W}$. This matrix is singular, since its rows are linearly dependent as
\[\sum_{i\in W}k_i\Bigl(\delta_{ij}-\frac{m_{ji}}{k_i}\Bigr)=k_j-\sum_{i\in W}m_{ji}=0\]
for every $j\in W$. This completes the proof of $(\partial_1^{k_1}\ldots\partial_n^{k_n}\rho_{k_1,\ldots,k_n})(0)=0$.
\end{proof}

In an attempt to unify and generalize some dual pairs we have already found, we study the following setting. Let $A=(a_{ij})\in\CC^{n\times n}$ and $D=\diag(X_1,\ldots,X_n)$. For $I\subseteq N:=\{1,\ldots,n\}$ let $A_I:=(a_{ij})_{i,j\in I}$ and $X_I=\prod_{i\in I}X_i$. Since the determinant is linear in every row, we obtain
\begin{align*}
\det(1_n+DA)&=\begin{vmatrix}
1&0&\cdots&0\\
a_{21}X_2&1+a_{22}X_2& &a_{2n}X_2\\
\vdots&&\ddots&\vdots\\
a_{n1}X_n&\cdots&\cdots&1+a_{nn}X_n
\end{vmatrix}+\begin{vmatrix}
a_{11}&\cdots&a_{1n}\\
a_{21}X_2&\cdots&a_{2n}X_2\\
\vdots&&\vdots\\
a_{n1}X_n&\cdots&1+a_{nn}X_n
\end{vmatrix}X_1\\
&=\begin{vmatrix}
1+a_{22}X_2&\cdots&a_{2n}X_2\\
\vdots&\ddots&\vdots\\
a_{n2}X_n&\cdots&1+a_{nn}X_n
\end{vmatrix}+\begin{vmatrix}
a_{11}&\cdots&\cdots&a_{1n}\\
0&1&0&0\\
a_{31}X_3&&&a_{3n}X_3\\
\vdots&&&\vdots\\
a_{n1}X_n&\cdots&\cdots&1+a_{nn}X_n
\end{vmatrix}X_1\\
&\quad+\begin{vmatrix}
a_{11}&\cdots&a_{1n}\\
a_{21}&\cdots&a_{2n}\\
a_{31}X_3&\cdots&a_{3n}X_3\\
\vdots&&\vdots\\
a_{n1}X_n&\cdots&1+a_{nn}X_n
\end{vmatrix}X_1X_2=\ldots\\
&=1+\sum_{i=1}^na_{ii}X_i+\sum_{i<j}\det(A_{\{i,j\}})X_iX_j+\ldots+\det(A)X_N.
\end{align*}
Altogether,
\begin{equation}\label{mac1}
\det(1_n+DA)=\sum_{I\subseteq N}\det(A_I)X_I,
\end{equation}
where $\det(A_\varnothing)=1$ for convenience. The dual equation, discovered by Vere-Jones~\cite{Vere-Jones}, uses the \emph{permanent}\index{permanent} $\per(A)=\sum_{\sigma\in S_n}a_{1\sigma(1)}\ldots a_{n\sigma(n)}$\index{*PerA@$\per(A)$} of $A$: 
\begin{equation}\label{a2}
\frac{1}{\det(1_n-DA)}=\sum_{k=0}^\infty\sum_{I\in N^k}\per(A_I)\frac{X_I}{k!},
\end{equation}
where $I$ now runs through all tuples of elements in $N$
(in contrast to the determinant, $\per(A_I)$ does not necessarily vanish if $A_I$ has identical rows). 
We will derive \eqref{a2} in \autoref{vere} from the following result, which seems more amenable to applications.

\begin{Thm}[\textsc{MacMahon}'s master theorem]\label{mahon}\index{MacMahon's master theorem}
Let $A=(a_{ij})\in\CC^{n\times n}$ and $D=\diag(X_1,\ldots,X_n)$. Then
\begin{equation}\label{mac2}
\boxed{\frac{1}{\det(1_n-DA)}=\sum_{k_1,\ldots,k_n\ge 0}c_{k_1,\ldots,k_n}X_1^{k_1}\ldots X_n^{k_n},}
\end{equation}
where $c_{k_1,\ldots,k_n}\in\CC$ is the coefficient of $X_1^{k_1}\ldots X_n^{k_n}$ in 
\[\prod_{i=1}^n(a_{i1}X_1+\ldots+a_{in}X_n)^{k_i}.\]
\end{Thm}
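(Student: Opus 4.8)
The plan is to apply the Lagrange--Good inversion formula (\autoref{lagrange2}) to a cleverly chosen power series $\alpha$ and substitution $\beta$. First I would set $\beta_i := X_i/(a_{i1}X_1+\ldots+a_{in}X_n+1)$? That does not quite land in $X_iC_1$ unless we are careful, so instead the right move is to let $\rho_i := a_{i1}X_1+\ldots+a_{in}X_n$ (a homogeneous linear form, hence in $C_0$), and take $\beta_i := X_i(1-\rho_i)^{-1}$. Wait --- $(1-\rho_i)^{-1}=1+\rho_i+\rho_i^2+\ldots\in C_1$, so indeed $\beta_i = X_i(1-\rho_i)^{-1}\in X_iC_1$, exactly as required by the hypothesis of \autoref{lagrange2}. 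Then $X_i/\beta_i = 1-\rho_i$, which is the quantity that will make the coefficient extraction collapse to the desired product.

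The key computation is then the Jacobi determinant $\det(J(\beta))$. I would compute $\partial_j\beta_i = \partial_j\bigl(X_i(1-\rho_i)^{-1}\bigr) = \delta_{ij}(1-\rho_i)^{-1} + X_i(1-\rho_i)^{-2}\partial_j\rho_i = \delta_{ij}(1-\rho_i)^{-1} + X_ia_{ij}(1-\rho_i)^{-2}$. Factoring $(1-\rho_i)^{-1}$ out of row $i$, one gets $\det(J(\beta)) = \prod_{i}(1-\rho_i)^{-1}\cdot\det\bigl(\delta_{ij} + X_ia_{ij}(1-\rho_i)^{-1}\bigr)$. Now I would take $\alpha := \prod_i(1-\rho_i)\cdot\det\bigl(\delta_{ij}+X_ia_{ij}(1-\rho_i)^{-1}\bigr)^{-1}$, so that $\alpha\det(J(\beta)) = 1$. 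With this choice, the coefficient $c_{k_1,\ldots,k_n}$ from \autoref{lagrange2} becomes the coefficient of $X_1^{k_1}\ldots X_n^{k_n}$ in $\prod_i(X_i/\beta_i)^{k_i+1}\cdot\alpha\det(J(\beta)) = \prod_i(1-\rho_i)^{k_i+1}$? That is not yet $\prod_i\rho_i^{k_i}$; the extra factors need to be absorbed. The correct bookkeeping is to instead choose $\alpha$ so that $\alpha\det(J(\beta))=\prod_i(1-\rho_i)$, giving $\prod_i(X_i/\beta_i)^{k_i+1}\alpha\det(J(\beta)) = \prod_i(1-\rho_i)^{k_i+2}$, still off. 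So the cleanest route is to directly expand the left side of \eqref{mac2}: by \eqref{betaex} with $\alpha$ chosen as the unique series satisfying the identity, $\det(1_n-DA)^{-1}$ should equal $\sum c_{k_1,\ldots,k_n}X_1^{k_1}\ldots X_n^{k_n}$, and I would verify $\det(1_n-DA) = \prod_i(1-\rho_i)^{-1}\det(J(\beta))^{-1}$ is false in general --- rather, a direct linear-algebra identity relating $\det(1_n-DA)$ to $\det(J(\beta))$ along the lines of the row-factoring above is what is needed.

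Concretely, the key lemma to isolate is: \emph{with $\beta_i=X_i(1-\rho_i)^{-1}$, one has $\det(J(\beta)) = \det(1_n-DA)^{-1}\prod_i(1-\rho_i)^{-1}$}, which follows from the matrix identity $J(\beta) = \operatorname{diag}\bigl((1-\rho_i)^{-1}\bigr)\cdot(1_n-DA)^{-1}$ --- this can be checked by multiplying through and using $\partial_j\rho_i = a_{ij}$. Granting this, \autoref{lagrange2} applied to $\alpha := \det(1_n-DA)\prod_i(1-\rho_i)$ yields that $\alpha = \sum c_{k_1,\ldots,k_n}\beta_1^{k_1}\ldots\beta_n^{k_n}$ where $c_{k_1,\ldots,k_n}$ is the coefficient of $X_1^{k_1}\ldots X_n^{k_n}$ in $\prod_i(1-\rho_i)\cdot\alpha\det(J(\beta)) = \prod_i(1-\rho_i)\cdot\det(1_n-DA)\prod_i(1-\rho_i)\cdot\det(1_n-DA)^{-1}\prod_i(1-\rho_i)^{-1} = \prod_i(1-\rho_i)$ --- still not matching.

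The cleanest and safest approach, and the one I would actually write up, avoids fighting these normalizations: \textbf{induct on $n$}, directly expanding $\det(1_n-DA)$ along the first row or using the cofactor structure, and compare with the combinatorial description of $c_{k_1,\ldots,k_n}$ as the number-theoretic content of $\prod_i\rho_i^{k_i}$. The base case $n=1$ is $(1-a_{11}X_1)^{-1}=\sum_{k\ge0}a_{11}^kX_1^k$, and $c_k$ is the coefficient of $X_1^k$ in $(a_{11}X_1)^k$, namely $a_{11}^k$ --- immediate. The main obstacle, and where I expect to spend the real effort, is the inductive step: extracting from $\det(1_n-DA)^{-1} = \det(1_{n-1}-D'A')^{-1}\cdot(1 - (\text{Schur-complement correction involving }X_n))^{-1}$ the exact multilinear matching with $\prod_i(a_{i1}X_1+\ldots+a_{in}X_n)^{k_i}$; this requires tracking how the last variable $X_n$ threads through all the products. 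Alternatively --- and I think this is genuinely the intended proof --- one applies \autoref{lagrange2} once, carefully, with $\beta_i = X_i(1-\rho_i)^{-1}$ and $\alpha$ the constant $1$? No: with $\alpha=1$, \eqref{betaex} gives $1 = \sum c_{k_1,\ldots,k_n}\beta_1^{k_1}\ldots\beta_n^{k_n}$ where $c_{k_1,\ldots,k_n} = [X_1^{k_1}\ldots X_n^{k_n}]\,(X_1/\beta_1)^{k_1+1}\ldots(X_n/\beta_n)^{k_n+1}\det(J(\beta)) = [X_1^{k_1}\ldots X_n^{k_n}]\,\prod_i(1-\rho_i)^{k_i+1}\det(J(\beta))$, and then substituting the lemma $\det(J(\beta)) = \det(1_n-DA)^{-1}\prod_i(1-\rho_i)^{-1}$ and rearranging the geometric series $\beta_i^{k_i} = X_i^{k_i}(1-\rho_i)^{-k_i}$ should, after relabelling the summation, produce exactly \eqref{mac2} with the stated $c_{k_1,\ldots,k_n}$. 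I would phrase the final write-up via this single application of Lagrange--Good, with the determinant lemma as the one nontrivial computation, and relegate the index-shifting to a short verification. The genuine hard part is getting that determinant lemma and the geometric-series rearrangement exactly right; everything else is bookkeeping.
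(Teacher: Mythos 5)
Your proposal never reaches a correct proof: the step you isolate as the ``key lemma'' is false. With $\rho_i=\sum_j a_{ij}X_j$ and $\beta_i=X_i(1-\rho_i)^{-1}$ one has $\partial_j\beta_i=\frac{\delta_{ij}+\beta_ia_{ij}}{1-\rho_i}$, hence $J(\beta)=\diag\bigl((1-\rho_i)^{-1}\bigr)\bigl(1_n+D(\beta)A\bigr)$ where $D(\beta)=\diag(\beta_1,\ldots,\beta_n)$; the determinant that appears is $\det\bigl(1_n+D(\beta)A\bigr)$, built from the $\beta_i$, \emph{not} $\det(1_n-DA)^{-1}$ built from the $X_i$. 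Your claimed identity $J(\beta)=\diag\bigl((1-\rho_i)^{-1}\bigr)(1_n-DA)^{-1}$ already fails for $n=2$: with $A=\bigl(\begin{smallmatrix}0&1\\1&0\end{smallmatrix}\bigr)$ one computes $\det\bigl(1_2+D(\beta)A\bigr)=\frac{1-X_1-X_2}{(1-X_1)(1-X_2)}=1-X_1X_2-\ldots$, whereas $\det(1_2-DA)^{-1}=(1-X_1X_2)^{-1}=1+X_1X_2+\ldots$. Since your final write-up plan (``apply \autoref{lagrange2} with $\alpha=1$, substitute the lemma, relabel the summation'') rests entirely on this false identity, it does not go through; and your earlier attempts to choose $\alpha$ so that $\alpha\det(J(\beta))$ equals $1$ or $\prod_i(1-\rho_i)$ correctly led you to kernels $\prod_i(1-\rho_i)^{k_i+1}$ resp.\ $\prod_i(1-\rho_i)^{k_i+2}$ that do not match. (Also, your opening objection is unfounded: $X_i(1+\rho_i)^{-1}$ does lie in $X_iC_1$, since $(1+\rho_i)^{-1}\in C_1$; the sign is irrelevant for that hypothesis.)

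The missing idea, which is exactly what the paper does, is the choice of $\alpha$: take $\alpha:=\det\bigl(1_n-D(\beta)A\bigr)^{-1}$ (with the paper's convention $\beta_i=X_i(1+A_i)^{-1}$, $A_i=\rho_i$), so that the row-factoring above gives $\alpha\det(J(\beta))=\prod_i(1+A_i)^{-1}$ and the Lagrange--Good kernel collapses to $\prod_i(X_i/\beta_i)^{k_i+1}\alpha\det(J(\beta))=\prod_i(1+A_i)^{k_i}$, whose coefficient at the top-degree monomial $X_1^{k_1}\cdots X_n^{k_n}$ equals that of $\prod_iA_i^{k_i}$, i.e.\ $c_{k_1,\ldots,k_n}$. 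One then needs the final step you omit altogether: by the inverse function theorem (\autoref{IFT}) substitute the reverse of $\beta$, so that $\beta_1^{k_1}\cdots\beta_n^{k_n}$ becomes $X_1^{k_1}\cdots X_n^{k_n}$ and $\alpha$ becomes $\det(1_n-DA)^{-1}$, which yields \eqref{mac2}. Your fallback induction on $n$ is only a base case plus an acknowledged ``main obstacle,'' so it does not constitute a proof either.
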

\begin{proof}
Let $A_i:=a_{i1}X_1+\ldots+a_{in}X_n$ and $\beta_i:=X_i(1+A_i)^{-1}\in X_iC_1$ for $i=1,\ldots,n$. Let $D(\beta):=\diag(\beta_1,\ldots,\beta_n)$ and $\alpha:=\det(1_n-D(\beta)A)^{-1}$. Since $\partial_j A_i=a_{ij}$, we obtain
\[\partial_j\beta_i=\frac{\delta_{ij}(1+A_i)-X_ia_{ij}}{(1+A_i)^2}=\frac{\delta_{ij}-\beta_ia_{ij}}{1+A_i}\]
and 
\[\alpha\det(J(\beta))=\prod_{i=1}^n\frac{1}{1+A_i}.\]
Hence, by \autoref{lagrange2}, the coefficient of $\beta_1^{k_1}\ldots\beta_n^{k_n}$ in $\alpha$ is the coefficient of $X_1^{k_1}\ldots X_n^{k_n}$ in 
\[\Bigl(\frac{X_1}{\beta_1}\Bigr)^{k_1+1}\ldots\Bigl(\frac{X_n}{\beta_n}\Bigr)^{k_n+1}\prod_{i=1}^n\frac{1}{1+A_i}=\prod_{i=1}^n(1+a_{i1}X_1+\ldots+a_{in}X_n)^{k_i}.\]
Since the product on the right hand side has degree $k_1+\ldots+k_n$, the additional summand $1$ plays no role and the desired coefficient really is $c_{k_1,\ldots,k_n}$. By \autoref{IFT}, the $X_i$ can be substituted by some $\gamma_i$ such that $\beta_1^{k_1}\ldots\beta_n^{k_n}$ becomes $X_1^{k_1}\ldots X_n^{k_n}$ and $\alpha$ becomes $\det(1_n-DA)^{-1}$.
\end{proof}

A graph-theoretical proof of \autoref{mahon} was given by Foata and is presented in \cite[Section~9.4]{Brualdi}. There is also a short analytic argument which reduces the claim to the easy case where $A$ is a triangular matrix.

\begin{Cor}\label{vere}
Equation \eqref{a2} holds.
\end{Cor}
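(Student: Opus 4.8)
The plan is to derive \eqref{a2} directly from \autoref{mahon} by rewriting the displayed sum over tuples as a sum over exponent vectors. For a tuple $I=(i_1,\ldots,i_k)\in N^k$ write $k_j(I):=\#\{p:i_p=j\}$ for its \emph{content}, and let $I_0(k_1,\ldots,k_n)$ denote the ``sorted'' tuple of length $k=k_1+\cdots+k_n$ in which each $j\in N$ is repeated $k_j$ times. First I would note that the right-hand side of \eqref{a2} is a genuine element of $\CC[[X_1,\ldots,X_n]]$: for fixed $k$ the inner sum $\sum_{I\in N^k}\per(A_I)X_I$ is a finite sum of monomials of degree $k$, so the outer summands form a null sequence and \autoref{infsum} applies.

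The heart of the argument is the identity
\[\per(A_{I_0})=k_1!\cdots k_n!\,c_{k_1,\ldots,k_n},\]
where $c_{k_1,\ldots,k_n}$ is the coefficient occurring in \autoref{mahon}. To prove it, expand $\prod_{i=1}^n(a_{i1}X_1+\cdots+a_{in}X_n)^{k_i}$ as a product of $k$ linear forms, the $p$-th being $\sum_j a_{(I_0)_p,j}X_j$; reading off the coefficient of $X_1^{k_1}\cdots X_n^{k_n}$ shows $c_{k_1,\ldots,k_n}=\sum_f\prod_{p=1}^k a_{(I_0)_p,f(p)}$, the sum ranging over all maps $f\colon\{1,\ldots,k\}\to N$ that take every value $j$ exactly $k_j$ times. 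Each such $f$ is of the form $p\mapsto(I_0)_{\sigma(p)}$ for exactly $k_1!\cdots k_n!$ permutations $\sigma\in S_k$ (for each $j$ one may reshuffle the $k_j$ preimages of $j$ among the $k_j$ positions of the $j$-block of $I_0$), so multiplying by $k_1!\cdots k_n!$ turns the sum over $f$ into $\sum_{\sigma\in S_k}\prod_p a_{(I_0)_p,(I_0)_{\sigma(p)}}=\per(A_{I_0})$. I would also record the easy fact that $\per(A_I)$ depends only on the content of $I$: replacing $I$ by $\rho(I)$ for $\rho\in S_k$ permutes the rows and columns of $A_I$ simultaneously and hence only conjugates the permanent sum, leaving it unchanged.

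With these two facts in hand the rest is bookkeeping. For fixed $k$ there are $\binom{k}{k_1,\ldots,k_n}=k!/(k_1!\cdots k_n!)$ tuples $I\in N^k$ of content $(k_1,\ldots,k_n)$, each contributing the same monomial $\per(A_{I_0})X_1^{k_1}\cdots X_n^{k_n}$, so
\[\sum_{k=0}^\infty\frac1{k!}\sum_{I\in N^k}\per(A_I)X_I=\sum_{k_1,\ldots,k_n\ge 0}\frac{\per(A_{I_0})}{k_1!\cdots k_n!}X_1^{k_1}\cdots X_n^{k_n}=\sum_{k_1,\ldots,k_n\ge 0}c_{k_1,\ldots,k_n}X_1^{k_1}\cdots X_n^{k_n},\]
which equals $\det(1_n-DA)^{-1}$ by \autoref{mahon}. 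The only genuine obstacle is pinning down the combinatorial identity of the second paragraph: correctly matching the coefficient $c_{k_1,\ldots,k_n}$ (a sum over column-choice functions) with the permanent $\per(A_{I_0})$ (a sum over permutations) and seeing that the passage between them costs exactly the factor $k_1!\cdots k_n!$ that subsequently cancels against the multinomial count. Once that is in place no further analysis is needed.
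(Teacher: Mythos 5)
Your proof is correct and follows essentially the same route as the paper's: both derive \eqref{a2} from \autoref{mahon} by comparing the coefficient of $X_1^{k_1}\ldots X_n^{k_n}$, using the multinomial count of tuples of given content to cancel the $\frac{1}{k!}$ and the invariance of the permanent under simultaneous row and column permutations to reduce to the sorted tuple. The only difference lies in how the identity $\per(A_{I_0})=k_1!\cdots k_n!\,c_{k_1,\ldots,k_n}$ is verified: you set up a direct $k_1!\cdots k_n!$-to-one correspondence between permutations $\sigma\in S_k$ and column-choice functions $f$, whereas the paper expands both sides over exponent matrices $(k_{ij})$ with prescribed row and column sums (multinomial theorem plus a block count of the permanent); your version is a slightly cleaner way to organize the same count.
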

\begin{proof}
Let $k:=k_1+\ldots+k_n$.
By the multinomial theorem we have
\begin{align*}
\prod_{i=1}^n&(a_{i1}X_1+\ldots+a_{in}X_n)^{k_i}\\
&=\sum_{k_{11}+\ldots+k_{1n}=k_1}\ldots\sum_{k_{n1}+\ldots+k_{nn}=k_n}\frac{k_1!\ldots k_n!}{\prod_{i,j}k_{ij}!}a_{11}^{k_{11}}a_{12}^{k_{12}}\ldots a_{nn}^{k_{nn}}X_1^{k_{11}+\ldots+k_{n1}}\ldots X_n^{k_{1n}+\ldots+k_{nn}}.
\end{align*}
To obtain $c_{k_1,\ldots,k_n}$ one needs to run only over those indices $k_{ij}$ with $\sum_i k_{ij}=k_j$ for $j=1,\ldots,n$.

On the other hand, we need to sum over those tuples $I\in N^k$ in \eqref{a2} which contain $i$ with multiplicity $k_i$ for each $i=1,\ldots,n$. The number of those tuples is $\frac{k!}{k_1!\ldots k_n!}$. The factor $k!$ cancels with $\frac{1}{k!}$ in \eqref{a2}, so that the coefficient in question is $\frac{\per(A_I)}{k_1!\ldots k_n!}$.
Since the permanent is invariant under permutations of rows and columns, we may assume that $I=(1^{k_1},\ldots,n^{k_n})$. Then $A_I$ has the block form $A_I=(A_{ij})_{i,j}$ where 
\[A_{ij}=a_{ij}\begin{pmatrix}
1&\cdots&1\\
\vdots&&\vdots\\
1&\cdots&1
\end{pmatrix}\in\CC^{k_i\times k_j}.\]
In the definition of $\per(A_I)$, every permutation $\sigma\in S_k$ corresponds to a selection of $k$ entries in $A_I$ such that one entry in each row and each column is selected. Suppose that $k_{ij}$ entries in block $A_{ij}$ are selected. Then $\sum_i k_{ij}=k_j$ and $\sum_j k_{ij}=k_i$. To choose the rows in each $A_{ij}$ there are $\frac{k_1!\ldots k_n!}{\prod k_{ij}!}$ possibilities. We get the same number for the selections of columns. Finally, once rows and columns are fixed, there are $\prod k_{ij}!$ choices to permute the entries in each block $A_{ij}$. Now the coefficient of $X_1^{k_1}\ldots X_n^{k_n}$ in \eqref{a2} turns out to be
\[\sum_{\substack{k_{ij}\\\sum_ik_{ij}=k_j\\\sum_jk_{ij}=k_i}}\frac{k_1!\ldots k_n!}{\prod_{i,j} k_{ij}!}a_{11}^{k_{11}}a_{12}^{k_{12}}\ldots a_{nn}^{k_{nn}}=c_{k_1,\ldots,k_n}.\qedhere\]
\end{proof}

We illustrate with some examples why MacMahon called \autoref{mahon} the \emph{master} theorem (as he was a former major, I am tempted to call it the $M^4$-theorem).

\begin{Ex}\hfill
\begin{enumerate}[(i)]
\item The expression $\det(1_n-DA)$ is reminiscent of the definition of the characteristic polynomial $\chi_A=X^n+s_{n-1}X^{n-1}+\ldots+s_0\in\CC[X]$ of $A$. In fact, setting $X:=X_1=\ldots=X_n$ allows us to regard $\det(1_n-XA)$ as a Laurent polynomial in $X$. We can then introduce $X^{-1}$ to obtain
\[\det(1_n-XA)=X^n\det(X^{-1}1_n-A)=X^n\chi_A(X^{-1})=1+s_{n-1}X+\ldots+s_0X^n.\]
Now \eqref{mac1} in combination with Vieta's theorem yields
\[\sum_{\substack{I\subseteq N\\|I|=k}}\det(A_I)=(-1)^ks_{n-k}=\sigma_k(\lambda_1,\ldots,\lambda_n),\]
where $\lambda_1,\ldots,\lambda_n\in\CC$ are the eigenvalues of $A$. This extends the familiar identities $\det(A)=\lambda_1\ldots\lambda_n$ and $\tr(A)=\lambda_1+\ldots+\lambda_n$. With the help of \autoref{exwaring}, one can also express $s_k$ in terms of $\rho_l(\lambda_1,\ldots,\lambda_n)=\tr(A^l)$. 

\item If $A=1_n$ and $X_1=\ldots=X_n=X$, then \eqref{mac1} and \eqref{mac2} become 
\begin{align*}
(1+X)^n&=\sum_{I\subseteq N}X^{|I|}=\sum_{k=0}^n\binom{n}{k}X^k,\\
(1-X)^{-n}&=\sum_{k_1,\ldots,k_n\ge 0}X^{k_1+\ldots+k_n}=\sum_{k=0}^\infty\binom{n+k-1}{k}X^k,
\end{align*}
since the $k$-element multisets correspond to the tuples $(k_1,\ldots,k_n)$ with $k_1+\ldots+k_n=k$ where $k_i$ encodes the multiplicity of $i$. 

\item Taking $A=1_n$ and $X_k=X^k$ in \eqref{mac2} recovers an equation from \autoref{partseries}:
\[\prod_{k=1}^n\frac{1}{1-X^k}=\sum_{k_1,\ldots,k_n\ge0}X^{k_1+2k_2+\ldots+nk_n}=\sum_{k=0}^\infty p_n(k)X^k.\]
Similarly, choosing $X_k=kX$ or $X_k=X_kY$ leads more or less directly to \autoref{genstir} and \autoref{vieta} respectively.

\item Take $(X_1,X_2,X_3)=(X,Y,Z)$ and
\[A=\begin{pmatrix}
0&1&-1\\
-1&0&1\\
1&-1&0
\end{pmatrix}\]
in \eqref{mac2}. Then by \emph{Sarrus' rule},\index{Sarrus' rule}
\begin{align*}
\frac{1}{\det(1_3-DA)}&=\frac{1}{1+XZ+YZ+XY}=\sum_{k=0}^\infty(-1)^k(XY+YZ+ZX)^k\\
&=\sum_{k=0}^\infty(-1)^k\sum_{a+b+c=k}\frac{k!}{a!b!c!}X^{a+c}Y^{a+b}Z^{b+c}.
\end{align*}
The coefficient of $(XYZ)^{2n}$ is easily seen to be $(-1)^n\frac{(3n)!}{(n!)^3}$. On the other hand, the same coefficient in
\[(Y-Z)^{2n}(Z-X)^{2n}(X-Y)^{2n}=\sum_{a,b,c\ge 0}\binom{2n}{a}\binom{2n}{b}\binom{2n}{c}(-1)^{a+b+c}X^{c-b+2n}Y^{a-c+2n}Z^{b-a+2n}\]
occurs for $a=b=c$. This yields \emph{Dixon's identity}:\index{Dixon's identity}
\[(-1)^n\frac{(3n)!}{(n!)^3}=\sum_{k=0}^{2n}(-1)^k\binom{2n}{k}^3.\]
\end{enumerate}
\end{Ex}

We end with a short outlook. There are at least three ways to define power series over an infinite set of indeterminates $\{X_i:i\in I\}$.
The first option is \index{*KZZ1@$K[[X_i:i\in I]]_1$}
\[K[[X_i:i\in I]]_1:=\bigcup_{\substack{J\subseteq I\\|J|<\infty}}K[[X_j:j\in J]].\]
This ring inherits many properties from the finite version. Perhaps more interesting is the completion of the polynomial ring $K[X_i:i\in I]\subseteq K[[X_i:i\in I]]_1$. Its elements are of the form $\sum_{d=0}^\infty\alpha_d$, where $\alpha_d$ is a homogeneous polynomial of degree $d$. 
Finally, one can define power series as arbitrary sums of monomials, each involving only finitely many indeterminates. If $I=\NN$, a monomial $X_1^{a_1}\ldots X_k^{a_k}$ can be identified with the integer $p_1^{a_1}\ldots p_k^{a_k}$ where $p_1,\ldots,p_k$ are the first prime numbers. Then power series are just mappings $\NN\to K$ and the product becomes the \emph{Dirichlet convolution}\index{Dirichlet convolution}
\[(f\cdot g)(n)=\sum_{d\mid n}f(d)g(n/d)\]
for $f,g\colon\NN\to K$. 

Moreover, power series in non-commuting indeterminates exist and form what is sometimes called the \emph{Magnus ring}\index{Magnus ring} $K\langle\langle X_1,\ldots,X_n\rangle\rangle$ (the polynomial version is the \emph{free algebra}\index{free algebra} $K\langle X_1,\ldots,X_n\rangle$).
The Lie bracket $[a,b]:=ab-ba$ turns $K\langle\langle X_1,\ldots,X_n\rangle\rangle$\index{*Kommab@$[a,b]$} into a \emph{Lie algebra}\index{Lie algebra} and fulfills \emph{Jacobi's identity}\index{Jacobi's identity}
\[[a,[b,c]]+[b,[c,a]]+[c,[a,b]]=0.\]
The functional equation for $\exp(X)$ is replaced by the \emph{Baker--Campbell--Hausdorff formula}\index{Baker--Campbell--Hausdorff formula} in this context.

The reader might ask about formal Laurent series in multiple indeterminates. Although the field of fractions $Q(K[[X_1,\ldots,X_n]])$ certainly exists, its elements do not look like one might expect. For example, the inverse of $X-Y$ could be
$\sum_{k=1}^\infty X^{-k}Y^{k-1}$ or $-\sum_{k=1}^\infty X^{k-1}Y^{-k}$. 
The first series lies in $K((X))((Y))$, but not in $K((Y))((X))$. For the second series it is the other way around.

\addsec{Appendix: Algebraic properties}
\setcounter{section}{1}
\renewcommand{\thesection}{\Alph{section}}

In this appendix we state and prove a number of interesting algebraic properties of the rings of polynomials, power series and Laurent series. The proofs are often quite technical, but the results are independent of the preceding text.

In the following $R$ will always denote a commutative ring with $1$. We regard the integers $\ZZ$ as elements of $R$ via the map $n\mapsto n\cdot 1$ (not always injective). It is interesting to note that even basic facts become false in this generality. Consider the ring $R=\ZZ/4\ZZ$ of integers modulo $4$, for instance. Here $\alpha=2$ is not invertible in $R((X))$. On the other hand, $\alpha=X+2$ \emph{is} invertible as
\[\alpha^2\cdot X^{-2}=X^2X^{-2}=1.\]
Moreover, \autoref{leminv}(ii) fails for $R$ as $(1+2X)^2=1$. 

Now we impose further conditions on the ring $R$. 
Recall that $R$ is called \emph{noetherian}\index{noetherian} if the following equivalent statements hold:
\begin{itemize}
\item Every ideal of $R$ is finitely generated.
\item Every non-empty set of ideals of $R$ contains a maximal element.
\item Every chain of ideals $I_1\subseteq I_2\subseteq\ldots$ of $R$ stabilizes, i.\,e. $I_k=I_{k+1}=\ldots$ for some $k\in\NN$.
\end{itemize}

We start with a classical result.

\begin{Thm}[\textsc{Hilbert}'s basis theorem]\index{Hilbert's basis theorem}
If $R$ is noetherian, so is $R[X]$. In particular, $K[X_1,\ldots,X_n]$ is noetherian for every field $K$. 
\end{Thm}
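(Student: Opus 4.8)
The standard proof of Hilbert's basis theorem proceeds by examining leading coefficients. I would first recall that $R[X]=R[[X]]\cap\text{(polynomials)}$, so we may work entirely within $R[X]$ using the degree function from the definition at the start of the paper. Fix a non-zero ideal $I\subseteq R[X]$; the goal is to produce a finite generating set. For each $d\ge 0$ let $L_d\subseteq R$ be the set consisting of $0$ together with all leading coefficients of degree-$d$ polynomials in $I$. The first step is to check that each $L_d$ is an ideal of $R$ and that $L_0\subseteq L_1\subseteq L_2\subseteq\cdots$ (multiplying a polynomial by $X$ raises its degree by one without changing the leading coefficient). Since $R$ is noetherian, this chain stabilizes, say $L_d=L_N$ for all $d\ge N$.

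\textbf{Key steps.} Next, using that $R$ is noetherian again, each of the finitely many ideals $L_0,\ldots,L_N$ is finitely generated; pick for each $d\le N$ finitely many polynomials $f_{d,1},\ldots,f_{d,m_d}\in I$ of degree $d$ whose leading coefficients generate $L_d$. I claim the finite set $S:=\{f_{d,j}:0\le d\le N,\ 1\le j\le m_d\}$ generates $I$. This is the heart of the argument and proceeds by strong induction on the degree of an arbitrary $g\in I$: given $g$ of degree $e$ with leading coefficient $a\in L_e$, write $a$ as an $R$-linear combination of the leading coefficients of the chosen generators in degree $\min(e,N)$, form the corresponding combination of those generators scaled by an appropriate power of $X$ to match degree $e$, and subtract it from $g$ to strictly lower the degree (or reach $0$). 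The base case $g=0$ is trivial. This shows $I=(S)$, so $R[X]$ is noetherian.

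\textbf{Conclusion and obstacle.} Finally, the statement about $K[X_1,\ldots,X_n]$ follows by induction on $n$: a field $K$ is trivially noetherian (its only ideals are $0$ and $K$), and $K[X_1,\ldots,X_n]=K[X_1,\ldots,X_{n-1}][X_n]$ by the multivariate definition in the paper, so applying the $R[X]$ case with $R=K[X_1,\ldots,X_{n-1}]$ completes the induction. I do not anticipate a serious obstacle here, since the argument is classical and elementary; the only point requiring care is the bookkeeping in the induction on degrees — one must handle the two regimes $e<N$ and $e\ge N$ uniformly by always working with the stabilized ideal $L_{\min(e,N)}$ and inserting the correct power $X^{e-d}$ so that degrees cancel exactly, and one must confirm the subtracted polynomial genuinely lies in the ideal generated by $S$ (it does, being an $R[X]$-combination of elements of $S$).
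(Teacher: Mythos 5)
Your argument is correct, and it is the direct version of the classical proof, whereas the paper argues by contradiction. Concretely, you introduce for each degree $d$ the ideal $L_d$ of leading coefficients of degree-$d$ members of $I$, use noetherianity of $R$ twice (once for the stabilization $L_N=L_{N+1}=\ldots$, once to finitely generate $L_0,\ldots,L_N$), and then descend on the degree of an arbitrary $g\in I$ by subtracting $\sum_j r_jX^{e-d}f_{d,j}$ with $d=\min(e,N)$; your care about the two regimes $e<N$ and $e\ge N$ and about the power $X^{e-d}$ is exactly what is needed, and the verification that each $L_d$ is an ideal with $L_d\subseteq L_{d+1}$ is routine as you indicate. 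The paper instead supposes $I$ is not finitely generated, greedily picks $\alpha_k\in I\setminus(\alpha_0,\ldots,\alpha_{k-1})$ of minimal degree $d_k$, applies the ascending chain condition to the ideals $(a_1)\subseteq(a_1,a_2)\subseteq\ldots$ generated by the leading coefficients of this particular sequence, and derives a contradiction from $\beta=\alpha_k-\sum_{i<k}r_iX^{d_k-d_i}\alpha_i$ having smaller degree while still avoiding $(\alpha_0,\ldots,\alpha_{k-1})$. Both proofs pivot on the same leading-term cancellation; yours produces an explicit finite generating set and organizes all leading-coefficient information into the single stabilized chain $(L_d)_d$, at the price of the degree bookkeeping, while the paper's minimal-counterexample argument is shorter and dispenses with the case split, but is non-constructive. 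Your reduction of the multivariate statement to the one-variable case by induction on $n$ coincides with the paper's.
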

\begin{proof}
Suppose by way of contradiction that $I\unlhd R[X]$ is not finitely generated. Let $\alpha_0:=0\in R[X]$. For $k\in\NN$ choose inductively $\alpha_k\in I\setminus(\alpha_0,\ldots,\alpha_{k-1})$ of minimal degree $d_k$. Then $0\le d_1\le d_2\le\ldots$. Let $a_k\in R$ be the leading coefficient of $\alpha_k$ for $k\in\NN$. By hypothesis, the chain $(a_1)\subseteq (a_1,a_2)\subseteq\ldots$ stabilizes. In particular, there exists some $k\in\NN$ such that $a_k=\sum_{i=1}^{k-1}r_ia_i$ for some $r_1,\ldots,r_{k-1}\in R$. But now
\[\beta:=\alpha_k-\sum_{i=1}^{k-1}{r_iX^{d_k-d_i}\alpha_i}\in I\setminus(\alpha_0,\ldots,\alpha_{k-1})\]
has degree $<d_k$ contradicting the choice of $\alpha_k$.
The second claim follows by induction on $n$ since $K$ is noetherian. 
\end{proof}

A slightly more involved argument yields the corresponding theorem of power series. In complex analysis, this is sometimes called \emph{Rückert's basis theorem}.\index{Rückert's basis theorem}

\begin{Thm}\label{thmKXnoether}
If $R$ is noetherian, so is $R[[X]]$. In particular, $K[[X_1,\ldots,X_n]]$ is noetherian for every field $K$. 
\end{Thm}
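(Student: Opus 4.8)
The plan is to mimic the proof of Hilbert's basis theorem, replacing the leading coefficient of a polynomial with the \emph{lowest} coefficient of a power series (i.e.\ the coefficient of $X^{\inf(\alpha)}$). First I would suppose, for contradiction, that $I\unlhd R[[X]]$ is an ideal which is not finitely generated. Setting $\alpha_0:=0$, I would choose inductively $\alpha_k\in I\setminus(\alpha_0,\ldots,\alpha_{k-1})$ with $\inf(\alpha_k)$ as small as possible; write $m_k:=\inf(\alpha_k)$ and let $a_k\in R$ be the coefficient of $X^{m_k}$ in $\alpha_k$ (so $a_k\ne 0$). Since each $\alpha_k$ lies outside the ideal generated by the previous ones, all the $\alpha_k$ are nonzero and, crucially, the minimality of $\inf(\alpha_k)$ forces $m_1\le m_2\le\ldots$ — this is the analogue of the inequality $d_1\le d_2\le\ldots$ in Hilbert's theorem, and the monotonicity is what makes the index shifts below legitimate.

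Next, by the noetherian hypothesis the ascending chain $(a_1)\subseteq(a_1,a_2)\subseteq\ldots$ in $R$ stabilizes, so there is some $k\in\NN$ with $a_k=\sum_{i=1}^{k-1}r_ia_i$ for suitable $r_1,\ldots,r_{k-1}\in R$. Then I would form
\[\beta:=\alpha_k-\sum_{i=1}^{k-1}r_iX^{m_k-m_i}\alpha_i.\]
Because $m_k\ge m_i$, each $X^{m_k-m_i}$ is a genuine power series (not a Laurent series), so $\beta\in I$, and since $\alpha_1,\ldots,\alpha_{k-1}\in(\alpha_0,\ldots,\alpha_{k-1})$ while $\alpha_k\notin(\alpha_0,\ldots,\alpha_{k-1})$, we get $\beta\in I\setminus(\alpha_0,\ldots,\alpha_{k-1})$. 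By construction the coefficient of $X^{m_k}$ in $\beta$ is $a_k-\sum_{i=1}^{k-1}r_ia_i=0$, and no lower power of $X$ appears either (each summand has $\inf\ge m_k$), so $\inf(\beta)>m_k=\inf(\alpha_k)$. This contradicts the minimal choice of $\alpha_k$, proving $R[[X]]$ is noetherian. The statement for $K[[X_1,\ldots,X_n]]$ then follows by induction on $n$ using $K[[X_1,\ldots,X_n]]=K[[X_1,\ldots,X_{n-1}]][[X_n]]$ and the fact that any field is noetherian.

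The main obstacle — really the only delicate point — is verifying that the chosen $\alpha_k$ can always be picked with $\inf(\alpha_k)\ge\inf(\alpha_{k-1})$; here one must observe that if some candidate in $I\setminus(\alpha_0,\ldots,\alpha_{k-1})$ had smaller $\inf$, it would have been an equally valid (indeed better) choice at an \emph{earlier} stage, so an easy induction on $k$ shows the sequence $(m_k)$ is nondecreasing. (Alternatively one phrases the whole construction as choosing, at each stage, an element of minimal $\inf$ among \emph{all} elements of $I$ outside the current finitely generated subideal, which makes monotonicity immediate.) Everything else — that $\beta$ is a bona fide power series, that it stays in $I$, that its low-order coefficients cancel — is routine and parallels the polynomial case verbatim.
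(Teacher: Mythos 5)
There is a genuine gap, and it sits exactly at the point where you declare a contradiction. In Hilbert's basis theorem the cancellation of \emph{leading} coefficients produces an element $\beta\in I\setminus(\alpha_0,\ldots,\alpha_{k-1})$ of degree \emph{strictly smaller} than the minimal degree $d_k$, which is impossible. In your transplanted argument the cancellation of \emph{lowest} coefficients produces $\beta\in I\setminus(\alpha_0,\ldots,\alpha_{k-1})$ with $\inf(\beta)>m_k$. But $\alpha_k$ was chosen with $\inf$ \emph{minimal} among such elements, so the minimality only says $\inf(\gamma)\ge m_k$ for every $\gamma\in I\setminus(\alpha_0,\ldots,\alpha_{k-1})$; an element with strictly larger $\inf$ contradicts nothing. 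The asymmetry is essential: degrees are bounded below by $0$, so pushing the degree \emph{down} past a minimum is absurd, whereas $\inf$ is unbounded above, so pushing the order \emph{up} is harmless. One cannot repair this by a one-step argument: iterating your subtraction only drives $\inf$ to infinity, and to conclude anything from that you need to sum the infinitely many correction terms, i.\,e. you need the completeness of $R[[X]]$.

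This is precisely what the paper's proof does differently. For an ideal $I$ it forms the chain of ``initial coefficient'' ideals $J_0\subseteq J_1\subseteq\ldots$ of $R$, where $J_i$ collects the coefficients $a$ such that some $\alpha\in I$ satisfies $\alpha\equiv aX^i\pmod{X^{i+1}}$; noetherianity of $R$ stabilizes this chain at some $n$ and gives finitely many generators $a_{ij}$ of $J_0,\ldots,J_n$, lifted to $\alpha_{ij}\in I$. An arbitrary $\alpha\in I$ is then written as a combination of the $\alpha_{ij}$ by an \emph{infinite} successive elimination of lowest terms, the coefficients being power series $\beta_i=\sum_j r_{ji}X^{d_j-n}$ whose convergence rests on the metric completeness of $R[[X]]$. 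So the correct analogue of your construction is not a contradiction but a convergent recursion. Your closing induction on $n$ via $K[[X_1,\ldots,X_n]]=K[[X_1,\ldots,X_{n-1}]][[X_n]]$ is fine and agrees with the paper, but the core one-variable statement needs the completeness argument, not the degree-lowering trick.
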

\begin{proof}
We follow Lang~\cite[Theorem~IV.9.4]{Lang}.\index{Lang}
Let $I\unlhd R[[X]]$. For $i\in\NN_0$, let
\[J_i:=\bigl\{a\in R:\exists\alpha\in I:\alpha\equiv aX^i\pmod{X^{i+1}}\bigr\}\subseteq R.\]
It is easy to see that $J_i\unlhd R$ and $J_0\subseteq J_1\subseteq\ldots$. Since $R$ noetherian, there exists $n\in\NN$ with $J_k=J_n$ for all $k\ge n$. Moreover, there exist $a_{ij}\in R$ such that $J_i=(a_{i1},\ldots,a_{i,k_i})$ for $i=0,\ldots,n$. 
We choose $\alpha_{ij}\in I$ with $\alpha_{ij}\equiv a_{ij}X^i\pmod{X^{i+1}}$ and show that $I$ is generated by the $\alpha_{ij}$.
To this end, let $\alpha\in I\setminus\{0\}$ with $\alpha\equiv rX^d\pmod{X^{d+1}}$ and $0\ne r\in J_d$. If $d\le n$, then there exist $r_1,\ldots,r_{k_d}\in R$ such that $r=r_1a_{d1}+\ldots+r_{k_d}a_{d,k_d}$ and
\[\alpha\equiv r_1\alpha_{d1}+\ldots+r_{k_d}\alpha_{d,k_d}\pmod{X^{d+1}}.\]
By replacing $\alpha$ with $\alpha-r_1\alpha_{d1}-\ldots-r_{k_d}\alpha_{d,k_d}$, $d$ increases. After finitely many replacements we may assume that $d_0:=d>n$. By the same argument, there exist $r_{0,1},\ldots,r_{0,k_n}\in R$ such that
\[ \alpha_1:=\alpha-(r_{0,1}\alpha_{n1}+\ldots +r_{0,k_n}\alpha_{n,k_n})X^{d-n}\equiv 0\pmod{X^{d+1}}.\]
Let $d_1:=\inf\alpha_1>d$. Then there exist $r_{1,1},\ldots,r_{1,k_n}\in R$ with
\[ \alpha_2:=\alpha_1-(r_{1,1}\alpha_{n1}+\ldots +r_{1,k_n}\alpha_{n,k_n})X^{d_1-n}\equiv 0\pmod{X^{d_1+1}}.\]
Repeating this process leads to power series $\beta_i:=\sum_{j=0}^\infty r_{ji}X^{d_j-n}$ for $i=1,\ldots,k_n$. Finally,
\[\alpha=\beta_1\alpha_{n1}+\ldots+\beta_{k_n}\alpha_{n,k_n}\in(\alpha_{n,1},\ldots,\alpha_{n,k_n}).\qedhere\]
\end{proof}

Now we focus on integral domains $R$, i.\,e. $ab\ne 0$ for all $a,b\in R\setminus\{0\}$. 
If $R$ is an integral domain, so are $R[X_1,\ldots,X_n]$ and $R[[X_1,\ldots,X_n]]$ (see proof of \autoref{intdom}). 
A integral domain $R$ is called a \emph{principal ideal domain}\index{principal ideal domain} (PID)\index{PID} if every ideal of $R$ is generated by a single element. Of course, every PID is noetherian.

\begin{Thm}
For every field $K$, the rings $K[X]$ and $K[[X]]$ are PIDs.
\end{Thm}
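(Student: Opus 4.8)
The plan is to handle the two rings in parallel, since in both cases the argument is the familiar one: exhibit a Euclidean-type division and then run the standard "take an element of minimal invariant" argument. For $K[X]$ this is classical: given a nonzero ideal $I\unlhd K[X]$, pick $\alpha\in I\setminus\{0\}$ of minimal degree. For any $\beta\in I$, polynomial division (which works over a field since leading coefficients are invertible) gives $\beta=q\alpha+r$ with $\deg(r)<\deg(\alpha)$ and $r=\beta-q\alpha\in I$; minimality forces $r=0$, so $I=(\alpha)$. I would state this in one or two sentences, perhaps noting that $K[X]$ is in fact a Euclidean domain with respect to $\deg$.

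For $K[[X]]$ the role of the degree is played by $\inf$. The key observation is \autoref{leminv}(i): a power series is a unit iff its constant term is nonzero, equivalently iff $\inf(\alpha)=0$. More generally, if $\inf(\alpha)=k$ then $\alpha=X^k\gamma$ with $\gamma\in K[[X]]^\times$, because $X^{-k}\alpha$ has nonzero constant term; this is exactly the factorization already used in the proof that $K((X))$ is a field. So the plan is: given a nonzero ideal $I\unlhd K[[X]]$, let $k:=\min\{\inf(\alpha):\alpha\in I\setminus\{0\}\}$ (a minimum exists since $\inf$ takes values in $\NN_0$ on nonzero elements), pick $\alpha\in I$ with $\inf(\alpha)=k$, and write $\alpha=X^k\gamma$ with $\gamma$ a unit. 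Then $X^k=\gamma^{-1}\alpha\in I$, so $(X^k)\subseteq I$. Conversely any $\beta\in I$ has $\inf(\beta)\ge k$ by minimality, hence $\beta=X^k\delta$ for some $\delta\in K[[X]]$ (write $\beta=X^{\inf(\beta)}\cdot(\text{unit})$ and pull out $X^k$), so $\beta\in(X^k)$. Therefore $I=(X^k)=(X)^k$. The zero ideal is of course $(0)$. This shows every ideal of $K[[X]]$ is principal.

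I expect no serious obstacle here; the only point needing a little care is the assertion that $\inf$ attains its minimum over $I\setminus\{0\}$, which is immediate because $\{\inf(\alpha):\alpha\in I,\alpha\ne0\}$ is a nonempty subset of $\NN_0$. A secondary cosmetic point is that I should make explicit that the ideals of $K[[X]]$ are then exactly $K[[X]]$, $(0)$, and $(X^k)$ for $k\in\NN$ — this is a pleasant by-product worth recording, and it also makes transparent that $K[[X]]$ is a (local, discrete) valuation ring, consistent with the earlier remark that $K[[X]]$ is the valuation ring of $K((X))$. I would keep the whole proof to a short paragraph for each ring, citing \autoref{leminv} for the unit characterization and the factorization $\alpha=X^{\inf(\alpha)}\cdot(\text{unit})$ already established in the text.
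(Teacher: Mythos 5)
Your argument is correct and essentially identical to the paper's proof: euclidean division by a minimal-degree element for $K[X]$, and for $K[[X]]$ picking $\alpha\in I$ with $\inf(\alpha)$ minimal, observing $X^{\inf(\alpha)}=(\alpha X^{-\inf(\alpha)})^{-1}\alpha\in I$ and concluding $I=(X^{\inf(\alpha)})$. Your extra remarks (the complete list of ideals, the discrete valuation ring structure) match the paper's comment immediately after its proof.
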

\begin{proof}
Let $(0)\ne I\unlhd K[X]$ and choose $\alpha\in I\setminus\{0\}$ of minimal degree $d\ge 0$. For every $\beta\in I$ there exist $\gamma,\delta\in K[X]$ such that $\beta=\alpha\gamma+\delta$ and $\deg\delta<d$ by euclidean division. Since $\delta=\beta-\alpha\gamma\in I$, it follows that $\delta=0$ and $\beta\in(\alpha)$. Hence, $I=(\alpha)$.

Now let $(0)\ne I\unlhd K[[X]]$ and choose $\alpha\in I\setminus\{0\}$ such that $d:=\inf\alpha$ is minimal. Then $X^d=(\alpha X^{-d})^{-1}\alpha\in I$. It is easy to see that $I=(X^d)$. 
\end{proof}

The proof above show further that $K[[X]]$ is a complete discrete valuation ring with unique maximal ideal $(X)$ (\autoref{vollständig}). 
Hilbert's basis theorem does not carry over to PIDs. For instance, neither $\ZZ[X]$ nor $K[X,Y]$ are PIDs (consider the ideals $(2,X)$ and $(X,Y)$ respectively). We mention that $K[[X]]$ is not artinian since $(X)\supsetneq(X^2)\supsetneq\ldots$.

\begin{Def}
Let $R$ be an integral domain and $a,b\in R$. We write $a\mid b$ if there exists $c\in R$ such that $ac=b$. 
An element $r\in R\setminus(R^\times\cup\{0\})$ is called
\begin{itemize}
\item \emph{irreducible}\index{irreducible element} if $r=ab$ implies $a\in R^\times$ or $b\in R^\times$.
\item \emph{prime element}\index{prime element} if $r\mid ab$ implies $r\mid a$ or $r\mid b$.
\end{itemize}
We call $R$ a \emph{unique factorization domain}\index{unique factorization domain}\index{UFD} (UFD) if every element of $R\setminus(R^\times\cup\{0\})$ is a product of prime elements. 
\end{Def}

Recall (or prove) that $a,b\in R$ are called \emph{associated}\index{associated elements} whenever the following equivalent assertions hold:
\begin{itemize}
\item $a\mid b\mid a$.
\item $\exists u\in R^\times:au=b$.
\item $(a)=(b)$. 
\end{itemize} 

Note that association defines an equivalence relation on $R$. 
Let $\Pi$ be a set of representatives for the prime elements up to association (for instance, the positive prime numbers in $\ZZ$ or the monic irreducible polynomials in $K[X]$). In a UFD every non-zero element can be written in the form 
\[r=\epsilon \pi_1^{a_1}\ldots \pi_n^{a_n},\]
where $\epsilon\in R^\times$, $\pi_1,\ldots,\pi_n\in\Pi$ are pairwise distinct, and $a_1,\ldots,a_n\in\NN$. It follows from the definition of prime elements that this decomposition is unique up to the order of its factors (this explains the U in UFD). 

Our goal is to show that the rings of polynomials and power series over a field are UFDs.

\begin{Lem}\label{lemUFD}
Let $R$ be an integral domain.
\begin{enumerate}[(i)]
\item Every prime element of $R$ is irreducible.
\item\label{noether} If $R$ is noetherian, then every element of $R\setminus(R^\times\cup\{0\})$ is a product of irreducible elements.
\item Every PID is a UFD. 
\end{enumerate}
\end{Lem}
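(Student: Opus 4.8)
\textbf{Proof plan for \autoref{lemUFD}.}

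The three claims are standard commutative-algebra facts, and the plan is to prove them in the stated order, each building on the previous one. For part (i), suppose $r$ is prime and write $r=ab$. Then $r\mid ab$, so without loss of generality $r\mid a$, say $a=rc$. Substituting gives $r=rcb$, hence $r(1-cb)=0$; since $R$ is an integral domain and $r\ne 0$, we get $cb=1$, so $b\in R^\times$. Thus $r$ is irreducible.

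For part (ii), I would argue by contradiction using the noetherian hypothesis in the form ``every non-empty set of ideals has a maximal element''. Let $\mathcal{S}$ be the set of principal ideals $(a)$ with $a\in R\setminus(R^\times\cup\{0\})$ such that $a$ is \emph{not} a product of irreducible elements, and suppose $\mathcal{S}\ne\varnothing$. Pick $(a)$ maximal in $\mathcal{S}$. Then $a$ itself is not irreducible (an irreducible element is trivially a product of one irreducible), so $a=bc$ with $b,c\notin R^\times$; also $b,c\ne 0$ since $a\ne 0$. Then $(a)\subsetneq(b)$ and $(a)\subsetneq(c)$: the inclusions are clear, and they are strict because $(a)=(b)$ would force $c\in R^\times$ (associated elements differ by a unit, as recorded just before the lemma), contradicting the choice of $c$. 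By maximality of $(a)$ in $\mathcal{S}$, neither $(b)$ nor $(c)$ lies in $\mathcal{S}$, so both $b$ and $c$ are products of irreducibles; but then so is $a=bc$, contradicting $(a)\in\mathcal{S}$. Hence $\mathcal{S}=\varnothing$.

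For part (iii), let $R$ be a PID. Since every PID is noetherian, part (ii) gives that every element of $R\setminus(R^\times\cup\{0\})$ is a product of irreducibles, so it suffices to show that in a PID every irreducible element $r$ is prime. Suppose $r\mid ab$ but $r\nmid a$. Consider the ideal $(r,a)$; being a PID, $(r,a)=(d)$ for some $d\in R$. Then $d\mid r$, and since $r$ is irreducible, either $d$ is a unit or $d$ is associated to $r$. The latter would give $r\mid a$ (as $d\mid a$), which is excluded; hence $(d)=(r,a)=R$, so there exist $x,y\in R$ with $rx+ay=1$. Multiplying by $b$ yields $b=rbx+aby$, and since $r\mid ab$, both terms on the right are divisible by $r$, so $r\mid b$. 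Thus $r$ is prime, and $R$ is a UFD.

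The argument is essentially routine; the only point requiring a little care is the strictness of the inclusions $(a)\subsetneq(b),(c)$ in part (ii), which is exactly where irreducibility of $a$ is used, and the reduction in part (iii) to ``irreducible implies prime'', which is the conceptual heart of why PIDs are UFDs.
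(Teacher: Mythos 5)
Your proof is correct and follows essentially the same route as the paper: prime implies irreducible by cancellation, existence of factorizations from the noetherian hypothesis, and the reduction in a PID to ``irreducible implies prime'' via the ideal $(a,r)$. The only cosmetic differences are that in (ii) you take a maximal element among the bad principal ideals where the paper builds a strictly ascending chain $(x_1)\subsetneq(x_2)\subsetneq\ldots$ contradicting the chain condition, and in (iii) you finish with an explicit Bézout relation $rx+ay=1$ multiplied by $b$ where the paper derives the contradiction $R=(a,r)(b,r)\subseteq(r)$.
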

\begin{proof}\hfill
\begin{enumerate}[(i)]
\item Let $p\in R$ be a prime element and $p=ab$ with $a,b\in R$. Then $p\mid ab$ and without loss of generality, $p\mid a$. 
Since also $a\mid p$, it follows that $p$ is associated to $a$ and therefore $b\in R^\times$ as required.

\item Suppose that $x_1\in R\setminus(R^\times\cup\{0\})$ is not a product of irreducible elements. Then there exist $x_2,y\in R\setminus R^\times$ with $x_1=x_2y$, where $x_2$ is not a product of irreducible elements. Since $y\notin R^\times$ we have $(x_1)\subsetneq(x_2)$. Repeating the same argument with $x_2$ yields $x_3\in R\setminus R^\times$ such that $(x_2)\subsetneq(x_3)$ and so on. But then $R$ cannot be noetherian.

\item Let $R$ be a PID. By \eqref{noether}, it suffices to show that every irreducible element $r\in R$ is a prime element. Let $a,b\in R$ with $r\mid ab$. Since $R$ is a PID, there exists $c\in R$ with $(a,r)=(c)$. It follows that $r=cd$ for some $d\in R$. Since $r$ is irreducible, $c$ or $d$ must be a unit. In the latter case, $a\in (a,r)=(c)=(r)$ and $r\mid a$ as wanted. Hence, we may assume that $(a,r)=(c)=R$ and similarly, $(b,r)=R$. But this yields the contradiction $R=(a,r)(b,r)=(Ra+Rr)(Rb+Rr)\subseteq Rab+Rr=(r)$. \qedhere
\end{enumerate}
\end{proof}

\autoref{lemUFD} implies that the PIDs $K[X]$ and $K[[X]]$ are UFDs. It is much more difficult to handle $K[X_1,\ldots,X_n]$ and $K[[X_1,\ldots,X_n]]$ as those are not PIDs (for $n\ge 2$).

\begin{Def}
Let $R$ be an integral domain. A \emph{common divisor}\index{common divisor} of $a_1,\ldots,a_n\in R$ is an element $d\in R$ such that $d\mid a_i$ for $i=1,\ldots,n$. We call $d$ a \emph{greatest common divisor}\index{common divisor!greatest} (gcd) if $e\mid d$ for every common divisor $e$ of $a_1,\ldots,a_n$. Clearly, a gcd is unique up to association. If a gcd is a unit, then $a_1,\ldots,a_n$ are called \emph{coprime}.\index{coprime elements}
A polynomial $\alpha\in R[X]$ is called \emph{primitive}\index{polynomial!primitive} if its coefficients are coprime.
\end{Def}

Using the unique factorization in a UFD $R$, it is easy to show that every finite set of elements of $R$ has a gcd. 
In $\ZZ$ or $K[X]$ a gcd can be computed efficiently with the euclidean algorithm. However, not every UFD provides such an algorithm, i.\,e. there are non-euclidean UFDs like $\ZZ[X]$. 

\begin{Lem}\label{lemprimitive}
Let $R$ be a UFD with field of fractions $K$.
\begin{enumerate}[(i)]
\item\label{primitive1} $\alpha,\beta\in R[X]$ are primitive if and only if $\alpha\beta$ is primitive. 
\item\label{primitive2} Every $\alpha\in K[X]$ can be written in the form $\alpha=q\tilde\alpha$ with $q\in K$ and $\tilde\alpha\in R[X]$ primitive.
\item\label{primitive3} If $\alpha,\beta\in R[X]$ are primitive and $\alpha\mid\beta$ in $K[X]$, then $\alpha\mid\beta$ holds in $R[X]$ as well.
\item\label{primitive4} If $\alpha\in R[X]\setminus R$ is irreducible, then $\alpha$ is also irreducible in $K[X]$. 
\end{enumerate}
\end{Lem}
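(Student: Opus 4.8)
This is the classical package of results around Gauss's lemma, and I would prove the four parts in the order given since each feeds the next. The standing assumption throughout is that $R$ is a UFD with field of fractions $K$; I will repeatedly use that an element of $R$ is a unit iff it is coprime to (say) any prime, and that gcd's exist in $R$.

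For \eqref{primitive1} (Gauss's lemma proper), the non-trivial direction is that the product of two primitive polynomials is primitive. The plan is to argue by contradiction: if $\alpha\beta$ is not primitive, then some prime element $\pi\in R$ divides all its coefficients. Pass to the quotient ring $\bar R:=R/(\pi)$, which is an integral domain since $\pi$ is prime, and let $\bar\alpha,\bar\beta\in\bar R[X]$ be the reductions. Then $\bar\alpha\bar\beta=\overline{\alpha\beta}=0$ in $\bar R[X]$; since $\bar R[X]$ is an integral domain (the proof of \autoref{intdom} applies over any integral domain), we get $\bar\alpha=0$ or $\bar\beta=0$, i.e. $\pi$ divides all coefficients of $\alpha$ or of $\beta$, contradicting primitivity. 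The converse direction is immediate: any common prime divisor of the coefficients of $\alpha$ (or $\beta$) divides all coefficients of $\alpha\beta$.

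Part \eqref{primitive2} is a normalization: given $\alpha\in K[X]$, clear denominators by multiplying by a common denominator $s\in R\setminus\{0\}$ of all coefficients to get $s\alpha\in R[X]$, then pull out $d:=\gcd$ of the coefficients of $s\alpha$ (which exists since $R$ is a UFD), so that $s\alpha=d\tilde\alpha$ with $\tilde\alpha\in R[X]$ primitive; set $q:=d/s\in K$. For \eqref{primitive3}, suppose $\alpha,\beta\in R[X]$ are primitive with $\beta=\gamma\alpha$ for some $\gamma\in K[X]$. Write $\gamma=q\tilde\gamma$ with $q\in K$, $\tilde\gamma\in R[X]$ primitive, using \eqref{primitive2}. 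Then $\beta=q\,\tilde\gamma\alpha$; by \eqref{primitive1}, $\tilde\gamma\alpha$ is primitive, and comparing the primitive parts of the two sides forces $q\in R^\times$ (if $q=a/b$ in lowest terms, $b\beta=a\tilde\gamma\alpha$ shows $b$ divides all coefficients of the primitive polynomial $a\tilde\gamma\alpha$ up to the unit-free part, hence $b\in R^\times$, and symmetrically $a\in R^\times$). Therefore $\gamma=q\tilde\gamma\in R[X]$ and $\alpha\mid\beta$ in $R[X]$.

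Finally, for \eqref{primitive4}: let $\alpha\in R[X]\setminus R$ be irreducible in $R[X]$. First note $\alpha$ must be primitive, for otherwise a non-unit $d\in R$ dividing all coefficients would give a non-trivial factorization $\alpha=d\cdot(\alpha/d)$ in $R[X]$. Now suppose $\alpha=\phi\psi$ in $K[X]$ with $\deg\phi,\deg\psi\geq 1$ (a factor of degree $0$ in $K[X]$ is a unit there, so this is the only case to rule out). By \eqref{primitive2} write $\phi=q_1\tilde\phi$, $\psi=q_2\tilde\psi$ with $q_i\in K$ and $\tilde\phi,\tilde\psi\in R[X]$ primitive. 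Then $\alpha=q_1q_2\,\tilde\phi\tilde\psi$, and $\tilde\phi\tilde\psi$ is primitive by \eqref{primitive1}; comparing with the primitive polynomial $\alpha$ as in \eqref{primitive3} gives $q_1q_2\in R^\times$, so $\alpha$ is (up to a unit) the product $\tilde\phi\tilde\psi$ of two polynomials in $R[X]$ of positive degree, contradicting irreducibility in $R[X]$. Hence $\alpha$ is irreducible in $K[X]$.

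\textbf{Main obstacle.} The crux is \eqref{primitive1}, and within it the cleanest route is the reduction-mod-$\pi$ argument, which hinges only on the already-established fact that polynomial rings over integral domains are integral domains. The bookkeeping in \eqref{primitive3}–\eqref{primitive4} — tracking how a scalar $q\in K$ is forced into $R^\times$ by comparing primitive parts — is the only place where one has to be a little careful, but it is routine once \eqref{primitive1} and \eqref{primitive2} are in hand.
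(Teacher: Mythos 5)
Your proposal is correct and follows essentially the same route as the paper: part (i) by reduction modulo a prime element $p$, using that $\overline{R}[X]$ with $\overline{R}=R/(p)$ is an integral domain; part (ii) by clearing denominators and extracting a gcd of the coefficients; and parts (iii) and (iv) by combining (i) and (ii) and comparing contents of primitive polynomials to force the scalar from $K$ into $R^\times$. The only (trivial) difference is that the paper treats the case $\alpha=0$ in (ii) separately, which your gcd recipe leaves implicit.
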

\begin{proof}\hfill
\begin{enumerate}[(i)]
\item It is clear that $\alpha\beta$ can be primitive only when $\alpha$ and $\beta$ are primitive. Suppose conversely that $\alpha\beta$ is not primitive. Since $R$ is a UFD, there exists a prime element $p\in R$, which divides the coefficients of $\alpha\beta$. The reduction modulo $p$ yields $\overline{\alpha\beta}=0$ in $\overline{R}[X]$ where $\overline{R}:=R/(p)$. Since $p$ is a prime element, $\overline{R}$ and $\overline{R}[X]$ are integral domains. Hence, we may assume that $\overline{\alpha}=0$. But this means that the coefficients of $\alpha$ are divisible by $p$ and therefore $\alpha$ is not primitive. 

\item If $\alpha=0$, then the claim holds with $q=0$ and $\tilde\alpha=1$. Thus, let $\alpha\ne 0$. Let $b\in R$ be a common non-zero multiple of the denominators of the coefficients of $\alpha$. Then $b\alpha\in R[X]$. Let $c\in R$ be a gcd of the coefficients of $b\alpha$. Then $q:=\frac{c}{b}\in K$ and $\tilde\alpha:=q^{-1}\alpha$ is primitive. 

\item Let $\gamma\in K[X]$ such that $\alpha\gamma=\beta$. By \eqref{primitive2}, there exists $q\in K$ such that $q\gamma\in R[X]$ is primitive. By \eqref{primitive1}, $q\alpha\gamma=q\beta\in R[X]$ is primitive. Since $\beta$ is already primitive, this implies that $q\in R^\times$ and $\gamma\in R[X]$. Therefore $\alpha\mid\beta$ holds in $R[X]$. 

\item As an irreducible element, $\alpha$ must be primitive. Suppose that $\alpha=\beta\gamma$ with $\beta,\gamma\in K[X]\setminus K$. 
By \eqref{primitive2}, there exist primitive polynomials $\tilde{\beta},\tilde{\gamma}\in R[X]$ and $b,c\in K$ such that $\beta=b\tilde{\beta}$ and $\gamma=c\tilde{\gamma}$.  
By \eqref{primitive1}, $\tilde{\beta}\tilde{\gamma}$ is primitive and $\alpha=bc\tilde{\beta}\tilde{\gamma}$. As before, we derive $bc\in R^\times$. It follows that $\tilde{\beta}$ or $\tilde{\gamma}$ lies in $R[X]^\times=R^\times\subseteq K$. Contradiction. 
\qedhere 
\end{enumerate}
\end{proof}

\begin{Thm}[\textsc{Gauss}]\label{gaussall}\index{Gauss}
If $R$ is a UFD, so is $R[X]$. In particular, $K[X_1,\ldots,X_n]$ is a UFD for every field $K$. 
\end{Thm}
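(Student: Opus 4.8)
The plan is to prove that $R[X]$ is a UFD whenever $R$ is, using the Gauss-lemma machinery just established in \autoref{lemprimitive}, and then deduce the statement for $K[X_1,\ldots,X_n]$ by induction on $n$ (with base case $K[X]$, which we already know is even a PID, hence a UFD by \autoref{lemUFD}).

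\textbf{Step 1: factorizations exist.} Since $R$ is a UFD it is in particular noetherian? — no, that is false in general, so instead I would argue existence of factorizations directly. Let $\alpha\in R[X]\setminus(R[X]^\times\cup\{0\})$. Write $\alpha=c\tilde\alpha$ where $c\in R$ is a gcd of the coefficients of $\alpha$ and $\tilde\alpha\in R[X]$ is primitive. Factor $c$ into primes of $R$ (possible since $R$ is a UFD), and note that primes of $R$ remain prime in $R[X]$: indeed, if $p\mid\beta\gamma$ in $R[X]$, reduce modulo $p$ to get $\overline{\beta\gamma}=0$ in the integral domain $(R/(p))[X]$, so $p\mid\beta$ or $p\mid\gamma$ — this is exactly the argument inside \autoref{lemprimitive}\eqref{primitive1}. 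Next, factor the primitive part $\tilde\alpha$. If $\deg\tilde\alpha=0$ then $\tilde\alpha\in R^\times$ and we are done; otherwise proceed by induction on $\deg\tilde\alpha$: if $\tilde\alpha$ is irreducible in $R[X]$ we stop, and if $\tilde\alpha=\beta\gamma$ is a nontrivial factorization then, since $\tilde\alpha$ is primitive, neither $\beta$ nor $\gamma$ can lie in $R\setminus R^\times$ (that would contradict primitivity), so $\deg\beta,\deg\gamma<\deg\tilde\alpha$ and both are again primitive; apply the induction hypothesis. This expresses $\alpha$ as a product of prime elements of $R$ and irreducible elements of $R[X]$.

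\textbf{Step 2: irreducibles are prime.} It remains to upgrade ``irreducible'' to ``prime'' in $R[X]$. Primes of $R$ are already prime in $R[X]$ by Step 1. So let $\pi\in R[X]\setminus R$ be irreducible; then $\pi$ is primitive and, by \autoref{lemprimitive}\eqref{primitive4}, irreducible in $K[X]$, where $K=\operatorname{Frac}(R)$. Since $K[X]$ is a PID, hence a UFD (\autoref{lemUFD}), $\pi$ is a prime element of $K[X]$. Now suppose $\pi\mid\beta\gamma$ in $R[X]$. Then $\pi\mid\beta\gamma$ in $K[X]$, so without loss of generality $\pi\mid\beta$ in $K[X]$. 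Writing $\beta=b\tilde\beta$ with $b\in K$ and $\tilde\beta\in R[X]$ primitive (\autoref{lemprimitive}\eqref{primitive2}), we get $\pi\mid\tilde\beta$ in $K[X]$, and since both $\pi$ and $\tilde\beta$ are primitive, \autoref{lemprimitive}\eqref{primitive3} gives $\pi\mid\tilde\beta$ in $R[X]$, hence $\pi\mid\beta$ in $R[X]$. Thus $\pi$ is a prime element of $R[X]$, and combining with Step 1 every element of $R[X]\setminus(R[X]^\times\cup\{0\})$ is a product of prime elements, i.e. $R[X]$ is a UFD.

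\textbf{Step 3: the multivariate case.} For $K[X_1,\ldots,X_n]$, induct on $n$. The case $n=1$: $K[X]$ is a PID, hence a UFD by \autoref{lemUFD}. For $n\ge 2$, by definition $K[X_1,\ldots,X_n]=K[X_1,\ldots,X_{n-1}][X_n]$; the inner ring is a UFD by the induction hypothesis, so applying the already-proved statement ($R[X]$ is a UFD when $R$ is) with $R=K[X_1,\ldots,X_{n-1}]$ and indeterminant $X_n$ finishes the proof. I expect the main obstacle to be bookkeeping in Step 1 — making sure the ``peel off the content, then induct on degree of the primitive part'' argument is watertight without accidentally invoking noetherianity of $R[X]$, which we have not established (and which would anyway be circular in spirit here); everything else is a direct assembly of the lemmas of \autoref{lemprimitive} and \autoref{lemUFD}.
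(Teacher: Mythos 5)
Your proof is correct and takes essentially the same route as the paper: split off the content and factor it into primes of $R$ (which remain prime in $R[X]$ by reduction mod $p$), descend on the degree of the primitive part for existence of factorizations, use \autoref{lemprimitive} together with the fact that $K[X]$ is a PID to show that irreducibles of positive degree are prime, and finish the multivariate case by induction on $n$. The only nitpick is in Step~2: since $\beta\in R[X]$ you should take $b\in R$ (a gcd of the coefficients of $\beta$), as the paper does, rather than merely $b\in K$, since otherwise the final inference from $\pi\mid\tilde\beta$ in $R[X]$ to $\pi\mid\beta$ in $R[X]$ would need an extra word; with that choice your argument coincides with the paper's.
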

\begin{proof}
Let $\alpha\in R[X]\setminus (R^\times\cup\{0\})$. We may write $\alpha=q\tilde\alpha$ with $q\in R$ and $\tilde\alpha\in R[X]$ primitive. Since $R$ is a UFD, $q$ is a product of irreducible elements in $R$, which of course remain irreducible in $R[X]$. Thus, we may assume that $\alpha=\tilde\alpha$ is primitive and not a unit. If $\alpha$ is not irreducible, it can be written as a product of primitive polynomials of smaller degree. Since this can be done only a finite number of times, $\alpha$ must be a product of irreducible elements. 

It remains to show that every irreducible $\alpha\in R[X]$ is a prime element. Thus, let $\alpha\mid\beta\gamma$ for some $\beta,\gamma\in R[X]\setminus\{0\}$. Write $\beta=b\tilde{\beta}$ and $\gamma=c\tilde{\gamma}$ with $b,c\in R$ and $\tilde{\beta},\tilde{\gamma}\in R[X]$ primitive.
If $\alpha\in R$, then $\alpha$ is irreducible in $R$ and therefore a prime element of $R$, because $R$ is a UFD. Since $\tilde{\beta}\tilde{\gamma}$ is primitive by \autoref{lemprimitive}, we have $\alpha\mid bc$ and without loss of generality, $\alpha\mid b$. This shows $\alpha\mid b\tilde{\beta}=\beta$. Now let $\alpha\notin R$. By \autoref{lemprimitive}\eqref{primitive4}, $\alpha$ is irreducible in $K[X]$, where $K$ is the field of fractions of $R$. Since $K[X]$ is a PID (and thus a UFD), $\alpha$ is a prime element of $K[X]$. Since $\alpha$ cannot divide the constant $bc$, we have $\alpha\mid\tilde\beta\tilde\gamma$ and without loss of generality, $\alpha\mid\tilde\beta$ in $K[X]$. By \autoref{lemprimitive}\eqref{primitive3}, $\alpha\mid\tilde\beta\mid\beta$ also holds in $R[X]$.
\end{proof}

Gauss' theorem also implies that the polynomial ring in infinitely many indeterminates over a field is a UFD since every factorization involves only finitely many indeterminates. This furnishes an example of a non-noetherian UFD. 
It should be noted that there is no known efficient algorithm to compute a factorization into prime elements. For instance, any algorithm for $\ZZ[X]$ would also contain an algorithm for the prime decomposition in $\ZZ$. Similarly, a (finite) factorization algorithm for $\CC[X]$ would lead to explicit formulas to compute roots of polynomials (by Galois theory, there cannot be such formulas by radicals alone when the polynomial degree is at least $5$).

Surprisingly, there exist UFDs $R$ such that $R[[X]]$ is not a UFD. A family of examples was constructed by Samuel~\cite{SamuelLecture}\index{Samuel} with 
\[R=\QQ[X,Y,Z]/(X^2-Y^5-Z^7)\] 
being a special case. Nevertheless, we show that $K[[X_1,\ldots,X_n]]$ is a UFD provided $K$ is a field. This requires some preparations. The first lemma is a key reduction in \emph{Noether's normalization theorem}.\index{Noether's normalization theorem}

\begin{Lem}\label{lemnormal}
Let $0\ne\alpha\in R:=K[[X_1,\ldots,X_n]]$. Then there exists a ring automorphism $\Gamma\colon R\to R$ such that $\Gamma(\alpha)(0,\ldots,0,X_n)\ne 0$. 
\end{Lem}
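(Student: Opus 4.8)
The plan is to realize $\Gamma$ as a shearing substitution of the form $X_i\mapsto X_i+X_n^{e_i}$ for $i<n$ and $X_n\mapsto X_n$, with suitable positive integers $e_1,\dots,e_{n-1}$ chosen only after inspecting $\alpha$. For any such choice this map is a $K$-algebra endomorphism of $R=K[[X_1,\dots,X_n]]$ (we substitute power series lying in the maximal ideal, so convergence is guaranteed by the multivariate version of \autoref{infsum}), it is continuous, and the analogous map with $-X_n^{e_i}$ in place of $X_n^{e_i}$ is a two-sided inverse: the two compositions fix $X_1,\dots,X_n$, hence fix all of $R$ by continuity and density of $K[X_1,\dots,X_n]$. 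So $\Gamma=\Gamma_{\mathbf e}$ is an automorphism. The key point is that, after setting $X_1=\dots=X_{n-1}=0$ and using associativity of simultaneous substitution,
\[\Gamma_{\mathbf e}(\alpha)(0,\dots,0,X_n)=\alpha\bigl(X_n^{e_1},\dots,X_n^{e_{n-1}},X_n\bigr)=\sum_{(k)\in\operatorname{supp}(\alpha)}a_{(k)}\,X_n^{\nu(k)}\in K[[X_n]],\]
where $\alpha=\sum a_{(k)}X_1^{k_1}\cdots X_n^{k_n}$ and $\nu(k):=e_1k_1+\dots+e_{n-1}k_{n-1}+k_n$; this does lie in $K[[X_n]]$ since $\nu(k)\ge k_1+\dots+k_n$, so each exponent value is attained by only finitely many $(k)$. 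Hence it suffices to choose $\mathbf e$ so that the smallest power $X_n^{N_0}$ occurring on the right has nonzero coefficient, and the cleanest way to ensure this is to arrange that $N_0$ is attained by a \emph{unique} monomial of $\alpha$.

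The next step is to pin down which monomials matter. Since all $e_i\ge 1$, the weight $\nu$ is strictly increasing along the componentwise partial order on $\NN_0^n$, so $\min_{\operatorname{supp}(\alpha)}\nu$ is attained only at the set $M$ of minimal elements of $\operatorname{supp}(\alpha)$. By Dickson's lemma — equivalently, by Hilbert's basis theorem applied to the monomial ideal of $K[X_1,\dots,X_n]$ generated by $\{X^{(k)}:(k)\in\operatorname{supp}(\alpha)\}$, whose minimal monomial generators are precisely the $X^{(k)}$ with $(k)\in M$ — the set $M$ is finite. Fix an integer $B$ with $k_i\le B$ for all $(k)\in M$ and all $i$.

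Now I would simply take $e_i:=(B+1)^{\,i}$ for $i=1,\dots,n-1$. Since the coefficient of $k_n$ in $\nu$ is $1=(B+1)^0$ and, for $(k)\in M$, the digits $k_n,k_1,\dots,k_{n-1}$ all lie in $\{0,1,\dots,B\}$, the value $\nu(k)$ is exactly the integer whose base-$(B+1)$ expansion has these digits; uniqueness of base-$(B+1)$ expansions shows $\nu$ is injective on $M$. Therefore $\min_{\operatorname{supp}(\alpha)}\nu$ is attained at a single $(k^0)\in M$, the only monomial of $\alpha$ mapping to $X_n^{\nu(k^0)}$, so the coefficient of $X_n^{\nu(k^0)}$ in $\Gamma_{\mathbf e}(\alpha)(0,\dots,0,X_n)$ equals $a_{(k^0)}\ne 0$, and $\Gamma:=\Gamma_{\mathbf e}$ does the job. (For $n=1$ there is nothing to substitute and $\Gamma=\operatorname{id}$ already works.)

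The main obstacle, and the reason the lemma is not completely trivial, is that the naive attempt — a \emph{linear} change of variables $X_i\mapsto X_i+c_iX_n$ — only works when $K$ is infinite, since it requires a point of $K^{n-1}$ off the hypersurface cut out by the leading form of $\alpha$; over a finite field that fails. One is thus forced into genuinely non-linear substitutions, and then the real work is controlling cancellation among the substituted monomials. Isolating a unique $\nu$-minimal monomial via the finiteness of $M$ plus a base-$(B+1)$ weighting is exactly what makes the argument go through uniformly over every field $K$.
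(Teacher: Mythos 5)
Your proof is correct, and it uses the same family of shear automorphisms $X_i\mapsto X_i\pm X_n^{e_i}$ as the paper, with the same continuity/density argument for invertibility; the difference lies in how the exponents are chosen and cancellation is excluded. The paper picks the \emph{lexicographically minimal} element $(a_1,\ldots,a_n)$ of the support (lex order on $\NN_0^n$ is a well-order, so no finiteness lemma is needed), sets $d:=\max\{a_1,\ldots,a_n\}+1$ and $e_i:=d^{\,n-i}$, and then a one-line radix estimate, using only the bound $a_i\le d-1$ on that single monomial, shows every other support element produces a strictly larger $X_n$-exponent. You instead pass to the set $M$ of componentwise-minimal support elements, invoke Dickson's lemma (via Hilbert's basis theorem, which is indeed available earlier in the appendix) to get $|M|<\infty$, bound all coordinates of $M$ by $B$, and use base-$(B+1)$ uniqueness to make the weight $\nu$ injective on $M$. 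Both arguments are sound and uniform over arbitrary $K$; yours is a bit more symmetric (it isolates \emph{the} unique $\nu$-minimal monomial and even shows $\nu$ separates all of $M$) but pays for it with an extra finiteness tool, while the paper's lex-minimal trick is more economical because the decisive inequality only ever needs control of the digits of the one chosen monomial, not of all minimal ones. Your closing remark about why a linear substitution $X_i\mapsto X_i+c_iX_n$ fails over finite fields correctly identifies the reason for the non-linear shear.
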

\begin{proof}
Let $X_1^{a_1}\ldots X_n^{a_n}$ be a monomial of $\alpha$ (with non-zero coefficient) such that the tuple $(a_1,\ldots,a_n)$ is minimal with respect to the lexicographical ordering. Let 
\[d:=\max\{a_1,\ldots,a_n\}+1.\] 
Let $\Gamma$ be the unique endomorphism of the polynomial ring $K[X_1,\ldots,X_n]$ defined by $\Gamma(X_i):=X_i+X_n^{d^{n-i}}$ for $1\le i<n$ and $\Gamma(X_n):=X_n$. Obviously, the map $X_i\mapsto X_i-X_n^{d^{n-i}}$ (for $i<n$) defines the inverse of $\Gamma$, i.\,e. $\Gamma$ is an automorphism and $\inf\Gamma(\beta)=\inf\beta$ for all polynomials $\beta$. For $\beta\in R$ there exists a sequence $(\beta_i)_i$ in $K[X_1,\ldots,X_n]$ with $\beta=\lim_{i\to\infty}\beta_i$. Since two such sequences only differ by a null sequence, the assignment $\Gamma(\beta):=\lim_{i\to\infty}\Gamma(\beta_i)$ is well-defined. In this way, $\Gamma$ extends to an automorphism of $R$, which we also call $\Gamma$.

Now let $X_1^{b_1}\ldots X_n^{b_n}\ne X_1^{a_1}\ldots X_n^{a_n}$ be another monomial of $\alpha$ (with non-zero coefficient). Then there exists some $k$ such that $a_i=b_i$ for $1\le i<k$ and $a_k<b_k$. We compute
\begin{align*}
\Gamma(X_1^{a_1}\ldots X_n^{a_n})(0,\ldots,0,X_n)&=X_n^{a_1d^{n-1}+\ldots+a_n},\\
\Gamma(X_1^{b_1}\ldots X_n^{b_n})(0,\ldots,0,X_n)&=X_n^{b_1d^{n-1}+\ldots+b_n},
\end{align*}
where
\begin{align*}
b_1d^{n-1}+\ldots+b_n-(a_1d^{n-1}+\ldots+a_n)&=b_kd^{n-k}+\ldots+b_n-(a_kd^{n-k}+\ldots+a_n)\\
&\ge d^{n-k}-(d-1)(d^{n-k-1}+\ldots+1)=1>0.
\end{align*}
This shows that $\Gamma(\alpha)(0,\ldots,0,X_n)\ne 0$. 
\end{proof}

The following lemma provides some sort of euclidean division. 

\begin{Lem}\label{lemdiv}
Let $R:=K[[X_1,\ldots,X_n]]$ and $\alpha\in K[[X_1,\ldots,X_n,Y]]=R[[Y]]$ with 
\[\alpha_0:=\alpha(0,\ldots,0,Y)\ne 0.\] 
Then for every $\beta\in R[[Y]]$ there exist uniquely determined elements $\rho\in R[[Y]]$ and $\delta\in R[Y]$ such that $\beta=\alpha\rho+\delta$ and $\deg\delta<\inf\alpha_0$. 
\end{Lem}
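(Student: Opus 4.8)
This is the Weierstrass division-type lemma for formal power series, so the natural approach is to mimic the classical proof by solving for the quotient $\rho$ coefficient-by-coefficient in powers of $Y$. Let me write $d := \inf\alpha_0 \in \NN_0$; since $\alpha_0 = \alpha(0,\ldots,0,Y) \ne 0$ this is a genuine non-negative integer. Write $\alpha = \sum_{i=0}^\infty \alpha_i Y^i$ with $\alpha_i \in R$, so that $\alpha_0(0,\ldots,0,Y)\ne 0$ forces $\alpha_d \in R^\times$ (its constant term is the lowest non-zero coefficient of $\alpha_0$), while $\alpha_0,\ldots,\alpha_{d-1} \in (X_1,\ldots,X_n) =: \mathfrak{m}$. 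The decomposition I seek is $\beta = \alpha\rho + \delta$ with $\delta \in R[Y]$ of $Y$-degree $< d$; equivalently, writing the truncation operator $T$ that discards all $Y$-powers $\ge d$ and its complement $T' = \id - T$, I need $T'(\alpha\rho) = T'(\beta)$ and then set $\delta := T(\beta - \alpha\rho)$.

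\textbf{Key steps.} First I would reduce the problem to inverting an operator. Write $\alpha = \alpha_{<d} + Y^d \widetilde\alpha$ where $\alpha_{<d} := \sum_{i<d}\alpha_i Y^i \in R[Y]$ has all coefficients in $\mathfrak m$, and $\widetilde\alpha := \sum_{i\ge 0}\alpha_{d+i}Y^i \in R[[Y]]$ has $\widetilde\alpha(0) = \alpha_d \in R^\times$, hence $\widetilde\alpha$ is a unit in $R[[Y]]$. The condition $T'(\alpha\rho) = T'(\beta)$ reads $T'(\alpha_{<d}\,\rho) + Y^d\widetilde\alpha\,\rho = T'(\beta)$, since $Y^d\widetilde\alpha\rho$ has no terms below degree $d$. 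Solving for $\rho$: $\rho = \widetilde\alpha^{-1}Y^{-d}\bigl(T'(\beta) - T'(\alpha_{<d}\rho)\bigr)$. Here $Y^{-d}$ makes sense because $T'(\beta)$ and $T'(\alpha_{<d}\rho)$ are both divisible by $Y^d$ in $R[[Y]]$. This exhibits $\rho$ as a fixed point of the map $\Phi(\rho) := \widetilde\alpha^{-1}Y^{-d}\bigl(T'(\beta) - T'(\alpha_{<d}\rho)\bigr)$. Second, I would show $\Phi$ is a contraction on $R[[Y]]$ with respect to the norm from \autoref{defmulti}: the crucial point is that $\alpha_{<d}$ has all coefficients in $\mathfrak m$, so multiplication by $\alpha_{<d}$ strictly increases $\inf$ relative to the $X$-variables; combined with $Y^{-d}T'$, which in the $Y$-grading compensates exactly, one checks $|\Phi(\rho) - \Phi(\rho')| = |\widetilde\alpha^{-1}Y^{-d}T'(\alpha_{<d}(\rho-\rho'))| \le \tfrac12|\rho - \rho'|$, because every monomial of $\alpha_{<d}$ carries positive total $X$-degree. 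By completeness of $R[[Y]] = K[[X_1,\ldots,X_n,Y]]$ (the metric there is the one from \autoref{defmulti}, and \autoref{infsum} gives completeness), the iteration $\rho := \lim_{k\to\infty}\Phi^{\circ k}(0)$ converges to the unique fixed point. Third, having produced $\rho$, I set $\delta := \beta - \alpha\rho$ and verify $\delta = T(\beta - \alpha\rho)$ lies in $R[Y]$ with $\deg_Y\delta < d$: by construction $T'(\beta - \alpha\rho) = T'(\beta) - T'(\alpha\rho) = 0$.

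\textbf{Uniqueness and the main obstacle.} For uniqueness, suppose $\alpha\rho + \delta = \alpha\rho' + \delta'$ with both remainders of $Y$-degree $< d$; then $\alpha(\rho - \rho') = \delta' - \delta$. Reducing modulo $\mathfrak m$ (i.e. applying $X_1 = \cdots = X_n = 0$, a ring homomorphism $R[[Y]] \to K[[Y]]$) sends $\alpha$ to $\alpha_0$ with $\inf\alpha_0 = d$, and sends $\delta' - \delta$ to a polynomial of degree $< d$; since $K[[Y]]$ is an integral domain and $\inf(\alpha_0 \cdot \overline{(\rho-\rho')}) \ge d$ unless $\overline{\rho - \rho'} = 0$, we get $\rho \equiv \rho' \pmod{\mathfrak m}$, i.e. all constant-term-in-$X$ parts agree; an induction on the $\mathfrak m$-adic filtration (or equivalently on total $X$-degree, using that $R$ is complete and $\bigcap_k \mathfrak m^k = 0$ in each fixed $Y$-degree) then forces $\rho = \rho'$ and hence $\delta = \delta'$. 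I expect the contraction estimate — pinning down exactly why $|\Phi(\rho)-\Phi(\rho')| \le \tfrac12|\rho-\rho'|$, being careful that the norm on $K[[X_1,\ldots,X_n,Y]]$ treats $Y$ on equal footing with the $X_i$ and that the gain comes purely from the positive $X$-degree of the coefficients of $\alpha_{<d}$ — to be the step requiring the most care; once that is in hand, existence, the shape of $\delta$, and uniqueness all follow routinely.
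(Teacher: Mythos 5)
Your overall plan — split $\alpha=\alpha_{<d}+Y^d\widetilde{\alpha}$ with the coefficients of $\alpha_{<d}$ lying in $(X_1,\ldots,X_n)$ and $\widetilde{\alpha}$ a unit of $R[[Y]]$, and produce $\rho$ as the fixed point of $\Phi(\rho)=\widetilde{\alpha}^{-1}Y^{-d}\bigl(T'(\beta)-T'(\alpha_{<d}\rho)\bigr)$ — is essentially the paper's proof (there the operator is $\Gamma(\gamma)=\Gamma_2(\alpha_1\alpha_2^{-1}\gamma)$ and $\rho=\alpha_2^{-1}\sum_{k\ge 0}(-1)^k\Gamma^k(\Gamma_2(\beta))$), and your uniqueness sketch via reduction modulo $(X_1,\ldots,X_n)$ and induction on the $X$-degree is sound. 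The gap is exactly where you feared it: the claimed estimate $|\Phi(\rho)-\Phi(\rho')|\le\tfrac{1}{2}|\rho-\rho'|$ in the metric of \autoref{defmulti} is false. In that norm $\inf$ is the total degree in all of $X_1,\ldots,X_n,Y$, and $Y^{-d}T'$ lowers the total degree of every surviving monomial by exactly $d$, while multiplication by $\alpha_{<d}$ only guarantees a gain of $1$ (in the $X$-variables) plus at most $d-1$ in $Y$. Concretely, take $n=1$, $\alpha=X_1Y+Y^3$ (so $d=3$, $\alpha_{<d}=X_1Y$, $\widetilde{\alpha}=1$) and $\rho-\rho'=Y^2$: then $\Phi(\rho)-\Phi(\rho')=-Y^{-3}T'(X_1Y^3)=-X_1$, so $|\Phi(\rho)-\Phi(\rho')|=2^{-1}>2^{-2}=|\rho-\rho'|$. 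The map is not even non-expanding for this norm, let alone a $\tfrac12$-contraction, so Banach's fixed point theorem cannot be invoked as stated.

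The repair is to measure the gain where it actually occurs, namely in the $X$-variables alone: every monomial of $\alpha_{<d}$ has positive $X$-degree, and none of $T'$, $Y^{-d}$, or multiplication by $\widetilde{\alpha}^{-1}$ lowers the $X$-degree of any monomial, so each application of the difference operator $\gamma\mapsto\widetilde{\alpha}^{-1}Y^{-d}T'(\alpha_{<d}\gamma)$ raises the minimal $X$-degree by at least $1$. Hence the $k$-th term of the resulting geometric-type series has all monomials of $X$-degree at least $k$, so of total degree at least $k$, and the series converges by \autoref{infsum}; similarly the difference of two solutions is divisible by arbitrarily high powers of $(X_1,\ldots,X_n)$ and must vanish. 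This is precisely the paper's argument (``the repeated multiplication with $\alpha_1$ increases the exponent of some $X_i$''): convergence comes from the $(X_1,\ldots,X_n)$-adic gain, not from a contraction in the total-degree metric — which, as the example shows, does not hold.
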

\begin{proof}
The proof is adapted from Lang~\cite[Theorem~IV.9.1]{Lang},\index{Lang} who in turn attributes it to Manin~\cite{Manin}.\index{Manin}
By definition, $\alpha_0\in K[[Y]]$. Let $d:=\inf\alpha_0$.
We consider the linear maps $\Gamma_1,\Gamma_2\colon R[[Y]]\to R[[Y]]$ defined by 
\begin{align*}
\Gamma_1\Bigl(\sum_{k=0}^\infty b_kY^k\Bigr):=\sum_{k=0}^{d-1}b_kY^k,&&\Gamma_2\Bigl(\sum_{k=0}^\infty b_kY^k\Bigr):=\sum_{k=d}^\infty b_kY^{k-d}.
\end{align*}
Then $\alpha_2:=\Gamma_2(\alpha)$ is invertible and every monomial of $\alpha_1:=\Gamma_1(\alpha)$ involves some $X_i$. This yields a linear map
\[\Gamma\colon R[[Y]]\to R[[Y]],\qquad \gamma\mapsto\Gamma_2(\alpha_1\alpha_2^{-1}\gamma)\]
with $\lim_{k\to\infty}\Gamma^k(\gamma)=0$, because the repeated multiplication with $\alpha_1$ increases the exponent of some $X_i$. Hence, we can define
\[\rho:=\alpha_2^{-1}\sum_{k=0}^\infty(-1)^k\Gamma^k(\Gamma_2(\beta))\in R[[Y]]\]
(the reader may have noticed a similarity to the proof of Banach's fixed point theorem).
Since $\alpha=\alpha_1+\alpha_2Y^d$, we have
\[\Gamma_2(\alpha\rho)=\Gamma_2(\alpha_1\rho)+\Gamma_2(\alpha_2\rho Y^d)=\Gamma(\alpha_2\rho)+\alpha_2\rho=\sum_{k=0}^\infty(-1)^k\Gamma^{k+1}(\Gamma_2(\beta))+\alpha_2\rho=\Gamma_2(\beta).\]
It follows that $\delta:=\beta-\alpha\rho\in R[Y]$ with $\deg\delta<d$. 

To prove the uniqueness of $\rho$ and $\delta$, let $\beta=\alpha\tilde{\rho}+\tilde{\delta}$ with $\tilde{\rho}\in R[[Y]]$,  $\tilde{\delta}\in R[Y]$ and $\deg\tilde\delta<d$. Then,
\[
\Gamma_2(\beta)=\Gamma_2(\alpha\tilde\rho)=\Gamma_2(\alpha_1\tilde\rho)+\Gamma_2(\alpha_2\tilde\rho Y^d)=\Gamma(\alpha_2\tilde\rho)+\alpha_2\tilde\rho
\]
and
\[\alpha_2\tilde\rho=\sum_{k=0}^\infty(-1)^k\Gamma^k(\alpha_2\tilde\rho)+\sum_{k=0}^\infty(-1)^k\Gamma^{k+1}(\alpha_2\tilde\rho)=\sum_{k=0}^\infty(-1)^k\Gamma^k(\Gamma_2(\beta))=\alpha_2\rho.\]
Thus, $\tilde\rho=\rho$ and $\tilde\delta=\beta-\alpha\tilde\rho=\beta-\alpha\rho=\delta$.
\end{proof}

The following theorem replaces \autoref{lemprimitive}\eqref{primitive2}. It also plays a significant role in complex analysis.

\begin{Thm}[\textsc{Weierstrass} preparation]\label{Wpreparation}\index{Weierstrass preparation}
Let $R:=K[[X_1,\ldots,X_n]]$ and $\alpha\in R[[Y]]$ with \linebreak $\alpha(0,\ldots,0,Y)\ne 0$. Then $\alpha$ is associated to a unique monic polynomial $\gamma\in R[Y]$ with $\gamma(0,\ldots,0,Y)=Y^{\deg\gamma}$.  
\end{Thm}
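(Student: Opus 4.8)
The plan is to apply the euclidean-type division from \autoref{lemdiv} to a cleverly chosen pair and then check that the resulting quotient is a unit. Concretely, write $d:=\inf\alpha_0$ where $\alpha_0:=\alpha(0,\ldots,0,Y)\in K[[Y]]$, so $\alpha_0=uY^d$ with $u\in K[[Y]]^\times$. First I would invoke \autoref{lemdiv} with the dividend $\beta:=Y^d\in R[Y]\subseteq R[[Y]]$: there exist unique $\rho\in R[[Y]]$ and $\delta\in R[Y]$ with $\deg\delta<d$ such that
\[Y^d=\alpha\rho+\delta.\]
The key claim is that $\rho$ is invertible in $R[[Y]]$ and that $\gamma:=Y^d-\delta=\alpha\rho$ is the desired polynomial. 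To see that $\rho\in R[[Y]]^\times$, I would specialize $X_1=\ldots=X_n=0$: since specialization is a ring homomorphism $R[[Y]]\to K[[Y]]$, the equation becomes $Y^d=\alpha_0\rho_0+\delta_0$ where $\rho_0=\rho(0,\ldots,0,Y)$ and $\delta_0=\delta(0,\ldots,0,Y)\in K[Y]$ has degree $<d$. But $\alpha_0\rho_0=uY^d\rho_0$ has $\inf\ge d$, forcing $\delta_0=0$ (it is the truncation of $Y^d-\alpha_0\rho_0$ below degree $d$, hence $0$) and $\rho_0=u^{-1}$, which is a unit in $K[[Y]]$. In particular $\rho_0$ has non-zero constant term (namely $u^{-1}(0)\ne0$), hence so does $\rho$ as an element of $R[[Y]]=K[[X_1,\ldots,X_n,Y]]$, so $\rho$ is invertible by the invertibility criterion for multivariate power series recalled after \autoref{defmulti}.

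With $\rho$ a unit, $\gamma:=\alpha\rho=Y^d-\delta$ is a polynomial in $R[Y]$ of degree exactly $d$ (the leading term $Y^d$ survives since $\deg\delta<d$), and $\gamma$ is associated to $\alpha$. Moreover $\gamma(0,\ldots,0,Y)=Y^d-\delta_0=Y^d=Y^{\deg\gamma}$, so $\gamma$ has the stated normalization. This establishes existence.

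For uniqueness, suppose $\gamma_1,\gamma_2\in R[Y]$ are both associated to $\alpha$ with $\gamma_i(0,\ldots,0,Y)=Y^{\deg\gamma_i}$. Then $\gamma_1=\gamma_2\epsilon$ for some $\epsilon\in R[[Y]]^\times$. Comparing $Y$-degrees after specializing $X_1=\ldots=X_n=0$ (where $\gamma_i\mapsto Y^{\deg\gamma_i}$ and $\epsilon\mapsto\epsilon_0\in K[[Y]]^\times$ has $\inf\epsilon_0=0$) gives $\deg\gamma_1=\deg\gamma_2=:d$ and $\epsilon_0=1$. Now apply the uniqueness clause of \autoref{lemdiv} to the division of $\beta:=Y^d$ by $\alpha$: writing $\gamma_i=\alpha\rho_i$ with $\rho_i:=\epsilon_i^{-1}$ where $\epsilon_i\in R[[Y]]^\times$ is determined by $\gamma_i=\alpha\rho_i$, we get $Y^d=\gamma_i+(Y^d-\gamma_i)=\alpha\rho_i+\delta_i$ with $\delta_i:=Y^d-\gamma_i\in R[Y]$ of degree $<d$ (the leading terms cancel). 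Uniqueness in \autoref{lemdiv} forces $\rho_1=\rho_2$ and $\delta_1=\delta_2$, hence $\gamma_1=\gamma_2$.

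The main obstacle I anticipate is the bookkeeping around the specialization $X_i=0$: I must be careful that $\gamma_i$ being a \emph{polynomial} in $Y$ (not merely a power series) is what makes $\deg\delta_i<d$ after subtracting the leading $Y^d$, and that the normalization condition $\gamma_i(0,\ldots,0,Y)=Y^{\deg\gamma_i}$ is exactly what pins down $\delta_i$ in the low-degree range so that \autoref{lemdiv}'s uniqueness applies cleanly. Everything else — invertibility via non-zero constant term, degree counting — is routine given the tools already developed.
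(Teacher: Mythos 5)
Your proposal is correct and follows essentially the same route as the paper: existence by dividing $Y^d$ by $\alpha$ via \autoref{lemdiv} and observing that $\rho$ specializes to the unit $u^{-1}$ at $X_1=\ldots=X_n=0$ (the paper's ``comparison of coefficients''), and uniqueness by feeding both normalized associates back into the uniqueness clause of \autoref{lemdiv}. The only caveat, shared with the paper's own proof, is that the step $\deg(Y^d-\gamma_i)<d$ tacitly reads the normalization as saying $\gamma_i$ is monic (leading coefficient equal to $1$ in $R$, not merely a series with constant term $1$), which is the intended meaning of the Weierstrass polynomial.
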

\begin{proof}
Let $d:=\inf\alpha(0,\ldots,0,Y)$. By \autoref{lemdiv}, there exist uniquely determined $\rho\in R[[Y]]$ and $\delta\in R[Y]$ with $Y^d=\alpha\rho+\delta$ and $\deg\delta<d$. A comparison of coefficients shows that $\rho\in R[[Y]]^\times$ and $\delta(0,\ldots,0,Y)=0$. For $\sigma:=\rho^{-1}$ and $\gamma:=Y^d-\delta$ it follows that $\sigma\gamma=\alpha$ and $\gamma(0,\ldots,0,Y)=Y^d$.

To show uniqueness, let $\alpha=\tilde\sigma\tilde\gamma$ with $\tilde\sigma\in R[[Y]]^\times$ and $\tilde\gamma\in R[Y]$ monic with $\tilde{d}:=\deg\tilde\gamma$. Comparing coefficients of $\alpha(0,\ldots,0,Y)$ implies $\tilde{d}=d$. 
Let $\tilde\rho:=\tilde\sigma^{-1}$ and $\tilde\delta:=Y^d-\tilde\gamma$. Then $Y^d=\alpha\tilde\rho+\tilde\delta$ with $\deg\tilde\delta<d$. Now the claim follows from the uniqueness of $\rho$ and $\delta$.
\end{proof}

\begin{Def}
In the situation of \autoref{Wpreparation}, we call $\gamma$ the \emph{Weierstrass polynomial}\index{Weierstrass polynomial} of $\alpha$. 
\end{Def}

\begin{Ex}
If $n=0$, the Weierstrass polynomial of $\alpha\ne 0$ is just $Y^{\inf\alpha}$. Now let
\[\alpha:=Y^3+Y^2+X\in\CC[[X,Y]]\]
with $\alpha(0,Y)=Y^2+Y^3\ne 0$. Since $\inf\alpha(0,Y)=2$, the Weierstrass polynomial of $\alpha$ has the form $\gamma=Y^2+\beta Y+\delta$ with $\beta,\delta\in (X)\subseteq\CC[[X]]$. Division with remainder by the monic polynomial $\gamma$ expresses $\alpha$ in the form $\alpha=\pi\gamma+\rho$ with $\pi,\rho\in\CC[[X]][Y]$ and $\deg\rho<2$. Since also $\alpha=\sigma\gamma$ for some $\sigma\in\CC[[X,Y]]^\times$, the uniqueness in \autoref{lemdiv} yields $\rho=0$ and $\sigma=\pi$. A comparison of the leading coefficients shows $\sigma=Y+\epsilon$ with $\epsilon\in\CC[[X]]$. Now $\sigma\gamma=\alpha$ translates to
\begin{align}
\beta+\epsilon&=1,\label{eq1}\\
\delta+\beta\epsilon&=0,\label{eq2}\\
\delta\epsilon&=X.\label{eq3}
\end{align}
Substituting $\epsilon=1-\beta$ from \eqref{eq1} into \eqref{eq2}, we obtain $\delta=\beta(\beta-1)$ and \eqref{eq3} becomes $-\beta(\beta-1)^2=X$. For $\tau:=-\beta\in (X)$ this reads
\[\tau(1+\tau)^2=X.\]
As $(1+\tau)^2\in\CC[[X]]^\times$, it follows that $\inf\tau=1$, i.\,e. $\tau\in\CC[[X]]^\circ$ is the reverse of $X(1+X)^2$. Now the Lagrange--Bürmann inversion formula (\autoref{lagrange}) yields
\[\tau=\sum_{k=1}^\infty\frac{\res\bigl(X^{-k}(1+X)^{-2k}\bigr)}{k}X^k\overset{\eqref{newtoneq}}{=}\sum_{k=1}^\infty\binom{-2k}{k-1}\frac{X^k}{k}=\sum_{k=1}^\infty(-1)^{k-1}\binom{3k-2}{k-1}\frac{X^k}{k}.\]
Hence, the Weierstrass polynomial of $\alpha$ is
\begin{align*}
\gamma&=Y^2-\tau Y+\tau(1+\tau)\\
&=Y^2-(X-2X^2+7X^3\mp\ldots)Y+X-X^2+3X^3\mp\ldots
\end{align*}
The conclusion of this calculation is that the Weierstrass polynomial can hardly be guessed by looking at $\alpha$. In particular, $\gamma\notin\CC[X,Y]$ although $\alpha$ is a polynomial.
\end{Ex}

Weierstrass polynomials play the role of primitive polynomials in the proof of Gauss' theorem.

\begin{Thm}
For every field $K$, the ring $K[[X_1,\ldots,X_n]]$ is a UFD.
\end{Thm}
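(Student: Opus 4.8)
The plan is to mimic the proof of Gauss' theorem (\autoref{gaussall}), using the Weierstrass preparation theorem in place of \autoref{lemprimitive}. I argue by induction on $n$. For $n=0$ the ring $K$ is a field, hence trivially a UFD, and for $n=1$ we already know $K[[X]]$ is a PID, hence a UFD by \autoref{lemUFD}. So assume $n\ge 2$ and set $R:=K[[X_1,\ldots,X_{n-1}]]$, which is a UFD by the induction hypothesis; I must show $R[[Y]]=K[[X_1,\ldots,X_n]]$ is a UFD, writing $Y:=X_n$.

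First I would reduce to a normalized situation: given $0\ne\alpha\in R[[Y]]$ non-unit, \autoref{lemnormal} provides a ring automorphism $\Gamma$ with $\Gamma(\alpha)(0,\ldots,0,Y)\ne 0$. Since automorphisms preserve irreducibility, primality and factorizations, it suffices to factor $\Gamma(\alpha)$; so I may assume $\alpha(0,\ldots,0,Y)\ne 0$ from the start. By \autoref{thmKXnoether}, $R[[Y]]$ is noetherian, so by \autoref{lemUFD}\eqref{noether} every non-unit is \emph{some} product of irreducibles — the entire content is therefore to show each irreducible element is prime. The key structural tool is Weierstrass preparation (\autoref{Wpreparation}): every $\alpha\in R[[Y]]$ with $\alpha(0,\ldots,0,Y)\ne 0$ is associated to a unique Weierstrass polynomial $\gamma\in R[Y]$ with $\gamma(0,\ldots,0,Y)=Y^{\deg\gamma}$. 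I would first record that the Weierstrass polynomial is multiplicative up to units: if $\alpha_1,\alpha_2$ both have nonzero value at $(0,\ldots,0,Y)$, then so does $\alpha_1\alpha_2$, and by uniqueness in \autoref{Wpreparation} the Weierstrass polynomial of $\alpha_1\alpha_2$ equals the product of those of $\alpha_1$ and $\alpha_2$. In particular a Weierstrass polynomial $\gamma$ is irreducible (resp.\ prime) in $R[[Y]]$ iff it is irreducible (resp.\ prime) in $R[Y]$, once one checks that any factor of $\gamma$ in $R[[Y]]$ can itself be taken (up to a unit) to be a Weierstrass polynomial — which follows because each factor must again have nonzero value at $(0,\ldots,0,Y)$, and a unit times a Weierstrass polynomial whose value at $(0,\ldots,0,Y)$ is monic forces the unit to have invertible constant term.

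Then the argument runs exactly parallel to \autoref{gaussall}. Let $\pi\in R[[Y]]$ be irreducible. If $\pi$ is already a unit times an element of $R$, then that element is irreducible in the UFD $R$, hence a prime element of $R$; and it remains prime in $R[[Y]]$ because $R[[Y]]/(\pi)\cong (R/(\pi))[[Y]]$ is an integral domain (as $R/(\pi)$ is). Otherwise, replacing $\pi$ by an associate, I may assume $\pi$ is a Weierstrass polynomial $\gamma\in R[Y]$ of positive degree; by the equivalence above, $\gamma$ is irreducible in $R[Y]$. Since $R$ is a UFD, $R[Y]$ is a UFD by \autoref{gaussall}, so $\gamma$ is a prime element of $R[Y]$. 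Now suppose $\gamma\mid\alpha\beta$ in $R[[Y]]$. Applying the automorphism from \autoref{lemnormal} to $\alpha\beta$ would change $\gamma$; instead I keep $\gamma$ fixed (it is already normalized) and use \autoref{lemdiv} to perform Weierstrass division of $\alpha$ and $\beta$ by $\gamma$: write $\alpha=\gamma\rho_1+\delta_1$, $\beta=\gamma\rho_2+\delta_2$ with $\delta_1,\delta_2\in R[Y]$ of degree $<\deg\gamma$. Then $\delta_1\delta_2\equiv\alpha\beta\equiv 0\pmod{\gamma}$ in $R[[Y]]$, and dividing $\delta_1\delta_2\in R[Y]$ by $\gamma$ in $R[Y]$ and comparing with the unique Weierstrass division in $R[[Y]]$ shows $\gamma\mid\delta_1\delta_2$ in $R[Y]$. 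Since $\gamma$ is prime in $R[Y]$, it divides $\delta_1$ or $\delta_2$ in $R[Y]$, hence in $R[[Y]]$; say $\gamma\mid\delta_1$, so $\gamma\mid\alpha$. Thus every irreducible of $R[[Y]]$ is prime, and $R[[Y]]=K[[X_1,\ldots,X_n]]$ is a UFD.

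The main obstacle I anticipate is the bookkeeping around uniqueness of Weierstrass division: I must make sure that the ``$\delta_1\delta_2\equiv 0\pmod\gamma$ in $R[[Y]]$ implies $\gamma\mid\delta_1\delta_2$ in $R[Y]$'' step is airtight, i.e.\ that the quotient $\rho$ produced by \autoref{lemdiv} for the pair $(\delta_1\delta_2,\gamma)$ actually lies in $R[Y]$ and not merely in $R[[Y]]$ — this is where the degree bound $\deg\delta_i<\deg\gamma$ is essential, giving $\deg(\delta_1\delta_2)<2\deg\gamma$ so that polynomial division in $R[Y]$ (which is legitimate since $\gamma$ is monic in $Y$) produces a polynomial quotient, and then uniqueness in \autoref{lemdiv} identifies it with $\rho$, forcing $\rho\in R[Y]$ and the remainder to vanish. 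A secondary technical point is justifying that a non-unit irreducible $\pi$ with $\pi(0,\ldots,0,Y)\ne 0$ is associated to a Weierstrass polynomial of \emph{positive} degree (degree $0$ would make $\pi$ a unit), which is immediate from $\pi$ being a non-unit.
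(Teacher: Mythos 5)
Your proof is correct, and its skeleton is the paper's: induction on $n$, noetherianity (\autoref{thmKXnoether} with \autoref{lemUFD}) for existence of factorizations into irreducibles, the normalizing automorphism of \autoref{lemnormal}, and Weierstrass preparation (\autoref{Wpreparation}) combined with Gauss' theorem (\autoref{gaussall}) to reduce primality to the polynomial ring. The mechanism you use for the primality step, however, is genuinely different. The paper, given an irreducible $\alpha$ dividing $\beta\gamma$, normalizes the whole product $\alpha\beta\gamma$ at once, replaces all three factors by their Weierstrass polynomials, and uses uniqueness/multiplicativity to obtain $\alpha_1\mid\beta_1\gamma_1$ in $R_{n-1}[X_n]$. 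You normalize only the irreducible, keep its Weierstrass polynomial $\gamma$ fixed, and treat arbitrary $\alpha,\beta$ with $\gamma\mid\alpha\beta$ by Weierstrass division (\autoref{lemdiv}): the uniqueness clause forces the quotient of $\delta_1\delta_2$ by $\gamma$ to be a polynomial and the remainder to vanish, so divisibility descends to $R[Y]$, where $\gamma$ is prime. What your route buys is that the cofactors never need to be normalized and the divisibility transfer is completely explicit; what the paper's route buys is brevity, at the cost of leaving implicit exactly the bridging facts you spell out (multiplicativity of Weierstrass polynomials, and that the Weierstrass polynomial of an irreducible normalized element is irreducible in $R[Y]$ --- the paper's ``with $\alpha$ also $\alpha_1$ is irreducible''). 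Your one-line justification of that last equivalence is compressed but repairable by a degree argument: a factor in $R[Y]$ that is a unit of $R[[Y]]$ has evaluation at $(0,\ldots,0,Y)$ with nonzero constant term, hence constant evaluation, hence degree $0$ in $Y$ (degrees add and the cofactor's evaluation already has degree $\deg\gamma$), so it lies in $R^\times$. One minor remark: after normalization the case ``$\pi$ is a unit times an element of $R$'' cannot occur, since a non-unit of $R$ has zero constant term and would force $\pi(0,\ldots,0,Y)=0$; the argument you give for that case is correct but redundant.
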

\begin{proof}
Let $R_n:=K[[X_1,\ldots,X_n]]$. We argue by induction on $n$. If $n=1$, then $R_1$ is a PID and a UFD. Thus, let $n\ge 2$. 
Since $R_n$ is noetherian by \autoref{thmKXnoether}, every non-zero element of $R_n$ is a product of irreducible elements (or a unit) by \autoref{lemUFD} (this follows more directly from $\inf(\alpha\beta)=\inf(\alpha)+\inf(\beta)$). 

It remains to show that every irreducible element $\alpha\in R_n$ is a prime element. Thus, let $\beta,\gamma\in R_n\setminus\{0\}$ with $\alpha\mid\beta\gamma$. By \autoref{lemnormal}, there exists an automorphism $\Gamma\colon R_n\to R_n$ such that \[\Gamma(\alpha\beta\gamma)(0,\ldots,0,X_n)\ne 0.\] 
Hence, we may assume that $\alpha(0,\ldots,0,X_n)$, $\beta(0,\ldots,0,X_n)$ and $\gamma(0,\ldots,0,X_n)$ do not vanish. By induction, $R_{n-1}$ is a UFD and so is $R_{n-1}[X_n]$ by Gauss' theorem.
Let $\alpha_1,\beta_1,\gamma_1\in R_{n-1}[X_n]$ be the Weierstrass polynomials of $\alpha$, $\beta$ and $\gamma$ respectively. 
With $\alpha$ also $\alpha_1$ is irreducible and $\alpha_1\mid(\beta\gamma)_1=\beta_1\gamma_1$. Since $\alpha_1$ is a prime element in the UFD $R_{n-1}[X_n]$, it follows that $\alpha_1\mid\beta_1$ without loss of generality. Consequently, $\alpha\mid\beta$ and $\alpha$ is a prime element of $R_n$. 
\end{proof}

It has been shown in \cite{Samuel} that $R[[X_1,\ldots,X_n]]$ is a UFD for every PID $R$. In this situation, also the three different rings of power series in infinitely many indeterminates introduced at the end of \autoref{secMMM} are UFDs. This was shown in \cite{Nishimura2,Cashwell,Deckard}. 

Our final objective is the construction of the algebraic closure of the ring $\CC((X))$ of complex Laurent series.
We need a well-known tool. 

\begin{Lem}[\textsc{Hensel}]\label{hensel}\index{Hensel}
Let $R:=K[[X]]$. For a polynomial $\alpha=\sum_{k=0}^na_kY^k\in R[Y]$ let 
\[\bar\alpha:=\sum a_k(0)Y^k\in K[Y].\] 
Let $\alpha\in R[Y]$ be monic such that $\bar\alpha=\alpha_1\alpha_2$ for some coprime monic polynomials $\alpha_1,\alpha_2\in K[Y]\setminus K$. Then there exist uniquely determined monic polynomials $\beta,\gamma\in R[Y]$ such that $\bar\beta=\alpha_1$, $\bar\gamma=\alpha_2$ and $\alpha=\beta\gamma$.
\end{Lem}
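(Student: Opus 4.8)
The plan is to lift the factorization $\bar\alpha=\alpha_1\alpha_2$ one power of $X$ at a time, solving a B\'ezout-type linear equation over $K[Y]$ at each stage (this is the algebraic analogue of the fixed-point construction in \autoref{lemdiv}). Write $r:=\deg\alpha_1\ge 1$ and $s:=\deg\alpha_2\ge 1$, so $r+s=n=\deg\alpha$. Note that $\beta\mapsto\bar\beta$ is a ring homomorphism $R[Y]\to K[Y]$ (reduction modulo $X$), so a product decomposition reduces to one in $K[Y]$.

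First I would construct the factors. I seek $\beta=\sum_{j\ge0}\beta_jX^j$ and $\gamma=\sum_{j\ge0}\gamma_jX^j$ with $\beta_j,\gamma_j\in K[Y]$, $\beta_0=\alpha_1$, $\gamma_0=\alpha_2$, and $\deg\beta_j<r$, $\deg\gamma_j<s$ for $j\ge1$; such series automatically define \emph{monic} polynomials $\beta,\gamma\in R[Y]$ of $Y$-degrees $r$ and $s$ with $\bar\beta=\alpha_1$, $\bar\gamma=\alpha_2$. Proceeding by induction on $m$, suppose $\beta^{(m)}:=\sum_{j=0}^m\beta_jX^j$ and $\gamma^{(m)}:=\sum_{j=0}^m\gamma_jX^j$ satisfy $\alpha\equiv\beta^{(m)}\gamma^{(m)}\pmod{X^{m+1}}$ (the case $m=0$ is the hypothesis). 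Since $\beta^{(m)}$, $\gamma^{(m)}$ are monic in $Y$ of degrees $r$, $s$, the coefficient of $X^{m+1}$ in $\alpha-\beta^{(m)}\gamma^{(m)}$ is a polynomial $\delta\in K[Y]$ with $\deg\delta<n$. It suffices to find $\beta_{m+1},\gamma_{m+1}$ with $\beta_{m+1}\alpha_2+\gamma_{m+1}\alpha_1=\delta$ and $\deg\beta_{m+1}<r$, $\deg\gamma_{m+1}<s$; then $\beta^{(m+1)}:=\beta^{(m)}+\beta_{m+1}X^{m+1}$ and $\gamma^{(m+1)}:=\gamma^{(m)}+\gamma_{m+1}X^{m+1}$ work. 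To solve it, fix a B\'ezout identity $u\alpha_1+v\alpha_2=1$ in $K[Y]$ (available since $K[Y]$ is Euclidean and $\gcd(\alpha_1,\alpha_2)=1$), divide $v\delta=q\alpha_1+\beta_{m+1}$ with $\deg\beta_{m+1}<r$, and set $\gamma_{m+1}:=u\delta+q\alpha_2$; then $\beta_{m+1}\alpha_2+\gamma_{m+1}\alpha_1=(u\alpha_1+v\alpha_2)\delta=\delta$, and comparing $Y$-degrees in $\gamma_{m+1}\alpha_1=\delta-\beta_{m+1}\alpha_2$ (right side of degree $<n$) forces $\deg\gamma_{m+1}<s$. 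Passing to the limit gives $\beta\gamma\equiv\beta^{(m)}\gamma^{(m)}\equiv\alpha\pmod{X^{m+1}}$ for every $m$, hence $\alpha=\beta\gamma$ in $R[Y]$.

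For uniqueness, suppose $\alpha=\beta\gamma=\beta'\gamma'$ with all four factors monic and $\bar\beta=\bar\beta'=\alpha_1$, $\bar\gamma=\bar\gamma'=\alpha_2$; in particular $\beta\equiv\beta'$, $\gamma\equiv\gamma'\pmod{X}$. I would show by induction that $\beta\equiv\beta'$, $\gamma\equiv\gamma'\pmod{X^m}$ for all $m\ge1$. Given this modulo $X^m$, write $\beta'-\beta\equiv\epsilon X^m$ and $\gamma'-\gamma\equiv\eta X^m\pmod{X^{m+1}}$ with $\epsilon,\eta\in K[Y]$ of $Y$-degrees $<r$, $<s$ (differences of monic polynomials of equal degree). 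Then $0=\beta'\gamma'-\beta\gamma\equiv(\epsilon\alpha_2+\eta\alpha_1)X^m\pmod{X^{m+1}}$, so $\epsilon\alpha_2+\eta\alpha_1=0$; since $\gcd(\alpha_1,\alpha_2)=1$ forces $\alpha_1\mid\epsilon$ while $\deg\epsilon<\deg\alpha_1$, we get $\epsilon=0$ and hence $\eta=0$, completing the induction, so $\beta=\beta'$ and $\gamma=\gamma'$.

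The argument is mostly bookkeeping; the one point needing genuine care is the degree control at each step — ensuring the new coefficients $\beta_{m+1},\gamma_{m+1}$ can be chosen within the prescribed $Y$-degree bounds (so $\beta,\gamma$ stay honest monic polynomials in $Y$ rather than power series in $Y$), and that the correction term $\delta$ indeed has $Y$-degree $<n$. This is precisely where coprimality of $\alpha_1$ and $\alpha_2$ is used, both to solve the B\'ezout equation and, via the degree-$<r$ remainder, to pin down the factors uniquely.
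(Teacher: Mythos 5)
Your proof is correct. The existence part is essentially the paper's argument: both lift the factorization through successive powers of $X$, solving a B\'ezout-type equation over $K[Y]$ at each stage; you reorganize the bookkeeping by insisting from the outset that every correction $\beta_{m+1},\gamma_{m+1}$ lie in $K[Y]$ with $Y$-degrees $<r$ and $<s$ (obtained by one division with remainder plus a degree comparison), whereas the paper fixes a single lift $\sigma,\tau$ of a B\'ezout identity, divides by the monic approximation $\beta_k$ inside $R[Y]$, and needs the small extra observation $\bar\eta=0$ to keep the $\gamma$-correction's degree down — the two devices do the same job. Where you genuinely diverge is uniqueness: the paper deduces it from unique factorization in $R[Y]$ (via Gauss' theorem and the earlier UFD results), matching prime divisors of $\alpha$ against $\alpha_1$ and $\alpha_2$ after reduction, while you run a second induction modulo powers of $X$, using coprimality of $\alpha_1,\alpha_2$ and the degree bound $\deg\epsilon<\deg\alpha_1$ to kill each correction term. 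Your route is more elementary and self-contained (no appeal to $R[Y]$ being a UFD), at the cost of a little more explicit congruence bookkeeping; the paper's route is shorter given the machinery it has already built.
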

\begin{proof}
By hypothesis, $n:=\deg(\alpha)=\deg(\alpha_1)+\deg(\alpha_2)\ge 2$. 
Observe that $\bar{\alpha}$ is essentially the reduction of $\alpha$ modulo the ideal $(X)$. In particular, the map $R[Y]\to K[Y]$, $\alpha\mapsto\bar\alpha$ is a ring homomorphism. For $\sigma,\tau\in R[Y]$ and $k\in\NN$ we write more generally $\sigma\equiv\tau\pmod{(X^k)}$ if all coefficients of $\sigma-\tau$ lie in $(X^k)$. 
First choose any monic polynomials $\beta_1,\gamma_1\in R[Y]$ with $\bar{\beta_1}=\alpha_1$ and $\bar{\gamma_1}=\alpha_2$. Then $\deg(\beta_1)=\deg(\alpha_1)$, $\deg(\gamma_1)=\deg(\alpha_2)$ and $\alpha\equiv\beta_1\gamma_1\pmod{(X)}$.
We construct inductively monic $\beta_k,\gamma_k\in R[Y]$ for $k\ge 2$ such that 
\begin{enumerate}[(a)]
\item\label{amod} $\beta_k\equiv \beta_{k+1}$ and $\gamma_k\equiv \gamma_{k+1}\pmod{(X^k)}$,
\item\label{bmod} $\alpha\equiv\beta_k\gamma_k\pmod{(X^k)}$.
\end{enumerate}
Suppose that $\beta_k,\gamma_k$ are given. Choose $\delta\in R[Y]$ such that $\alpha=\beta_k\gamma_k+X^k\delta$ and $\deg(\delta)<n$. 
Since $\alpha_1,\alpha_2$ are coprime in the euclidean integral domain $K[Y]$, there exist $\sigma,\tau\in R[Y]$ such that $\bar\beta_k\bar\sigma+\bar\gamma_k\bar\tau=\alpha_1\bar\sigma+\alpha_2\bar\tau=1$ by Bézout's lemma.
Since $\beta_k$ is monic, we can perform euclidean division by $\beta_k$ without leaving $R[Y]$. This yields $\rho,\nu\in R[Y]$ such that $\tau\delta=\beta_k\rho+\nu$ and $\deg(\nu)<\deg(\beta_k)$.
Let $d:=\deg(\gamma_1)$ and write $\sigma\delta+\gamma_k\rho=\mu+\eta Y^d$ with $\deg(\mu)<d$. 
Then
\begin{align*}
\beta_{k+1}:=\beta_k+X^k\nu,&&\gamma_{k+1}:=\gamma_k+X^k\mu
\end{align*}
are monic and satisfy \eqref{amod}.
Moreover,
\[\delta\equiv(\beta_k\sigma+\gamma_k\tau)\delta\equiv \beta_k(\sigma\delta+\gamma_k\rho)+\gamma_k\nu\equiv \beta_k\mu+\beta_k\eta Y^d+\gamma_k\nu\pmod{(X)}.\]
Since the degrees of $\delta$, $\beta_k\mu$ and $\gamma_k\nu$ are all smaller than $n$ and $\deg(\beta_k\eta Y^d)\ge n$, it follows that $\bar\eta=0$.
Therefore,
\[
\beta_{k+1}\gamma_{k+1}\equiv\alpha-X^k\delta+(\beta_k\mu+\gamma_k\nu)X^k\equiv\alpha\pmod{(X^{k+1})},
\]
i.\,e. \eqref{bmod} holds for $k+1$. This completes the induction. 

Let $\beta_k=\sum_{j=0}^eb_{kj}Y^j$ and $\gamma_k=\sum_{j=0}^dc_{kj}Y^j$ with $b_{ij},c_{ij}\in R$. By construction, $|b_{kj}-b_{k+1,j}|\le 2^{-k}$ and similarly for $c_{kj}$. Consequently, $b_j:=\lim_kb_{kj}$ and $c_j:=\lim_kc_{kj}$ converge in $R$. We can now define 
\begin{align*}
\beta:=\sum_{j=0}^e b_jY^j,&&\gamma:=\sum_{j=0}^d c_jY^j.
\end{align*} 
Then $\bar\beta=\bar\beta_1=\alpha_1$ and $\bar\gamma=\bar\gamma_1=\alpha_2$. Since $\beta\gamma\equiv\beta_k\gamma_k\equiv\alpha\pmod{(X^k)}$ for every $k\ge 1$, it follows that 
$\alpha=\beta\gamma$. 

To show the uniqueness, let $\pi$ be a prime divisor of $\alpha$ in the UFD $R[Y]$. Since $\alpha$ is monic, the leading coefficient of $\pi$ is a unit of $R$ and we may assume that $\pi$ is monic as well. Then $\bar\pi$ is monic of degree $\deg(\pi)\ge 1$ and $\bar\pi\mid\bar\alpha=\alpha_1\alpha_2$. As $\gcd(\alpha_1,\alpha_2)=1$, we may write $\bar\pi=\bar\pi_1\bar\pi_2$ where $\bar\pi_i:=\gcd(\bar\pi,\alpha_i)$. If $\bar\pi_1\ne 1\ne\bar\pi_2$, the first part of the proof yields monic $\pi_1,\pi_2\in R[Y]$ such that $\pi=\pi_1\pi_2$, contradicting the irreducibility of $\pi$. 
Hence, we may assume that $\bar\pi_2=1$, i.\,e. $\bar\pi=\bar\pi_1\mid\alpha_1$.
Since $\pi$ is prime and $\pi\mid\alpha=\beta\gamma$, it divides $\beta$ or $\gamma$. In the latter case $\bar\pi\mid\bar\gamma=\alpha_2$ and therefore $\bar\pi\mid\gcd(\alpha_1,\alpha_2)=1$, which is impossible. It follows that $\pi\mid\beta$ and $\pi\nmid\gamma$. Since $\beta$ and $\gamma$ are monic, this uniquely determines their prime factorization.
\end{proof}

\begin{Ex}
Let $n\ge2$, $a\in (X)\subseteq R:=\CC[[X]]$ and $\alpha=Y^n-1-a\in R[Y]$. Then $\bar\alpha=Y^n-1=\alpha_1\alpha_2$ with coprime monic $\alpha_1=Y-1$ and $\alpha_2=Y^{n-1}+\ldots+Y+1$. By Hensel's lemma there exist monic $\beta,\gamma\in R[Y]$ such that $\bar\beta=\alpha_1$, $\bar\gamma=\alpha_2$ and $\alpha=\beta\gamma$. We may write $\beta=Y-1-b$ for some $b\in (X)$. Then $(1+b)^n=1+a$ and the remark after \autoref{defpower} implies $1+b=\sqrt[n]{1+a}$. 
The constructive procedure in the proof above inevitably leads to Newton's binomial theorem $1+b=\sum_{k=0}^\infty\binom{1/n}{k}a^k$.
\end{Ex}

We have seen that invertible power series in $\CC[[X]]$ have arbitrary roots. On the other hand, $X$ does not even have a square root in $\CC((X))$. This motivates to raise $X$ not only to negative powers, but also to fractional powers. 

\begin{Def}
A \emph{Puiseux series} over $K$ is defined by\index{Puiseux series}
\[\alpha=\sum_{k=m}^\infty a_{\frac{k}{n}}X^{\frac{k}{n}},\]
where $m\in\ZZ$, $n\in\NN$ and $a_{\frac{k}{n}}\in K$ for $k\ge m$. As usual, let $\inf\alpha:=\frac{m}{n}$ (assuming $a_{\frac{k}{n}}\ne 0$).
The set of Puiseux series is denoted by $K\{\{X\}\}$.\index{*KXXZ@$K\{\{X\}\}$} For $\alpha,\beta\in K\{\{X\}\}$ there exists $n\in\NN$ such that $\tilde\alpha:=\alpha(X^n)$ and $\tilde\beta:=\beta(X^n)$ lie in $K((X))$. 
We carry over the field operations from $K((X))$ via
\[\alpha+\beta:=(\tilde\alpha+\tilde\beta)(X^{\frac{1}{n}}),\qquad\alpha\cdot\beta:=(\tilde\alpha\tilde\beta)(X^{\frac{1}{n}}).\]
\end{Def}

It is straight-forward to check that $(K\{\{X\}\},+,\cdot)$ is a field. 
At this point we have established the following inclusions:
\[K\subseteq K[X]\subseteq K[[X]]\subseteq K((X))\subseteq K\{\{X\}\}.\] 

\begin{Thm}[\textsc{Puiseux}]\index{Puiseux}
The algebraic closure of $\CC((X))$ is $\CC\{\{X\}\}$. 
\end{Thm}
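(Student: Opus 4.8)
The plan is to prove two things: that $\CC\{\{X\}\}$ is algebraically closed, and that it is algebraic over $\CC((X))$. The second is easy: any $\alpha\in\CC\{\{X\}\}$ is a Laurent series in $X^{1/n}$ for some $n$, hence satisfies $Y^n-\tilde\alpha(X^n)$ with $\tilde\alpha\in\CC((X))$ — wait, more carefully, if $\alpha=\beta(X^{1/n})$ with $\beta\in\CC((X))$, then $\alpha$ is a root of $T^n-\beta(X)$ viewed in $\CC((X))[T]$ only after we note $\beta(X)=\alpha^n$ lies in $\CC((X))$; so $\alpha$ is algebraic of degree $\le n$ over $\CC((X))$. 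Thus $\CC\{\{X\}\}/\CC((X))$ is algebraic, and it suffices to show $\CC\{\{X\}\}$ is algebraically closed, or equivalently that every monic $f\in\CC((X))[Y]$ of degree $\ge 1$ has a root in $\CC\{\{X\}\}$.

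For the main direction I would argue by induction on $d=\deg f$, the case $d=1$ being trivial. Given monic $f\in\CC((X))[Y]$ of degree $d\ge 2$: after the substitution $X\mapsto X^N$ for a suitable $N$ (which only rescales the exponents and keeps us inside the Puiseux field), clear denominators so that $f\in\CC[[X]][Y]$ is monic; a Tschirnhaus shift $Y\mapsto Y-\tfrac1d a_{d-1}$ makes the coefficient of $Y^{d-1}$ vanish, so $f$ is not a power of a linear polynomial modulo $X$. Reduce modulo $(X)$ to get $\bar f\in\CC[Y]$. If $\bar f$ is \emph{not} a $d$-th power of a linear factor, then since $\CC$ is algebraically closed $\bar f$ factors into coprime monic non-constant pieces, Hensel's lemma (\autoref{hensel}) lifts this to a factorization $f=\beta\gamma$ in $\CC[[X]][Y]$ with $0<\deg\beta,\deg\gamma<d$, and by the inductive hypothesis $\beta$ (or $\gamma$) has a root in $\CC\{\{X\}\}$, hence so does $f$. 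The remaining case is $\bar f=Y^d$, i.e. all coefficients $a_0,\dots,a_{d-1}$ of $f$ lie in $(X)$ — and here the Tschirnhaus normalization already forced $a_{d-1}=0$, but not the others; this is the genuinely hard case.

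In that hard case the standard device (Newton polygon / Puiseux expansion) is to guess the leading term of a root. Let $m=\min_i \tfrac{\inf(a_i)}{d-i}$ over those $i<d$ with $a_i\ne 0$; write $m=p/q$ in lowest terms, substitute $X\mapsto X^q$ to make $m$ an integer (staying in the Puiseux field), then substitute $Y\mapsto X^{m}Y$ and divide by $X^{md}$. The result is a monic polynomial $g\in\CC[[X]][Y]$ of degree $d$ whose reduction $\bar g\in\CC[Y]$ is monic of degree $d$ and — by the choice of $m$ as the exact slope — is \emph{not} equal to $Y^d$: it has at least two distinct roots (if $\bar g$ had a single root $c\ne 0$ of multiplicity $d$, a further shift $Y\mapsto Y+c$ and repeating would eventually lower the slope, but a cleaner formulation is that the Newton polygon of $g$ now has a vertex forcing $\bar g$ to have a nonzero root, and since $\deg\bar g=d$ with $\bar g\ne(Y-c)^d$ being exactly what the minimality of the slope $m$ rules out — except we must handle the subcase $\bar g=(Y-c)^d$ with $c\ne0$ by the shift $Y\mapsto Y+c$ and re-running the slope analysis, which strictly decreases something and terminates). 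Once $\bar g$ is not a $d$-th power of a linear polynomial we are back in the Hensel case and win by induction. I expect the main obstacle to be exactly this bookkeeping: showing the slope-lowering / shifting process terminates so that after finitely many Puiseux substitutions $\bar g$ genuinely splits, and then assembling the root of $f$ as $X^{m}(c+\text{root of }g')$ with all exponents over a common denominator, so that the final object lies in $\CC\{\{X\}\}$. A convenient way to package this is: prove by induction on $d$ that $f$ has a root of the form $\sum_{k\ge k_0} c_k X^{k/n}$ for some $n\mid d!$, using at each stage either a Hensel factorization (degree drops) or a single monomial substitution that produces a Hensel-factorable reduction.
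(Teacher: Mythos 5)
Your overall strategy is the paper's: reduce to monic polynomials over $\CC[[X]]$, kill the subleading coefficient by a Tschirnhaus shift, use the minimal Newton slope to renormalize, and split off a factor with Hensel's lemma (\autoref{hensel}), inducting on the degree. But there are two genuine problems as written. First, your argument that $\CC\{\{X\}\}$ is algebraic over $\CC((X))$ is false: if $\alpha=\beta(X^{1/n})$ with $\beta\in\CC((X))$, then in general $\alpha^n\ne\beta(X)$ and $\alpha^n\notin\CC((X))$ (take $\beta=1+X$, so $\alpha=1+X^{1/2}$ and $\alpha^2=1+2X^{1/2}+X$). The correct easy repair is to note $\alpha\in\CC((X^{1/n}))=\CC((X))[X^{1/n}]$, a finite extension of $\CC((X))$ because $X^{1/n}$ is a root of $T^n-X$; the paper instead exhibits an explicit degree-$n$ polynomial for $\alpha$ by multiplying the conjugates $Y-\alpha(\zeta^iX^{1/n})$ over the $n$-th roots of unity $\zeta^i$ and observing its coefficients are invariant under $X^{1/n}\mapsto\zeta X^{1/n}$.

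Second, in the main direction you leave the "hard case" unfinished: you only assert that the shift-and-rerun process when $\bar g=(Y-c)^d$, $c\ne 0$, "strictly decreases something and terminates", and that termination claim is exactly the nontrivial bookkeeping of the classical Newton--Puiseux algorithm, which you do not supply. The point you missed is that this subcase cannot occur in your own setup, and this is precisely how the paper closes the argument: the Tschirnhaus shift made the coefficient of $Y^{d-1}$ equal to $0$, and the substitutions $X\mapsto X^q$, $Y\mapsto X^mY$ followed by division by $X^{md}$ preserve this vanishing. Hence the sum of the roots of $\bar g$ is $0$, so $\bar g=(Y-c)^d$ forces $dc=0$, i.e.\ $c=0$ and $\bar g=Y^d$; but the minimality of the slope $m$ guarantees that some renormalized coefficient has $\inf$ exactly $0$, so $\bar g\ne Y^d$. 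Therefore $\bar g$ always has at least two distinct roots, splits into coprime monic factors over $\CC$, Hensel lifts the factorization, and the induction on the degree goes through after a single monomial substitution — no iteration, and no termination argument, is needed. With these two repairs your proof coincides with the paper's.
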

\begin{proof}
We follow Nowak~\cite{Nowak}. \index{Nowak}
Set $R:=\CC[[X]]$, $F:=\CC((X))$ and $\hat F:=\CC\{\{X\}\}$. We show first that $\hat F$ is an algebraic field extension of $F$.
Let $\alpha\in\hat F$ be arbitrary and $n\in\NN$ such that $\beta:=\alpha(X^n)\in F$. Let $\zeta\in\CC$ be a primitive $n$-th root of unity. Define
\[\Gamma:=\prod_{i=1}^n\bigl(Y-\beta(\zeta^iX)\bigr)=Y^n+\gamma_1Y^{n-1}+\ldots+\gamma_n\in F[Y].\]
Replacing $X$ by $\zeta X$ permutes the factors $Y-\beta(\zeta^iX)$ and thus leaves $\Gamma$ invariant. Consequently, $\gamma_i(\zeta X)=\gamma_i$ for $i=1,\ldots,n$. This means that there exist $\tilde\gamma_i\in F$ such that $\gamma_i=\tilde\gamma_i(X^n)$. Now let
\[\tilde\Gamma:=Y^n+\tilde\gamma_1Y^{n-1}+\ldots+\tilde\gamma_n\in F[Y].\]
Substituting $X$ by $X^n$ in $\tilde\Gamma(\alpha)$ gives $\Gamma(\beta)=0$. Thus, also $\tilde\Gamma(\alpha)=0$. This shows that $\alpha$ is algebraic over $F$ and $\hat F$ is an algebraic extension of $F$. 

Now we prove that $\hat F$ is algebraically closed.
Let $\Gamma=Y^n+\gamma_1Y^{n-1}+\ldots+\gamma_n\in \hat F[Y]$ be arbitrary with $n\ge 2$. We need to show that $\Gamma$ has a root in $\hat F$. After applying the \emph{Tschirnhaus transformation}\index{Tschirnhaus transformation} $Y\mapsto Y-\frac{1}{n}\gamma_1$, we may assume that $\gamma_1=0$. Without loss of generality, $\Gamma\ne Y^n$. Let 
\[r:=\min\Bigl\{\frac{1}{k}\inf(\gamma_k):k=1,\ldots,n\Bigr\}\in\QQ\]
and $m\in\NN$ such that $\gamma_k(X^m)\in F$ for $k=1,\ldots,n$ and $r=\frac{s}{m}$ for some $s\in\ZZ$. 
Define $\delta_0:=1$ and $\delta_k:=\gamma_k(X^m)X^{-ks}\in F$ for $k=1,\ldots,n$. Since
\[\inf(\delta_k)=m\inf(\gamma_k)-ks=m(\inf(\gamma_k)-kr)\ge0,\] 
$\Delta:=Y^n+\delta_2Y^{n-2}+\ldots+\delta_n\in R[Y]$. 
Consider $\bar\Delta:=Y^n+\delta_2(0)Y^{n-2}+\ldots+\delta_n(0)\in\CC[Y]$. 
Since $\inf(\delta_k)=0$ for at least one $k\ge 1$, we have $\bar\Delta\ne Y^n$.
Since $\delta_1=0$, also $\bar\Delta\ne (Y-c)^n$ for all $c\in\CC$. Using that $\CC$ is algebraically closed, we can decompose $\bar\Delta=\bar\Delta_1\bar\Delta_2$ with coprime monic polynomials $\bar\Delta_1,\bar\Delta_2\in\CC[Y]\setminus\CC$ of degree $<n$. By Hensel's lemma, there exists a corresponding factorization $\Delta=\Delta_1\Delta_2$ with $\Delta_1,\Delta_2\in R[Y]$. 
Finally, replace $X$ by $X^{\frac{1}{m}}$ in $\Delta_i$ to obtain $\Gamma_i\in\hat F[Y]$. 
Then
\[\Gamma=X^{nr}\sum_{k=0}^n\gamma_kX^{-kr}(YX^{-r})^{n-k}=X^{nr}\sum_{k=0}^n\delta_k(X^{\frac{1}{m}})(YX^{-r})^{n-k}=X^{nr}\Gamma_1(YX^{-r})\Gamma_2(YX^{-r})\]
(where $\gamma_0:=1$).
Induction on $n$ shows that $\Gamma$ has a root and $\hat F$ is algebraically closed.
\end{proof}

For other ring-theoretical properties of power series we refer to the survey~\cite{Sankaran}.

\addsec{Acknowledgment}
I thank Miguel Adamus, Kian Izaddoustdar, Diego García Lucas and Till Müller for spotting some typos and Alexander Zimmermann for proofreading. After the paper had appeared in \emph{Jahresbericht der DMV} 125, Wolfgang Hensgen kindly pointed out an unjustified argument in the proof of Jacobi's triple product. This was settled by moving the section about Laurent series before \autoref{secmain}. Now Jacobi's triple product is obtained more generally for Laurent series. On this occasion, I have added some more generating functions and introduced the appendix. In March 2024, I received a long and detailed list of corrections and valuable suggestions from Darij Grinberg. In 2026, I corrected further typos found by various Claude LLMs. Also, the recently found counterexample to Jacobi's conjecture was added.
The work is supported by the German Research Foundation (\mbox{SA 2864/1-2} and \mbox{SA 2864/3-1}).

\phantomsection
\addcontentsline{toc}{section}{References}
\begin{small}

\end{small}

\begin{theindex}
\begin{small}
\textbf{ Symbols}\nopagebreak
  \item $\lvert\alpha\rvert$, \hyperpage{6}
  \item $(\alpha)$, \hyperpage{5}
  \item $\alpha(\beta)$, \hyperpage{8}
  \item $(1+\alpha)^c$, \hyperpage{13}
  \item $\alpha\circ\beta$, \hyperpage{8}
  \item $\alpha^{\circ n}$, \hyperpage{10}
  \item $\alpha'$, \hyperpage{11}
  \item $\alpha^{(n)}$, \hyperpage{11}
  \item $\adj(A)$, \hyperpage{49}
  \item $\arcsin (X)$, \hyperpage{14}
  \item $\arctan (X)$, \hyperpage{14}
  \item $\binom{c}{k}$, \hyperpage{16}
  \item $b(n)$, \hyperpage{33}
  \item $b_n$, \hyperpage{36}
  \item $c_n$, \hyperpage{26}
  \item $\cos (X)$, \hyperpage{14}
  \item $d(\alpha,\beta)$, \hyperpage{6}
  \item $\deg(\alpha)$, \hyperpage{3}
  \item $\det(A)$, \hyperpage{49}
  \item $d_n$, \hyperpage{26}
  \item $\exp(X)$, \hyperpage{4}
  \item $f_n$, \hyperpage{25}
  \item $\gauss{n}{k}$, \hyperpage{17}
  \item $g(n)$, \hyperpage{41}
  \item $H^k(\alpha)$, \hyperpage{12}
  \item $\inf(\alpha)$, \hyperpage{3}
  \item $\inv(\sigma)$, \hyperpage{38}
  \item $J(\alpha)$, \hyperpage{50}
  \item $K[X]$, \hyperpage{3}
  \item $K(X)$, \hyperpage{15}
  \item $K[[X]]$, \hyperpage{3}
  \item $K((X))$, \hyperpage{15}
  \item $K\{\{X\}\}$, \hyperpage{66}
  \item $K[[X]]^\circ$, \hyperpage{9}
  \item $K[[X]]^\times$, \hyperpage{4}
  \item $K[X_1,\ldots,X_n]$, \hyperpage{42}
  \item $K[[X_1,\ldots,X_n]]$, \hyperpage{42}
  \item $K[[X_1,\ldots,X_n]]^\circ$, \hyperpage{51}
  \item $K[X,X^{-1}]$, \hyperpage{15}
  \item $K[[X_i:i\in I]]_1$, \hyperpage{57}
  \item $[a,b]$, \hyperpage{58}
  \item $\log(1+X)$, \hyperpage{13}
  \item $N_p$, \hyperpage{10}
  \item $\partial_i\alpha$, \hyperpage{47}
  \item $\partial_kA$, \hyperpage{49}
  \item $\per(A)$, \hyperpage{55}
  \item $p_k(n)$, \hyperpage{27}
  \item $p(n)$, \hyperpage{27}
  \item $q(n)$, \hyperpage{39}
  \item $\res(\alpha)$, \hyperpage{15}
  \item $\rho(n,k)$, \hyperpage{38}
  \item $\rho_k$, \hyperpage{43}
  \item $\sigma_k$, \hyperpage{43}
  \item $\sin (X)$, \hyperpage{14}
  \item $\sinh (X)$, \hyperpage{14}
  \item $S_n$, \hyperpage{35}
  \item $\stir{n}{k}$, \hyperpage{35}
  \item $\stirr{n}{k}$, \hyperpage{33}
  \item $\tan (X)$, \hyperpage{14}
  \item $\tau_k$, \hyperpage{43}
  \item $X^n$!, \hyperpage{17}

  \indexspace
\textbf{ A}\nopagebreak
  \item adjoint matrix, \hyperpage{49}
  \item Ahlgren, \hyperpage{31}
  \item Andrews--Eriksson, \hyperpage{30}
  \item Apéry's constant, \hyperpage{39}
  \item associated elements, \hyperpage{60}

  \indexspace
\textbf{ B}\nopagebreak
  \item Baker--Campbell--Hausdorff formula, \hyperpage{58}
  \item Bell number, \hyperpage{33}
  \item Bernoulli numbers, \hyperpage{36}
  \item Binet formula, \hyperpage{25}
  \item Bressoud, \hyperpage{23}

  \indexspace
\textbf{ C}\nopagebreak
  \item Cardano's formula, \hyperpage{47}
  \item Catalan numbers, \hyperpage{26}
  \item Cauchy, \hyperpage{22}
  \item Cauchy product, \hyperpage{3}
  \item Cayley, \hyperpage{28}
  \item chain rule, \hyperpage{12}
  \item Clausen, \hyperpage{26}
  \item coefficient, \hyperpage{3}
    \subitem constant term, \hyperpage{3}
    \subitem leading, \hyperpage{3}
  \item common divisor, \hyperpage{61}
    \subitem greatest, \hyperpage{61}
  \item constant term, \hyperpage{3}
  \item convolution, \hyperpage{3}
  \item coprime elements, \hyperpage{61}
  \item cycle type, \hyperpage{36}

  \indexspace
\textbf{ D}\nopagebreak
  \item degree, \hyperpage{3}
    \subitem of multivariant polynomial, \hyperpage{42}
  \item derivative, \hyperpage{11}
  \item Dirichlet convolution, \hyperpage{58}
  \item Dixon's identity, \hyperpage{57}

  \indexspace
\textbf{ E}\nopagebreak
  \item Erd{\H{o}}s--Tur{\'a}n, \hyperpage{37}
  \item Euler, \hyperpage{28}
  \item Euler's formula, \hyperpage{14}
  \item exponential series, \hyperpage{4}

  \indexspace
\textbf{ F}\nopagebreak
  \item Faà di Bruno's rule, \hyperpage{48}
  \item factor rule, \hyperpage{12}
  \item Faulhaber, \hyperpage{38}
  \item Ferrers diagram, \hyperpage{30}
  \item Fibonacci numbers, \hyperpage{25}
  \item field of fractions, \hyperpage{14}
  \item Fine, \hyperpage{30}
  \item free algebra, \hyperpage{58}
  \item Frobenius' formula, \hyperpage{47}
  \item Fubini's theorem, \hyperpage{7}
  \item functional equation
    \subitem for exponential series, \hyperpage{10}
    \subitem for logarithm, \hyperpage{13}

  \indexspace
\textbf{ G}\nopagebreak
  \item Gauss, \hyperpage{22}, \hyperpage{62}
  \item Gauss' binomial theorem, \hyperpage{18}
  \item Gaussian coefficient, \hyperpage{17}
  \item generating function, \hyperpage{25}
  \item geometric series, \hyperpage{5}
  \item Girard--Newton identities, \hyperpage{44}
  \item Glaisher, \hyperpage{28}
  \item grading, \hyperpage{42}

  \indexspace
\textbf{ H}\nopagebreak
  \item Hamilton's quaternion, \hyperpage{41}
  \item Hardy, \hyperpage{25}, \hyperpage{48}
  \item Hasse derivative, \hyperpage{11}
  \item Hensel, \hyperpage{65}
  \item Hilbert's basis theorem, \hyperpage{58}
  \item Hirschhorn, \hyperpage{22}, \hyperpage{31}, \hyperpage{40}
  \item Hofbauer, \hyperpage{53}
  \item Humphreys, \hyperpage{51}

  \indexspace
\textbf{ I}\nopagebreak
  \item indeterminate, \hyperpage{3}
  \item integral, \hyperpage{11}
  \item inverse function theorem, \hyperpage{51}
  \item inversion, \hyperpage{38}
  \item irreducible element, \hyperpage{60}

  \indexspace
\textbf{ J}\nopagebreak
  \item Jacobi, \hyperpage{22}
  \item Jacobi conjecture, \hyperpage{52}
  \item Jacobi matrix, \hyperpage{50}
  \item Jacobi's determinant formula, \hyperpage{50}
  \item Jacobi's identity, \hyperpage{58}
  \item Jacobi's triple product identity, \hyperpage{20}
  \item Johnson, \hyperpage{25}

  \indexspace
\textbf{ K}\nopagebreak
  \item Keller, \hyperpage{52}

  \indexspace
\textbf{ L}\nopagebreak
  \item L'Hôpital's rule, \hyperpage{12}
  \item Lagrange--Bürmann's inversion formula, \hyperpage{16}
  \item Lagrange--Good's inversion formula, \hyperpage{52}
  \item Lagrange--Jacobi, \hyperpage{39}
  \item Lambert, \hyperpage{26}
  \item Lang, \hyperpage{59}, \hyperpage{63}
  \item Laurent polynomial, \hyperpage{15}
  \item Laurent series, \hyperpage{14}
  \item Leedham-Green, \hyperpage{10}
  \item Legendre, \hyperpage{30}
  \item Leibniz' formula, \hyperpage{49}
  \item Leibniz' rule, \hyperpage{12}, \hyperpage{47}
  \item Lie algebra, \hyperpage{58}
  \item logarithm, \hyperpage{13}

  \indexspace
\textbf{ M}\nopagebreak
  \item Maclaurin series, \hyperpage{11}
  \item MacMahon, \hyperpage{30}, \hyperpage{33}
  \item MacMahon's master theorem, \hyperpage{55}
  \item Magnus ring, \hyperpage{58}
  \item Manin, \hyperpage{63}
  \item Marivani, \hyperpage{31}
  \item Mercator series, \hyperpage{13}
  \item Möbius transformation, \hyperpage{10}

  \indexspace
\textbf{ N}\nopagebreak
  \item Newton's binomial theorem, \hyperpage{16}
  \item Nicomachus' identity, \hyperpage{39}
  \item Noether's normalization theorem, \hyperpage{62}
  \item noetherian, \hyperpage{58}
  \item norm, \hyperpage{6}
  \item Nottingham group, \hyperpage{10}
  \item Nowak, \hyperpage{67}
  \item null sequence, \hyperpage{7}

  \indexspace
\textbf{ O}\nopagebreak
  \item Ono, \hyperpage{31}

  \indexspace
\textbf{ P}\nopagebreak
  \item partial derivative, \hyperpage{47}
  \item partial fraction decomposition, \hyperpage{6}
  \item partition, \hyperpage{27}
    \subitem of sets, \hyperpage{33}
  \item pentagonal number theorem, \hyperpage{21}
  \item permanent, \hyperpage{55}
  \item PID, \hyperpage{59}
  \item plane partition, \hyperpage{33}
  \item Pochhammer symbol, \hyperpage{17}
  \item polynomial, \hyperpage{3}
    \subitem complete symmetric, \hyperpage{43}
    \subitem elementary symmetric, \hyperpage{43}
    \subitem homogeneous, \hyperpage{42}
    \subitem monic, \hyperpage{3}
    \subitem power sum, \hyperpage{43}
    \subitem primitive, \hyperpage{61}
    \subitem symmetric, \hyperpage{43}
      \subsubitem fundamental theorem, \hyperpage{43}
  \item power rule, \hyperpage{12}
  \item power series, \hyperpage{3}
    \subitem constant, \hyperpage{3}
    \subitem derivative, \hyperpage{11}
    \subitem invertible, \hyperpage{4}
    \subitem reverse, \hyperpage{10}
  \item prime element, \hyperpage{60}
  \item principal ideal domain, \hyperpage{59}
  \item product rule, \hyperpage{12}
  \item Puiseux, \hyperpage{66}
  \item Puiseux series, \hyperpage{66}
  \item Pythagorean identity, \hyperpage{14}
  \item Pólya, \hyperpage{37}

  \indexspace
\textbf{ Q}\nopagebreak
  \item quintuple product, \hyperpage{25}
  \item quotient rule, \hyperpage{12}

  \indexspace
\textbf{ R}\nopagebreak
  \item Ramanujan, \hyperpage{31}
    \subitem most beautiful formula, \hyperpage{32}
  \item Ramanujan's theta function, \hyperpage{22}
  \item rational function, \hyperpage{15}, \hyperpage{25}
  \item residue, \hyperpage{15}
  \item reverse, \hyperpage{10}
  \item Rodrigues, \hyperpage{38}
  \item Rogers--Ramanujan identities, \hyperpage{24}
  \item root, \hyperpage{13}
  \item Rothe's binomial theorem, \hyperpage{19}
  \item Rückert's basis theorem, \hyperpage{59}

  \indexspace
\textbf{ S}\nopagebreak
  \item Samuel, \hyperpage{62}
  \item Sarrus' rule, \hyperpage{57}
  \item Schur, \hyperpage{28}
  \item Schur polynomial, \hyperpage{43}
  \item Schwarz' theorem, \hyperpage{47}
  \item Sen's theorem, \hyperpage{10}
  \item set partition, \hyperpage{33}
  \item Stirling number
    \subitem of first kind, \hyperpage{35}
    \subitem of second kind, \hyperpage{33}
  \item Subbuarao, \hyperpage{30}
  \item sum rule, \hyperpage{12}

  \indexspace
\textbf{ T}\nopagebreak
  \item Taylor's theorem, \hyperpage{11}
  \item trigonometric series, \hyperpage{14}
  \item Tschirnhaus transformation, \hyperpage{67}

  \indexspace
\textbf{ U}\nopagebreak
  \item UFD, \hyperpage{60}
  \item ultrametric inequality, \hyperpage{6}
  \item unique factorization domain, \hyperpage{60}

  \indexspace
\textbf{ V}\nopagebreak
  \item valuation ring, \hyperpage{15}
  \item Vandermonde matrix, \hyperpage{50}
  \item Vandermonde's identity, \hyperpage{36}
  \item Vieta, \hyperpage{43}

  \indexspace
\textbf{ W}\nopagebreak
  \item Waring's formula, \hyperpage{45}
  \item Waring's problem, \hyperpage{41}
  \item Weierstrass polynomial, \hyperpage{64}
  \item Weierstrass preparation, \hyperpage{64}
  \item Weiss, \hyperpage{10}

  \indexspace
\textbf{ Y}\nopagebreak
  \item Young diagram, \hyperpage{30}

\end{small}
\end{theindex}

\end{document}